\documentclass[12pt,reqno]{amsart}
\usepackage{amssymb,graphicx}
\usepackage{amsfonts}
\usepackage{tikz}
\usetikzlibrary{shapes,arrows}
\usetikzlibrary{shapes.geometric,arrows,chains}
\usepackage{mathrsfs}
\usepackage{ifpdf}
\usepackage[hidelinks]{hyperref}
\hypersetup{colorlinks=true,linktocpage=true,
	linkcolor=blue,
	filecolor=blue,
	citecolor = blue,
	urlcolor=red}
\usepackage{setspace}
\usepackage{bm}
\usepackage{float}
\usepackage[margin=1.2in]{geometry}

\usepackage{bbm}
\usepackage{physics}
\usepackage{amsmath}
\usepackage{tikz}
\usepackage{mathdots}
\usepackage{yhmath}
\usepackage{cancel}
\usepackage{color}
\usepackage{siunitx}
\usepackage{array}
\usepackage{multirow}
\usepackage{gensymb}
\usepackage{tabularx}
\usepackage{extarrows}
\usepackage{booktabs}
\usetikzlibrary{fadings}
\usetikzlibrary{patterns}
\usetikzlibrary{shadows.blur}
\allowdisplaybreaks[4]
\theoremstyle{plain}
\newtheorem*{theorem}{Theorem}
\newtheorem{thm}{Theorem}[section]
\newtheorem{lem}[thm]{Lemma}
\newtheorem{cor}[thm]{Corollary}
\newtheorem{definition}[thm]{Definition}

\newtheorem{rmk}[thm]{Remark}

\numberwithin{equation}{section}

\def\a{\alpha}
\def\b{\beta}
\def\B{B(x_0,R)}

\def\B2T{B(x_0,2R)\times[0,T]}
\def\d{\Delta}
\def\e{\varepsilon}
\def\g{\gamma}

\def\M{\mathcal{M}^n}
\def\O{\Omega}
\def\Or{\Omega_r}
\def\P{\Phi}

\def\T{\textbf{T}}
\def\R{\mathbb{R}}
\def\RCD{${\rm RCD}^{\ast}(K,N)$}

\def\S{\mathcal{S}}

\def\x{x^{\star}}
\makeatother
\makeatletter
\@namedef{subjclassname@2020}{\textup{2020} Mathematics Subject Classification}
\makeatother

\begin{document}
	
	\title[]
	{ Global and local properties of solutions of elliptic equations with a nonlinear term involving the product of the function and its gradient}
	
	\author[Zhihao Lu]{Zhihao Lu}
	\address[Zhihao Lu]{School of Mathematics and Statistics, Jiangsu Normal University, Xuzhou 221116, P. R. China}
	\email{zhihaolu@jsnu.edu.cn}

	\begin{abstract}
%Recently, Bidaut-Véron-García-Huidobro-Véron \cite{BGV} researched the local gradient type estimates and Liouville theorems for the following nonlinear equations with a source reaction term  involving the product of the function and its gradient, which contains classical results related to Lane-Emden equation and Hamilton-Jacobi equation. However, on the one hand, except radial positive solutions, the sharp Liouville theorem is widely open; on the other hand, their gradient type estimates cannot be fully established on the indicator region where the Liouville theorem holds. In the present paper, we reconsider these basic questions. Concretely, we first establish an improved Liouville theorem, which is sharp when  and also holds on Riemannian manifolds with nonnegative Ricci curvature. Secondly, by introducing a new method---Bernstein blow-up method, we derive the logarithmic gradient estimate and gradient estimate on the whole indicator region where the Liouville theorem holds, which essentially enhance the corresponding estimates in \cite{BGV}. Naturally, we also obtain the Harnack inequalities on same indicator region.
 We study the global and local properties of positive solutions to the quasi-linear elliptic equation:
	\begin{equation}
	\d u+|\nabla u|^q u^p=0,\quad x\in \O\subset \mathbb{R}^n,\nonumber
\end{equation}
where $q\ge 0$ and $p\in\mathbb{R}$.
Our contributions are twofold:
\begin{enumerate}
	\item[1.]  Based on an optimal and new identity for the modulus squared of the logarithmic gradient, we establish optimal and improved Liouville theorems for global positive solutions, and generalize these findings to the framework of Riemannian manifolds.
	
	\item[2.]Based on a newly discovered mutual control relationship of two nonlinear iterms, for all index pairs \( (p, q) \) where the Liouville theorem holds, we derive several optimal gradient estimates for local positive solutions. As a direct corollary, we obtain the corresponding Harnack inequality.
	
\end{enumerate}

%Our results  substantially  strengthen those already appearing in the literature.

These results strengthen the related conclusions in Bidaut-Véron--García-Huidobro--Véron \cite {BGV} from both global and local perspectives.
 
 %We mention that even for the partial results overlapping with previous ones, our methods of proofs are also new.
	\end{abstract}

%	 By means of the direct Bernstein method, we derive some optimal Liouville theorems for global positive solutions  and  generalize the corresponding results to Riemannian setting. Meanwhile, to derive the optimal local estimates, we have proposed a new method---the Bernstein blow up method. This method combines Bernstein method and Poláčik-Quittner-Souplet's doubling lemma and links the derivation of local gradient-type estimates with the known Liouville theorems for the equation. Eventually, we have obtained several gradient estimates for all index pairs \((p, q)\) where Liouville theorem holds.
	\keywords{nonlinear elliptic equations,  Liouville theorem, local gradient estimate,  Harnack inequality, Riemannian manifolds with curvature bounded below}
	
	\subjclass[2020]{Primary:  35J62,  35B08, 35B09; Secondary:  35A23,  58J05}
	
%	\thanks{School of Mathematics and Statistics, Jiangsu Normal University, Xuzhou 221116, P. R. China}
	
	%\thanks{This paper was typeset using \AmS-\LaTeX}
	
	\maketitle
	{\tableofcontents}
	\section{Introduction}

	\subsection{Background and motivation}\label{ss1}
	In this paper, we mainly study some fine local and global properties of the positive classical solutions to the following 
	 equation on the Euclidean spaces:
	\begin{equation}\label{pqle}
		\d u+|\nabla u|^q u^p=0,\quad x\in\O\subset \mathbb{R}^n,
	\end{equation}
where $q\ge 0$, $p\in\mathbb{R}$.
%Although many results to the study of equation \eqref{pqle} in the Euclidean setting presented here are new, they can be naturally extended to the context of Riemannian manifolds. Specifically, under Ricci curvature bounded below setting, we also establish analogous results for equation \eqref{pqle}.
 In many certain cases, we actually derive the corresponding results for equation \eqref{pqle} on Riemannian manifolds with Ricci curvature bounded below though analogous results are new for Euclidean case.%For deriving the sharp local estimates such as logarithmic gradient estimate and Harnack inequality, we restrict the manifold as $\mathbb{R}^n$. %Simultaneously, we also generalize partial results of equation \eqref{pqle} to the following equation 
%	\begin{equation}\label{ge}
%	\d u+|\nabla u|^q f(u)=0,
%\end{equation}
%where $q\ge 0$ and $f\in C^1(0,\infty)$ with some  growth conditions. 

%\begin{figure}[h]
%	\includegraphics[scale=0.7]{Pro1.png}
%	\caption{Some separated curves in previous progresses (N=6)}\label{F1}
%\end{figure}

Before we state main results, we review some interesting and important studies for equation \eqref{pqle}. Meanwhile, we also analyze the research methods behind these results. For a clearer view, we have summarized them into the following figure.

	\begin{figure}[!htb]

	\tikzset{every picture/.style={line width=0.75pt}} %set default line width to 0.75pt        
	
	\begin{tikzpicture}[x=0.75pt,y=0.75pt,yscale=-1,xscale=1]
		%uncomment if require: \path (0,414); %set diagram left start at 0, and has height of 414
		
		%Shape: Axis 2D [id:dp2562784235403033] 
		\draw  (50.67,336.89) -- (605.67,336.89)(82.66,55.75) -- (82.66,369.75) (598.67,331.89) -- (605.67,336.89) -- (598.67,341.89) (77.66,62.75) -- (82.66,55.75) -- (87.66,62.75) (182.66,331.89) -- (182.66,341.89)(282.66,331.89) -- (282.66,341.89)(382.66,331.89) -- (382.66,341.89)(482.66,331.89) -- (482.66,341.89)(582.66,331.89) -- (582.66,341.89)(77.66,236.89) -- (87.66,236.89)(77.66,136.89) -- (87.66,136.89) ;
		\draw   (189.66,348.89) node[anchor=east, scale=0.75]{1} (289.66,348.89) node[anchor=east, scale=0.75]{2} (389.66,348.89) node[anchor=east, scale=0.75]{3} (489.66,348.89) node[anchor=east, scale=0.75]{4} (589.66,348.89) node[anchor=east, scale=0.75]{5} (79.66,236.89) node[anchor=east, scale=0.75]{1} (79.66,136.89) node[anchor=east, scale=0.75]{2} ;
		%Straight Lines [id:da5759267657203901] 
		\draw  [dash pattern={on 4.5pt off 4.5pt}]  (82.67,237.08) -- (600.67,237.08) ;
		%Straight Lines [id:da5120382948888427] 
		\draw  [dash pattern={on 4.5pt off 4.5pt}]  (82.67,136.75) -- (605.67,135.75) ;
		%Straight Lines [id:da7085509835826891] 
		\draw [color={rgb, 255:red, 50; green, 112; blue, 188 }  ,draw opacity=1 ][fill={rgb, 255:red, 74; green, 144; blue, 226 }  ,fill opacity=1 ]   (59.67,114.42) -- (282.67,336.75) ;
		%Straight Lines [id:da056204043162845] 
		\draw [color={rgb, 255:red, 245; green, 166; blue, 35 }  ,draw opacity=1 ] [dash pattern={on 4.5pt off 4.5pt}]  (182.67,256.42) -- (262.67,336.75) ;
		%Curve Lines [id:da48790554715782286] 
		\draw [color={rgb, 255:red, 245; green, 166; blue, 35 }  ,draw opacity=1 ] [dash pattern={on 4.5pt off 4.5pt}]  (182.67,256.42) .. controls (154.67,225.75) and (117.67,191.08) .. (102.67,136.08) ;
		%Curve Lines [id:da3201158047053496] 
		\draw    (282.67,336.75) .. controls (249.67,313.42) and (155.67,244.42) .. (121.67,136.42) ;
		%Curve Lines [id:da448378894966204] 
		\draw [color={rgb, 255:red, 208; green, 2; blue, 27 }  ,draw opacity=1 ]   (282.67,336.75) .. controls (220.67,272.08) and (226.67,245.08) .. (557.67,241.08) ;
		%Curve Lines [id:da7817364319566846] 
		\draw [color={rgb, 255:red, 144; green, 19; blue, 254 }  ,draw opacity=1 ]   (134.67,136.75) .. controls (143.67,167.75) and (201.67,260.75) .. (268.67,321.75) ;
		%Straight Lines [id:da2755819843261147] 
		\draw [color={rgb, 255:red, 245; green, 166; blue, 35 }  ,draw opacity=1 ] [dash pattern={on 4.5pt off 4.5pt}]  (82.67,237.08) -- (182.67,336.75) ;

		\draw (65,156.48) node  [font=\footnotesize,color={rgb, 255:red, 0; green, 0; blue, 0 }  ,opacity=1 ,rotate=-358.41]  {$\frac{n+3}{n-1}$};
		% Text Node
		\draw (61,50.75) node [anchor=north west][inner sep=0.75pt]   [align=left] {q};
		% Text Node
		\draw (600,349.75) node [anchor=north west][inner sep=0.75pt]   [align=left] {p};
		% Text Node
		\draw (254,344.15) node [anchor=north west][inner sep=0.75pt]  [font=\tiny]  {$\frac{n+3}{n-1}$};
		% Text Node
		\draw (64,339.15) node [anchor=north west][inner sep=0.75pt]    {$O$};
		% Text Node
		\draw (118,117.15) node [anchor=north west][inner sep=0.75pt]    {$I$};
		% Text Node
		\draw (137,117.15) node [anchor=north west][inner sep=0.75pt]    {$II$};
		% Text Node
		\draw (85,117.82) node [anchor=north west][inner sep=0.75pt]    {$III_{1}$};
		% Text Node
		\draw (55,98.82) node [anchor=north west][inner sep=0.75pt]    {$IV$};
		% Text Node
		\draw (416,249.82) node [anchor=north west][inner sep=0.75pt]    {$V$};
		% Text Node
		\draw (100,276.15) node [anchor=north west][inner sep=0.75pt]    {$III_{2}$};

	\end{tikzpicture}

	\caption{Some separated curves in previous progresses ($n=6$)}\label{F1}
\end{figure}
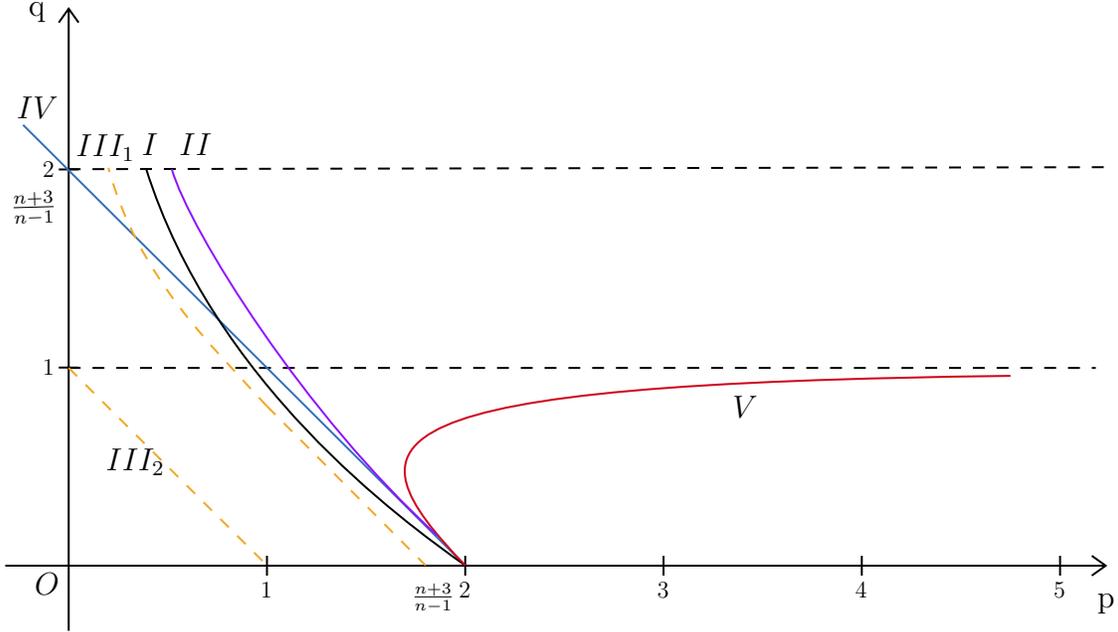

  Bidaut-Véron-García-Huidobro-Véron \cite[Theorem D]{BGV} proved that the red curve $V$ in Figure \ref{F1} is the critical curve that determines whether equation \eqref{pqle} has a nontrivial radially symmetric positive solution in $\mathbb{R}^n$. When $n\ge 3$, the equation of curve $V$ is given by 
$$
p+q-1=\frac{(2-q)^2}{(1-q)(n-2)},\qquad q\in[0,1).
$$
Then \cite[Theorem D]{BGV} states that  there exist non-constant radial positive solutions of \eqref{pqle} in $\mathbb{R}^n$ if and only if $0 \leq q<1$ and
$$
p+q-1\ge \frac{(2-q)^2}{(1-q)(n-2)}.
$$
That is to say, the equation \eqref{pqle} has nontrivial radial positive symmetric solutions if and only if indicator $(p,q)$ falls on the lower right corner of the red curve $V$.

When the symmetry condition is completely removed, it becomes even more challenging to determine the indicator region where the Liouville theorem of \eqref{pqle} holds. Specifically, inspired by \cite[Theorem D]{BGV}, one would naturally put forward the following conjecture.

\textbf{Conjecture:} Let $\mathbb{R}^2_{+}:=\{(p,q)\in \mathbb{R}^2:q\ge 0\}$ and  define the subdomain of $\mathbb{R}^2_{+}$
\begin{eqnarray}\label{NL}
	\mathscr{D}(n):=
	\begin{cases}
		\emptyset \qquad\qquad  \qquad\qquad \qquad\qquad\qquad\qquad\qquad\qquad\qquad   n= 2,\\
		\\
		\left\{(p,q)\in \mathbb{R}^2_{+}:q\in[0,1),\,\,p+q-1\ge \frac{(2-q)^2}{(1-q)(n-2)}.\right\} \quad  \,\,\,\, n\ge 3,
	\end{cases}
\end{eqnarray}
and define the Liouville domain of the equation \eqref{pqle} on $\mathbb{R}^n$ as follows.
\begin{eqnarray}\label{LD}
	\mathscr{D}_L(n)=\left\{\begin{array}{l|l}
		(p,q) \in \mathbb{R}^2_{+} & \begin{array}{l}
			\text{any positive classical solutions of \eqref{pqle} on } \\
			\text{ $\mathbb{R}^n$  with indices $p,q$ must be constant.}
		\end{array}
	\end{array}\right\}.
\end{eqnarray}
Then the question is that $\mathscr{D}_L(n)=\mathbb{R}^2_{+}\setminus \mathscr{D}(n)$?
\vspace{6pt}
%However, these methods are ineffective for equation \eqref{pqle} when $q>0$.

Generally speaking, this question is widely open.
For some special and important case, there are some nice results. For $q=0$ case, using integral method (or integral Bernstein method), Gidas-Spruck \cite{GS} first obtained the complete Liouville theorem, which states that the nonnegative classical solutions of \eqref{pqle} with $(1,p_S(n))$ must be trivial. Here, $p_S(n)$ is given by 
\begin{eqnarray}
	p_S(n)=
	\begin{cases}
		\infty\qquad \qquad\qquad\quad n=2;\nonumber\\
		\frac{n+2}{n-2}\qquad\qquad\qquad\,\,\, n\ge 3.
	\end{cases}
\end{eqnarray}
Their celebrated results are also reproved by moving plane or moving sphere method, e.g., \cite{CL,GNN,LZ}, see also the references therein.  Recently, Lu and Wu have also established the Gidas–Spruck type Liouville theorem on manifolds via the direct Bernstein method \cite{LU1,LU2,LU3,W}. Another interesting case is $p=0$. For this case, using direct Bernstein method, Lions \cite{Lions2} yields that all  classical solutions of \eqref{pqle} with $q>1$ are constant. However, for general case, the Liouville theorem without any assumptions for solutions to equation \eqref{pqle} is hard.  Recently, there are some remarkable progresses. For $q>2$ and $p>0$, 
Filippucci-Pucci-Souplet \cite{FPS} obtained that all bounded classical solutions of \eqref{pqle} is constant. Later on, using Keller--Osserman comparison and bootstrap method, Bidaut-Véron \cite[Theorem 1.1, Theorem 1.5]{B} completely removed the boundness condition and generalized the Liouville theorem to more general quasilinear problem. Based on the work \cite{GS,B}, the essential hard range for index is  $q\in(0,2)$.

%. And  Bidaut-Véron's Liouville theorem is valid for larger index range: $p+q-1>0$ and $p\le 0$ or $q\ge 2$. In fact, \cite{B} considered
 In an earlier and interesting work \cite[Theorem C]{BGV}, Bidaut-Véron-García-Huidobro-Véron established the Liouville theorem for a subdomain of $\mathbb{R}^2_{+}\setminus \mathscr{D}(n)$. Concretely speaking, using integral Bernstein method, they obtained that if $p\ge 0$, $q\in[0,2)$ and $(p,q)$ satisfies $G(p,q)<0$, then all the positive solutions of \eqref{pqle} is constant. Here, $$G(p,q)=((n-1)^2q+n-2)p^2+p(n(n-1)q^2-(n^2+n-1)q-n-2)-nq^2.$$
 Especially, for $q=0$ case, their result recovers the Gidas-Spruck's result. In Figure \ref{F1}, the black curve $I$ is $G(p,q)=0$ with $n=6$. Equivalently,  the bottom left corner of curve $I$ belongs to $\mathscr{D}_L(n)$. Notice that curve $I$ is strictly below the ideal curve $V$ except $q=0$ case. Very recently, using a refined integral identity and integral Bernstein method, Ma-Wu \cite[Theorem 1,3,Theorem 1.4]{MW}  improved the  Bidaut-Véron-García-Huidobro-Véron's Liouville theorem (see also \cite{DSWZ}). Especially, Ma-Wu's Liouville theorem is optimal when $p\ge 0$, $p+q-1>0$ and $q\in[0,\frac{1}{n-1}]$. That is, 
when $p\ge 0$, $p+q-1>0$, $q\in[0,\frac{1}{n-1}]$ and
$$
p+q-1< \frac{(2-q)^2}{(1-q)(n-2)},
$$
then $(p,q)\in \mathscr{D}_L(n)$. For $q\in(\frac{1}{n-1},2)$, they prove that if $p\ge 0$, $p+q-1>0$ and $\mathbb{H}(p,q)<0$, then $(p,q)\in \mathscr{D}_L(n)$, where
$$
\mathbb{H}(p, q):=p^2+\left(\frac{n-1}{n-2} q-\frac{n^2-3}{(n-2)^2}\right) p+\frac{1-(n-1) q}{(n-2)^2} .
$$
Visually speaking, their separated curve is purple curve $II$ in the Figure \ref{F1}. %In fact, the superlinear condition is redundant.
%Very recently, via Moser's iteration method, He-Hu-Wang \cite[Theorem 1.2]{HHW} established logarithmic gradient estimate for \eqref{pqle} with $p+q<\frac{n+3}{n-1}$ and $q\ge 0$, and so the corresponding Liouville theorem is obtained.  \vspace{3pt}

To summarize, up to now, aforementioned conjecture is open for the following region: $q\in(0,2)$, $p\ge 0$ and $(p,q)$ is located between curve $II$ and curve $V$ in Figure \ref{F1}.
%$\bullet$ $q\in(0,2)$, $p+q-1>\frac{n+3}{n-1}$ and $p<0$;\\
 
\vspace{2mm}
Therefore, one of the objectives of this paper is to improve the existing Liouville theorem.   
Before we go futher, we mention that the main difficulty of derive the optimal Liouville theorem of \eqref{pqle} is the existence of gradient item factor. Concretely, it causes that the important moving plane or sphere method fails in the present case and integral Bernstein method has become the mainstream choice currently.. However, up to now, the classical integral estimate from Gidas-Spruck \cite{GS} have not been fully generalized and applied successfully to obtain complete Liouville theorem for \eqref{pqle} (see \cite{BGV} and \cite{MW}). In the present work, we adopt the direct Bernstein method to deal with this question. In contrast to the case when \(q = 0\), we find an essential refinement for dealing with auxiliary functions because the classical differential identity is not optimal for present case (see \cite{LU1, LU3} for more details  $q=0$ case and subsection \ref{ss3} below for $q>0$ case).
\vspace{2mm}

In addition, the index region that the local estimates (such as gradient type estimates) hold is obviously smaller than the region where the known Liouville theorem holds. In \cite[Theorem B]{BGV}, Bidaut-Véron-García-Huidobro-Véron  derived a gradient type estimate as follows. 
If $p+q-1>0$, $q\in[0,2)$ and \\
$\bullet$ $p\ge 1$ and $p+q-1<\frac{n+3}{n-1}$ or\\
$\bullet$ $p\in[0,1)$ and $p+q-1<\frac{(p+1)^2}{p(n-1)}$,\\
then there exist $a=a(n,p,q)>0$ and $C=C(n,p,q)>0$ such that any positive solutions of \eqref{pqle} on $B(0,R)$ satisfy
$$
\sup\limits_{B(0,\frac{R}{2})}|\nabla u^a|\le CR^{-1-\frac{a(2-q)}{p+q-1}}.
$$
In Figure \ref{F1}, when $(p,q)$ lies between the yellow dashed curve $III_1$ and curve $III_2$, the aforementioned estimate holds. Subsequently, He--Hu--Wang \cite[Theorem 1.2]{HHW} established a logarithmic gradient estimate for equation \eqref{pqle} under the conditions $p+q<\frac{n+3}{n-1}$ and $q\ge 0$ (see also \cite{HWW,LU1,LU30} for the case when $q=0$). To date, these local gradient-type estimates constitute all the known results of this kind.

It is obvious that local gradient estimates is stronger than the corresponding Liouville theorem. However, from classical case $q=0$, they should be equivalent to each other. Indeed, in this case, the logarithmic gradient estimate, Harnack inequality and Liouville theorme are valid (see \cite[Theorem 1.6]{LU1}). Besides, Gidas-Spruck \cite{GS0} and Poláčik-Quittner-Souplet \cite{PQS} provide a philosophy that local estimates can be derived from Liouville theorems and so they are equivalent. Therefore, we pose the following natural question:
\vspace{1mm}

\textbf{Question:} For $n\ge 2$, let $\mathscr{D}_L(n)$ be defined by \eqref{LD}. For any $(p,q)\in \mathscr{D}_L(n)$, does these local gradient estimates and Harnack inequality hold? 
\vspace{1mm}

The another objective of this paper is to give a positive answer on this question.

\subsection{Main results}
First, we give the notation that will be used throughout the text.

\vspace{3mm}
	% 调整表格行间距（1.8为系数，越大行间距越宽）
	\renewcommand{\arraystretch}{1.5}
	\begin{tabular}{|p{2.5cm}|p{11.5cm}|} % 固定列宽，避免内容挤压
		\hline
		$(\M, g)$ & $n$-dimensional Riemannian manifold with metric $g$ \\
		\hline
		$\mathrm{Ric}$ & Ricci tensor on $(M^n, g)$ \\
		\hline
		$B(x_0, R)$ & the geodesic ball with center $x_0$ and radius $R$ \\
		\hline
		$\nabla f, df, \nabla^2 f$ & the gradient, differential and Hessian  of the $C^2$ function $f$ \\
		\hline
		$\mathrm{dist}(x, E)$ & the distance between $x$ and the subset $E$ in $\M$ \\
		\hline
		$\mathbbm{1}_E$ & the indicator function of the set $E$ \\
		\hline
		$\Omega_r(u)$ & regular set of $C^1$ function $u$ in $\Omega$: $\Omega_r(u) = \{x \in \Omega : |\nabla u|(x) > 0\}$ \\
		\hline
		$|T|$ & the norm of the tensor $T$ under the Riemannian metric $g$ \\
		\hline
	\end{tabular}

	\vspace{3mm}
	Now, we state main results. The first one is a partial answer for the Conjecture in the subsection \ref{ss1}, which essentially improves the known results.
	
	\begin{thm}\label{m1}
		Let $u$ be a nonnegative classical solution of \eqref{pqle} on $\mathbb{R}^n$ with $q\ge 0$ and $p\in \mathbb{R}$. Let $\mathscr{D}(n)$ be defined by \eqref{NL}. If any of the following conditions holds:\\
		(1) $n=2$;\\
		(2) $q\ge \frac{5}{3}$;\\
		(3) $q\in[0,1-\frac{1}{\sqrt{n-1}}]$ and $(p,q)\in \mathbb{R}^2_{+}\setminus \mathscr{D}(n)$ with $n\ge3$;\\
		(4) $q>0$ and $p+q\le \frac{n+2}{n-2}$ with $n\ge3$;\\
		(5) $p\le\frac{3\sqrt{n+6}}{2(n-2)}$  with $n\ge3$,\\
		then $u$ is constant. 
	\end{thm}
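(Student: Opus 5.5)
The plan is a direct Bernstein argument on a power of the logarithmic gradient, in the spirit of \cite{LU1,LU3}. First I reduce to positive solutions. If $u\ge0$ solves \eqref{pqle}, then $\d u=-|\nabla u|^q u^p\le 0$ wherever the nonlinearity makes sense, so $u$ is superharmonic. By the strong maximum principle, either $u\equiv0$ or $u>0$. Hence it suffices to treat $u>0$.

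Next I set $u=v^{-\beta}$, or equivalently $v=u^{\gamma}$ or $w=\log u$, with an exponent $\beta$ to be chosen. I consider the auxiliary quantity
$$f=|\nabla \log v|^2 \quad\text{or}\quad f=v^{-\alpha}|\nabla v|^2 .$$
Let $\eta$ be a cutoff supported in $B(0,2R)$. The key computation is Bochner's formula for $\d f$ at a point of $\Omega_r(u)$. I will not use the crude bound $|\nabla^2 v|^2\ge (\d v)^2/n$. Instead I split the Hessian along $\nabla v$ and its orthogonal complement, using
$$|\nabla^2 v|^2\ge \frac{(\d v)^2}{n-1}\cdot(\text{correction})+\frac{n}{n-1}\Big(\frac{\langle\nabla^2 v\nabla v,\nabla v\rangle}{|\nabla v|^2}-\frac{\d v}{n}\Big)^2 .$$
This refined identity is the ``optimal identity for the modulus squared of the logarithmic gradient'' announced in the abstract. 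It produces a differential inequality of the form
$$\d f\ge -b\langle \nabla v,\nabla f\rangle/v+ c\,f^2+c'\,|\nabla v|^{q}v^{\cdots}f\,(\cdots)-\text{(terms absorbed by }\nabla f\text{)}.$$
I then need to choose $\beta$ and the auxiliary weight parameters so that the resulting quadratic form in the two nonlinear quantities $f$ and $|\nabla u|^q u^{p-1}$ is positive definite. This choice is exactly where conditions (1)--(5) come from.

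The case analysis runs as follows.
\begin{itemize}
\item For $n=2$ and for $q\ge 5/3$, I expect the positivity to be achievable for all $p$ with a simple choice such as $\beta=0$ (pure $\log u$), or close to it. The gradient term then dominates via Young's inequality.
\item For (4), I use the Gidas--Spruck-type choice adapted to $p+q$. Since $|\nabla u|^q u^p=|\nabla\log u|^q u^{p+q}$, the problem behaves like the Lane--Emden case with exponent $p+q\le\frac{n+2}{n-2}$. The endpoint is handled by a limiting argument in the parameters.
\item For (3) and (5), the discriminant condition for positivity reduces to $p+q-1<\frac{(2-q)^2}{(1-q)(n-2)}$ under $q\le 1-1/\sqrt{n-1}$, respectively to a bound on $p$.
\end{itemize}
I would verify these reductions by computing the discriminant of the quadratic in $\beta$ explicitly.

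Finally, I apply the maximum principle to $\eta f$ at its maximum point. With the standard cutoff estimates on $\mathbb{R}^n$ (or on manifolds with $\mathrm{Ric}\ge0$), I obtain $\sup_{B(0,R)}f\le C R^{-2}$. Letting $R\to\infty$ gives $f\equiv0$, so $u$ is constant. At maximum points where $\nabla u=0$, $f=0$ trivially, so this case causes no difficulty.

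The main obstacle is the algebra in the middle step. The term $|\nabla u|^q$ generates cross terms involving $\langle\nabla^2 u\nabla u,\nabla u\rangle$ with a coefficient depending on $q$. These must be absorbed by the refined Hessian term, and the optimality of the constants, and hence the precise regions in (2), (3) and (5), hinges on choosing $\beta$ correctly. I would first try to absorb the $q$-cross term completely by completing the square in the normal Hessian component. I would then optimize the remaining scalar discriminant over $\beta$.
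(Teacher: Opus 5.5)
There is a genuine gap: the auxiliary function is too weak. Every function you propose has the form $u^{k}|\nabla\ln u|^2$, since $|\nabla\log v|^2=\beta^{-2}H$ and $v^{-\alpha}|\nabla v|^2=c\,u^{k}H$, where $H=|\nabla\ln u|^2$.

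Your refined Hessian inequality is not the problem. At a critical point of $f=u^kH$ one has $\nabla^2u(\nabla u,\cdot)=(1-\frac k2)Hu\,\nabla u$. So $\nabla u$ is an eigenvector of $\nabla^2 u$, and your split $|\nabla^2u|^2\ge a_{11}^2+(\Delta u-a_{11})^2/(n-1)$ is already the sharpest pointwise bound available. Carry it through with $\Delta u/u=-L$, $L=|\nabla\ln u|^qu^{l}$, $l=p+q-1$ and $k=-\beta'$. The dominant part of $\Delta f/u^k$ is then exactly
\[
\Big(\tfrac{2(1+\beta'/2)^2}{n-1}-\beta'-\tfrac{\beta'^2}{2}\Big)H^2+\Big(\tfrac{4(1+\beta'/2)}{n-1}+(1-q)\beta'-2l\Big)HL+\tfrac{2}{n-1}L^2 .
\]
With a weight you also do not get $\sup_{B(0,R)}f\le CR^{-2}$. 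You only get $CR^{-2}u(x^{\star})^{-\beta'}$. The only available control is the superharmonic lower bound $u\ge c\,(r/R)^{n-2}\min_{\partial B(x_0,r)}u$ of Lemma~\ref{lb}, and this forces $\beta'<\frac{2}{n-2}$. Your plan never mentions this constraint, yet it is what produces the critical curve.

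Under this constraint the form fails to be copositive in much of the claimed region.
\begin{itemize}
\item For $n=3$, $q=0$, positivity means $l<1+\beta'+\sqrt{1-\beta'^2/4}$. Its maximum over $\beta'$ is $1+\sqrt5<4=\frac{4}{n-2}$. So you do not even recover Gidas--Spruck, and hence neither (3) nor (4) for small $q$.
\item For $q\ge\frac53$, the $HL$-coefficient is $-2l$ plus a bounded quantity, so no choice handles all $p$. With $\beta=0$ you get only $p+q<\frac{n+3}{n-1}$.
\item For $n=2$, $\beta=0$ gives only $p+q<5$. You need a large negative power of $u$ as weight (Theorem~\ref{2d} takes $\beta=\max(1,\frac{2|l|}{3-q})$), together with the two-dimensional lower bound of Lemma~\ref{lb} with arbitrarily small exponent.
\end{itemize}
Your reduction of (4) to ``Lane--Emden with exponent $p+q$'' is unfounded. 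The endpoint $p+q=\frac{n+2}{n-2}$ also cannot come from a limiting argument in the parameters, because Liouville conclusions do not pass to limits of open conditions. It holds because, for $q>0$, that line lies strictly inside the admissible region. This is checked in Lemma~\ref{b1}, together with the inclusion needed for (5).

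The missing ingredients are as follows.
\begin{itemize}
\item For $0\le q<2$ the paper uses the two-term function $F=u^{-(1-\frac q2)\beta}\big(|\nabla\ln u|^{2-q}+d\,u^{p+q-1}\big)$. It is constant on the critical radial ground state (Remark~\ref{opt}), and this is what makes the sharp curve reachable.
\item It then perturbs $F$ to $G=F\big(H^{1-\frac q2}/(H^{1-\frac q2}+\varepsilon u^{l})\big)^{\rho}$. This vanishes on $\{\nabla u=0\}$, where $F$ is not smooth. Its dominant term is the enhanced $(1+Z)(H+L)^2$, which is needed to absorb the cutoff gradient terms (Lemmas~\ref{dk1} and~\ref{dk2}, with piecewise-constant $\tau$ when $q\ge\frac32$).
\item For $q\le1$, the same term also bounds $u(x^{\star})$ through $\Phi L^2\le CR^{-2}H$.
\item For $q\ge2$ a different argument is needed. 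At $q=2$, $\int_0^u e^{s^{p+1}/(p+1)}ds$ is harmonic, and Theorem~\ref{t10} covers $p\le-1$. For $q>2$, the bound on $L-dH$ from Theorem~\ref{sc} makes $(u-\inf u)^s$ subharmonic, and then weak Harnack and the mean-value inequality close the argument.
\end{itemize}
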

	
	The conditions $(1)$--$(3)$ are optimal. Especially, case (3) contains the classical Gidas-Spruck's result for $q=0$ and it also covers (4) when $q\in[0,1-\frac{1}{\sqrt{n-1}}]$. Regarding the linear region, the condition (4) is sharp ($n\ge 3$), i.e., 
	$$
	\frac{n+2}{n-2}=\sup\{l\in\mathbb{R}:p+q\le l, (p,q)\in \mathscr{D}_L(n)\},
	$$
	where $\mathscr{D}_L(n)$ is given by \eqref{LD}.
	This is due to the fact that the line $p+q=\frac{n+2}{n-2}$ (the  line $IV$ in the Figure \ref{F1}) is exactly the tangent to the curve $V$ at  point $(\frac{n+2}{n-2},0)$. From Theorem \ref{m1}, we see that the cases $q>0$ and $q=0$ are clearly different. In $q=0$ case,  equation \eqref{pqle} has positive solutions when $p=\frac{n+2}{n-2}$; and this fact is invalid for $p+q=\frac{n+2}{n-2}$ with $q>0$. Last, the condition (5) contains the Lions' classical result $p=0$ and $q>1$ and also covers the condition (4) when $q\ge 1$.

	In fact, in the non-optimal case, we have proven that the region where the Liouville theorem holds is larger than the region described by conditions (4) and (5). Concretely, this region is expressed by some inequlities in Definition \ref{set}. Its specific plot is as follows:

	% Pattern Info
	
	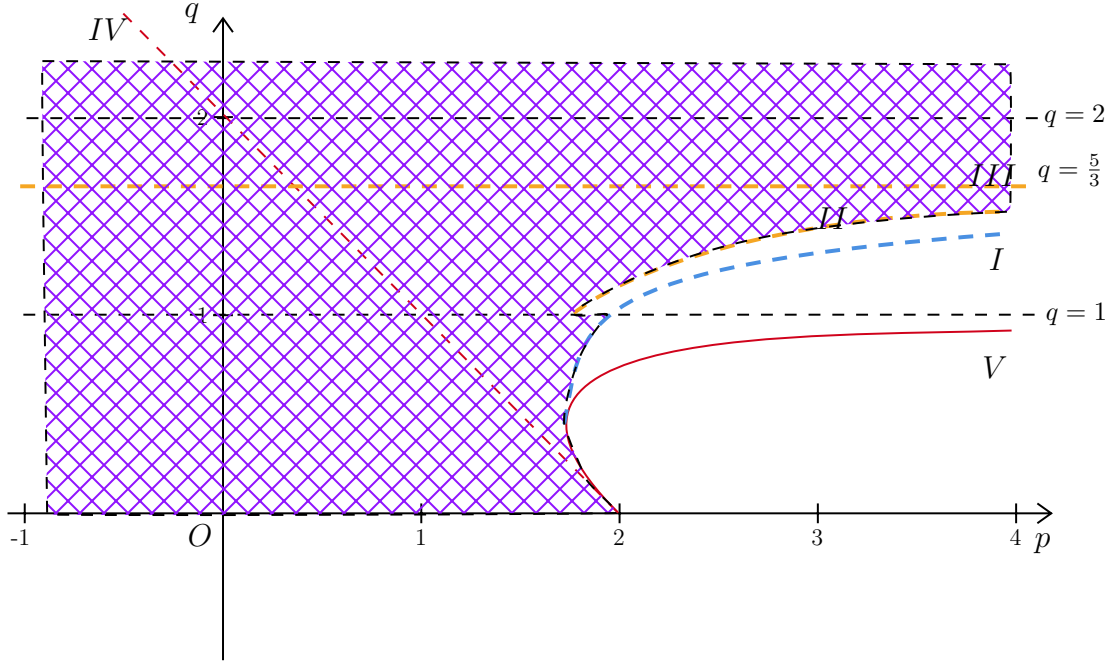
\begin{figure}[!htb]

% Pattern Info

\tikzset{
	pattern size/.store in=\mcSize, 
	pattern size = 5pt,
	pattern thickness/.store in=\mcThickness, 
	pattern thickness = 0.3pt,
	pattern radius/.store in=\mcRadius, 
	pattern radius = 1pt}
\makeatletter
\pgfutil@ifundefined{pgf@pattern@name@_gyjn2hfro}{
	\pgfdeclarepatternformonly[\mcThickness,\mcSize]{_gyjn2hfro}
	{\pgfqpoint{0pt}{0pt}}
	{\pgfpoint{\mcSize}{\mcSize}}
	{\pgfpoint{\mcSize}{\mcSize}}
	{
		\pgfsetcolor{\tikz@pattern@color}
		\pgfsetlinewidth{\mcThickness}
		\pgfpathmoveto{\pgfqpoint{0pt}{\mcSize}}
		\pgfpathlineto{\pgfpoint{\mcSize+\mcThickness}{-\mcThickness}}
		\pgfpathmoveto{\pgfqpoint{0pt}{0pt}}
		\pgfpathlineto{\pgfpoint{\mcSize+\mcThickness}{\mcSize+\mcThickness}}
		\pgfusepath{stroke}
}}
\makeatother
\tikzset{every picture/.style={line width=0.75pt}} %set default line width to 0.75pt        

\begin{tikzpicture}[x=0.75pt,y=0.75pt,yscale=-1,xscale=1]
	%uncomment if require: \path (0,439); %set diagram left start at 0, and has height of 439
	
	%Shape: Axis 2D [id:dp4329897875813441] 
	\draw  (45.4,303.14) -- (571.8,303.14)(154.09,53.28) -- (154.09,377.28) (564.8,298.14) -- (571.8,303.14) -- (564.8,308.14) (149.09,60.28) -- (154.09,53.28) -- (159.09,60.28) (254.09,298.14) -- (254.09,308.14)(354.09,298.14) -- (354.09,308.14)(454.09,298.14) -- (454.09,308.14)(554.09,298.14) -- (554.09,308.14)(54.09,298.14) -- (54.09,308.14)(149.09,203.14) -- (159.09,203.14)(149.09,103.14) -- (159.09,103.14) ;
	\draw   (261.09,315.14) node[anchor=east, scale=0.75]{1} (361.09,315.14) node[anchor=east, scale=0.75]{2} (461.09,315.14) node[anchor=east, scale=0.75]{3} (561.09,315.14) node[anchor=east, scale=0.75]{4} (61.09,315.14) node[anchor=east, scale=0.75]{-1} (151.09,203.14) node[anchor=east, scale=0.75]{1} (151.09,103.14) node[anchor=east, scale=0.75]{2} ;
	%Straight Lines [id:da6229616054259954] 
	\draw [line width=0.75]  [dash pattern={on 4.5pt off 4.5pt}]  (55,103.68) -- (567,103.68) ;
	%Straight Lines [id:da9058824549941648] 
	\draw [color={rgb, 255:red, 208; green, 2; blue, 27 }  ,draw opacity=1 ] [dash pattern={on 4.5pt off 4.5pt}]  (103.8,50.88) -- (353.4,302.88) ;
	%Straight Lines [id:da5617969123519926] 
	\draw [line width=0.75]  [dash pattern={on 4.5pt off 4.5pt}]  (53.4,202.88) -- (565.4,202.88) ;
	%Curve Lines [id:da5765925554381958] 
	\draw [color={rgb, 255:red, 208; green, 2; blue, 27 }  ,draw opacity=1 ]   (353.4,302.88) .. controls (258.2,207.68) and (445.4,214.08) .. (551.8,210.88) ;
	%Straight Lines [id:da2585148569955078] 
	\draw [color={rgb, 255:red, 245; green, 166; blue, 35 }  ,draw opacity=1 ][line width=1.5]  [dash pattern={on 5.63pt off 4.5pt}]  (51.8,138.08) -- (563.8,138.08) ;
	%Curve Lines [id:da7387450903582022] 
	\draw [color={rgb, 255:red, 245; green, 166; blue, 35 }  ,draw opacity=1 ][line width=1.5]  [dash pattern={on 5.63pt off 4.5pt}]  (331,202.08) .. controls (371,172.08) and (457.4,151.68) .. (551,150.88) ;
	%Curve Lines [id:da7429117527320178] 
	\draw [color={rgb, 255:red, 74; green, 144; blue, 226 }  ,draw opacity=1 ][line width=1.5]  [dash pattern={on 5.63pt off 4.5pt}]  (326.2,258.08) .. controls (335.8,206.08) and (328.6,178.88) .. (547.8,162.08) ;
	%Shape: Polygon Curved [id:ds02828704486376976] 
	\draw  [pattern=_gyjn2hfro,pattern size=9pt,pattern thickness=0.75pt,pattern radius=0pt, pattern color={rgb, 255:red, 144; green, 19; blue, 254}][dash pattern={on 4.5pt off 4.5pt}] (63,74.88) .. controls (63.8,74.88) and (550.2,76.48) .. (551,76.48) .. controls (551.8,76.48) and (551.5,150.88) .. (551,150.88) .. controls (550.5,150.88) and (477.4,153.28) .. (424.08,164.3) .. controls (370.76,175.31) and (331.8,198.88) .. (331,202.08) .. controls (330.2,205.28) and (348.6,201.28) .. (347.8,202.88) .. controls (347,204.48) and (339.68,212.47) .. (335,222.08) .. controls (330.32,231.7) and (325.17,252.45) .. (326.2,258.08) .. controls (327.23,263.72) and (335.8,282.88) .. (337.4,286.08) .. controls (339,289.28) and (353.4,302.05) .. (353.4,302.88) .. controls (353.4,303.72) and (65.4,304.48) .. (65.4,303.68) .. controls (65.4,302.88) and (62.2,74.88) .. (63,74.88) -- cycle ;
	
	% Text Node
%	\draw (721,21) node    {$0$};
	% Text Node
%	\draw (701,71) node    {$0$};
	% Text Node
	\draw (134.8,307.4) node [anchor=north west][inner sep=0.75pt]    {$O$};
	% Text Node
	\draw (562,311.8) node [anchor=north west][inner sep=0.75pt]    {$p$};
	% Text Node
	\draw (133.2,44.2) node [anchor=north west][inner sep=0.75pt]    {$q$};
	% Text Node
	\draw (538.8,167.8) node [anchor=north west][inner sep=0.75pt]    {$I$};
	% Text Node
	\draw (451.6,147.4) node [anchor=north west][inner sep=0.75pt]    {$II$};
	% Text Node
	\draw (528.4,125.6) node [anchor=north west][inner sep=0.75pt]    {$III$};
	% Text Node
	\draw (535.6,222.4) node [anchor=north west][inner sep=0.75pt]    {$V$};
	% Text Node
	\draw (84.4,52) node [anchor=north west][inner sep=0.75pt]    {$IV$};
	% Text Node
	\draw (563.6,120.4) node [anchor=north west][inner sep=0.75pt]  [font=\footnotesize]  {$q=\frac{5}{3}$};
	% Text Node
	\draw (567.6,195.6) node [anchor=north west][inner sep=0.75pt]  [font=\footnotesize]  {$q=1$};
	% Text Node
	\draw (566.8,95.6) node [anchor=north west][inner sep=0.75pt]  [font=\footnotesize]  {$q=2$};

\end{tikzpicture}

	\caption{present Liouville domain of equation \eqref{pqle} ($n=6$)}\label{F02}
\end{figure}
	The purple grid area in the plot represents the region where the Liouville theorem is valid. When $q\in[0,1]$, it is bounded by the blue curve $I$; when  $q>1$, it is bounded by the yellow curve $II$ and $III$ ($q=\frac{5}{3}$).
	Now, we give a remark for Figure \ref{F02}.
	
	\begin{rmk}
\rm 	Notice that	curve $I$ and curve $II$ are incompatible when $q>1$. Technically, this incompatibility arises primarily from our use of different auxiliary functions tailored to each case. If we use the auxiliary function in $q\le 1$, we need a priori $L^{\infty}$ estimate of solution at the maximal point and fortunately, this key fact holds automatically in our arguments. However, when $q>1$, we do not have this estimate and hence we search another auxiliary function which does not need the priori estimate. However, the trade-off is that we lose some of the regions where the Liouville theorem holds. 
	\end{rmk}
	
	Under the assumption of boundedness, we obtain a Liouville theorem on a larger index range.

	\begin{thm}\label{m2}
		Let $u$ be a nonnegative bounded classical solution of \eqref{pqle} on $\mathbb{R}^n$ with $q\ge 0$ and $p\in \mathbb{R}$. If $p,q$ satisfy any one of the conditions in Theorem \ref{m1} or $q\ge\frac{3}{2}$, 
		then $u$ is constant. 
	\end{thm}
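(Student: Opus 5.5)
The first part of Theorem \ref{m2} is immediate from Theorem \ref{m1}: a bounded nonnegative solution is in particular a nonnegative classical solution. So it remains to treat the new range $q\ge\frac32$ (in fact only $q\in[\frac32,\frac53)$, since $q\ge\frac53$ is already covered). We may also assume $u>0$. If $u$ vanishes at an interior point, then, since $\Delta u=-|\nabla u|^q u^p\le 0$ where $p\ge0$ (or by the strong maximum principle applied to the linearised operator), $u\equiv 0$. When $p<0$ we anyway only consider positive solutions. Set $M:=\sup_{\mathbb{R}^n}u<\infty$.

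The plan is a direct Bernstein argument. The auxiliary function is one that, in the unbounded setting, costs the a priori $L^\infty$ control at the maximum point (as explained in the Remark after Figure \ref{F02}); boundedness now supplies that control for free. Concretely, write $u=v^{\beta}$ for a suitable $\beta$ (or take $w=\ln u$) and consider
\[
P=\varphi(u)\,|\nabla u|^2 ,
\]
with $\varphi$ a power-type weight such as $\varphi(u)=u^{-\gamma}$ or $\varphi(u)=(2M-u)^{-\gamma}$. The second choice is where the bound $u\le M$ enters: it makes $\varphi$ bounded above and below, and its derivatives produce terms of favourable sign.

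\textbf{Step 1.} Compute $\Delta P$ using Bochner's formula. The Hessian term is estimated with the refined inequality $|\nabla^2 u|^2\ge \frac{(\Delta u)^2}{n}+\frac{n}{n-1}\bigl(\frac{\Delta u}{n}-u_{11}\bigr)^2$, where $u_{11}=\nabla^2u(\nabla u,\nabla u)/|\nabla u|^2$. The derivative of the nonlinearity is $\nabla(|\nabla u|^q u^p)$. It contains $q|\nabla u|^{q-2}u^p\,\nabla^2u(\nabla u,\cdot)$, which at a critical point of $P$ can be rewritten through $\nabla P=0$. That identity expresses $u_{11}$ in terms of $\varphi'|\nabla u|^2/\varphi$. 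This yields a quadratic form at the maximum point in the two quantities $X=|\nabla u|^{q}u^{p-1}$ (the nonlinear term) and $Y=\varphi'|\nabla u|^2u/\varphi$.

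\textbf{Step 2.} Show that for $q\ge\frac32$ this quadratic form is coercive. The coefficient of $X^2$ is roughly $\frac{(q-1)^2}{n-1}$-type plus contributions from $q$. Using $u\le M$ to absorb the terms coming from $p$ (whose sign would otherwise be uncontrolled), we aim for $\Delta P\ge c\,\varphi^{-1}P^{1+\delta}$ modulo gradient terms, with $\delta=(q-1)/2$-like.

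\textbf{Step 3.} Localise with a standard cutoff $\eta$ on $B(0,2R)$, apply the maximum principle to $\eta P$, and obtain $\sup_{B(0,R)}|\nabla u|\le C(M)R^{-\theta}$ for some $\theta>0$. Letting $R\to\infty$ gives $\nabla u\equiv0$.

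The main obstacle is Step 2: choosing $\gamma$ (and possibly $\beta$) so that the discriminant of the quadratic form is negative for every $p\in\mathbb R$ once $q\ge\frac32$. I would first try $\varphi=(2M-u)^{-\gamma}$, in the spirit of Lions' argument for $p=0$, and optimise in $\gamma$. I expect the threshold $\frac32$ to appear exactly as the condition making the optimised discriminant nonpositive.
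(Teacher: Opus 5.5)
Your reduction to $n\ge 3$, $q\in[\frac32,\frac53)$ and $u>0$ is fine. The gap is Step~2, which is not merely unproved: it fails for every auxiliary function of the form $P=\varphi(u)|\nabla u|^2$, and $u\le M$ cannot rescue it.

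Write $\psi:=u\varphi'(u)/\varphi(u)$, $H=|\nabla\ln u|^2$ and $L=|\nabla u|^qu^{p-1}$. At a critical point of $P$ (cut-off errors aside) one has $u_{1i}=0$ for $i\ge 2$ and $u_{11}=-\frac{\psi}{2}uH$. Your refined Kato inequality, which is the best this information allows, then gives
\[
\frac{\Delta P}{\varphi u^2}\ \ge\ \Bigl(u\psi'-\psi-\tfrac{n-2}{2(n-1)}\psi^2\Bigr)H^2+\Bigl(\bigl(q-1-\tfrac{2}{n-1}\bigr)\psi-2p\Bigr)HL+\tfrac{2}{n-1}L^2 .
\]
The bad term is $-2pHL$, i.e.\ $-2p\varphi|\nabla u|^{q+2}u^{p-1}$. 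It is dangerous where $u$ is small, not where $u$ is large.

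With your $\varphi=(2M-u)^{-\gamma}$, the $H^2$-coefficient equals $\gamma\bigl(1-\frac{(n-2)\gamma}{2(n-1)}\bigr)\frac{u^2}{(2M-u)^2}$, which tends to $0$ as $u\to 0$. Meanwhile the $HL$-coefficient tends to $-2p$, so for $p>0$ the form is indefinite where $u$ is small.

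In general, coercivity at every value of $u$ requires $u\psi'>g(\psi)$, where
\[
g(\psi):=\psi+\tfrac{n-2}{2(n-1)}\psi^2+\tfrac{n-1}{8}\bigl[\bigl((q-1-\tfrac{2}{n-1})\psi-2p\bigr)_-\bigr]^2 .
\]
For $p>0$, $g$ takes negative values if and only if $(n-2)p^2+\bigl((n-1)(q-1)-2\bigr)p<1$. In that range a constant $\psi$, i.e.\ a scale-invariant power weight, already works, so boundedness is irrelevant there. When $(n-2)p^2+\bigl((n-1)(q-1)-2\bigr)p>1$ (in particular for all large $p$, whatever $q$), one has $g(\psi)\ge c_0(1+\psi^2)$. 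Then $\frac{d}{d\ln u}\arctan\psi>c_0$, which is impossible on the range $\ln u\in(\ln\inf u,\ln M]$, since nothing controls $M/\inf u$ (typically $\inf u=0$).

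So no choice of $\gamma$, $\beta$ or $\varphi$ produces a threshold at $q=\frac32$. This class of test functions only reaches a bounded range of $p$, and that range does not even use boundedness.

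What is missing is an auxiliary function that couples $H$ and $L$ non-multiplicatively, so that the effective weight depends on $Z=L/H$ and not only on $u$. The paper gets Theorem~\ref{m2} from Theorem~\ref{rm1}(2) via Theorem~\ref{t28}:

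\begin{enumerate}
\item Lemma~\ref{l2} expands $u^{-2}|\nabla^2u|^2$ around $\mathbf{T}(\beta,\sigma)=u^{-1}\nabla^2u-[(1+\beta)+\sigma Z]\,d\ln u\otimes d\ln u$, which carries a gradient-dependent shift $\sigma Z$. It discards only the trace-free part of $\mathbf{T}$.
\item The test function is $G=(H^{1-\frac q2}+du^{l})^{1-\rho}H^{(1-\frac q2)\rho}$, with $\beta=0$, $\varepsilon=d=\frac{2-q}{l}$, $\sigma\equiv-1$ and $\rho$ small. The parameter $\tau$ is piecewise constant: $-1$ on $\{H>\chi\rho dL\}$ and $\tau_0$ elsewhere.
\item Lemma~\ref{dk1}, Case~3, gives $\mathcal{D}\ge\kappa(1+Z)(H+L)^2$ for the dominant part of $\Delta G$. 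This is where $q\ge\frac32$ enters, through $I_8=\rho d\varepsilon^4(1-\frac q2)\bigl[4(q-\frac32)+O(\rho)\bigr]$ on $\{H>\chi\rho dL\}$.
\item Boundedness is used only at the very end. Because $\rho<1$, $G\lesssim H^{1-\frac q2}+H^{(1-\frac q2)\rho}u^{(1-\rho)l}$, so the maximum principle gives $\sup_{B(x_0,R)}|\nabla\ln u|^{2-q}\le C\bigl(R^{-(2-q)}+R^{-(2-q)\rho}\|u\|_{L^\infty}^{(1-\rho)l}\bigr)$. Letting $R\to\infty$ gives $\nabla u\equiv 0$.
\end{enumerate}
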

	
	Actually, under the assumption of boundedness, the Liouville domain that we have proved below is larger than the union $q\ge\frac{3}{2}$ and the Liouville domain without boundedness assumption. 
	Specifically, it is given by the blue grid area in the figure below. Here, we mention that the following blue curve $I$ is same as curve $I$ in Figure \ref{F02}.

	\begin{figure}[!htb]
		
	% Pattern Info
	
	\tikzset{
		pattern size/.store in=\mcSize, 
		pattern size = 5pt,
		pattern thickness/.store in=\mcThickness, 
		pattern thickness = 0.3pt,
		pattern radius/.store in=\mcRadius, 
		pattern radius = 1pt}
	\makeatletter
	\pgfutil@ifundefined{pgf@pattern@name@_c3vk5ey3o}{
		\pgfdeclarepatternformonly[\mcThickness,\mcSize]{_c3vk5ey3o}
		{\pgfqpoint{0pt}{0pt}}
		{\pgfpoint{\mcSize}{\mcSize}}
		{\pgfpoint{\mcSize}{\mcSize}}
		{
			\pgfsetcolor{\tikz@pattern@color}
			\pgfsetlinewidth{\mcThickness}
			\pgfpathmoveto{\pgfqpoint{0pt}{\mcSize}}
			\pgfpathlineto{\pgfpoint{\mcSize+\mcThickness}{-\mcThickness}}
			\pgfpathmoveto{\pgfqpoint{0pt}{0pt}}
			\pgfpathlineto{\pgfpoint{\mcSize+\mcThickness}{\mcSize+\mcThickness}}
			\pgfusepath{stroke}
	}}
	\makeatother
	
	% Pattern Info
	
	\tikzset{
		pattern size/.store in=\mcSize, 
		pattern size = 5pt,
		pattern thickness/.store in=\mcThickness, 
		pattern thickness = 0.3pt,
		pattern radius/.store in=\mcRadius, 
		pattern radius = 1pt}
	\makeatletter
	\pgfutil@ifundefined{pgf@pattern@name@_9o0n8qavt}{
		\pgfdeclarepatternformonly[\mcThickness,\mcSize]{_9o0n8qavt}
		{\pgfqpoint{0pt}{0pt}}
		{\pgfpoint{\mcSize}{\mcSize}}
		{\pgfpoint{\mcSize}{\mcSize}}
		{
			\pgfsetcolor{\tikz@pattern@color}
			\pgfsetlinewidth{\mcThickness}
			\pgfpathmoveto{\pgfqpoint{0pt}{\mcSize}}
			\pgfpathlineto{\pgfpoint{\mcSize+\mcThickness}{-\mcThickness}}
			\pgfpathmoveto{\pgfqpoint{0pt}{0pt}}
			\pgfpathlineto{\pgfpoint{\mcSize+\mcThickness}{\mcSize+\mcThickness}}
			\pgfusepath{stroke}
	}}
	\makeatother
	
	% Pattern Info
	
	\tikzset{
		pattern size/.store in=\mcSize, 
		pattern size = 5pt,
		pattern thickness/.store in=\mcThickness, 
		pattern thickness = 0.3pt,
		pattern radius/.store in=\mcRadius, 
		pattern radius = 1pt}
	\makeatletter
	\pgfutil@ifundefined{pgf@pattern@name@_4mhispr4r}{
		\pgfdeclarepatternformonly[\mcThickness,\mcSize]{_4mhispr4r}
		{\pgfqpoint{0pt}{0pt}}
		{\pgfpoint{\mcSize}{\mcSize}}
		{\pgfpoint{\mcSize}{\mcSize}}
		{
			\pgfsetcolor{\tikz@pattern@color}
			\pgfsetlinewidth{\mcThickness}
			\pgfpathmoveto{\pgfqpoint{0pt}{\mcSize}}
			\pgfpathlineto{\pgfpoint{\mcSize+\mcThickness}{-\mcThickness}}
			\pgfpathmoveto{\pgfqpoint{0pt}{0pt}}
			\pgfpathlineto{\pgfpoint{\mcSize+\mcThickness}{\mcSize+\mcThickness}}
			\pgfusepath{stroke}
	}}
	\makeatother
	\tikzset{every picture/.style={line width=0.75pt}} %set default line width to 0.75pt        
	
	\begin{tikzpicture}[x=0.75pt,y=0.75pt,yscale=-1,xscale=1]
		%uncomment if require: \path (0,402); %set diagram left start at 0, and has height of 402
		
		%Straight Lines [id:da702474492441356] 
		\draw [color={rgb, 255:red, 74; green, 144; blue, 226 }  ,draw opacity=1 ] [dash pattern={on 4.5pt off 4.5pt}]  (75.67,208.33) -- (593.67,208.33) ;
		%Straight Lines [id:da8869015167418075] 
		\draw [color={rgb, 255:red, 74; green, 144; blue, 226 }  ,draw opacity=1 ] [dash pattern={on 4.5pt off 4.5pt}]  (75.67,108) -- (255.67,107.66) -- (598.67,107) ;
		%Straight Lines [id:da2330616020906333] 
		\draw [color={rgb, 255:red, 208; green, 2; blue, 27 }  ,draw opacity=1 ][fill={rgb, 255:red, 58; green, 96; blue, 140 }  ,fill opacity=1 ][line width=1.5]  [dash pattern={on 5.63pt off 4.5pt}]  (172.67,81.43) -- (399.67,308) ;
		%Curve Lines [id:da27461560020913023] 
		\draw [color={rgb, 255:red, 208; green, 2; blue, 27 }  ,draw opacity=1 ]   (399.67,308) .. controls (361.67,269) and (345.67,243.67) .. (380.67,233) .. controls (415.67,222.33) and (516.67,222.67) .. (579.67,217.67) ;
		%Shape: Axis 2D [id:dp05912285427311237] 
		\draw  (61.67,308) -- (585.67,308)(199.67,59) -- (199.67,345.67) (578.67,303) -- (585.67,308) -- (578.67,313) (194.67,66) -- (199.67,59) -- (204.67,66) (299.67,303) -- (299.67,313)(399.67,303) -- (399.67,313)(499.67,303) -- (499.67,313)(99.67,303) -- (99.67,313)(194.67,208) -- (204.67,208)(194.67,108) -- (204.67,108) ;
		\draw   (306.67,320) node[anchor=east, scale=0.75]{1} (406.67,320) node[anchor=east, scale=0.75]{2} (506.67,320) node[anchor=east, scale=0.75]{3} (106.67,320) node[anchor=east, scale=0.75]{-1} (196.67,208) node[anchor=east, scale=0.75]{1} (196.67,108) node[anchor=east, scale=0.75]{2} ;
		%Curve Lines [id:da7416491455861269] 
		\draw [color={rgb, 255:red, 74; green, 144; blue, 226 }  ,draw opacity=1 ][line width=1.5]  [dash pattern={on 5.63pt off 4.5pt}]  (359.67,251) .. controls (375.67,201.09) and (382.67,187.09) .. (593.67,168.67) ;
		%Straight Lines [id:da7562448323715267] 
		\draw [color={rgb, 255:red, 74; green, 144; blue, 226 }  ,draw opacity=1 ][line width=2.25]  [dash pattern={on 6.75pt off 4.5pt}]  (65.67,157.09) -- (608.67,157.43) ;
		%Shape: Polygon [id:ds4001149060934861] 
		\draw  [color={rgb, 255:red, 74; green, 144; blue, 226 }  ,draw opacity=1 ][pattern=_c3vk5ey3o,pattern size=9pt,pattern thickness=0.75pt,pattern radius=0pt, pattern color={rgb, 255:red, 74; green, 144; blue, 226}][dash pattern={on 0.84pt off 2.51pt}] (199.67,107.67) -- (399.67,308) -- (61.67,308.67) -- (75.67,308.67) -- (75.67,195.67) -- (71.67,81.09) -- (172.67,81.43) -- cycle ;
		%Shape: Polygon Curved [id:ds008587548730889072] 
		\draw  [color={rgb, 255:red, 74; green, 144; blue, 226 }  ,draw opacity=1 ][pattern=_9o0n8qavt,pattern size=9pt,pattern thickness=0.75pt,pattern radius=0pt, pattern color={rgb, 255:red, 74; green, 144; blue, 226}][dash pattern={on 0.84pt off 2.51pt}] (199.67,107.67) .. controls (199.67,106.76) and (582.67,106.76) .. (582.67,107.76) .. controls (582.67,108.76) and (582.67,172.43) .. (582.67,171.09) .. controls (582.67,169.76) and (445.42,182.29) .. (407.67,196.09) .. controls (369.92,209.9) and (365.67,229.09) .. (361.67,242.09) .. controls (357.67,255.09) and (361.67,260.76) .. (366.67,268.76) .. controls (371.67,276.76) and (378.67,284.76) .. (381.67,288.76) .. controls (384.67,292.76) and (399.67,308.06) .. (399.67,308) .. controls (399.67,307.94) and (199.67,108.57) .. (199.67,107.67) -- cycle ;
		%Shape: Polygon [id:ds3314705206261752] 
		\draw  [pattern=_4mhispr4r,pattern size=9pt,pattern thickness=0.75pt,pattern radius=0pt, pattern color={rgb, 255:red, 74; green, 144; blue, 226}][dash pattern={on 0.84pt off 2.51pt}] (172.67,81.43) -- (199.67,107.67) -- (582.67,107.76) -- (581.67,80.09) -- (172.67,81.43) -- cycle ;
		
		% Text Node
		\draw (193,40) node [anchor=north west][inner sep=0.75pt]   [align=left] {q};
		% Text Node
		\draw (574,314) node [anchor=north west][inner sep=0.75pt]   [align=left] {p};
		% Text Node
		\draw (179,310.4) node [anchor=north west][inner sep=0.75pt]    {$O$};
		% Text Node
		\draw (538,175.4) node [anchor=north west][inner sep=0.75pt]    {$I$};
		% Text Node
		\draw (539,137.4) node [anchor=north west][inner sep=0.75pt]    {$II$};
		% Text Node
		\draw (583.67,129.73) node [anchor=north west][inner sep=0.75pt]  [font=\small]  {$q=\frac{3}{2}$};
		% Text Node
		\draw (541,95.07) node [anchor=north west][inner sep=0.75pt]    {$III$};
		% Text Node
		\draw (532,225.65) node [anchor=north west][inner sep=0.75pt]    {$V$};
		% Text Node
		\draw (583.67,83.49) node [anchor=north west][inner sep=0.75pt]    {$q=2$};
		% Text Node
		\draw (155,65.4) node [anchor=north west][inner sep=0.75pt]    {$IV$};

	\end{tikzpicture}

\caption{Liouville domain of equation \eqref{pqle} with boundedness assumption ($n=6$)}\label{F03}
\end{figure}
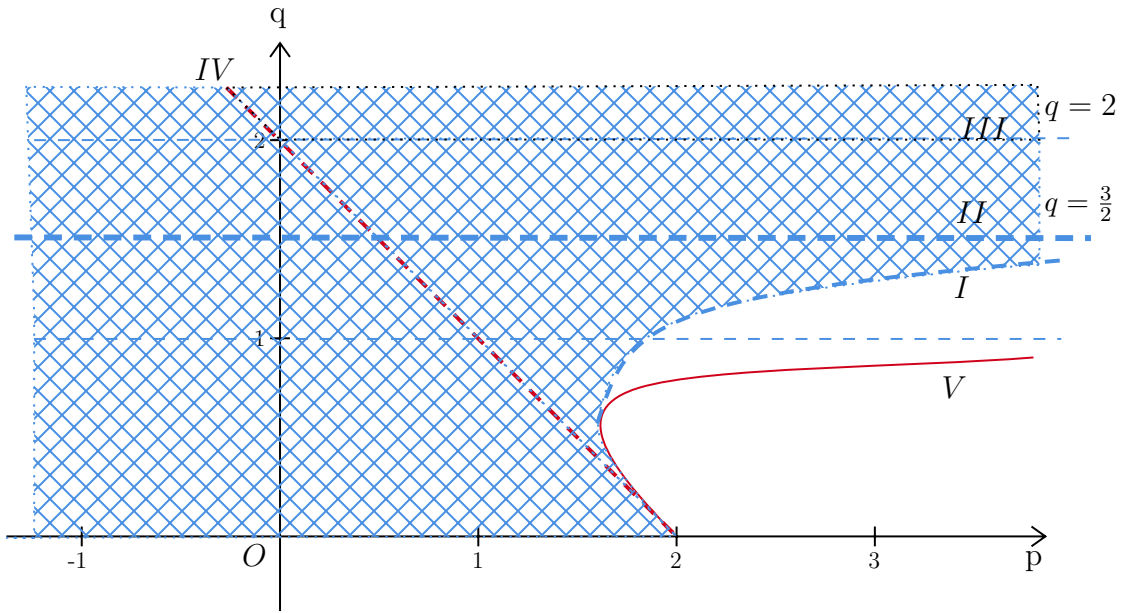

	For the clear description of  Liouville domain in the Figure \ref{F02} and Figure \ref{F03} respectively, we give the following definition of admissible set.
	\begin{definition}[Admissible set]\label{set}
		Let $n\ge 3$, we define two regions. The first one is $$\mathbb{A}(n)=\left\{(p,q)\in \mathbb{R}^2_{+}:p+q-1<\mathcal{L}(n,q), \,\, 1-\frac{1}{\sqrt{n-1}}< q< \frac{3}{2}\right\},$$
		where
		\begin{equation}
			\mathcal{L}(n,q)=\sup\left\{\frac{2-q}{n-2}+\frac{y(n-2)+2}{(n-2)(2-y)}:y\in\mathbb{D}(n,q)\right\},\nonumber
		\end{equation}
		\begin{eqnarray}
			\mathbb{D}(n,q):=\{y\in(0,2):	y \,\,\text{satisfies}  \,\,\eqref{12} \},\nonumber
		\end{eqnarray}
		
		\begin{equation}\label{12}
			\left\{
			\begin{aligned}
				&x = \frac{2 - ny}{2(n-1)},\quad 2y \left( \frac{1}{\sqrt{n}+1} + x \right) + qy^2 > 0, \\
				&\frac{2(x+1)^2}{n} + y(q - 2x - 1) - \frac{qy^2}{2} - 2x^2 > 0, \\
				&4 \left( \frac{2(x+1)^2}{n} + y(q - 2x - 1) - \frac{qy^2}{2} - 2x^2 \right) \\
				&+ 2y \left( \frac{1}{\sqrt{n}+1} + x \right) + (q-1)y^2 > 0.
			\end{aligned}
			\right.
		\end{equation}
	The second one is
		$$\mathbb{B}(n)=\left\{(p,q)\in \mathbb{R}^2_{+}:p+q-1<\mathcal{H}(n,q), \,\, 1< q< \frac{5}{3}\right\},$$
		where
		\begin{equation}\label{l0}
			\mathcal{H}(n,q)=\sup\left\{\frac{2-q}{n-2}+\frac{y(n-2)+2}{(n-2)(2-y)}:y\in\mathbb{E}(n,q)\right\},\nonumber
		\end{equation}
		\begin{eqnarray}
			\mathbb{E}(n,q):=\{y\in(0,2):	y \,\,\text{satisfies}  \,\,\eqref{13} \},\nonumber
		\end{eqnarray}
		
		\begin{equation}\label{13}
			\left\{
			\begin{aligned}
				&x = \frac{2 - ny}{2(n-1)},\quad 2y \left( \frac{1}{\sqrt{n}+1} + x \right) + qy^2 > 0, \\
				&\frac{2(x+1)^2}{n} + y(q - 2x - 1) - \frac{qy^2}{2} - 2x^2 > 0, \\
				&4 \left( \frac{2(x+1)^2}{n} + y(q - 2x - 1) - \frac{qy^2}{2} - 2x^2 \right) \\
				&+ 2y \left( \frac{1}{\sqrt{n}+1} + x \right) + \left(\frac{3}{2}q-2\right)y^2 > 0.
			\end{aligned}
			\right.
		\end{equation}
	Then we call the following sets as admissible set ($\mathscr{D}(n)$ is given by \eqref{NL}):

	$$
	\mathbb{BL}(n)=\left\{(p,q)\in \mathbb{R}^2_{+},q\ge\frac{3}{2}\right\}\bigcup \mathbb{A}(n)\bigcup\left\{(p,q)\in \mathbb{R}^2_{+}\setminus \mathscr{D}(n):q\in[0,1-\frac{1}{\sqrt{n-1}}]\right\};
	$$
	
		$$
	\mathbb{L}(n)=\left\{(p,q)\in \mathbb{R}^2_{+},q\ge\frac{5}{3}\right\}\bigcup \mathbb{B}(n)\bigcup\left(	\mathbb{BL}(n)\cap (\R\times[0,1])\right).
	$$
	\end{definition}

	Then $\mathbb{L}(n)$ and $\mathbb{BL}(n)$ are the Liouville domains that what we are going to prove below without and with boundedness assumption respectively. Concretely, we have %In addition, our approach of deriving Liouville theorems can be  extended to the Riemannian setting without extra effort.	We collect them as follows. $(\M,g)$ be an n-dimensional Riemannian manifold with $Ric\ge 0$ and

	\begin{thm}\label{rm1}
		Let  $u$ be a nonnegative classical solution of \eqref{pqle} on $\mathbb{R}^n$ with $q\ge 0$ and $p\in \mathbb{R}$. The admissible set $\mathbb{L}(n)$ and $\mathbb{BL}(n)$ are given by Definition \ref{set}.  If one of the following two conditions is satisfied:\\
		$(1)$  $n=2$ or $(p,q)\in \mathbb{L}(n) $ with $n\ge 3$;\\
		$(2)$ $u$ is bounded and $(p,q)\in \mathbb{BL}(n) $ with $n\ge 3$,\\
		then $u$ is constant. 
	\end{thm}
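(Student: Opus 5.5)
We plan to use the direct Bernstein method applied to a power-type transform of $u$, combined with a cut-off argument on large balls. First, the strong maximum principle (the nonlinearity $|\nabla u|^q u^p$ vanishes together with $\nabla u$) lets us assume $u>0$. We then write $u=w^{-\beta}$ (or $u=e^{f}$ with $f=\log u$, which corresponds to the limiting case $\beta\to 0$) for a parameter $\beta$ to be fixed later. In terms of $w$, equation \eqref{pqle} becomes
\[
\Delta w=\frac{1+\beta}{w}|\nabla w|^2+c\,w^{\gamma}|\nabla w|^{q},
\]
with $c>0$ and an explicit exponent $\gamma$ depending on $p,q,\beta$. The Bernstein quantity is $z=|\nabla w|^2 w^{-\alpha}$, or in the logarithmic version $z=|\nabla \log u|^2$ weighted by a power of $u$. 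The parameters $y$ and $x=\frac{2-ny}{2(n-1)}$ in \eqref{12}--\eqref{13} will play the role of these free exponents.

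\textbf{Step 1: the pointwise differential inequality.} At points where $\nabla u\neq 0$ we compute $\Delta z$ using Bochner's formula. The main choice is how to split the Hessian. We decompose $\nabla^2 w$ into the trace-free part, the part along $\nabla w\otimes\nabla w$, and the remainder. We then use the refined Kato-type inequality $|\nabla^2 w|^2\ge \frac{(\Delta w)^2}{n}+\frac{n}{n-1}\big(\frac{\Delta w}{n}-\frac{\langle\nabla^2 w\nabla w,\nabla w\rangle}{|\nabla w|^2}\big)^2$. This is the source of the constant $\frac{1}{\sqrt n+1}$ and of the relation $x=\frac{2-ny}{2(n-1)}$.

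Substituting the equation turns $\Delta z$ into a quadratic form in three quantities: $\langle\nabla z,\nabla w\rangle$, $|\nabla w|^2/w$, and the nonlinear term $w^{\gamma}|\nabla w|^{q}$. The conditions in \eqref{12} (respectively \eqref{13}) are exactly the positivity conditions of this quadratic form, i.e. positive diagonal entries and a positive discriminant-type combination. Under them we obtain
\[
\Delta z-\langle b,\nabla z\rangle\ge \delta\, w^{\alpha}\, z^{2}+\delta\,(\text{nonlinear term})\cdot z
\]
for some $\delta>0$. The admissible range $p+q-1<\mathcal L(n,q)$, respectively $<\mathcal H(n,q)$, comes from optimizing over $y$. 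In the region $q\le 1-\frac{1}{\sqrt{n-1}}$ we instead use the logarithmic quantity, and we check that the resulting constraint coincides with the boundary curve $V$. This is the optimal case.

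\textbf{Step 2: localization.} We multiply $z$ by a cut-off $\eta$ supported in $B(0,2R)$ with $|\nabla\eta|^2/\eta+|\Delta\eta|\le C R^{-2}$, and evaluate at the maximum of $\eta z$. The error terms are absorbed by the $z^2$ term, which gives $z\le C R^{-2}\cdot(\text{power of } w)$ on $B(0,R)$. Here the two cases split:
\begin{itemize}
\item When $q\le 1$, the auxiliary function has a factor $w^{\alpha}$ that must be controlled at the maximum point. We will get this control from the nonlinear term itself, which dominates when $w$ is large or small. This is the "automatic $L^\infty$ bound at the maximum point" mentioned in the remark.
\item For $1<q<\frac53$, we choose the second family \eqref{13}, for which the coefficient $(\frac32q-2)$ replaces $(q-1)$. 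This makes the estimate independent of the size of $w$.
\end{itemize}
Letting $R\to\infty$ gives $\nabla w\equiv 0$, so $u$ is constant. For $q\ge\frac53$, and for $q\ge\frac32$ in the bounded case, we take $y$ close to a specific value and use the gradient term $w^{\gamma}|\nabla w|^q$ alone, in the spirit of Lions' argument. In the bounded case, boundedness of $u$ lets us absorb the terms with the wrong sign in the power of $w$. The case $n=2$ is handled by the logarithmic quantity, where the $\frac{1}{n-2}$ constraints disappear.

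\textbf{Main obstacle.} The hardest part is Step 1: finding the right combination of the Hessian decomposition and the exponents so that the quadratic form is positive definite exactly under \eqref{12}/\eqref{13}. A second difficulty is handling the mixed term $\langle\nabla w,\nabla(|\nabla w|^q)\rangle w^{\gamma}$. I would first try completing squares in the variable $\langle\nabla^2 w\nabla w,\nabla w\rangle/|\nabla w|^2$, treating $y$ as a free Lagrange-type parameter. I would then verify that the three conditions in \eqref{12} are the Sylvester criteria for the resulting $2\times2$ form.
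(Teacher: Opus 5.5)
\textbf{There is a genuine gap: the proposed Bernstein quantity is too weak to reach $\mathbb{L}(n)$, and the localization step and the range $q\ge2$ are not actually handled.}

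\textbf{The Bernstein quantity.} Your $z=|\nabla w|^2w^{-\alpha}$ with $u=w^{-\beta}$ is a positive multiple of the single-term function $u^{-b}|\nabla\ln u|^2$ for some $b$. The refined Kato inequality cannot strengthen it. That inequality is exactly what one gets by choosing, pointwise and optimally, the coefficient $c$ in $u^{-1}\nabla^2u-c\,d\ln u\otimes d\ln u$ before discarding the trace-free part; in the paper this coefficient is $c=(1+\beta)+\sigma Z$.

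Concretely, take $n=3$, $q=0$, and use the paper's $H,L,Z,l$. At a maximum point of $z=u^{-b}H$ the best available bound is $\Delta z\ge u^{-b}H^2\big(1-\frac{b^2}{4}+2(1+b-l)Z+Z^2\big)$. This is positive for all $Z\ge0$ only when $l<1+b+\sqrt{1-b^2/4}$. The right-hand side is at most $1+\sqrt5$, whereas $\mathbb{L}(3)$ contains every $l=p-1<4$ at $q=0$.

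Structurally, the extremals are the ground states of Remark \ref{opt}. On them $u^{-b}|\nabla\ln u|^2$ is never constant, so no single-term quantity can reach the curve $V$.

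The parameter $y$ in \eqref{12}--\eqref{13} is not an exponent. It is $\theta=d\,(l-(1-\frac q2)\beta)/(1-\frac q2)$, the normalized weight of a second summand, and $y>0$ forces $d>0$. The paper's function is $F=u^{-(1-\frac q2)\beta}\big(|\nabla\ln u|^{2-q}+d\,u^{p+q-1}\big)$, which is constant on those ground states. Two further problems force a perturbation. First, $F>0$ at critical points of $u$, where no equation for $\Delta F$ is available. Second, the gradient terms carry factors of $Z$ that a quadratic dominant term cannot absorb. The paper therefore works with $G=F\big(H^{1-\frac q2}/(H^{1-\frac q2}+\varepsilon u^{l})\big)^{\rho}$, using a second pair $(\delta,\tau)$ of splitting parameters. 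Its dominant part, a degree-nine over degree-seven expression, is shown to be $\ge\kappa(1+Z)(H+L)^2$.

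Conditions \eqref{12}/\eqref{13} are not Sylvester conditions for a $2\times2$ form. They come from $S_3>0$ together with positivity of combinations of $I_7,I_8,I_9$ as $\rho\to0^+$. In the second family the relevant combination is $I_3,I_5,I_7$ as $\varepsilon\to0^+$ with $\rho=1$, and this is where $\frac32q-2$ replaces $q-1$. The constant $\frac1{\sqrt n+1}$ is the root of $A(\tau)=\frac2n(1+\tau)^2-2\tau^2$.

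\textbf{Step 2 (localization).} This is a second, independent gap. Absorbing the errors at the maximum point $x^*$ of $\eta z$ gives only $\eta|\nabla\ln u|^2(x^*)\lesssim R^{-2}$, hence $\sup_{B_R}z\lesssim R^{-2}u(x^*)^{-b}$. You therefore need a \emph{lower} bound on $u(x^*)$, and the nonlinear term does not supply one, since $u$ may decay at infinity.

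The paper gets it from superharmonicity: $u(x^*)\ge C(n)(r/R)^{n-2}\min_{\partial B(x_0,r)}u$ (Lemma \ref{lb}). This yields decay $R^{-(2-(n-2)\beta)(1-\frac q2)}$ and is the reason for $\beta<\frac2{n-2}$. The formulas defining $\mathcal{L}$ and $\mathcal{H}$ are exactly the bounds evaluated at the endpoint $\beta=\frac2{n-2}$. For $n=2$ the paper does use your single-term quantity, but it still needs this lower bound.

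The ``automatic $L^\infty$ bound'' for $q\le1$ is the opposite: an \emph{upper} bound on $u$, coming from $\Phi L^2\lesssim R^{-2}H$ at $x^*$. It is needed only for the summand $d\,u^{l}$. For $1<q<2$ the paper avoids it by taking $\rho=1$, so that $G\lesssim(u^{-\beta}H)^{1-\frac q2}$.

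\textbf{The cases not covered by $G$.} Appealing to Lions does not handle $q\ge2$. The paper's arguments are:
\begin{itemize}
\item For $q>2$, a Bernstein argument on $|\nabla\ln u|^qu^{l}-d|\nabla\ln u|^2$ gives $|\nabla u|^{q-2}u^{p+1}\le\frac{2nq(p+q-1)}{q-2}$. Then $u-\inf u$ is superharmonic while a power of it is subharmonic, and weak Harnack plus mean-value inequalities conclude.
\item For $q=2$, $p>-1$, the function $\int_0^u e^{s^{p+1}/(p+1)}\,ds$ is positive harmonic, and Yau's Liouville theorem applies.
\item For $p+q<\frac{n+3}{n-1}$, which includes $l\le0$, the plain estimate for $|\nabla\ln u|^2$ suffices.
\end{itemize}
For $q\in[\frac53,2)$, or $q\in[\frac32,2)$ under boundedness, the paper still uses $G$. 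It takes $\sigma\equiv-1$, $\theta=2$, and $\tau$ a step function switching between $-1$ and $\tau_0$ according to the size of $H/L$.
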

	
	We have actually established the above Liouville theorem for the more general case of Riemannian manifolds.\emph{ An important point we would like to emphasize is that our method of deriving Theorem \ref{rm1} is completely different from the existing methods. In the regions where it overlaps with each of the previous results, we have adopted different approaches and techniques.} That is to say, the proof of Theorem \ref{rm1} is self-contained. For the detailed framework of the proof of Theorem \ref{rm1}, the involved techniques, and the generalization to Riemannian setting, the readers can refer to Section \ref{framework}.

	\vspace{2mm}

	Except Liouville theorem, we derive the local gradient type estimates to equation \eqref{pqle} with all indices where the Liouville theorem holds  (i.e. $(p,q)\in \mathscr{D}_L(n)$).

	\begin{thm}\label{LFG}
		Let $\O\subsetneq \mathbb{R}^n$ be a domain and $u$ be a positive solution  of \eqref{pqle} with indices $p,q$ on $\O$. If $(p,q)\in \mathscr{D}_L(n)$, where $\mathscr{D}_L(n)$ is defined by \eqref{LD},  then 
		\begin{eqnarray}\label{lgefromL}
			\left(\frac{|\nabla u|^2}{u^2}+|\nabla u|^qu^{p-1}\right)(x)\le \frac{C(n,p,q)}{{\rm dist}^2(x,\partial\O)} \qquad \text{on\,\, $\O$}.
		\end{eqnarray}
	\end{thm}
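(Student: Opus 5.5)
We argue by contradiction, combining scaling with the doubling lemma of Pol\'a\v{c}ik--Quittner--Souplet, in the spirit of Gidas--Spruck. The key observation is that the quantity
\[
M_u:=\left(\frac{|\nabla u|^2}{u^2}+|\nabla u|^qu^{p-1}\right)^{1/2}
\]
is invariant under the scaling of \eqref{pqle}. Indeed, if $u$ solves \eqref{pqle}, then so does
\[
u_\lambda(y)=\mu\,u(x_0+\lambda y)\qquad\text{with}\qquad \mu^{p+q-1}=\lambda^{q-2}
\]
(for $p+q\neq 1$). One checks directly that $|\nabla u_\lambda|^2/u_\lambda^2=\lambda^2|\nabla u|^2/u^2$ and $|\nabla u_\lambda|^q u_\lambda^{p-1}=\mu^{p+q-1}\lambda^q|\nabla u|^qu^{p-1}=\lambda^2|\nabla u|^qu^{p-1}$, so $M_{u_\lambda}(y)=\lambda M_u(x_0+\lambda y)$.

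When $p+q=1$ the equation is invariant under $u\mapsto cu$ alone, and any free multiplicative constant may be used instead. In either case the normalization $u_\lambda(0)=1$ can be imposed, since $\mu$ (respectively $c$) is free after choosing $\lambda$. This normalization is essential because a uniform bound on the logarithmic gradient then gives local two-sided bounds on $u_\lambda$.

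Suppose \eqref{lgefromL} fails. Then there are domains $\Omega_k$, solutions $u_k$ and points $y_k$ with $M_{u_k}(y_k)\,{\rm dist}(y_k,\partial\Omega_k)>2k$. Applying the doubling lemma to $M_k:=M_{u_k}$, which is continuous and positive where $\nabla u\ne0$ and may be extended harmlessly elsewhere (only an upper bound is used), we obtain points $x_k$ such that $M_k(x_k)\,{\rm dist}(x_k,\partial\Omega_k)>2k$ and $M_k\le 2M_k(x_k)$ on $B(x_k,k/M_k(x_k))$. Set $\lambda_k=1/M_k(x_k)$ and rescale around $x_k$ with $u_k(x_k)$ normalized to $1$. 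The rescaled functions $v_k$ solve \eqref{pqle} on $B(0,k)$ and satisfy $M_{v_k}\le 2$ there, with $M_{v_k}(0)=1$.

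The bound $|\nabla\log v_k|\le 2$ together with $v_k(0)=1$ gives $e^{-2|y|}\le v_k\le e^{2|y|}$, and hence also $|\nabla v_k|\le 2e^{2|y|}$. Consequently $\Delta v_k=-|\nabla v_k|^qv_k^p$ is locally uniformly bounded. Elliptic $W^{2,r}$ estimates then give $C^{1,\alpha}_{loc}$ bounds, and Schauder theory bootstraps these to $C^{2,\alpha}_{loc}$. One caveat is that $|\xi|^q$ is only H\"older for small $q$, but since $q\ge0$ it is $C^{0,\min(q,1)}$, which suffices.

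By Arzel\`a--Ascoli, a subsequence converges in $C^2_{loc}(\mathbb{R}^n)$ to a positive solution $v$ of \eqref{pqle} on $\mathbb{R}^n$ with $v(0)=1$ and $M_v(0)=1$. Since $(p,q)\in\mathscr{D}_L(n)$, $v$ must be constant. But then $\nabla v\equiv0$, so $M_v(0)=0$, a contradiction.

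The main obstacle is the passage to the limit in the quantity $M$. Because of the exponent $q$ and possibly $p<1$, the normalized term $|\nabla v|^q v^{p-1}$ needs $v$ to be bounded above and away from zero, and convergence must be in $C^1$ for $M_{v_k}(0)=1$ to pass to $M_v(0)=1$. Both requirements are provided by the logarithmic bound and the $C^{1,\alpha}$ compactness above. A secondary point is the case $q=0$, where $|\nabla u|^q\equiv1$ and the same argument applies verbatim.
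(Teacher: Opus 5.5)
Your proposal has a genuine gap at the normalization $v_k(0)=1$. For $p+q\neq1$ the amplitude is not a free parameter. Inserting $\mu u(x_0+\lambda y)$ into \eqref{pqle} forces $\mu^{p+q-1}=\lambda^{2-q}$; this, not $\lambda^{q-2}$, is the relation your own check $\mu^{p+q-1}\lambda^q=\lambda^2$ actually uses. Moreover, $u\mapsto cu$ with $c\neq1$ is not a symmetry of the equation.

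So once $\lambda_k=1/M_k(x_k)$ is fixed, $v_k(0)=\lambda_k^{\frac{2-q}{p+q-1}}u_k(x_k)$ is determined, and nothing in your argument prevents $v_k(0)\to0$ or $v_k(0)\to\infty$. Everything afterwards needs $v_k(0)$ to stay in a compact subset of $(0,\infty)$: the two-sided bounds on $v_k$, the local bound on $\Delta v_k$, and above all the positivity and nontriviality of the limit. If $v_k(0)\to0$ the limit is $v\equiv0$. If $v_k(0)\to\infty$ there is no compactness. In neither case does the Liouville hypothesis on positive solutions apply.

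For $p+q=1$ and $q\neq2$ it is worse: $\mu\lambda^2=\mu\lambda^q$ forces $\lambda=1$, so no spatial rescaling exists at all. The paper disposes of $p+q<\frac{n+3}{n-1}$, in particular $p+q\le1$, by the direct Bernstein estimate of Theorem~\ref{t10}, and runs the blow-up only for $p+q>1$.

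The missing ingredient is the pair of mutual control Lemmas~\ref{control1} and~\ref{control2}. These are maximum-principle (Bernstein) estimates: on $B(0,R)$, each of $H=|\nabla\ln u|^2$ and $L=|\nabla u|^qu^{p-1}$ is bounded by $C(n,p,q)$ times the supremum of the other on $B(0,2R)$, plus $CR^{-2}$. Use them as follows, with $|\nabla\ln v_k|\le2$ and $L_k\le4$ on $B(0,k)$ and $l=p+q-1>0$.
\begin{enumerate}
\item If $v_k(0)\to0$, then $v_k\to0$ uniformly on $B(0,2R)$. Hence $L_k=|\nabla\ln v_k|^qv_k^{l}\to0$ there, and Lemma~\ref{control1} gives $\limsup_k H_k(0)\le CR^{-2}$.
\item If $v_k(0)\to\infty$, then $|\nabla\ln v_k|^q\le4v_k^{-l}\to0$ uniformly on $B(0,2R)$ for $q>0$; when $q=0$ the bound $v_k^{p-1}\le4$ excludes this case outright. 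Lemma~\ref{control2} then gives $\limsup_k L_k(0)\le CR^{-2}$.
\end{enumerate}
Letting $R\to\infty$ contradicts $H_k(0)+L_k(0)=1$ in both cases. So $v_k(0)$ is bounded above and away from $0$, and after that your Harnack-plus-elliptic-regularity compactness and final contradiction go through as written.

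The first case could alternatively be closed softly: the limit of $v_k/v_k(0)$ is a positive harmonic function on $\mathbb{R}^n$ whose logarithmic gradient has modulus $1$ at the origin. Soft compactness does not settle the second case, however. Integrating $\Delta v_k=-L_kv_k$ over balls only gives $L_k\to0$ in $L^1_{\rm loc}$, which says nothing at the origin. It is the pointwise gradient estimate of Lemma~\ref{control2} that closes it.
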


	From Theorem \ref{LFG} we immediately have a local gradient type estimate, which provides an explicit value for the constant $a$ in \cite[Theorem B]{BGV}.

	\begin{thm}\label{geofpqle}
		Let $\O\subsetneq \mathbb{R}^n$ be a domain and $u$ be a positive solution  of \eqref{pqle} with indices $p,q$ on $\O$. If $(p,q)\in \mathscr{D}_L(n)$, where $\mathscr{D}_L(n)$ is defined by \eqref{LD},  then  there exists $a=a(p,q)\in\mathbb{R}$ such that
		\begin{eqnarray}\label{gte}
			|\nabla u^a|(x)\le C(n,p,q)\left({\rm dist}(x,\partial\O)\right)^{-1-a\frac{2-q}{p+q-1}} \qquad \text{on\,\, $\O$}.
		\end{eqnarray}
		Especially, if $p+q>1$ and $q>0$, then we can choose $a=\frac{p+q-1}{q}>0$; further, if $p>1$, we can choose $a=1$.
	\end{thm}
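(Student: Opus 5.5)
The plan is to derive \eqref{gte} directly from Theorem \ref{LFG} by an algebraic interpolation between the two quantities it controls. Fix $x\in\O$ and write $d={\rm dist}(x,\partial\O)$. Set
$$
X=\frac{|\nabla u|}{u},\qquad Y=|\nabla u|^q u^{p-1}.
$$
Since $(p,q)\in\mathscr{D}_L(n)$, estimate \eqref{lgefromL} gives $X\le Cd^{-1}$ and $Y\le Cd^{-2}$ at $x$, with $C=C(n,p,q)$. Everything else is pointwise bookkeeping; no further PDE argument is needed.

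The key identity comes from rewriting $Y$ in terms of $X$. Since $|\nabla u|=uX$, we have $Y=u^{p+q-1}X^q$. Assume $p+q\neq1$; otherwise the exponent in \eqref{gte} is undefined, and only the degenerate choice $a=0$ makes sense. Then $u=(Y X^{-q})^{1/(p+q-1)}$. For any $a\in\mathbb{R}$,
$$
|\nabla u^a|=|a|\,u^{a}X=|a|\,Y^{\frac{a}{p+q-1}}\,X^{\,1-\frac{aq}{p+q-1}}.
$$
Choose $a$ so that both exponents are nonnegative, that is,
$$
0\le \frac{a}{p+q-1}\le \frac1q,
$$
with the upper bound vacuous when $q=0$. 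Then the two bounds from Theorem \ref{LFG} can be inserted directly:
$$
|\nabla u^a|\le C\,d^{-\frac{2a}{p+q-1}}\,d^{-1+\frac{aq}{p+q-1}}=C\,d^{-1-a\frac{2-q}{p+q-1}}.
$$
This is exactly \eqref{gte}. A nonzero admissible $a$ always exists: take $a$ with the sign of $p+q-1$ and small modulus. When $q>0$ the endpoint $a=\frac{p+q-1}{q}$ is also admissible.

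For the special choices claimed in the statement:

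\textbf{Case $q>0$, $p+q>1$, $a=\frac{p+q-1}{q}$.} The exponent of $X$ vanishes, and the estimate reduces to
$$
|\nabla u^a|^q=a^q\,u^{p-1}|\nabla u|^q=a^qY\le Cd^{-2}.
$$
Hence $|\nabla u^a|\le Cd^{-2/q}$, and $2/q=1+a\frac{2-q}{p+q-1}$, as required.

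\textbf{Case $p>1$, $a=1$.} Here $0<\frac{1}{p+q-1}\le\frac1q$, since $p+q-1\ge q$. A direct version of the same computation is
$$
|\nabla u|^{p+q-1}=|\nabla u|^q u^{p-1}\,X^{p-1}=Y X^{p-1}\le Cd^{-2}d^{-(p-1)}.
$$
This gives $|\nabla u|\le Cd^{-\frac{p+1}{p+q-1}}$, which matches \eqref{gte} with $a=1$.

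The main point requiring care is the sign and range bookkeeping. If $p+q<1$, then $a$ must be negative for the exponent of $Y$ to be nonnegative. This is why the statement only asserts $a\in\mathbb{R}$ in general, with positivity when $p+q>1$. The borderline $p+q=1$ must be excluded or treated as the trivial case $a=0$. Beyond this, the theorem is a corollary of Theorem \ref{LFG}.
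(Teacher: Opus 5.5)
Your proposal is correct and follows the paper's route. The paper only asserts that \eqref{gte} follows immediately from Theorem \ref{LFG}; your interpolation $|\nabla u^a|=|a|\,Y^{\frac{a}{p+q-1}}X^{1-\frac{aq}{p+q-1}}$ with $0\le \frac{a}{p+q-1}\le\frac1q$ supplies exactly that bookkeeping, including both special choices of $a$ and the degenerate case $p+q=1$. The one case you leave unstated is a point with $\nabla u(x)=0$, where $u=(YX^{-q})^{1/(p+q-1)}$ is unavailable for $q>0$; there $|\nabla u^a|(x)=0$, so the estimate holds trivially.
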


	The Harnack inequality of equation \eqref{pqle}
	is a direct corollary of  logarithmic gradient estimate \eqref{lgefromL}.

	\begin{thm}\label{hkofpqle}
		Let  $u$ be a positive solution  of \eqref{pqle} on $B(x_0,2R)\subset\mathbb{R}^n$ with indices $p,q$. If $(p,q)\in \mathscr{D}_L(n)$,  where $\mathscr{D}_L(n)$ is defined by \eqref{LD},   then 
		\begin{eqnarray}\label{HK0}
			\sup\limits_{B(x_0,R)}u\le C(n,p,q)\inf\limits_{B(x_0,R)} u.
		\end{eqnarray}
	\end{thm}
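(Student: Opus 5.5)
The Harnack inequality \eqref{HK0} will follow directly from the logarithmic gradient estimate of Theorem \ref{LFG}, applied on the domain $\O=B(x_0,2R)$. For every $x\in B(x_0,\tfrac{3}{2}R)$ we have ${\rm dist}(x,\partial B(x_0,2R))\ge \tfrac{R}{2}$. Dropping the nonnegative term $|\nabla u|^qu^{p-1}$ in \eqref{lgefromL}, we get
\begin{equation}
|\nabla \log u|(x)\le \frac{\sqrt{C(n,p,q)}}{{\rm dist}(x,\partial B(x_0,2R))}\le \frac{2\sqrt{C(n,p,q)}}{R},\qquad x\in B(x_0,\tfrac{3}{2}R).\nonumber
\end{equation}
This uses that $u$ is positive, so $\log u$ is $C^1$ there. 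The only hypothesis to check is $(p,q)\in\mathscr{D}_L(n)$, which is assumed. The estimate holds for a strictly proper domain, and $B(x_0,2R)\subsetneq\R^n$, so Theorem \ref{LFG} applies.

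Next I integrate along segments. For any $y,z\in B(x_0,R)$, the segment from $y$ to $z$ lies in the convex set $B(x_0,R)\subset B(x_0,\tfrac32R)$ and has length at most $2R$. Hence
\begin{equation}
|\log u(y)-\log u(z)|\le \int_0^1|\nabla\log u(y+t(z-y))|\,|z-y|\,dt\le \frac{2\sqrt{C}}{R}\cdot 2R=4\sqrt{C}.\nonumber
\end{equation}
Exponentiating gives $u(y)\le e^{4\sqrt{C}}u(z)$. Taking the supremum over $y$ and the infimum over $z$ yields \eqref{HK0} with constant $e^{4\sqrt{C(n,p,q)}}$, which depends only on $n,p,q$ and not on $R$, $x_0$ or $u$.

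There is no real obstacle here once Theorem \ref{LFG} is available. All the difficulty, namely the blow-up/doubling argument that reduces the local estimate to the Liouville property, sits inside that theorem. The only point needing care is to evaluate the distance to the boundary on a slightly larger ball than $B(x_0,R)$, so that the gradient bound is uniform of order $R^{-1}$ along the connecting segments. Alternatively, one can work directly on $B(x_0,R)$, where the distance is at least $R$, and use convexity in the same way.
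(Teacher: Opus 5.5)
Your argument is correct and follows the paper's route. The paper obtains \eqref{HK0} as a direct corollary of the logarithmic gradient estimate \eqref{lgefromL} of Theorem~\ref{LFG} applied on $\Omega=B(x_0,2R)$. Your step of integrating the bound $|\nabla\ln u|\le C/R$ along segments in the convex ball is exactly the routine argument the paper leaves implicit.
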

	
	Combining Theorem \ref{m1}, Theorem \ref{LFG}, Theorem \ref{geofpqle} and Theorem \ref{hkofpqle}, we have 
	
	\begin{thm}\label{zong}
	Let $u$ be a nonnegative classical solution of \eqref{pqle} on $\O\subsetneq\mathbb{R}^n$ with $q\ge 0$ and $p\in \mathbb{R}$. If $n=2$ or $(p,q)\in \mathbb{L}(n) $ with $n\ge 3$, 
	then $u$ satisfies the gradient type estimates \eqref{lgefromL}, \eqref{gte} and Harnack inequality \eqref{HK0}. 
	\end{thm}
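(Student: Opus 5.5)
Theorem \ref{zong} is obtained by chaining the earlier results, so the plan is short. First, by Theorem \ref{rm1}(1), if $n=2$ or $(p,q)\in\mathbb{L}(n)$ with $n\ge3$, then every nonnegative classical solution of \eqref{pqle} on $\mathbb{R}^n$ with these indices is constant. In particular every positive one is. By the definition \eqref{LD} this says exactly that $(p,q)\in\mathscr{D}_L(n)$. (For $n=2$ one may equally quote Theorem \ref{m1}(1).) So the first step is the inclusion
\[
\{n=2\}\cup\mathbb{L}(n)\subset \mathscr{D}_L(n),
\]
which is a direct rereading of Theorem \ref{rm1}.

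Second, with $(p,q)\in\mathscr{D}_L(n)$ established, we apply Theorem \ref{LFG}, Theorem \ref{geofpqle} and Theorem \ref{hkofpqle} on the proper subdomain $\O\subsetneq\mathbb{R}^n$. These give \eqref{lgefromL}, \eqref{gte} and \eqref{HK0} respectively. For the Harnack inequality, the ball $B(x_0,2R)$ is taken inside $\O$.

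The one point needing care is that Theorem \ref{zong} is stated for nonnegative solutions, while Theorems \ref{LFG}--\ref{hkofpqle} assume positivity. I would reduce to the positive case via the strong maximum principle. Since $u\ge0$ solves $\Delta u=-|\nabla u|^qu^p\le0$ wherever this makes sense, $u$ is superharmonic. Hence either $u\equiv0$ on the connected domain $\O$ (where the estimates are trivial, or vacuous for the logarithmic quantity), or $u>0$ everywhere.

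When $p<0$ the term $u^p$ forces $u>0$ anyway for a classical solution. When $p\ge0$ the superharmonicity argument applies directly.

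I expect no genuine obstacle: all the analytic difficulty lies in Theorems \ref{rm1} and \ref{LFG}, and this corollary only has to state that the Liouville region proved there lies inside $\mathscr{D}_L(n)$.
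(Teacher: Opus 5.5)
Your proposal is correct and follows the paper's route: the paper obtains Theorem \ref{zong} by combining the Liouville theorem, which places $(p,q)$ in $\mathscr{D}_L(n)$, with Theorems \ref{LFG}, \ref{geofpqle} and \ref{hkofpqle}. Citing Theorem \ref{rm1}(1) rather than the paper's Theorem \ref{m1} is the more precise reference for the region $\mathbb{L}(n)$, and your strong-maximum-principle reduction from nonnegative to positive solutions fills in a step the paper leaves implicit.
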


	Before we go further, we give a remark for Theorem \ref{LFG}--Theorem \ref{zong}.
	\begin{rmk}
	\rm	(A) Theorem \ref{LFG} establishes the equivalence between the Liouville theorem for the equation \eqref{pqle} and local gradient estimates.  Gidas-Spruck \cite{GS} and Poláčik-Quittner-Souplet \cite{PQS} also provide such philosophy for semilinear case. However, for equation \eqref{pqle} in $q>0$ case,  such connection has not been derived for equation \eqref{pqle} by classical blow-up arguments due to the lack of a priori $L^{\infty}$ estimates of the blow-up sequences.\\ %Concretely, one can see more details in subsection \ref{ss3}.
	(B) Theorem \ref{zong} essentially strengthened the previous results. The best known result for gradient estimate is due to \cite[Theorem B]{BGV}, which derives the inequality \eqref{gte} for $(p,q)$ between the curve $III_1$ and $III_2$ in Figure \ref{F1}\\% When $p+q<\frac{n+3}{n-1}$ (for $p+q>1$, this region is slightly less than the region between curve $III_1$ and $III_2$), via Moser's iteration method, \cite{HHW} also gives the logarithmic gradient estimate \eqref{lgefromL}.  \\
		(C) when $\frac{2-q}{n-2}<p+q-1$, $q\in[0,2)$ and $(p,q)\in\mathscr{D}_L(n)$, equation \eqref{pqle} has solution of the form $u(x)=\Lambda |x|^{\frac{2-q}{p+q-1}}$ in $\mathbb{R}^n\setminus\{0\}$, where $\Lambda=\Lambda(n,p,q)>0$. In this case, estimates \eqref{lgefromL} and \eqref{gte} are optimal ($\O$ is chose as $\mathbb{R}^n\setminus\{0\}$).
	\end{rmk}

%By direct computation, if $q>2-\frac{1}{\sqrt{n}+1}$, then $x=2$ satisfies \eqref{712} and \eqref{713} and so there exists $\delta>0$ such that $(2-\delta,2)\subset \mathbb{D}(n,q)$. Therefore, $l_0=\infty$.

%Now, we present the estimates for $1-\frac{1}{\sqrt{n-1}}\le q< 2$. Due to limitations in our proof process, we divide it into two situations: $q\le 1$ and $q\in(1,2)$. When $q\in [1-\frac{1}{\sqrt{n-1}},1]$, we have same estimate as Theorem \ref{kt}. When $q\in(1,2)$, we have a weaker estimate. 

%	\left\{(p,q)\in\mathbb{R}_{+}^2|\text{any positive classical solutions of \eqref{pqle} on  $\mathbb{R}^n$ must be constant}\right\}

\subsection{New contributions and ideas}\label{ss3}
	In this subsection, we will elaborate on the main contributions and ideas of this article.
	
	$\bullet$ \textbf{Bochner inequality is not optimal and its remedy.} For exploring the local and global properties of Lane-Emden equation ($q=0$ in \eqref{pqle}) or Yamabe equation ($\Delta u+\lambda u+\mu u^{\frac{n+2}{n-2}}=0$) on manifolds or Euclidean spaces, many analysts and differential geometers provided the following transformation: if $u$ is a positive $C^3$ function on manifolds, define
	\begin{equation}
		v=u^{-\b}, \qquad\b\neq 0\nonumber
	\end{equation}
	and use the Bochner formula 
	$$
\frac{1}{2}	\Delta |\nabla v|^2=|\nabla^2 v|^2+\left\langle\nabla v,\nabla \Delta v\right\rangle+Ric(\nabla v,\nabla v)
	$$
	for the new function $v$, such as Obata \cite{O}, Gidas--Spruck \cite{GS}, Bidaut-Véron--Véron \cite{BV} and Serrin--Zou \cite{SZ}. One major advantage of this transformation is that it can be applied to equations on manifolds and even metric measure spaces.  Especially, when considering subcritical or critical problems of Lane-Emden equation, one usually uses the basic inequality $|\nabla^2 v|^2\ge\frac{1}{n}(\Delta v)^2$ ($n$ is the dimension of manifolds) and derives the useful Bochner inequality
		$$
	\frac{1}{2}	\Delta |\nabla v|^2\ge \frac{1}{n}(\Delta v)^2+\left\langle\nabla v,\nabla \Delta v\right\rangle+Ric(\nabla v,\nabla v).
	$$
	In fact, the discarded item in Bochner inequality is
	$$
	\left|\nabla^2 v-\frac{\Delta v}{n}g\right|^2,
	$$
	where $g$ is the Riemannian metric of the manifolds. We express this item in the form of u as follows:
	\begin{equation}\label{di}
		\b^2u^{-2\b}\left|\textbf{X}_{\b}-\frac{tr(\textbf{X}_{\b})}{n}g\right|^2,
	\end{equation}
	where $$\textbf{X}_{\b}:=\frac{\nabla^2 u}{u}-(1+\b)d\ln u\otimes d\ln u,$$
	$d\ln u$ is the exterior differential of $\ln u$ and $tr(\textbf{X}_{\b})$ is the trace of tensor $\textbf{X}_{\b}$.

	Now, we restrict the manifolds to Euclidean spaces. Notice that  a standard global solution of critical Lane-Emden equation \eqref{pqle} with $q=0$ and $p=\frac{n+2}{n-2}$ is ($n\ge 3$)
	$$
	u(x)=\left[\frac{ \sqrt{n(n-2)}}{1+\left|x\right|^2}\right]^{\frac{n-2}{2}},\qquad x\in\mathbb{R}^n .
	$$
	Substituting such a solution to the formula \eqref{di} and choosing $\b=\frac{2}{n-2}$, one will find that the discarded item vanishes. This inspires us when studying critical Lane-Emden equation on $\mathbb{R}^n$ with $n\ge 3$, Bochner inequality is optimal because the discarded item vanishes. Simultaneously, when we consider subcritical Lane-Emden equation, this fact inspires us that if we choose $\b$ sufficiently close to $\frac{2}{n-2}$, such discarded item should be almost optimal.
	
	Then we explore the equation \eqref{pqle} in $q>0$ case. When $q\in(0,1)$, the critical condition of \eqref{pqle} becomes the curve $$p+q-1=\frac{(2-q)^2}{(1-q)(n-2)}.$$
	 By \cite[Theorem D]{BGV}, we have a global solution of \eqref{pqle} as follows ($n\ge 3$):
	\begin{equation}\label{ss}
		u(x)=\left(K +|x|^{\frac{2-q}{1-q}}\right)^{-\frac{(N-2)(1-q)}{2-q}},\qquad x\in\mathbb{R}^n ,
	\end{equation}
	where $K=K(n,q)>0$. If we substitute the solution to the discarded item \eqref{di}, then we find that there is no $\b\neq 0$ that makes this expression equal to zero. This key fact inspires us that \emph{Bochner inequality (with possible transformation $v=u^{-\b}$) cannot be optimal at critical case!} Therefore, we must return to Bochner formula and find new way to deriving optimal inequality for equation \eqref{pqle}.
	
	After some experimental calculations, we find the following singular second-order symmetric covariant tensor with two parameters:
	$$
	\T(\b,\sigma)= \frac{\nabla^2 u}{u}-[(1+\beta)+\sigma Z] d \ln u \otimes d \ln u,
	$$
	where $Z:=|\nabla u|^{q-2}u^{p+q+1}$ defines on the open set $\{x\in \M:|\nabla u|>0\}$.
	
	If we choose $u$ as \eqref{ss} ($\M=\mathbb{R}^n$), $\b=\frac{2}{n-2}$ and $\sigma=\frac{-q}{n-(n-1)q}$ with $q\in(0,1)$, then the nonnegative item
	$$
	\left|\T(\b,\sigma)-\frac{tr(\T(\b,\sigma))}{n}  g\right|^2
	$$
	vanishes on  $\mathbb{R}^n\setminus\{0\}$, where $tr(\T(\b,\sigma))$ is the trace of tensor $\T(\b,\sigma)$. This important fact inspires us that after we use Bochner formula, we should discard such item to ensure the optimality of the inequality. This observation is key to proving the optimal Liouville theorem for equation \eqref{pqle}. Concretely, we give its details in Section \ref{S2}.

	$\bullet$ \textbf{Combine Bernstein method and blow up arguments.}
	For deriving the local estimate of nonlinear  elliptic equations on the domain of Euclidean spaces, analysts usually use the blow up arguments. Generally speaking, one assumes the expected local estimates fail and yields a sequence of functions, and finally finds a subsequence that converges  a global solution of certain elliptic equation (in some sense) which contradicts with some known Liouville theorems of such equation. For positive solutions of Lane-Emden equation, via blow up arguments, Gidas-Spruck \cite{GS0} and Dancer \cite{D}  obtained its universal bounds on bounded domain with Dirichlet boundary condition and without boundary condition respectively.  Afterwards, Poláčik-Quittner-Souplet  found an important doubling lemma \cite[Lemma 5.1]{PQS}, and based on it, they systematically derive universal gradient estimate and $L^{\infty}$ estimate for nonlinear elliptic equations and  systems. However, this classic method has not been successfully applied to equation \eqref{pqle} with $q>0$ yet. First, we recall the classical results and arguments for Lane-Emden equation.%\footnote{Such estimates also can be obtained by integral estimate or direct Bernstein method, see Serrin-Zou \cite{SZ} or Lu \cite{LU1,LU2}. For $\M=\mathbb{R}^n$, via blow up arguments, in the present paper, we give another proof of \cite[Theorem 1.6]{LU1}, which provides the local logarithmic gradient estimate and $L^{\infty}$ estimate for equation \eqref{pqle} with  $q=0$ and $p<p_S(n)$, where $p_S(n)=\infty$ if $n\le 2$ and $p_S(n)=\frac{n+2}{n-2}$ if $n\ge 3$.}.

	\begin{theorem}\cite[Theorem 2.3]{PQS}
	Let $1<p<p_S(n)$, and let $\Omega \neq \mathbb{R}^n$ be a domain of $\mathbb{R}^n$. There exists $C=C(n, p)>0$ such that any nonnegative solution $u$ of \eqref{pqle} with $q=0$ in $\Omega$ satisfies
\begin{equation}\label{uefle}
	u(x)+|\nabla u|^{2 /(p+1)}(x) \leq C \operatorname{dist}^{-2 /(p-1)}(x, \partial \Omega), \quad x \in \Omega.
\end{equation}	
	\end{theorem}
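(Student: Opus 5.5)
The statement just above is the theorem quoted from \cite[Theorem 2.3]{PQS}: for $1<p<p_S(n)$, nonnegative solutions of $\Delta u+u^p=0$ in a proper domain $\Omega$ satisfy the universal bound \eqref{uefle}. I would prove it by the doubling-lemma blow-up argument, with the Gidas--Spruck Liouville theorem as the only nontrivial input. Set
$$M(x):=u^{\frac{p-1}{2}}(x)+|\nabla u|^{\frac{p-1}{p+1}}(x),$$
so that \eqref{uefle} is equivalent to $M(x)\le C\,{\rm dist}^{-1}(x,\partial\Omega)$. This quantity has the right scaling: if $\tilde u(y)=\lambda^{\frac{2}{p-1}}u(x+\lambda y)$, then $\tilde u$ solves the same equation and $\tilde M(y)=\lambda M(x+\lambda y)$.

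Argue by contradiction. Suppose there are domains $\Omega_k$, solutions $u_k$ and points $y_k\in\Omega_k$ with $M_k(y_k)>2k\,{\rm dist}^{-1}(y_k,\partial\Omega_k)$. Apply the doubling lemma \cite[Lemma 5.1]{PQS} with $X=\overline{\Omega_k}$ (or $\mathbb{R}^n$), $D=\Omega_k$, $\Gamma=\partial\Omega_k$, $M=M_k$. It gives points $x_k\in\Omega_k$ such that:
\begin{itemize}
\item $M_k(x_k)\ge M_k(y_k)$, so that $M_k(x_k)\,{\rm dist}(x_k,\partial\Omega_k)>2k$;
\item $M_k\le 2M_k(x_k)$ on $B\big(x_k,k/M_k(x_k)\big)$.
\end{itemize}
Now rescale with $\lambda_k=M_k(x_k)^{-1}$:
$$v_k(y)=\lambda_k^{\frac{2}{p-1}}u_k(x_k+\lambda_k y).$$
Each $v_k$ is a nonnegative solution on $B(0,k)$. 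Its scaled quantity satisfies $\tilde M_k(0)=1$ and $\tilde M_k\le 2$ on $B(0,k)$. In particular $v_k$ and $|\nabla v_k|$ are uniformly bounded on $B(0,k)$.

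The uniform bounds give $|\Delta v_k|=v_k^p\le C$. Interior $W^{2,r}$ estimates followed by Schauder estimates then give uniform $C^{2,\alpha}_{loc}$ bounds. By Arzel\`a--Ascoli and a diagonal argument, a subsequence converges in $C^2_{loc}(\mathbb{R}^n)$ to a nonnegative solution $v$ of $\Delta v+v^p=0$ on all of $\mathbb{R}^n$. The normalization passes to the limit because $\tilde M$ involves only $v$ and $\nabla v$: we get $v^{\frac{p-1}{2}}(0)+|\nabla v|^{\frac{p-1}{p+1}}(0)=1$, so $v\not\equiv 0$.

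Since $1<p<p_S(n)$, the Gidas--Spruck Liouville theorem \cite{GS} forces $v\equiv 0$. This contradicts the normalization at $0$, which proves \eqref{uefle}.

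The main obstacle is passing the normalization to the limit. That is why $M$ is built from both $u$ and $|\nabla u|$, with exponents matched to the scaling: this yields $C^1$ control and lets the nondegeneracy at $0$ survive the limit. The difficulty is exactly the one the paper stresses for $q>0$. There one would need a bound on $v_k$ itself to get compactness, and a quantity like $|\nabla u|^2/u^2+|\nabla u|^qu^{p-1}$ gives no $L^\infty$ bound on $v_k$. In the Lane--Emden case the $u^{\frac{p-1}{2}}$ term supplies this bound directly, so the compactness step is routine.
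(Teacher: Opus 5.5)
Your proposal is correct and follows the same route as the paper's proof. Both use the rescaling-invariant quantity $M=u^{(p-1)/2}+|\nabla u|^{(p-1)/(p+1)}$, the Pol\'a\v{c}ik--Quittner--Souplet doubling lemma, rescaling by $\lambda_k=M_k(x_k)^{-1}$, local elliptic compactness, and a contradiction with the Gidas--Spruck Liouville theorem. One small wording point: the doubling lemma gives $M_k(x_k)\,{\rm dist}(x_k,\partial\Omega_k)>2k$ directly; it does not follow from $M_k(x_k)\ge M_k(y_k)$, since ${\rm dist}(x_k,\partial\Omega_k)$ may be smaller than ${\rm dist}(y_k,\partial\Omega_k)$.
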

	
	\begin{proof}
		For proving estimate \eqref{uefle}, we assume it fails. Then there exist sequences $\Omega_k, u_k, y_k \in \Omega_k$ such that $u_k$ solves \eqref{pqle} (with $q=0$) on $\Omega_k$ and the functions
		
		$$
		M_k:=u_k^{(p-1) / 2}+\left|\nabla u_k\right|^{(p-1) /(p+1)}, \quad k=1,2, \cdots
		$$
		satisfy
		
		$$
		M_k\left(y_k\right)>2 k \operatorname{dist}^{-1}\left(y_k, \partial \Omega_k\right) .
		$$
		By the doubling lemma \cite[Lemma 5.1]{PQS}, it follows that there exists $x_k \in \Omega_k$ such that
		
		$$
		M_k\left(x_k\right) \geq M_k\left(y_k\right), \quad M_k\left(x_k\right)>2 k \operatorname{dist}^{-1}\left(x_k, \partial \Omega_k\right)
		$$
		and
		
		$$
		M_k(z) \leq 2 M_k\left(x_k\right), \quad\left|z-x_k\right| \leq k M_k^{-1}\left(x_k\right)
		$$
		Now, we rescale $u_k$ by setting
		
		$$
		v_k(y):=\lambda_k^{2 /(p-1)} u_k\left(x_k+\lambda_k y\right), \quad|y| \leq k, \text { with } \lambda_k=M_k^{-1}\left(x_k\right) .
		$$
		The function $v_k$ solves \eqref{pqle} with $q=0$ on the ball $B(0,k)$ and they also satisfy
		\begin{equation}\label{ei}
			\left[v_k^{(p-1) / 2}+\left|\nabla v_k\right|^{(p-1) /(p+1)}\right](0)=\lambda_k M_k\left(x_k\right)=1
		\end{equation}
		and
		\begin{equation}\label{ue}
			\left[v_k^{(p-1) / 2}+\left|\nabla v_k\right|^{(p-1) /(p+1)}\right]_{L^{\infty}(B(0,k))} \leq 2.
		\end{equation}
		
		By using elliptic $W^{2,p}$-estimates  and Sobolev imbeddings (see \cite{GT}), we deduce that some subsequence of $v_k$ converges in $C_{\text {loc }}^1\left(\mathbb{R}^n\right)$ to a classical solution $v \geq 0$ of Lane-Emden equation in $\mathbb{R}^n$. Moreover, $\left[v^{(p-1) / 2}+|\nabla v|^{(p-1) /(p+1)}\right](0)=1$ by \eqref{ei}, so that $v$ is nontrivial. This contradicts the Gidas-Spruck's classical Liouville theorem \cite[Theorem 1.1]{GS} and so proves the desired Theorem.
	\end{proof}

	 Then, we dissect this exquisite proof and take a look at where the difficulties lie when we consider \eqref{pqle} with $q>0$.
	 First, the blow up quantity $M(u)$ should have the following good properties:
	 \begin{spacing}{1.1}
	 \noindent	(i) it is (directly or indirectly) related to our desired estimates;\\
	 	(ii) it is rescaling invarint such that identity \eqref{ei} and inequality \eqref{ue} hold at same time;\\
	 	(iii) inequality \eqref{ue} provides the ``compactness" of blow up sequence and  identity \eqref{ei} provides ``nontrivialness" of ``all possible limit states".
	 \end{spacing}

	 Notice that equation \eqref{pqle} still is invariant by rescaling or translation. And we can construct rescaling-invariant blow up quantity (assume $p+q-1>0$ and $q\in(0,2)$)
	 $$
	 M(u)=u^{\frac{p+q-1}{2-q}}+|\nabla u|^{\frac{p+1}{p+q-1}}.
	 $$
	 Then we can check that all the processes in the proof just now are also valid for present case. But, for the limit function $v$,   the identity $$M(v)(0)=\left(v^{\frac{p+q-1}{2-q}}+|\nabla v|^{\frac{p+1}{p+q-1}}\right)(0)=1$$ cannot derive the ``nontrivialness" of $v$, which 
	 causes the entire proof to collapse. In fact, due to the existence of constant solutions, we cannot expect universal $L^{\infty}$ estimate for positive solutions. Therefore, for using blow up arguments, we only construct rescaling invariant quantity related to gradient of solutions (and also solutions). However, if we choose blow up quantity $M(u)$ only related gradient of solutions (such as $\sum_{i=1}^{k}u^{\a_i}|\nabla u|^{\g_i}$ with $\g_i>0$ and $k\ge 1$), ``compactness" of blow up sequence is difficult to obtain because of lack of local $L^{\infty}$ estimate of the sequence.

	  Now, we provide our solution for it. Inspired by \cite[Theorem 1.6]{LU1}, we construct the following rescaling invariant blow up function 
	$$
	F(u)=\frac{|\nabla u|}{u}+|\nabla u|^{\frac{q}{2}}u^{\frac{p-1}{2}} .
	$$
	One heuristic reason is that when $q=0$ and $p<p_S(n)$, by \cite[Theorem 1.6]{LU1},
	 $$F(u)(x)\le C(n,p){\rm dist}^{-1}(x,\partial\O) \quad x \in \Omega.$$  Another more important reason is that the uniform boundedness of function sequences $|\nabla\ln v_k|$ ($v_k$ is the rescaling of  $u_k$) automatically  provide the important Harnack inequality. Next, we just need to prove the ``compactness'' of real number sequence  $\{v_k(0)\}$. Concretely speaking, it simultaneously has positive upper and lower bounds. For proving the key point, we find that for any solution $u$ of \eqref{pqle} with $p\in\mathbb{R}$ and $q\ge 0$, the items
	 $$|\nabla\ln u|\qquad and\qquad |\nabla u|^{\frac{q}{2}}u^{\frac{p-1}{2}}$$
	  can be  controlled by each other. We summarize this vital fact in Section \ref{S6} and call them mutual control lemmas.
	 Via the mutual control lemmas,  we exclude  the possibility of $\liminf\limits_{k\to \infty}|\nabla\ln v_k|(0)=0$ or $\liminf\limits_{k\to \infty}|\nabla v_k|^{\frac{q}{2}}v_k^{\frac{p-1}{2}}(0)=0$ and derive that the limits of any subsequence must be positive. Together with the Harnark inequality, this proves that there exists a subsequence of $\{v_k\}$ $C^1_{loc}$ converges to a global positive solution $v$ of \eqref{pqle}. Finally, the identity 
	$$
	\left(\frac{|\nabla v|}{v}+|\nabla v|^{\frac{q}{2}}v^{\frac{p-1}{2}}\right)(0)=1
	$$
	yields the ``nontrivialness" of solution and this contradicts with the (weaker) Liouville theorem. Finally, we derive the desired estimate.
	 %In fact, for the contradiction, we just need a weaker Liouville theorem. Concretely, we give related details in Section \ref{S7}.  

	%the standard cut-off functions on Riemannian manifolds with Ricci curvature bounded below and the fundamental lemma 

	\subsection{Organization of the paper} The rest of this paper is organized as follows. In Section \ref{framework}, we provide the framework of proving Liouville theorem and its generalization. In Section \ref{S2},  we provide a elliptic equations of the modulus square of logarithmic gradient of the positive solutions with two parameters. In addition to deriving the $L^{\infty}$ estimates of logarithmic gradient in Section \ref{S3}, it serves as a foundation of obtaining elliptic equations of more complex auxiliary functions related to solutions in Section \ref{S4}.  In Section \ref{S3}, using Lemma \ref{l2}, we prove the logarithmic gradient estimate part in Theorem \ref{t10} and Liouville theorem for $q>2$ case.  In Section \ref{S4},  we derive the elliptic equations for more general auxiliary functions related solutions and its gradients.
	 In Section \ref{S5}, using the elliptic equations in Section \ref{S4}, we have proven the estimates in Section \ref{framework}, thus completing the proof of Liouville theorem. In the second part of this article, which is composed by Section \ref{S6} and Section \ref{S7}, we provide a new method to study the local gradient type  estimates  of the positive solutions of \eqref{pqle} on the domains of Euclidean spaces. Based on Bernstein method ,  we obtain the important mutual control lemmas in Section \ref{S6}. In Section \ref{S7} we  derive the local estimates via the blow up argument and the (weaker) Liouville theorem. Last, in appendix, we collect the purely algebraic calculations involved in the auxiliary functions and compare our results with the previously known results.

	\section{The framework of Liouville theorems}\label{framework}
	In this section, we list the estimates required for proving Liouville theorem. In fact, we have established these estimates for equation \eqref{pqle} on Riemannian manifolds under Ricci curvature bounded below setting.

	First, we generalize Theorem \ref{rm1} to Riemannian manifolds with nonnegative Ricci curvature.
		\begin{thm}\label{lrm}
		Let  $u$ be a non-negative classical solution of \eqref{pqle} on $\M$ with $Ric\ge 0$. The admissible set $\mathbb{L}(n)$ and $\mathbb{BL}(n)$ are given by Definition \ref{set}.  If one of the following two conditions is satisfied:\\
		$(1)$  $n=2$ or $(p,q)\in \mathbb{L}(n) $ with $n\ge 3$;\\
		$(2)$ $u$ is bounded and $(p,q)\in \mathbb{BL}(n) $ with $n\ge 3$,\\
		then $u$ is constant. 
	\end{thm}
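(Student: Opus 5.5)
We argue by the direct Bernstein method on a suitably chosen auxiliary function and a local maximum-principle estimate on geodesic balls. Since $u\ge 0$ solves $\d u=-|\nabla u|^q u^p\le 0$, $u$ is superharmonic. The strong maximum principle then gives either $u\equiv 0$ or $u>0$, so we may assume $u>0$. For $n=2$ we use the classical fact that positive superharmonic functions on a parabolic manifold are constant; a complete surface with $Ric\ge 0$ has quadratic volume growth and is therefore parabolic. This settles the case $n=2$.

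For $n\ge 3$ the plan is to prove local estimates that decay as $R\to\infty$. On the regular set $\Omega_r(u)$ we compute, via the Bochner formula, an elliptic inequality for $w=|\nabla \ln u|^2$ and for more general functions of the form $w=u^{-2\b}|\nabla u|^2\,\phi(Z)$, with $Z=|\nabla u|^{q-2}u^{p+q+1}$. The key point is that we discard only the traceless part
$$\Big|\T(\b,\sigma)-\tfrac{tr(\T(\b,\sigma))}{n}g\Big|^2$$
of the tensor $\T(\b,\sigma)$, not the full $|\nabla^2 v-\frac{\Delta v}{n}g|^2$. This leaves a quadratic form in the quantities $|\nabla\ln u|^2$, $|\nabla u|^qu^{p-1}$ and $\langle\nabla w,\nabla\ln u\rangle$. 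The parameters $\b,\sigma$ (equivalently $x,y$ in \eqref{12}, \eqref{13}) are then chosen so that this form is positive definite. The three inequalities in \eqref{12}/\eqref{13} are exactly the positivity conditions, and the supremum defining $\mathcal L(n,q)$ and $\mathcal H(n,q)$ records the admissible range of $p$. We treat separately $q\in[0,1-\frac1{\sqrt{n-1}}]$, where the choice $\b\approx\frac{2}{n-2}$, $\sigma=\frac{-q}{n-(n-1)q}$ reaches the optimal curve $V$, and the regimes $q\le 1$, $1<q<\frac53$ and $q\ge\frac53$ (or $q\ge\frac32$ under boundedness). These regimes need different auxiliary functions.

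Next, on $B(x_0,2R)$ we multiply by a cutoff $\psi$ built from the Laplacian comparison theorem ($Ric\ge0$ gives $\Delta r\le (n-1)/r$ and $|\nabla\psi|^2/\psi+|\Delta\psi|\lesssim R^{-2}$). At a maximum point of $\psi w$ we obtain $\delta\,(\psi w)^2\le C R^{-2}\psi w+\text{(error terms)}$. The error terms contain powers of $u$; under boundedness of $u$, or in the $q\le1$ regime where the maximum point itself provides the needed $L^\infty$ control, they are absorbed. This yields
$$\sup_{B(x_0,R)}\Big(\frac{|\nabla u|^2}{u^2}+|\nabla u|^qu^{p-1}\Big)\le \frac{C}{R^2}.$$
Letting $R\to\infty$ gives $\nabla u\equiv0$. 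For $q>2$ and for $q\ge\frac53$ we instead work with $w=|\nabla\ln u|^2$ directly, using the pure gradient nonlinearity, and no bound on $u$ is required.

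The main obstacle is the algebra in the Bernstein step. After dropping only the traceless part of $\T$, we must verify that the remaining quadratic form is positive definite on the whole stated region, and that the degenerate set $\{|\nabla u|=0\}$, where $Z$ is singular, does not interfere. This needs an approximation argument, or the observation that the maximum of $\psi w$ is attained in $\Omega_r(u)$ unless $w\equiv0$. We would first check the optimal choice on the explicit solution \eqref{ss} to calibrate $\sigma$, and then perturb $\b$ slightly to obtain strict positivity throughout the region.
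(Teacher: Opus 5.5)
Your reduction to $u>0$ is correct. Your treatment of $n=2$ is also correct: a complete surface with $Ric\ge 0$ is parabolic, so positive superharmonic functions are constant. This is shorter than the paper's route through Theorem~\ref{2d}. The argument for $n\ge 3$, however, has two genuine gaps.

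\textbf{First gap ($0\le q<2$).} The Bernstein step you describe does not yield $\sup_{B(x_0,R)}(|\nabla u|^2/u^2+|\nabla u|^qu^{p-1})\le C/R^2$.

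With $|\nabla\ln u|^2$ alone you cannot get past $p+q<\frac{n+3}{n-1}$. Indeed, at points where $L=H$ (notation of \eqref{21}, $l=p+q-1$), the reaction terms in Lemma~\ref{l2} equal $\bigl(\frac2n(2+\beta+\sigma)^2-2(\beta+\sigma)^2-2l\bigr)H^2\le\bigl(\frac{8}{n-1}-2l\bigr)H^2$ for every choice of $\beta,\sigma$.

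To reach the region near the curve $V$, the paper's auxiliary function $G$ carries the weight $u^{-(1-\frac q2)\beta}$ with $\beta<\frac{2}{n-2}$ close to $\frac{2}{n-2}$ (Lemma~\ref{p1}). The maximum principle then only gives $\Phi(1+Z)(H+L)(x^*)\le CR^{-2}$, and $A(x^*)$ still contains the factor $u(x^*)^{-(1-\frac q2)\beta}$.

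You only discuss upper bounds on $u$. Those handle the $du^l$ part: it is automatic for $q\le 1$, it uses boundedness for $\mathbb{BL}(n)$, and for $\mathbb{B}(n)$ it is removed by taking $\rho=1$, not by any $L^\infty$ bound.

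The missing idea is a \emph{lower} bound for $u$ at the maximum point. The paper gets it in two steps:
\begin{itemize}
\item Strict positivity of $\Delta G$ forces $x^*\in B(x_0,\frac32R)\setminus B(x_0,R)$.
\item Comparing the superharmonic $u$ with $r^{2-n}$ on the annulus (Lemma~\ref{lb}) then gives $u(x^*)\ge c\,(r/R)^{n-2}\min_{\partial B(x_0,r)}u$.
\end{itemize}
The outcome is the weighted, non-scale-invariant bound of Theorem~\ref{t27}, which decays like $R^{-(2-(n-2)\beta)(1-\frac q2)}$. It tends to $0$ precisely because $\beta<\frac{2}{n-2}$, and this is where the critical curve enters.

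Upgrading this to your unweighted $C/R^2$ bound would need a Harnack inequality. The paper obtains one only afterwards, only in $\mathbb{R}^n$, and only from the Liouville theorem via blow-up (Theorem~\ref{LFG}).

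Relatedly, positivity of a quadratic form is not enough to absorb the gradient terms, because these carry factors of $Z$. The paper needs $\mathcal{D}\ge\kappa(1+Z)(H+L)^2$ for a ninth-degree numerator. The conditions \eqref{12} and \eqref{13} encode $S_3>0$, the sign of $I_9$, and a Cauchy--Schwarz combination of $I_7,I_8,I_9$; they are not the conditions for a $2\times 2$ form.

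\textbf{Second gap ($q\ge 2$).} Here ``working with $|\nabla\ln u|^2$ directly'' fails for the same algebraic reason once $p+q\ge\frac{n+3}{n-1}$, and $p$ is unrestricted in this range. The paper uses different mechanisms:
\begin{itemize}
\item For $q=2$, $p>-1$, the function $\int_0^u e^{s^{p+1}/(p+1)}\,ds$ is harmonic, and Yau's theorem applies.
\item For $q>2$, a Bernstein argument for $L-dH$ gives the global bound $|\nabla u|^{q-2}u^{p+1}\le\frac{2nq(p+q-1)}{q-2}$ (Corollary~\ref{sc0}). This makes $(u-\inf u)^s$ subharmonic while $u-\inf u$ is superharmonic. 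The weak Harnack inequality and the mean value inequality then force $u\equiv\inf u$; no gradient quantity is shown to decay.
\end{itemize}
Your proposal has no substitute for these steps.

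For $q\in[\frac53,2)$ your intuition is closer to what happens. With $\beta=0$ and $\rho=1$, the paper's $G$ is comparable to $H^{1-q/2}$, but the argument still needs the two-valued choice of $\tau$.
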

	
	When $q=2$ and $p>-1$, Theorem \ref{lrm} is a direct corollary of Liouville theorem for harmonic functions. Let $w=\int_0^u e^{\frac{s^{p+1}}{p+1}}ds$, then $w$ is a positive harmonic function and so a constant by Yau's Liouville theorem (see Yau \cite{Y}). Therefore, $u$ is constant. 	When $q=2$ and $p\le-1$, Theorem \ref{lrm} is implied by the following gradient estimate, which guarantees Liouville theorem when \( p + q <\frac{n+3}{n-1} \).

		\begin{thm}\label{t10}
		Let $(\M,g)$ be an n-dimensional Riemannian manifold with $Ric\ge-Kg$ and $K\ge 0$. Let $u$ be a positive solution of \eqref{pqle} on $B(x_0,2R)$ with $q\ge 0$ and $p+q<\frac{n+3}{n-1}$, then
		\begin{eqnarray}
			\sup\limits_{B(x_0,R)}\left(\frac{|\nabla u|^2}{u^2}+|\nabla u|^qu^{p-1}\right)\le C(n,p,q)\left(\frac{1}{R^2}+K\right).\nonumber
		\end{eqnarray}
	\end{thm}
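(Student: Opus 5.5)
The plan is a direct Bernstein argument on the logarithmic gradient. Set $w=\ln u$, $h=|\nabla w|^2$ and $Y=|\nabla u|^qu^{p-1}=h^{q/2}u^{p+q-1}$. Then \eqref{pqle} becomes
$$\Delta w=-h-Y .$$
I will apply the Bochner formula to $w$ with $Ric\ge -Kg$. On the regular set $\Omega_r(u)$ I will use the refined Kato-type splitting
$$|\nabla^2 w|^2\ge \frac{(\Delta w)^2}{n}+\frac{n}{n-1}\Big(\frac{\Delta w}{n}-w_{11}\Big)^2,$$
where $e_1=\nabla w/|\nabla w|$. Equivalently, I will use the two-parameter identity of Section \ref{S2} with the tensor $\T(\b,\sigma)$ and simply discard the trace-free part. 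The quantities $w_{11}$ and $\langle\nabla w,\nabla h\rangle=2h\,w_{11}$ are linked, and $\langle\nabla w,\nabla Y\rangle$ is computed from $\nabla Y=Y\big(q\,w_{11}+(p+q-1)h\big)\nabla w$-type terms. I would then expect an inequality of the form
$$\Delta h \ge \frac{2}{n}(h+Y)^2 + a_1 hY + a_2 h^2 + (\text{terms}\cdot\nabla h) - 2Kh$$
on $\Omega_r(u)$, with coefficients $a_i=a_i(n,p,q)$.

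Since the statement also controls $Y$, I would work with the auxiliary function
$$G=h+\lambda Y$$
for a parameter $\lambda>0$ to be chosen, or equivalently with $h$ computed for $v=u^{-\b}$ with a free $\b$. The extra parameter is what lets the exponent range reach $p+q<\frac{n+3}{n-1}$. The key algebraic step is to show that, after absorbing the gradient terms $\langle\nabla w,\nabla G\rangle$ (harmless at an interior maximum of a cutoff of $G$), one has
$$\Delta G\ge c\,(h^2+Y^2)-C|\nabla G|\,|\nabla w|-CKG \quad\text{at points where } \nabla G\text{ is controlled.}$$
Here $c=c(n,p,q)>0$. This amounts to positive definiteness of a quadratic form in $(h,Y)$ whose discriminant condition, after optimizing $\lambda$ or $\b$, should be exactly $p+q<\frac{n+3}{n-1}$. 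This calculation is the step I expect to be the main obstacle. I would first try the pure $q=0$ choice $\b$ near the known optimum and check that the linear combination $p+q$ is what appears. The mixed term $hY$ has coefficient $2/n$ plus a contribution from $q w_{11}$, and it is bounded with Young's inequality.

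Then comes the standard localization. Take a cutoff $\psi$ supported in $B(x_0,2R)$ with $\psi\equiv1$ on $B(x_0,R)$, and with $|\nabla\psi|^2/\psi\le C/R^2$ and $-\Delta\psi\le C(1+\sqrt K R)/R^2$; the latter follows from Laplacian comparison under $Ric\ge-Kg$, together with Calabi's trick at the cut locus. Consider a maximum point of $\psi G$. If $\nabla u=0$ there, then $h=0$ and also $Y=0$ when $q>0$, so the bound is trivial. For $q=0$ the function $Y=u^{p-1}$ is smooth and the same computation goes through without the regular-set issue. Otherwise, using $\nabla(\psi G)=0$ and $\Delta(\psi G)\le0$ in the inequality above, and noting $|\nabla w|^2\le G$, I expect
$$c\,\psi G^2\le C\Big(\frac1{R^2}+K\Big)G+C\frac{|\nabla\psi|}{\psi^{1/2}}(\psi G)^{1/2}G .$$
By Young's inequality this gives $\psi G\le C(n,p,q)\left(R^{-2}+K\right)$ at the maximum point, hence on $B(x_0,R)$, which is the claimed estimate.
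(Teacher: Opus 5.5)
Your overall strategy (a Bernstein argument for $H=|\nabla\ln u|^2$ via the identity of Lemma \ref{l2}, discarding the trace-free part, then a cutoff maximum principle) is the paper's. However, the decisive step is left undone, and your expectation about it is off. The threshold does not come from optimizing $\lambda$. It comes from the choice $\beta=\frac{2}{n-1}$, $\sigma=0$ in Lemma \ref{l2}, which gives $A(\beta)=\frac2n$ and
\[
\Delta H\ge \tfrac{2}{n}(H^2+L^2)+\Big(\tfrac{4(n+1)}{n(n-1)}-2l\Big)HL-2KH+[2(\beta-1)H-qL]\langle\nabla\ln u,\nabla\ln H\rangle .
\]
Since $H,L\ge 0$, the quadratic part dominates $\varepsilon(H^2+L^2)$ exactly when $\frac{4(n+1)}{n(n-1)}-2l>-\frac4n$, i.e. $l<\frac{4}{n-1}$, i.e. $p+q<\frac{n+3}{n-1}$. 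This is Lemma \ref{l3}. Plain Bochner ($\beta=0$) only gives $p+q<\frac{n+4}{n}$.

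Your refined Kato inequality reaches the same threshold. At a maximum point of $\psi H$ one has $w_{11}=-\langle\nabla w,\nabla\psi\rangle/(2\psi)$, hence $|\nabla^2 w|^2\ge\frac{1-\eta}{n-1}(h+Y)^2-C_\eta h|\nabla\psi|^2/\psi^2$. The form $\frac{2}{n-1}(h+Y)^2-2lhY$ is then positive iff $l<\frac{4}{n-1}$. So once you use either device, $\lambda$ plays no role in the exponent range.

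The second gap is the bound on $Y$, and your two options are not equivalent. Working with $h$ alone (any $\beta$) bounds $\psi h$, but bounds $Y$ only at the maximum point, so it does not control $\sup_{B(x_0,R)}Y$. The paper closes this with the mutual control Lemma \ref{control2}. That lemma is a separate Bernstein argument on $H+L$, with a case distinction on $L/H$ at the maximum point, and it holds for all $p,q$.

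Your alternative $G=h+\lambda Y$ can also work, but only for small $\lambda$, and the inequality you predict is not what comes out. At a maximum point of $\psi G$,
\[
\nabla\ln h=\frac{\nabla G-\lambda lY\nabla w}{h+\frac{\lambda q}{2}Y},
\]
which is not small. The gradient terms of $\Delta h$ and $\Delta Y$ therefore produce:
\begin{itemize}
\item genuine zeroth-order terms of size $O(\lambda)(h^2+Y^2)$;
\item a cubic term $-\lambda^3\frac q2(1-\frac q2)l^2\frac{Y^3}{J^2h}$;
\item cutoff errors of size $(2h+qY)h^{-1/2}|\nabla G|$, not $|\nabla G||\nabla w|$.
\end{itemize}
These are absorbable only because the good term appears as $J\varepsilon(h^2+Y^2)$ with $J=1+\frac{\lambda q}{2}\frac{Y}{h}$, which comes from the piece $\frac q2\frac Yh\Delta h$ inside $\Delta Y$. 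The remaining $O(\lambda)$ terms then force $\lambda$ small.

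Done this way, your route yields both bounds in one argument and avoids Lemma \ref{control2}, at the cost of this bookkeeping. The paper's modular route keeps the $H$-estimate trivial and reuses Lemma \ref{control2} in the blow-up argument of Section \ref{S7}.
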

	
Yau type gradient estimate part in Theorem \ref{t10} was also obtained by He-Hu-Wang \cite{HHW} using the Moser iteration method, but the proof we present below is new.
	From Theorem \ref{t10}, we obtain the weaker gradient estimate of the power of solutions. 
	
	\begin{thm}\label{t20}
		Let $(\M,g)$ be an n-dimensional Riemannian manifold with $Ric\ge-Kg$ and $K\ge 0$. If  $u$ be a positive solution of \eqref{pqle} on $B(x_0,2R)$ with $q>0$ and $p+q<\frac{n+3}{n-1}$, then there exists $a=a(p,q)\in\mathbb{R}$ such that
		\begin{eqnarray}
			\sup\limits_{B(x_0,R)}|\nabla u^a|\le C(n,p,q)\left(\frac{1}{R^2}+K\right)^{\frac{1}{2}+a\frac{2-q}{2(p+q-1)}}.\nonumber
		\end{eqnarray}
		Especially, if $p+q>1$ and $q>0$, then we can choose $a=\frac{p+q-1}{q}>0$; further, if $p>1$, we can choose $a=1$.
	\end{thm}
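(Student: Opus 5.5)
Theorem \ref{t20} follows from Theorem \ref{t10} by pure algebra: choose $a$ so that $|\nabla u^a|$ is a product of the two quantities controlled there. Set
$$
A:=\frac{|\nabla u|}{u},\qquad B:=|\nabla u|^{\frac q2}u^{\frac{p-1}{2}},\qquad \Lambda:=C(n,p,q)\left(\frac{1}{R^2}+K\right).
$$
By Theorem \ref{t10}, on $B(x_0,R)$ we have $A^2\le\Lambda$ and $B^2\le\Lambda$. We look for exponents $\theta_1,\theta_2\ge0$ and $a$ with
$$
|\nabla u^a|=|a|\,u^{a-1}|\nabla u|=|a|\,A^{\theta_1}B^{\theta_2}.
$$
Once such a representation exists, $|\nabla u^a|\le |a|\Lambda^{(\theta_1+\theta_2)/2}$ immediately.

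Matching powers of $|\nabla u|$ and of $u$ gives two linear conditions:
$$
\theta_1+\frac q2\theta_2=1,\qquad -\theta_1+\frac{p-1}{2}\theta_2=a-1.
$$
Adding them yields $\theta_2\frac{p+q-1}{2}=a$, so $\theta_2=\frac{2a}{p+q-1}$ and $\theta_1=1-\frac{qa}{p+q-1}$. Hence
$$
\frac{\theta_1+\theta_2}{2}=\frac12+\frac{a(2-q)}{2(p+q-1)},
$$
which is exactly the exponent in the statement. This consistency check confirms the approach.

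It remains to choose $a$ with $\theta_1,\theta_2\ge0$. When $p+q>1$ and $q>0$, both constraints hold for $0\le a\le\frac{p+q-1}{q}$. The endpoint $a=\frac{p+q-1}{q}$ gives $\theta_1=0$ and $\theta_2=\frac2q$, so $|\nabla u^a|=aB^{2/q}$. If moreover $p>1$, then $1<\frac{p+q-1}{q}$, so $a=1$ is admissible, with $\theta_1=\frac{p-1}{p+q-1}$ and $\theta_2=\frac{2}{p+q-1}$.

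If $p+q<1$, the sign of $p+q-1$ flips. Choosing $a<0$ gives $\theta_2>0$ and $\theta_1=1+\frac{q|a|}{1-p-q}>0$, so a suitable $a<0$ (for instance $a=p+q-1$) works with the same exponent formula. In the degenerate case $p+q=1$, $B=(|\nabla u|/u)^{q/2}$ is a power of $A$, and we take $a=0$, interpreted as $\nabla\ln u$, bounded via $A$ alone.

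There is no real analytic obstacle here. The only care needed is bookkeeping of signs so that $\theta_1,\theta_2\ge0$, since a negative exponent would turn an upper bound into a lower bound. One should also note that all constants depend only on $(n,p,q)$.
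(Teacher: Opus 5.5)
Your route is the same as the paper's. The paper gives no separate argument for Theorem \ref{t20}; it simply states that the estimate follows from Theorem \ref{t10}, which is exactly your factorization $|\nabla u^a|=|a|A^{\theta_1}B^{\theta_2}$ with $\theta_2=\frac{2a}{p+q-1}$ and $\theta_1=1-\frac{qa}{p+q-1}$. Your treatment of the case $p+q>1$ is correct, including the choices $a=\frac{p+q-1}{q}$ and, for $p>1$, $a=1$.

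There is, however, a sign slip in the case $p+q<1$. For $a<0$ one has $\frac{qa}{p+q-1}=\frac{q|a|}{1-p-q}>0$, so
\[
\theta_1=1-\frac{q|a|}{1-p-q},
\]
not $1+\frac{q|a|}{1-p-q}$. The admissible range is therefore $\frac{p+q-1}{q}\le a<0$, not every $a<0$.

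Your sample choice $a=p+q-1$ gives $\theta_1=1-q$, which is negative whenever $q>1$. This case is allowed by the hypotheses, e.g.\ $p=-1$, $q=\frac32$. There $|\nabla u^{a}|=|a|A^{1-q}B^2$, and this quantity is not controlled by upper bounds on $A$ and $B$, so the example does not follow from Theorem \ref{t10}.

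The fix is to take $a=\frac{p+q-1}{q}$ in this case as well. Then $\theta_1=0$ and $|\nabla u^a|=|a|B^{2/q}\le |a|\Lambda^{1/q}$. The exponent $\frac12+\frac{a(2-q)}{2(p+q-1)}=\frac1q$ matches the statement. So the single choice $a=\frac{p+q-1}{q}$ works uniformly for every $p+q\neq 1$ with $q>0$.
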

	When $q>2$, using Bernstein method, we derive a strong control lemma, which It plays a crucial role in proving Theorem \ref{lrm} for the case when \( q > 2 \). 
		\begin{thm}\label{sc}
		Let $(\M,g)$ be an n-dimensional Riemannian manifold with $Ric\ge-Kg$ and $K\ge 0$. Let $u$ be a positive solution of \eqref{pqle} on $B(x_0,2R)$ with $q> 2$ , then
		\begin{eqnarray}
			\sup\limits_{B(x_0,R)}\left(|\nabla u|^qu^{p-1}-\frac{2nq(p+q-1)}{q-2}\frac{|\nabla u|^2}{u^2}\right)\le C(n,p,q)\left(\frac{1}{R^2}+K\right).\nonumber
		\end{eqnarray}
	\end{thm}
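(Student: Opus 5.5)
We prove Theorem \ref{sc} by a direct Bernstein argument applied to the auxiliary function
$$
W:=|\nabla u|^qu^{p-1}-\Lambda\frac{|\nabla u|^2}{u^2},\qquad \Lambda:=\frac{2nq(p+q-1)}{q-2},
$$
localized by a standard cut-off. In terms of $f=\ln u$, set $w=|\nabla f|^2$ and $Y:=|\nabla u|^qu^{p-1}=w^{q/2}u^{p+q-1}$. The equation becomes
$$
\Delta f=-w-Y .
$$
Then $W=Y-\Lambda w$.

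\emph{Step 1: a differential inequality for $W$.}
\begin{itemize}
\item By the Bochner formula with $Ric\ge -Kg$ and $|\nabla^2 f|^2\ge \frac1n(\Delta f)^2$, we get
$$
\tfrac12\Delta w\ge \tfrac1n(w+Y)^2-\langle\nabla f,\nabla w\rangle-\langle\nabla f,\nabla Y\rangle-Kw .
$$
\item Writing $Y=w^{q/2}e^{(p+q-1)f}$, we compute $\Delta Y$ on $\Omega_r(u)$. It involves $\frac q2 w^{q/2-1}u^{p+q-1}\Delta w$, a squared-gradient term $\frac q2(\frac q2-1)w^{q/2-2}|\nabla w|^2u^{p+q-1}$, cross terms, and $(p+q-1)Y\Delta f=-(p+q-1)Y(w+Y)$.
\item For $q>2$ the coefficient $\frac q2(\frac q2-1)$ is positive, which gives a good term. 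The dominant contribution is $\frac q2 w^{-1}Y\Delta w\ge \frac{q}{n}w^{-1}Y(w+Y)^2+\cdots\approx \frac qn\frac{Y^3}{w}$ when $Y\gg w$, i.e. superquadratic growth in $Y$.
\item The bad terms, such as $-(p+q-1)Y^2$ and the first-order terms, are absorbed as follows. The choice of $\Lambda$ is made precisely so that at points where $W>0$, i.e. $Y>\Lambda w$, one has
$$
\tfrac qn\tfrac{Y^3}{w}\ \ge\ \tfrac{q\Lambda}{n}Y^2,
$$
which dominates $(p+q-1)Y^2$ together with the $-\Lambda\Delta w$ contribution, with a margin of a fixed fraction of $Y^2$. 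Indeed $\frac{q\Lambda}{n}=\frac{2q^2(p+q-1)}{q-2}$, and the factor $\frac{2q}{q-2}$ is what survives after the gradient cross terms are handled with Cauchy--Schwarz, using the good $|\nabla w|^2$ term.
\end{itemize}
The target is an inequality on $\{W>0\}\cap\Omega_r(u)$ of the form
$$
\Delta W\ge c\,W^2-C\frac{|\nabla W|^2}{W}-\langle b,\nabla W\rangle-CKW,
$$
with a drift $b$ controlled by $|\nabla f|\lesssim W^{1/2}$ or $Y^{1/2}$ up to harmless factors.

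\emph{Step 2: localization.}
\begin{itemize}
\item Take the Li--Yau/Calabi cut-off $\phi$ supported in $B(x_0,2R)$, equal to $1$ on $B(x_0,R)$, with $|\nabla\phi|^2/\phi\le C/R^2$ and $\Delta\phi\ge -C(1+\sqrt K R)/R^2$ via Laplacian comparison.
\item Evaluate $\phi W$ at a positive interior maximum point (Calabi's trick handles the cut locus). If $\nabla u=0$ there, then $W=0$, so we may assume the point lies in $\Omega_r(u)$.
\item At the maximum, $\nabla(\phi W)=0$ and $\Delta(\phi W)\le0$. Substituting the inequality from Step 1 gives
$$
c\,\phi W^2\le C\left(\tfrac1{R^2}+K\right)W+\text{drift terms}.
$$
\item The drift terms are handled by Young's inequality using $|\nabla f|^2\le Y/\Lambda$ on $\{W>0\}$ and $Y\le$ const$\cdot W$ there.
\end{itemize}
This yields $\phi W\le C(R^{-2}+K)$ at the maximum, hence on $B(x_0,R)$.

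The main obstacle is Step 1: organizing the many terms of $\Delta Y$ (the cross term $\langle\nabla w,\nabla f\rangle Y$, the $|\nabla w|^2$ term, and the term $\langle\nabla f,\nabla Y\rangle$ from Bochner) so that they close with exactly the constant $\Lambda$. I would first substitute $\nabla w$ from the critical-point relation $\nabla W=-W\nabla\phi/\phi$, i.e. $\nabla Y=\Lambda\nabla w+O(W|\nabla\phi|/\phi)$. This turns the mixed first-order terms into a quadratic form in $\nabla w$, whose positivity for $q>2$ is what fixes the factor $\frac{2q}{q-2}$.
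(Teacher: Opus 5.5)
Your set-up matches the paper's. $W$ is exactly the paper's $M=L-dH$ with $d=\Lambda$. Your Bochner inequality is Lemma \ref{l2} with $\beta=\sigma=0$. Eliminating $\nabla w$ through $(\frac q2 Z-\Lambda)\nabla w=\nabla W-lY\nabla f$ (with $Z=Y/w$, $l=p+q-1$) is precisely the paper's identity \eqref{47}.

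\textbf{The gap is in how you close the estimate.} You reduce everything to $W$: a good term $cW^2$, a curvature error $CKW$, and a drift controlled by $W^{1/2}$. In Step 2 you then invoke ``$Y\le \mathrm{const}\cdot W$ on $\{W>0\}$''. That inequality is false. Near $\partial\{W>0\}$ one has $W\to 0$ while $Y\approx \Lambda w$ stays positive; on $\{W>0\}$ you only know $w<Y/\Lambda$. The error terms that actually appear are not of size $W$:
\begin{itemize}
\item the curvature term is $-(qY-2\Lambda w)K\ge -qKY$;
\item the drift coefficient in front of $\nabla W$ is $\bigl(-(qZ+2)+\frac{2qlY}{qY-2\Lambda w}\bigr)\nabla f$, which carries the unbounded factor $Z$ and produces at the maximum point a term of size $Z\,Y\,w^{1/2}|\nabla\phi|$;
\item the remaining cut-off errors are of size $\frac{|\nabla\phi|^2}{\phi}(w+Y)$.
\end{itemize}
None of these can be absorbed by $c\,\phi W^2$. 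So the inequality you aim for in Step 1 is not what the computation produces, and it could not be closed anyway.

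\textbf{The fix, which is what the paper does, is to keep the full strength of the good term.} Bochner bounds $\Delta w$ only from below, so the term $-\Lambda\Delta w$ cannot be ``dominated'' on its own. You must first group $\frac q2 Z\Delta w-\Lambda\Delta w=(\frac q2 Z-\Lambda)\Delta w$. On $\{W>0\}$ one has $Z>\Lambda$, hence $\frac q2 Z-\Lambda\ge(\frac q2-1)Z>0$. This, not the sign of $\frac q2(\frac q2-1)$, is where $q>2$ is really used; the $|\nabla w|^2$ term can simply be discarded.

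Applying Bochner to this positive coefficient gives the good term $\frac2n(\frac q2-1)Z(w+Y)^2$.
\begin{itemize}
\item Half of it is at least $\frac1n(\frac q2-1)\Lambda Y^2=qlY^2$. This beats the bad zeroth-order terms $l\bigl(1+\frac{2ql}{(q-2)\Lambda}\bigr)Y^2=l(1+\frac1n)Y^2$.
\item The other half is $\gtrsim (w+Y)^2$ and also $\gtrsim Y^3/w$. It therefore absorbs $Z\,Y\,w^{1/2}|\nabla\phi|$ by Young, as well as $KY$ and $\frac{|\nabla\phi|^2}{\phi}(w+Y)$.
\end{itemize}
The outcome is $\phi(w+Y)(x^*)\le C(R^{-2}+K)$. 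Only then do you pass to $W$, via $\phi W\le \phi Y$.

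Finally, your argument implicitly needs $l>0$ so that $\Lambda>0$. The paper handles $l\le 0$ separately via Theorem \ref{t10} and Lemma \ref{control2}.
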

From	Theorem \ref{sc}, we have the following global estimate in Ricci nonnegative case.
	\begin{cor}\label{sc0}
			Let $(\M,g)$ be an n-dimensional Riemannian manifold with $Ric\ge 0$. Let $u$ be a positive solution of \eqref{pqle} on $\M$ with $q> 2$ and $p+q\ge 1$, then
			\begin{eqnarray}
				\sup\limits_{\M}|\nabla u|^{q-2}u^{p+1}\le \frac{2nq(p+q-1)}{q-2}.\nonumber
			\end{eqnarray}
	\end{cor}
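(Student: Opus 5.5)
The plan is to deduce Corollary \ref{sc0} from Theorem \ref{sc} by letting $R\to\infty$ and then absorbing the logarithmic-gradient term. Since $Ric\ge 0$, we may take $K=0$ in Theorem \ref{sc}. For any fixed $x\in\M$ and every $R>0$ we have $x\in B(x,R)$, and $u$ solves \eqref{pqle} on $B(x,2R)\subset\M$. Theorem \ref{sc} therefore gives
\begin{equation*}
|\nabla u|^q u^{p-1}(x)-\frac{2nq(p+q-1)}{q-2}\frac{|\nabla u|^2}{u^2}(x)\le \frac{C(n,p,q)}{R^2}.
\end{equation*}
Letting $R\to\infty$ yields the pointwise inequality
\begin{equation*}
|\nabla u|^q u^{p-1}\le \frac{2nq(p+q-1)}{q-2}\,\frac{|\nabla u|^2}{u^2}\quad\text{on } \M.
\end{equation*}

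Next, fix a point $x$. If $\nabla u(x)=0$, then $|\nabla u|^{q-2}u^{p+1}(x)=0$ because $q>2$, so the claimed bound holds trivially there. Note also that the right-hand side $\frac{2nq(p+q-1)}{q-2}$ is nonnegative since $p+q\ge1$ and $q>2$. If instead $|\nabla u|(x)>0$, we divide both sides of the pointwise inequality by $|\nabla u|^2u^{-2}>0$. This gives $|\nabla u|^{q-2}u^{p+1}(x)\le \frac{2nq(p+q-1)}{q-2}$. Taking the supremum over $x\in\M$ finishes the argument.

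The one point to check is that Theorem \ref{sc} applies on every ball $B(x_0,2R)$ of a complete manifold with constant $C(n,p,q)$ independent of $R$ and of the base point. This is exactly how the theorem is stated. There is no real obstacle, since the whole weight of the argument is carried by Theorem \ref{sc}; the corollary itself is a scaling limit followed by a division. If $p+q-1=0$, the same argument gives $|\nabla u|^{q-2}u^{p+1}\equiv 0$, i.e. $u$ is constant, which is consistent with the stated bound.
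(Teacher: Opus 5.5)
Your proposal is correct and is exactly the derivation the paper intends, which it states without writing out: take $K=0$ in Theorem \ref{sc}, let $R\to\infty$ to get $|\nabla u|^q u^{p-1}\le \frac{2nq(p+q-1)}{q-2}|\nabla\ln u|^2$ pointwise on $\M$, then divide by $|\nabla\ln u|^2$ wherever $\nabla u\neq 0$. Your handling of critical points (where $q>2$ makes the quantity vanish) and of the borderline case $p+q=1$ is also fine, as is your remark that completeness of $\M$ is needed for Theorem \ref{sc} to apply on every ball.
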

	The similar result as Corollary \ref{sc0} in the Euclidean case was obtained in Bidaut-Véron \cite{B} by Keller--Osserman comparison method and bootstrap method. The difference in our estimates is that the upper bound is computable, while hers is qualitative.
Starting from Corollary \ref{sc0}, as same as in \cite{B}, we can derive Theorem \ref{lrm} for the case when \( q > 2 \) by utilizing the properties of super-harmonic and sub-harmonic functions.

	\vspace{2mm}

	Next, we focus on various gradient estimates for the case when \( 0\le q < 2 \). In the three theorems below, we assume \(0 \leq q < 2\) by default.
	For the two-dimensional case, we derive a gradient-type estimate that imposes no restrictions on \(p\).
	\begin{thm}\label{2d}
		Let $(\mathcal{M}^2,g)$ be a 2-dimensional Riemannian manifold with $Ric\ge 0$. Assume that $u$ be a positive solution of \eqref{pqle} on $B(x_0,2R)$ with $q\in[0,2)$ and $p\in\mathbb{R}$. Then for any $0<r\le R$, there exists $\b=\b(p,q)\ge 0$ such that
		\begin{equation}\label{2de}
			\sup\limits_{B(x_0,R)}u^{-\b}|\nabla\ln u|^{2}\le \frac{C(p,q)\left(\min\limits_{\partial B(x_0,r)}u\right)^{-\b}}{Rr}.\nonumber
		\end{equation}
	\end{thm}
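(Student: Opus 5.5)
Set $v=u^{-\beta}$ for a parameter $\beta=\beta(p,q)\ge 0$ to be chosen, and run a Bernstein (maximum principle) argument on the auxiliary quantity $w=u^{-\beta}|\nabla\ln u|^2$. In dimension two the traceless part of the Hessian is controlled by the gradient direction alone. Indeed, for a $2\times 2$ symmetric matrix $A$ and a unit vector $e$ one has
\[
|A|^2\ge \tfrac12(\operatorname{tr}A)^2+2\Big(A(e,e)-\tfrac12\operatorname{tr}A\Big)^2 .
\]
This refinement of the inequality $|\nabla^2v|^2\ge\frac1n(\Delta v)^2$ from Section \ref{ss3} is what removes every restriction on $p$.

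\emph{Step 1 (differential inequality).} On the regular set $\Omega_r(u)$ I compute $\Delta w$ using the Bochner formula with $\mathrm{Ric}\ge 0$, the equation $\Delta u=-|\nabla u|^qu^p$, and its derivative $\langle\nabla\ln u,\nabla\Delta u\rangle$. The derivative produces terms in $\nabla^2u(\nabla u,\nabla u)$ with coefficient proportional to $q$. I then apply the 2D inequality above with $e=\nabla u/|\nabla u|$. The term $\nabla^2u(e,e)$ can be eliminated through $\langle\nabla w,\nabla u\rangle$, at the cost of a gradient term $\langle\nabla w,\cdot\rangle$ that vanishes at a maximum point.

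The aim is an inequality of the form
\[
\Delta w\ge -\langle b,\nabla w\rangle + c\,u^{\beta}w^2+(\text{nonnegative multiple of } |\nabla u|^qu^{p-1}w),
\]
with $c=c(p,q)>0$. The role of $\beta$ is to make the quadratic form in the two quantities $X=|\nabla\ln u|^2$ and $Y=|\nabla u|^qu^{p-1}$ nonnegative for every $p$. The $(p,q)$-dependent cross terms carry a factor depending on $p+\beta$ and $q$, so $\beta$ is chosen to absorb them. I expect $\beta=\max\{0,\text{linear expression in }p,q\}$, and this choice is the main algebraic obstacle. The aim is that the $Y^2$ and $XY$ coefficients come out nonnegative, with $q<2$ used exactly here.

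\emph{Step 2 (localization).} Multiply $w$ by a cutoff $\psi$ supported in $B(x_0,2R)$ and evaluate at a maximum point of $\psi w$. Because $\mathrm{Ric}\ge0$ in dimension $2$, the Laplacian comparison gives $\Delta\psi\ge -C/R^2$ and $|\nabla\psi|^2/\psi\le C/R^2$ in the barrier sense. At the maximum point one then has
\[
c\,u^{\beta}(\psi w)^2\le C\Big(\frac1{R^2}+\frac{|\nabla\ln u|}{R}\Big)\psi w .
\]
The weight $u^\beta$ is not scale free, and this is why the estimate involves $\min_{\partial B(x_0,r)}u$.

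\emph{Step 3 (using superharmonicity).} Since $\Delta u\le0$, $u$ is superharmonic, so $\min_{B(x_0,s)}u$ is attained on the boundary sphere and is nonincreasing in $s$. Along with the bound from Step 2, I would use the two-dimensional fact that for positive superharmonic $u$ the function $\min_{\partial B(x_0,s)}u$ is a concave function of $\ln s$. This controls the drop in $u$ from radius $r$ out to radius $2R$ and yields $u\gtrsim \min_{\partial B(x_0,r)}u$ (up to harmless factors) on the region where $\psi w$ is maximized. Inserting this lower bound for $u^{\beta}$ gives $\sup_{B(x_0,R)}w\le C(\min_{\partial B(x_0,r)}u)^{-\beta}/(Rr)$. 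The mixed factor $1/(Rr)$, rather than $1/R^2$, should come from pairing the cutoff scale $R$ with the inner scale $r$ at which $u$ is bounded below.

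Handling the weight $u^{\beta}$ at the maximum point, that is, converting the pointwise bound into one in terms of $\min_{\partial B(x_0,r)}u$, is the delicate step after the choice of $\beta$. If the concavity route fails, I would fall back on comparing $u$ with the harmonic function $\min_{\partial B(x_0,r)} u\cdot\frac{\ln(2R/|x|)}{\ln(2R/r)}$ on the annulus.
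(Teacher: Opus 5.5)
Your Steps 1–2 follow the paper's argument. The paper also applies the maximum principle to $\psi\,u^{-\beta}|\nabla\ln u|^2$ with the cutoff of Lemma~\ref{cut}. It obtains $\psi H(x^{\star})\le C(p,q)/R^2$ at the maximum point, where $H=|\nabla\ln u|^2$, and then only has to bound $u(x^{\star})$ from below.

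\textbf{The gap is Step 1.} It is the whole content of the theorem, and you leave it as an expectation. Your explanation of why it should work is also off: no two-dimensional refinement of the Hessian inequality is needed. Use Lemma~\ref{l2} with $n=2$, $\sigma=0$, and the shift equal to the weight exponent $\beta$; this is the classical $|\nabla^2 v|^2\ge\frac12(\Delta v)^2$ for $v=u^{-\beta}$. Set $w=u^{-\beta}H$, $L=|\nabla u|^qu^{p-1}$, $l=p+q-1$. With $\mathrm{Ric}\ge0$ one gets
\[
u^{\beta}\Delta w\ge(1+\beta)H^2+L^2+\bigl(2+(3-q)\beta-2l\bigr)HL-(2H+qL)\langle\nabla\ln u,\nabla\ln w\rangle .
\]
\begin{itemize}
\item What removes the restriction on $p$ is that for $n=2$ the coefficient $\frac{2}{n}(1+\beta)-\beta$ equals $1$ for every $\beta$, so the threshold $\beta<\frac{2}{n-2}$ disappears.
\item Hence any $\beta\ge\max(0,\frac{2l}{3-q})$ works; the paper takes $\max(1,\frac{2|l|}{3-q})$. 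The $HL$ coefficient is then at least $2$, and the right side is at least $H^2+L^2$ plus the gradient term.
\item Only $q<3$ is used here, not $q<2$.
\item Your refinement is harmless, since it discards less than the trace inequality, but it is not the mechanism.
\item Keep the $L^2$ term. Step 2 produces a term $qL\langle\nabla\ln u,\nabla\psi\rangle$, and your displayed Step 2 inequality omits it. It must be absorbed by $L^2$ or by a strictly positive $HL$ term; your target ``nonnegative multiple of $Lw$'' alone would not do this.
\end{itemize}

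\textbf{Your Step 3 is a genuinely different and sharper route.} The paper uses Lemma~\ref{lb}: on a surface with $\mathrm{Ric}\ge0$, $d(x_0,\cdot)^{-\alpha}$ is subharmonic for every $\alpha>0$. This gives $u(x^{\star})\ge C(\alpha)(r/R)^{\alpha}\min_{\partial B(x_0,r)}u$. The choice $\alpha=1/\beta$ (which needs $\beta>0$) turns $u(x^{\star})^{-\beta}C/R^2$ into $C(\min_{\partial B(x_0,r)}u)^{-\beta}/(Rr)$. So the factor $1/(Rr)$ is an artifact of the power barrier.

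Your logarithmic barrier $\min_{\partial B(x_0,r)}u\cdot\ln(2R/d)/\ln(2R/r)$, with $d=d(x_0,\cdot)$, is the $\alpha\to0$ limit of these.
\begin{itemize}
\item On a manifold it is not harmonic. However, $\Delta d\le 1/d$ gives $\Delta\ln d\le 0$ in the barrier sense, so it is subharmonic. That is the right direction for comparison with the superharmonic $u$ on $B(x_0,2R)\setminus B(x_0,r)$.
\item This fact, together with the monotonicity of $s\mapsto\min_{\partial B(x_0,s)}u$, is also what justifies your concavity claim outside $\mathbb{R}^2$.
\item Since $x^{\star}\in B(x_0,\tfrac32R)$, you get $u(x^{\star})\ge \frac{\ln(4/3)}{\ln(2R/r)}\min_{\partial B(x_0,r)}u$; inside $B(x_0,r)$ use the minimum principle.
\end{itemize}
Hence
\[
\sup_{B(x_0,R)}u^{-\beta}|\nabla\ln u|^2\le\frac{C\,\bigl(\ln(2R/r)\bigr)^{\beta}}{R^2}\Bigl(\min_{\partial B(x_0,r)}u\Bigr)^{-\beta}.
\]
This implies the stated bound, because $(\ln(2R/r))^{\beta}\le C(\beta)R/r$ for $0<r\le R$. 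It also allows $\beta=0$. With Step 1 filled in as above, your argument is complete.
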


In the theorems below, we assume \( n \geq 3 \) and \( p + q \geq \frac{n+3}{n-1} \geq \frac{n}{n-2} \) (these results are automatically implied by Theorem \ref{t10} otherwise). Under this condition, we derive the gradient-type estimate for equation \eqref{pqle}.
	
	\begin{thm}\label{t27}
		Let $(\M,g)$ be an n-dimensional Riemannian manifold with $Ric\ge 0$. Assume that $u$ be a positive solution of \eqref{pqle} on $B(x_0,2R)$ with $(p,q)\in  \mathbb{L}(n)$ and  $n\ge 3$,  where $\mathbb{L}(n)$ is given by Definition \ref{set}.
		Then for $0<r\le R$, there exists a $\b=\b(n,p,q)\in[0,\frac{2}{n-2})$  such that
		\begin{eqnarray}\label{17}
			\sup\limits_{B(x_0,R)}u^{-(1-\frac{q}{2})\b}|\nabla\ln u|^{2-q}\le C\left(R^{-(2-(n-2)\b)(1-\frac{q}{2})}\left(r^{n-2}\min_{\partial B(x_0,r)}u\right)^{-\left(1-\frac{q}{2}\right)\b}+R^{-\eta}\right),\nonumber
		\end{eqnarray}
		where $C=C(n,p,q)$ and $\eta=\frac{2-q}{p+q-1}\left(p+q-1-\left(1-\frac{q}{2}\right)\b\right)$.
	\end{thm}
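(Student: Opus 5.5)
The estimate in Theorem \ref{t27} will be obtained by the direct Bernstein method, applied to an auxiliary function built from the logarithmic gradient and a negative power of $u$. Write $w=\ln u$, so that $\Delta w=-|\nabla w|^2-|\nabla w|^q u^{p+q-1}$. Set
$$
\varphi=u^{-(1-\frac q2)\b}|\nabla w|^{2-q}=\left(u^{-\b}|\nabla w|^{2}\right)^{1-\frac q2}.
$$
The exponent $\b\in[0,\frac{2}{n-2})$ and the parameter $\sigma$ from the tensor $\T(\b,\sigma)$ of Section \ref{ss3} will be chosen to realise the admissible conditions \eqref{12} or \eqref{13}.

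\textbf{Step 1: a differential inequality for $\varphi$.} Starting from the Bochner formula for $|\nabla w|^2$, I would discard only the traceless part $\left|\T(\b,\sigma)-\frac{tr\T(\b,\sigma)}{n}g\right|^2\ge 0$, not $|\nabla^2 w-\frac{\Delta w}{n}g|^2$. This uses the identity of Section \ref{S2} for the modulus squared of the logarithmic gradient, together with the elliptic equations of Section \ref{S4} for such product auxiliary functions. On $\Omega_r(u)$ this should give, with $\mathrm{Ric}\ge 0$,
$$
\Delta\varphi-\langle b,\nabla\varphi\rangle\ge c_0\,u^{\frac{q}{2}\cdot\frac{2\b}{2-q}}\varphi^{\frac{4-q}{2-q}}\cdot(\text{lower order})+c_1\varphi\,|\nabla w|^{q}u^{p+q-1}-\cdots,
$$
where $b$ is a drift vector field. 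The key point is that the quadratic form in $(|\nabla w|^2,\ |\nabla w|^qu^{p+q-1},\ \langle\nabla w,\nabla|\nabla w|\rangle$-type cross terms$)$ must be positive definite. With $x=\frac{2-ny}{2(n-1)}$ and $y\sim\b$-related, positive definiteness is exactly the system of inequalities \eqref{12}/\eqref{13}. The supremum $\mathcal L(n,q)$ or $\mathcal H(n,q)$ then shows that, for $(p,q)\in\mathbb L(n)$, an admissible $y$ exists with $p+q-1<\frac{2-q}{n-2}+\frac{y(n-2)+2}{(n-2)(2-y)}$. The three regimes of $\mathbb L(n)$ need separate handling:
\begin{itemize}
\item for $q\le 1-\frac1{\sqrt{n-1}}$ the optimal tensor choice gives the sharp curve $V$;
\item for $q\in(1-\frac1{\sqrt{n-1}},1]$ one uses \eqref{12};
\item for $q>1$ one uses \eqref{13}, or $q\ge\frac53$, with a slightly different auxiliary function that avoids needing an $L^\infty$ bound at the maximum point.
\end{itemize}

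\textbf{Step 2: localisation.} Take a standard cutoff $\psi$ supported in $B(x_0,2R)$ with $|\nabla\psi|^2/\psi+|\Delta\psi|\le C/R^2$ (Laplacian comparison for $\mathrm{Ric}\ge0$). Then apply the maximum principle to $\psi\varphi$ at an interior maximum point, where $|\nabla u|>0$; if the maximum is not at a regular point, the estimate is trivial. The coercive term carries the weight $u^{-\b}$, so the bound at the maximum point involves a power of $u$ there. Here the nonlinear term $|\nabla w|^q u^{p+q-1}$ must be balanced by Young's inequality. This produces the two contributions on the right-hand side:
\begin{itemize}
\item the homogeneous term $R^{-\eta}$, coming from scaling $u\mapsto \lambda^{\frac{2-q}{p+q-1}}u(\lambda\cdot)$, which gives the exponent $\eta$;
\item a term requiring a lower bound for $u$ at the maximum point.
\end{itemize}

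\textbf{Step 3: lower bound for $u$.} Since $\Delta u\le 0$, $u$ is superharmonic. Comparison with the Green-type barrier $r^{n-2}|x-x_0|^{2-n}\min_{\partial B(x_0,r)}u$ on $B(x_0,2R)\setminus B(x_0,r)$, valid under $\mathrm{Ric}\ge 0$ via Laplacian comparison, gives $u\gtrsim R^{2-n}r^{n-2}\min_{\partial B(x_0,r)}u$ on $B(x_0,2R)$. Inserting this into the factor $u^{-\b}$ yields
$$
R^{-(2-(n-2)\b)(1-\frac q2)}\left(r^{n-2}\min_{\partial B(x_0,r)}u\right)^{-(1-\frac q2)\b}.
$$
Here $\b<\frac{2}{n-2}$ guarantees the positive power of $R^{-1}$.

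\textbf{Main obstacle.} The hardest part is Step 1: verifying that the admissibility system makes the relevant quadratic form positive with constants depending only on $(n,p,q)$, uniformly, while keeping $\b<\frac2{n-2}$. It is also delicate that the sign structure of the cross terms changes as $q$ passes $1$. I would first do the algebra with a general $\sigma$, then optimise $\sigma$ so the discriminant conditions reduce to \eqref{12}/\eqref{13}.
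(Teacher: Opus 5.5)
\textbf{There is a genuine gap: your auxiliary function cannot cover $\mathbb{L}(n)$.} Your $\varphi=(u^{-\beta}H)^{1-\frac q2}$ is the paper's $F$ with $d=0$ and no perturbation. For this function, the system \eqref{12}/\eqref{13} is \emph{not} the positivity condition of the dominant term.

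In \eqref{12}/\eqref{13}, $x$ is the tensor parameter $\sigma=\frac{2-n\theta}{2(n-1)}$. However, $y$ is not a $\beta$-parameter. It is $\theta=\frac{d(l-(1-\frac q2)\beta)}{1-\frac q2}$, the relative weight of the term $du^{p+q-1}$ in $F=u^{-(1-\frac q2)\beta}(H^{1-\frac q2}+du^{p+q-1})$. Moreover, the constant $\frac{1}{\sqrt n+1}$ and the terms $(q-1)y^2$ and $(\frac32 q-2)y^2$ come from a second tensor parameter $\tau$. This parameter enters only through the perturbation factor in \eqref{g}, via the second application of Lemma~\ref{l2} inside $\Delta L$.

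For $\varphi$ you are at $y=0$. The dominant part of $\Delta\varphi$ is then $(1-\frac q2)W(S_1H^2+S_2HL+S_3L^2)$, with $S_2=\frac4n(1+\beta)(1+\sigma)-2\beta\sigma-2l+(1-q)\beta$ and $S_3=A(\sigma)$. Take $q=0$, where $\mathbb{L}(n)$ contains every $l<\frac{4}{n-2}$:
\begin{itemize}
\item Requiring $S_1,S_2,S_3>0$ as in Lemma~\ref{p1} gives only $l<\frac{3}{n-2}$.
\item Asking merely for positive definiteness, with $\beta$ optimized and $\sigma$ chosen pointwise, still stays strictly below $\frac{4}{n-2}$. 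For $n=3$ the best you get is $l<1+\sqrt5<4$.
\end{itemize}
So the inequality of your Step 1 fails on a large part of $\mathbb{L}(n)$, already in the Gidas--Spruck case.

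\textbf{Adding $du^{p+q-1}$ alone does not repair this} (Remark~\ref{r4}). Two problems appear:
\begin{itemize}
\item Then $F>0$ where $\nabla u=0$, so the maximum of $\Phi F$ may sit outside $\Omega_r(u)$, where no equation for $F$ is available.
\item The term $-\frac q2(1-\frac q2)^{-1}(1+dZ)^2HW|\nabla\ln F|^2$ becomes, at the maximum, an error of order $W\frac{L^2}{H}\frac{|\nabla\Phi|^2}{\Phi}$. A quadratic dominant term $W\Phi(H+L)^2$ cannot absorb this when $H\ll L$.
\end{itemize}

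\textbf{The missing idea is the perturbation $G$ of \eqref{g}.} It vanishes at critical points of $u$, and its dominant part is the enhanced $\mathcal D\ge\kappa(1+Z)(H+L)^2$ of Lemmas~\ref{dk1} and~\ref{dk2}. The parameters are: $\rho$ small and $\varepsilon=d$ for $q\le 1$; $\rho=1$ and $\varepsilon$ small for $1<q<2$; and $\tau$ a two-valued simple function when $q\ge\frac53$. This yields $\Phi(1+Z)(H+L)(x^\star)\le CR^{-2}$.

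\textbf{This is also the real source of $R^{-\eta}$.}
\begin{itemize}
\item For $q\le1$ the bound gives $\Phi L^2\le CR^{-2}H$, hence $\Phi^{2-q}u^{2l}\le CR^{-2(2-q)}$ at $x^\star$. This is used to bound the part $d\,u^{l-(1-\frac q2)\beta}$ of $G$.
\item For $q>1$ that bound is unavailable. Instead, $\rho=1$ gives $G\le C(u^{-\beta}H)^{1-\frac q2}$, so no such term arises.
\end{itemize}
Your ``scaling plus Young'' explanation has nothing to act on, since $\varphi$ contains no pure power of $u$.

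\textbf{Step 3 needs a small correction.} It works once restricted to the annulus $B(x_0,\frac32R)\setminus B(x_0,R)$, where $x^\star$ must lie (otherwise $\Delta G(x^\star)>0$). The barrier lower bound degenerates near $\partial B(x_0,2R)$, so it does not hold on all of $B(x_0,2R)$ as you wrote.
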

	%\begin{rmk}
	%	When $n\ge 3$, $\e_1=2-(n-2)\b$ with $\b\in[0,\frac{2}{n-2})$ and $\a(n)=n-2$. 
	%\end{rmk}

	For $(p,q)\in \mathbb{BL}(n)$, then we have the following weaker gradient estimate, which implies the Liouville theorem for bounded solutions. 
	\begin{thm}\label{t28}
		Let $(\M,g)$ ($n\ge 3$) be an n-dimensional Riemannian manifold with $Ric\ge 0$. If  $u$ be a positive solution of \eqref{pqle} on $B(x_0,2R)$ with $(p,q)\in \mathbb{BL}(n)$, where $\mathbb{BL}(n)$ is given by Definition \ref{set}.
		Then for $0<r\le R$, there exist $\b=\b(n,p,q)\in[0,\frac{2}{n-2})$ and $\rho=\rho(n,p,q)\in(0,1)$ such that %$(1-\rho)(p+q-1)-(1-\frac{q}{2})\b>0$ and 
		\begin{eqnarray}\label{180}
			&&\sup\limits_{B(x_0,R)}\left(u^{-(1-\frac{q}{2})\b}|\nabla\ln u|^{2-q}+u^{(1-\rho)(p+q-1)-(1-\frac{q}{2})\b}|\nabla\ln u|^{(2-q)\rho}\right)\nonumber\\
			&&\le C\left(R^{-(2-(n-2)\b)(1-\frac{q}{2})}\left(r^{n-2}\min_{\partial B(x_0,r)}u\right)^{-\left(1-\frac{q}{2}\right)\b}+R^{-(2-q)\rho}\Vert u\Vert^{(1-\rho)(p+q-1)-(1-\frac{q}{2})\b}_{L^{\infty}(B(0,2R))}\right),\nonumber
		\end{eqnarray}
		where $C=C(n,p,q)$.
	\end{thm}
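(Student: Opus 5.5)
The plan is to run the direct Bernstein (maximum principle) method on an auxiliary function built from the two quantities on the left-hand side, and to feed it the refined Bochner identity for $|\nabla\ln u|^2$ involving the two-parameter tensor $\T(\b,\sigma)$ rather than the classical Bochner inequality. Write $f=\ln u$ and $Z=|\nabla u|^{q-2}u^{p+q+1}$ on the regular set $\Or(u)$. I would consider
$$
G=u^{-(1-\frac q2)\b}|\nabla f|^{2-q}+\lambda\,u^{(1-\rho)(p+q-1)-(1-\frac q2)\b}|\nabla f|^{(2-q)\rho},
$$
with $\b\in[0,\frac{2}{n-2})$, $\rho\in(0,1)$ and $\lambda>0$ to be fixed. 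Both terms have the same scaling as $|\nabla f|^{2-q}u^{-(1-\frac q2)\b}$ up to a power of $u$. The second term is a power interpolation between $|\nabla f|^{2-q}$ and $u^{p+q-1}$, using the identity $|\nabla u|^q u^{p-1}=|\nabla f|^{q}u^{p+q-1}$.

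The first step is to compute $\Delta G$ on $\Or(u)$ from the elliptic equation for $|\nabla f|^2$ of Section \ref{S2} and its generalization in Section \ref{S4}. In this computation I discard only the traceless part $\bigl|\T(\b,\sigma)-\frac{tr(\T(\b,\sigma))}{n}g\bigr|^2$ and use $Ric\ge 0$. The goal is an inequality of the form
$$
\Delta G-\langle b,\nabla G\rangle\ \ge\ c_0\,u^{(1-\frac q2)\b}G^{2}-(\text{terms controlled by }G),
$$
with $b$ a vector field of size $\lesssim|\nabla f|$. The coefficients are quadratic forms in the three quantities $|\nabla f|^2$, $|\nabla u|^qu^{p-1}$ and the $\nabla|\nabla f|$-term.

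The key algebraic step, which I expect to be the main obstacle, is choosing $\sigma$, $\b$, $\rho$ and the parameters $x,y$ so that these quadratic forms are positive definite exactly when $(p,q)\in\mathbb{BL}(n)$. Positivity should reduce to the three inequalities in \eqref{12}, with $y$ tied to $\b$ through $p+q-1<\frac{2-q}{n-2}+\frac{y(n-2)+2}{(n-2)(2-y)}$. For $q\ge\frac32$ the bounded case should be simpler: one takes $\rho$ small, and the dominant term $|\nabla f|^{q}u^{p+q-1}$ absorbs the rest without the $\sigma$ correction. For $q\le 1-\frac{1}{\sqrt{n-1}}$ I would take $\sigma=\frac{-q}{n-(n-1)q}$ and $\b$ close to $\frac{2}{n-2}$, mimicking the extremal solution \eqref{ss}. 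To get the first-order cross terms I would try Cauchy--Schwarz with weights, determined by completing squares in the Hessian contribution first. Then I would check the discriminant conditions, which are the appendix algebra.

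Next comes localization. Take a standard cutoff $\psi$ supported in $B(x_0,2R)$ with $|\nabla\psi|^2/\psi+|\Delta\psi|\le C/R^2$ (Laplacian comparison under $Ric\ge0$, Calabi's trick at the cut locus), and evaluate at a maximum point of $\psi G$. At that point the inequality gives
$$
\psi G\le C\,R^{-2}u^{-(1-\frac q2)\b}\cdot(\text{power of }G)\ +\ C R^{-(2-q)\rho}u^{(1-\rho)(p+q-1)-(1-\frac q2)\b}.
$$
I would bound the second factor by $\Vert u\Vert_{L^\infty(B(0,2R))}^{(1-\rho)(p+q-1)-(1-\frac q2)\b}$, after choosing $\rho$ so that this exponent is positive. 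This is exactly where boundedness enters, and the reason the region enlarges to $\mathbb{BL}(n)$.

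The negative power $u^{-(1-\frac q2)\b}$ at the maximum point still has to be controlled. Since $\Delta u\le0$, $u$ is superharmonic, and comparison with the Green-type function $r^{n-2}|x|^{2-n}$ on $B(x_0,2R)\setminus B(x_0,r)$ gives $u\gtrsim\bigl(r^{n-2}\min_{\partial B(x_0,r)}u\bigr)R^{2-n}$ on $B(x_0,2R)$. Substituting this yields the factor $R^{-(2-(n-2)\b)(1-\frac q2)}\bigl(r^{n-2}\min_{\partial B(x_0,r)}u\bigr)^{-(1-\frac q2)\b}$. Since $\psi\equiv1$ on $B(x_0,R)$, this gives the stated estimate.
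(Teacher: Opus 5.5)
Your outer architecture is the paper's. It consists of the refined identity of Lemma~\ref{l2}, a $\rho$-interpolated perturbation of $F=u^{-(1-\frac q2)\beta}(H^{1-\frac q2}+du^{l})$ that vanishes at critical points and on constants, a cut-off maximum principle, and $\Vert u\Vert_{L^\infty}$ to absorb $u^{(1-\rho)l-(1-\frac q2)\beta}$ once that exponent is positive. It finishes with Lemma~\ref{lb} at $x^{\star}$; that lower bound holds only on $B(x_0,\frac32R)$, not on all of $B(x_0,2R)$, so the cut-off must be supported there. Your parameter choice for $q\le 1-\frac{1}{\sqrt{n-1}}$ also matches the paper's.

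The gap is the step you only announce: a lower bound for the dominant part of $\Delta G$ that absorbs every gradient term for all $(p,q)\in\mathbb{BL}(n)$. In the paper this is Lemma~\ref{eg} plus Lemma~\ref{dk1}, giving $\mathcal{D}\ge\kappa(1+Z)(H+L)^2$. What you sketch for it is the $q=0$ template and does not fit $q>0$:
\begin{itemize}
\item The target term $c_0u^{(1-\frac q2)\beta}G^2$ has the homogeneity of $\Delta G$ under $u\mapsto\lambda^{\frac{2-q}{l}}u(\lambda\,\cdot)$ only when $q=0$. The correct leading size is $W(H+L)^2$ with $W=u^{-(1-\frac q2)\beta}H^{-\frac q2}$.
\item The drift is not $O(|\nabla f|)$. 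Its coefficients contain $(2\tau-q)Z$ and $(k+l)dZ$, with $Z=L/H$ unbounded.
\item For your sum $W(H+\lambda H^{\rho}L^{1-\rho})$, the expansion carries fractional weights such as $Z^{1-\rho}$ (from $\rho H^{\rho-1}L^{1-\rho}\Delta H$). It is therefore not a family of quadratic forms, and nothing makes the outcome \eqref{12}. Those inequalities are what the paper extracts from the $\rho\to0$ expansions of $I_7,I_8,I_9$ (Lemma~\ref{a3}) for its product form $G=W(H+dL)(\frac{H}{H+\varepsilon L})^{\rho}$ with $\varepsilon=d$, after one weighted Cauchy--Schwarz. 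You would have to redo that computation for your function.
\end{itemize}

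Your one concrete prediction, for $q\ge\frac32$, fails. As $\rho\to0$ your $G$ tends to $F$ with $d=\lambda$. After discarding the traceless part, the dominant part of $\Delta G$ where $\nabla G=0$ converges, uniformly on $\{Z\le M\}$, to $(1-\frac q2)W(S_1H^2+S_2HL+S_3L^2)$ from Lemma~\ref{cl}, with $\theta=\lambda(l-(1-\frac q2)\beta)/(1-\frac q2)$. Nonnegativity of this form needs $S_3\ge0$ and $S_2\ge-2\sqrt{S_1S_3}$. Since $S_2=(\theta-2)l+O(1)$ and $l$ is unbounded in $\mathbb{BL}(n)\cap\{q\ge\frac32\}$, this forces $\theta\ge2-O(1/l)$. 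With $\sigma=0$ one then gets $S_3=\frac2n+(q-1)\theta-\frac q2\theta^2\le\frac2n-2+O(1/l)<0$. So the $L^2$-coefficient has the wrong sign, and $|\nabla f|^qu^{p+q-1}$ absorbs nothing.

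In fact at $\theta=2$ one has $S_3=-2(1-\frac1n)(1+\sigma)^2$, so $\sigma\equiv-1$ is forced, and even then $S_3=0$. This is the hardest case, not the simplest: all positivity must come from the $O(\rho)$ coefficients $I_8,I_9,I_{10}$. The paper obtains it only by taking $\tau$ to be the simple function $-\mathbbm{1}_{\{H>\chi\rho dL\}}+\tau_0\mathbbm{1}_{\{H\le\chi\rho dL\}}$, and that is exactly where $q\ge\frac32$ is used.

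The missing piece is therefore the positivity lemma itself, for a specific auxiliary function and with these three parameter regimes. Without it the proposal does not prove the estimate.
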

	
	%Before we go further, we give a remark for these estimates appeared in Theorem \ref{kt}, Theorem \ref{kt0} and Theorem \ref{wkt0}.

%	\begin{rmk}\label{r1}
	%	We use the direct Bernstein method to derive these estimates. For estimate \eqref{17} and \eqref{180}, we use the same auxiliary function; and for estimate \eqref{18}, we use another function. Their main differece lies in the selection of $\rho$ (concretely, see the auxiliary function \eqref{g}). For proving  Theorem \ref{kt} and Theorem \ref{wkt0}, the parameter $\rho$ is chose sufficiently small. For proving Theorem \ref{kt0}, we choose $\rho=1$.	The main reason for this situation is that when $q\le 1$, we automatically obtain the universal $L^{\infty}$ estimate of solution at the maximal value point of related auxiliary function and this fact fails in our proof when $q>1$ and we need the extra $L^{\infty}$ estimate of solutions in Theorem \ref{wkt0}.
%	\end{rmk}

	\section{A fundamental lemma}\label{S2}
	In this section, we derive the elliptic equation for the squared norm of the logarithmic gradient associated with the positive solution to equation \eqref{pqle}. 
	Let $u$ be a positive solution of \eqref{pqle} on $\O\subset\M$, define 
	\begin{equation}\label{21}
		H:=|\nabla \ln u|^2,\quad l:=p+q-1,\quad L:=|\nabla \ln u|^q u^{l}
	\end{equation}
	 and define
\begin{equation}\label{22}
	 \quad Z:=\frac{L}{H} \qquad\text{on} \quad \O_r(u).
\end{equation}

Then we give the equation of $H$, which is the core of this section.
%The following lemma is the core of this section, which  plays an important role in deriving the Liouville theorem. %From the proof of Theorem \ref{Liou}, one will see that directly applying the classical Bochner formula is not optimal for equation \eqref{pqle}. 
\begin{lem}\label{l2}
	Let $u$ be a positive solution of \eqref{pqle} on $\O\subset\M$ and define $H,l,L,Z$ as \eqref{21} and \eqref{22} respectively. Then for any real number $\b$ and real-valued function $\sigma$ on $\O$, we have
	\begin{eqnarray}\label{23}
		 \Delta H&=&A(\beta) H^2+A(\sigma) L^2+B(\beta,\sigma) H L\nonumber\\
		 &&+[2(\beta-1) H+(2 \sigma-q) L]\langle\nabla\ln u, \nabla \ln H\rangle\nonumber\\
		 &&	+2\left|\T(\b,\sigma)-\frac{tr(\T(\b,\sigma))}{n}  g\right|^2+2 Ric(\nabla \ln u, \nabla\ln u)
	\end{eqnarray}
on  $\Or(u)$, where $A,B$ are two functions defined on $\mathbb{R}$ and $\mathbb{R}^2$ respectively and $\T(\b,\sigma)$ is a second order symmetric covariant tensor, whose definitions are as follows.
\begin{eqnarray}
	 A(x):&=&\frac{2}{n}(1+x)^2-2 x^2, \label{A}\\
	B(x, y):&=&\frac{4}{n}(1+x)(1+y)-4 x y-2 l,\label{B} \\
	\T(\b,\sigma)&=&u^{-1} \nabla^2 u-[(1+\beta)+\sigma Z] d \ln u \otimes d \ln u .
\end{eqnarray}
\end{lem}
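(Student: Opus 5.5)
The plan is to apply the Bochner formula to $w:=\ln u$ on $\Or(u)$ and then complete a square around the traceless part of $\T(\b,\sigma)$. The first step rewrites the equation \eqref{pqle} in terms of $w$. Since $\nabla u=u\nabla w$, we have $\Delta u/u=-|\nabla u|^qu^{p-1}=-|\nabla w|^qu^{p+q-1}=-L$. Hence
$$\Delta w=\frac{\Delta u}{u}-|\nabla w|^2=-H-L .$$
Also, on $\Or(u)$ the function $L=H^{q/2}u^{l}$ is smooth with $\nabla L=L\left(\tfrac q2\nabla\ln H+l\nabla w\right)$. Both this expression and $Z=L/H$ require $H>0$, which is why the identity is only asserted on $\Or(u)$.

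Next apply $\frac12\Delta H=|\nabla^2 w|^2+\langle\nabla w,\nabla\Delta w\rangle+Ric(\nabla w,\nabla w)$. The gradient term gives
$$2\langle\nabla w,\nabla\Delta w\rangle=-2H\langle\nabla w,\nabla\ln H\rangle-qL\langle\nabla w,\nabla\ln H\rangle-2lHL .$$
For the Hessian term, note that $\T(\b,\sigma)=\nabla^2 w-c\,dw\otimes dw$ with $c:=\b+\sigma Z$, so that $cH=\b H+\sigma L$. This is a pointwise algebraic decomposition, so $\sigma$ can be any function, since it is never differentiated. Write $\nabla^2w=\T+c\,dw\otimes dw$ and split $\T=\mathring{\T}+\frac{tr\T}{n}g$, where $\mathring{\T}:=\T-\frac{tr(\T)}{n}g$. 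Expanding gives
$$|\nabla^2 w|^2=|\mathring{\T}|^2+\frac{(tr\T)^2}{n}+2c\,\T(\nabla w,\nabla w)+c^2H^2 .$$

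Two identities then finish the computation. The first is $\T(\nabla w,\nabla w)=\nabla^2w(\nabla w,\nabla w)-cH^2=\frac12\langle\nabla w,\nabla H\rangle-cH^2$. The second is
$$tr\T=\Delta w-cH=-(1+\b)H-(1+\sigma)L .$$
Substituting both yields
$$|\nabla^2w|^2=|\mathring{\T}|^2+\frac1n\big((1+\b)H+(1+\sigma)L\big)^2-(\b H+\sigma L)^2+(\b H+\sigma L)\langle\nabla w,\nabla\ln H\rangle .$$

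Doubling this and adding the gradient and Ricci terms, we collect coefficients. The $H^2$ coefficient is $\frac2n(1+\b)^2-2\b^2=A(\b)$, and the $L^2$ coefficient is $A(\sigma)$. The $HL$ coefficient is $\frac4n(1+\b)(1+\sigma)-4\b\sigma-2l=B(\b,\sigma)$. The first-order term becomes $[2(\b-1)H+(2\sigma-q)L]\langle\nabla\ln u,\nabla\ln H\rangle$. This is exactly \eqref{23}.

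No step is conceptually hard. The main risk is bookkeeping: getting the cross term $2c\,\T(\nabla w,\nabla w)$ right, since it both cancels the $-2c^2H^2$ part and produces the $\langle\nabla w,\nabla\ln H\rangle$ term with coefficient $2(\b H+\sigma L)$. I would double-check this step by testing the identity on a radial example.
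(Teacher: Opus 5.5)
Your proof is correct and follows essentially the same route as the paper: the Bochner formula, then completing the square with $\T(\beta,\sigma)$ and splitting off its traceless part, with the trace identity $\mathrm{tr}\,\T=-(1+\beta)H-(1+\sigma)L$ producing $A(\beta)$, $A(\sigma)$ and $B(\beta,\sigma)$. The only difference is that you apply Bochner to $w=\ln u$, so that $\T=\nabla^2 w-(\beta+\sigma Z)\,dw\otimes dw$ and $\Delta w=-H-L$, whereas the paper applies it to $u$ and expands $\Delta(u^{-2}|\nabla u|^2)$ by the Leibniz rule; your choice of variable makes the bookkeeping somewhat cleaner.
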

\begin{proof}
	By Leibniz rule, Bochner formula and the definitions of $H,l,L$, we have
		\begin{eqnarray}\label{27}
		\Delta H&=&u^{-2} \Delta|\nabla u|^2+|\nabla u|^2 \Delta\left(u^{-2}\right)+2\left\langle\nabla\left(u^{-2}\right), \nabla|\nabla u|^2\right\rangle \nonumber\\
		&=&2 u^{-2}\left(|\nabla u|^2+\langle\nabla \Delta u, \nabla u\rangle+Ric(\nabla u, \nabla u)\right)\nonumber\\
		&&+\left(-2 u^{-3} \Delta u+6 u^{-4}|\nabla u|^2\right)|\nabla u|^2 -4 u^{-3}\left\langle\nabla u, \nabla|\nabla u|^2\right\rangle \nonumber\\
		&=&2 u^{-2}\left(\left|\nabla^2 u\right|^2+Ric(\nabla u, \nabla u)\right)-2 u^{-2}\left\langle\nabla\left(|\nabla u|^q u^p\right), \nabla u\right\rangle \nonumber\\
		&& +\left(2 u^{-3}\left| \nabla u\right|^q u^p+6 u^{-4}|\nabla u|^2\right)|\nabla u|^2-4 u^{-3}\left\langle\nabla u, \nabla|\nabla u|^2\right\rangle \nonumber\\
		&=&2 u^{-2}\left(\left|\nabla^2 u\right|^2+Ric(\nabla u, \nabla u)\right)-2 (p-1)|\nabla \ln u |^2 \cdot|\nabla u|^q u^{p-1} \nonumber\\
		&&-q u^{p-2}|\nabla u|^{q-2}\left\langle\nabla|\nabla u|^2, \nabla u\right\rangle+6|\nabla \ln u|^4-4 u^{-3}\left\langle\nabla u, \nabla|\nabla u|^2\right\rangle \nonumber\\
		&=&2 u^{-2}\left(\left|\nabla^2 u\right|^2+Ric(\nabla u, \nabla u)\right)-2 l H L \nonumber\\
		&&-2 H^2-(4 H+q L)\langle\nabla \ln H, \nabla \ln u\rangle
	\end{eqnarray}
on $\O$. Notice that for any real number $\b$ and real-valued function $\sigma$ on $\O$, we have
	\begin{eqnarray}\label{28}
		2 u^{-2}\left|\nabla^2 u\right|^2&= & 2\left|u^{-1} \nabla^2 u-[(1+\beta)+\sigma Z] d \ln u \otimes d\ln u\right|^2 \nonumber\\
		&& -2[(1+\beta) H+\sigma L]^2+4 u^{-3}((1+ \beta)+\sigma Z) \nabla^2 u(\nabla u, \nabla u)\nonumber \\
		&= & 2\left|u^{-1} \nabla^2 u-[(1+ \beta)+\sigma Z] d \ln u \otimes d \ln u\right|^2 \nonumber\\
		&& -2[(1+\beta) H+\sigma L]^2+2 u^{-3}((1+\beta)+\sigma Z)\left\langle\nabla|\nabla u|^2, \nabla u\right\rangle\nonumber \\
		&= & 2\left|u^{-1} \nabla^2 u-[(1+\beta)+\sigma Z] d \ln u \otimes d \ln u\right|^2-2[(1+\beta) H+\sigma L]^2\nonumber \\
		&& +2((1+\beta) H+\sigma L)\langle\nabla \ln u, \nabla \ln H\rangle+4((1+\beta) H+\sigma L) H
	\end{eqnarray}
	on $\Or(u)$.	Substituting \eqref{28} into \eqref{27} yields
	\begin{eqnarray}\label{29}
		\Delta H&=&  2\left|u^{-1} \nabla^2 u-[(1+ \beta)+\sigma Z] d \ln u \otimes d \ln u\right|^2-2 \beta^2 H^2-2 \sigma^2 L^2 \nonumber\\
		&&-(4 \beta \sigma +2l) H L +[2(\beta-1) H+(2 \sigma-q) L]\langle\nabla \ln H, \nabla \ln u\rangle 
	\end{eqnarray}
	on $\Or(u)$. If we define the second order covariant tensor (on $\Or(u)$)
	$$
	\T(\b,\sigma)=u^{-1} \nabla^2 u-[(1+\beta)+\sigma Z] d \ln u \otimes d \ln u,
	$$
	then \eqref{29} is equivalent to
	\begin{eqnarray}\label{210}
			\Delta H&=&  2\left|\T(\b,\sigma)-\frac{tr(\T(\b,\sigma))}{n} g\right|^2+\frac{2}{n}(t r \T(\b,\sigma))^2-2 \beta^2 H^2-2 \sigma^2 L^2\nonumber \\
		& &-(4 \beta \sigma +2l) H L+[2(\beta-1) H+(2 \sigma-q) L]\langle\nabla\ln H, \nabla \ln u\rangle 
	\end{eqnarray}
	on $\Or$, where $tr(\T(\b,\sigma))$ is the trace of the tensor $\T(\b,\sigma)$ under the Riemannian metric. Direct computation gives $$tr(\T(\b,\sigma))=-(1+\b)H-(1+\sigma)L,$$ then substituting it into \eqref{210} yields our desired equation \eqref{23}.

\end{proof}
	
	\begin{rmk}
		In most cases, \(\sigma\) is chosen to be a constant function; however, when considering the a priori estimates for equation \eqref{pqle} in the case where \( 1 < q < 2 \), we need \( \sigma \) to be a simple function.

	\end{rmk}
	
	\section{Proof of Theorem \ref{t10} and \ref{sc}}\label{S3}
	In this section, we prove Theorems \ref{t10} and \ref{sc}. Furthermore, using Theorem \ref{sc}, we derive the part of Theorem \ref{lrm} in \( q > 2 \) case.  
	 First,  by classical Laplacian comparison theorem and  the standard construction argument, we have the good  cut-off function.

	\begin{lem}[Existence of cut-off function]\label{cut}
		Let $(\M,g)$ be an $n$-dimensional complete Riemannian manifold with $Ric\ge-Kg$ and $K\ge 0$. For  any $R>0$, there exists a Lipschtiz function $\Phi(x)$ in $B(x_0,2R)$ such that\\
		(i) $\Phi(x)=\phi(d(x_0,x))$, where  $d(x_0,\cdot)$ is the distance function from $x_0$ and  $\phi$ is a non-increasing function on $[0,\infty)$ and
		\begin{eqnarray}
			\Phi(x)=
			\begin{cases}
				1\qquad\qquad\qquad\,\text{if}\qquad x\in B(x_0,R)\nonumber\\
				0\qquad\qquad\qquad\,\text{if}\qquad x\in B(x_0,2R)\setminus B(x_0,\frac{3}{2}R).
			\end{cases}
		\end{eqnarray}
		(ii) On $\{x\in B(x_0,2R):\Phi(x)>0\}$,
		\begin{equation}
			\frac{|\nabla \Phi|}{\Phi^{\frac{1}{2}}}\le\frac{C}{R}.\nonumber
		\end{equation}
		(iii)
		\begin{equation}
			\Delta \Phi\ge-\frac{C\sqrt{nK}}{R} \coth\left(R \sqrt{\frac{K}{n}}\right)-\frac{C}{R^2}\ge-C(n)\frac{1+\sqrt{K}R}{R^2}\nonumber
		\end{equation}
		holds on $B(x_0,2R)$ in the distribution sense and pointwise outside cut locus of $x_0$. Here, $C$ is a universal constant.
	\end{lem}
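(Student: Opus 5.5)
We construct $\Phi$ as a radial cut-off composed with the distance function, $\Phi(x)=\phi(d(x_0,x))$, and verify (ii) and (iii) using the Laplacian comparison theorem. First fix a one-variable profile $\psi\in C^2([0,\infty))$ with $\psi\equiv1$ on $[0,1]$, $\psi\equiv0$ on $[\frac32,\infty)$, $\psi$ non-increasing, and satisfying
$$
|\psi'|\le C\psi^{1/2},\qquad \psi''\ge -C\psi^{1/2}\ \ (\text{indeed } |\psi''|\le C).
$$
A standard way to get this is to take $\psi=\chi^2$ for a smooth non-increasing $\chi$ with the same support properties. Then $\psi'=2\chi\chi'$ gives $|\psi'|\le 2\|\chi'\|_\infty\psi^{1/2}$, and $\psi''=2\chi'^2+2\chi\chi''\ge -2\|\chi''\|_\infty\psi^{1/2}$. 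Set $\phi(t)=\psi(t/R)$. Then $\Phi=\phi(d(x_0,\cdot))$ is Lipschitz, equals $1$ on $B(x_0,R)$ and vanishes outside $B(x_0,\frac32R)$, which is (i). Since $|\nabla d|=1$ almost everywhere, $|\nabla\Phi|=R^{-1}|\psi'|\le CR^{-1}\Phi^{1/2}$, which is (ii).

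For (iii), outside the cut locus $\Delta\Phi=\phi''(d)+\phi'(d)\Delta d$. The Laplacian comparison theorem under $\mathrm{Ric}\ge-Kg$ gives
$$
\Delta d\le (n-1)\sqrt{\tfrac{K}{n-1}}\coth\!\Big(d\sqrt{\tfrac{K}{n-1}}\Big),
$$
and we only need this on the support of $\phi'$, where $R\le d\le \frac32R$. Because $\phi'\le0$, this upper bound on $\Delta d$ turns into a lower bound: $\phi'\Delta d\ge -\frac{C}{R}\sqrt{(n-1)K}\coth(R\sqrt{K/(n-1)})$, using that $\coth$ is decreasing, so its value at $d\ge R$ is at most its value at $R$. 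Adjusting $n-1$ versus $n$ only changes the constants, and one can write the bound in the stated form with $\sqrt{nK}$ and $\coth(R\sqrt{K/n})$. Together with $\phi''\ge -C/R^2$ this yields the first inequality in (iii). The second inequality follows from $x\coth x\le 1+x$, which gives $\sqrt{K}\coth(R\sqrt{K/n})\le C(n)(1/R+\sqrt K)$.

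The main obstacle is passing from the pointwise bound to the distributional inequality across the cut locus. I would use Calabi's argument: the comparison $\Delta d\le\ldots$ holds on all of $M$ in the distributional (equivalently barrier, or viscosity) sense, since the singular part of $\Delta d$ on the cut locus is a nonpositive measure. Multiplying by the nonpositive factor $\phi'(d)$ reverses that sign, so the singular part contributes nonnegatively to $\Delta\Phi$. To make this rigorous, test against a nonnegative $\varphi\in C_c^\infty$ and use $\int\Phi\Delta\varphi=-\int\phi'(d)\langle\nabla d,\nabla\varphi\rangle$. Then approximate via the exhaustion of $M\setminus\mathrm{Cut}(x_0)$ by star-shaped domains, whose boundary terms have the right sign, or alternatively use the smoothness of $d$ in the segment domain together with the fact that the cut locus has measure zero. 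Completeness is assumed in the statement, so the comparison theorem applies on $B(x_0,2R)$.
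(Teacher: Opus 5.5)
Your construction ($\Phi=\chi^2(d/R)$, the Laplacian comparison applied only on the annulus where $\phi'\neq 0$, and Calabi's sign argument to pass across the cut locus) is correct. It is exactly the standard argument the paper invokes without writing out (``classical Laplacian comparison theorem and the standard construction''). One caution: the cut locus having measure zero does not by itself give the distributional inequality. You still need the singular part of $\Delta d$ to be nonpositive, for instance via the star-shaped exhaustion with $\langle\nabla d,\nu\rangle\ge 0$ on the boundaries, so treat that as the operative step rather than as one of two interchangeable alternatives.
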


	Next, using Lemma \ref{l2}, we derive the following elliptic inequality. 
	\begin{lem}\label{l3}
		Let $(\M,g)$ be an n-dimensional Riemannian manifold with $Ric\ge-Kg$ and $K\ge 0$. Let $u$ be a positive solution of \eqref{pqle} on $B(x_0,R)\subset\M$ with $q\ge 0$ and $p+q<\frac{n+3}{n-1}$ and $H,l,L$ are defined by \eqref{21}. Then there exist $\b=\b(n)>0$ and $\e=\e(n,p,q)>0$ such that
		\begin{eqnarray}
			\Delta H\ge \e\left(H^2+ L^2\right)-2KH+[2(\beta-1) H-q L]\langle\nabla\ln u, \nabla \ln H\rangle\nonumber
		\end{eqnarray}
	on $\{x\in B(x_0,R): |\nabla u|(x)>0\}$.
	\end{lem}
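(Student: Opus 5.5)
The plan is to apply Lemma \ref{l2} with the constant choice $\sigma\equiv 0$, so that the first-order term becomes $[2(\b-1)H-qL]\langle\nabla\ln u,\nabla\ln H\rangle$, exactly as in the statement. In \eqref{23} I discard the nonnegative term $2\left|\T(\b,0)-\frac{tr(\T(\b,0))}{n}g\right|^2$. I bound the curvature term by $2Ric(\nabla\ln u,\nabla\ln u)\ge -2KH$. What remains is to show that, for a suitable $\b=\b(n)$, the quadratic form
$$Q(H,L)=A(\b)H^2+A(0)L^2+B(\b,0)HL$$
satisfies $Q\ge \e(H^2+L^2)$ for all $H,L\ge 0$, with some $\e=\e(n,p,q)>0$.

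Here $A(0)=\frac{2}{n}$, $A(\b)=\frac{2}{n}(1+\b)^2-2\b^2$ and $B(\b,0)=\frac{4}{n}(1+\b)-2l$. Since $H,L\ge0$, it suffices to have $A(\b)>0$ together with $B(\b,0)>-2\sqrt{A(\b)A(0)}$. (When $B\ge 0$ the mixed term only helps.) The second condition is equivalent to
$$l<g(\b):=\frac{2(1+\b)}{n}+\frac{2}{n}\sqrt{(1+\b)^2-n\b^2}.$$
So I optimize over $\b$. Setting $g'(\b)=0$ gives $\sqrt{(1+\b)^2-n\b^2}=(n-1)\b-1$. Squaring yields $2n\b=n(n-1)\b^2$, hence $\b=\frac{2}{n-1}$.

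For this $\b$ we have $1+\b=\frac{n+1}{n-1}$ and $(n-1)\b-1=1$. Therefore
$$g(\b)=\frac{2(n+1)}{n(n-1)}+\frac{2}{n}=\frac{4}{n-1}.$$
Also
$$A(\b)=\frac{2}{(n-1)^2}\left(\frac{(n+1)^2}{n}-4\right)=\frac{2}{n}>0.$$
The hypothesis $p+q<\frac{n+3}{n-1}$ is exactly $l=p+q-1<\frac{4}{n-1}=g(\b)$. It follows that $Q$ is strictly positive on the closed quarter circle $\{H,L\ge0,\ H^2+L^2=1\}$. By compactness and homogeneity, $Q\ge \e(H^2+L^2)$ with $\e>0$ depending only on $n$ and $l$, i.e.\ on $(n,p,q)$. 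Concretely, one can take $\e=\min\{A(\b),A(0)\}-\frac12\big(B(\b,0)\big)_-$, which is positive because $(B)_-<2\sqrt{A(\b)A(0)}=\frac{4}{n}$ while $\min\{A(\b),A(0)\}=\frac{2}{n}$.

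Combining these pieces gives the claimed inequality on $\{|\nabla u|>0\}$, with $\b=\frac{2}{n-1}>0$ depending only on $n$. I expect the only delicate step to be locating the optimal $\b$ so that the threshold is exactly $\frac{n+3}{n-1}$. The rest is bookkeeping; in particular, negative $l$ causes no trouble because it only increases $B$.
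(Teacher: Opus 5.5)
Your proposal is correct and follows essentially the same route as the paper. The paper also applies Lemma~\ref{l2} with $\sigma=0$ and $\beta=\frac{2}{n-1}$, which gives $A(\beta)=A(0)=\frac{2}{n}$. It then absorbs the mixed term via $\frac{2-\varepsilon}{n}(H^2+L^2)\ge\frac{4-2\varepsilon}{n}HL$, so that the condition becomes $l<\frac{4}{n-1}$; your explicit choice $\varepsilon=\frac{2}{n}-\frac12(B)_-$ is the same absorption. Your optimization over $\beta$ additionally explains why the threshold $\frac{n+3}{n-1}$ is the best this choice of $\sigma$ can give.
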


	\begin{proof}
		Choosing $\b=\frac{2}{n-1}$, $\sigma=0$ and $\O=B(x_0,R)$ in Lemma \ref{l2} and using the curvature condition $Ric\ge -Kg$, we have
			\begin{eqnarray}\label{32}
			\Delta H&\ge&\frac{2}{n} H^2+\frac{2}{n} L^2+\left(\frac{4(n+1)}{n(n-1)}-2l\right) H L-2KH\nonumber\\
			&&+[2(\beta-1) H-q L]\langle\nabla\ln u, \nabla \ln H\rangle
		\end{eqnarray}
		on $\{x\in B(x_0,R): |\nabla u|(x)>0\}$.
		By basic inequality, for any $\e\in (0,2)$ we have
		\begin{equation}\label{33}
		\frac{2-\e}{n}\left(H^2+L^2\right)\ge \frac{4-2\e}{n}HL.
		\end{equation}
		Substituting \eqref{33} into \eqref{32} yields
		\begin{eqnarray}\label{34}
			\Delta H&\ge&\e\left(H^2+L^2\right)+\left(\frac{4(n+1)}{n(n-1)}+\frac{4-2\e}{n}-2l\right) H L-2KH\nonumber\\
			&&+[2(\beta-1) H-q L]\langle\nabla\ln u, \nabla \ln H\rangle\nonumber
		\end{eqnarray}
		on $\{x\in B(x,R): |\nabla u|(x)>0\}$. Due to $l<\frac{4}{n-1}$, we can fix a $\e\in(0,1)$ such that 
		$$
		\frac{4(n+1)}{n(n-1)}+\frac{4-2\e}{n}-2l>0.
		$$
		Then such $\e$ is what we need and the proof is complete.
		
	\end{proof}
	
%	\begin{rmk}
%		The choice of $\b$ and $\sigma$ is optimal and in this case, we make $l=p+q-1$ as large as possible. For $q=0$, these arguments are appeared in \cite[Section 3]{LU1}.
%	\end{rmk}

	By Lemma \ref{l3}, we give a proof of Theorem \ref{t10}. Due to Lemma \ref{control2} in Section \ref{S6} (the proof of Lemma \ref{control2} is self-contained in Section \ref{S6}), we just prove the logarithmic gradient estimate part in Theorem \ref{t10}.
	
	\begin{proof}[Proof of Theorem \ref{t10}]

		Let $u$ be a positive solution of \eqref{pqle}, and $H,l,L$ are defined by \eqref{21}.	Let	$\P(x)$ be an undetermined cut-off function as in Lemma \ref{cut}, we define
		\begin{equation}
			A(x)=\P(x)H(x),\qquad\text{on $B(x_0,2R)$}.
		\end{equation}
		 By chain rule, we have
		\begin{equation}\label{A1}
			\d A=\frac{\d \P}{\P}A+2\left\langle\nabla \P,\nabla H\right\rangle+\P\d H,
		\end{equation}
		on $N:=\{x\in B(x_0,2R):A(x)>0\,\, \text{and}\,\, x\notin cut(x_0)\}$, where
		$cut(x_0)$ is the cut locus of $x_0$.
		 Without loss of generality, we assume $A(x^{\star})=\max\limits_{B(x_0,2R)} A>0$ and the maximum value point $\x$ is outside of cut locus of $x_0$ (see, e.g., Yau \cite{Y} or Cheng-Yau \cite{CY}).
		At the point $\x$, by  Lemma \ref{l3}, there exist $\e=\e(n,p,q)\in(0,1)$ and $\b=\frac{2}{n-1}>0$ such that
		\begin{eqnarray}\label{37}
			0\ge\d A&\ge&\left(\frac{\d \P}{\P}-2\frac{|\nabla\P|^2}{\P^2}\right)A+\e\P\left(H^2+L^2\right)-2KA\nonumber\\
			&& +\P[2(\beta-1) H-q L]\langle\nabla\ln u, \nabla \ln H\rangle\nonumber\\
			&\ge&\left(\frac{\d \P}{\P}-2\frac{|\nabla\P|^2}{\P^2}\right)A+\e\P \left(H^2+L^2\right)-2KA\nonumber\\
			&& -[2(\beta-1) H-q L]\langle\nabla\ln u, \nabla \P\rangle,
		\end{eqnarray}
	where we use $\nabla A(\x)=\left(\nabla\P H+\P\nabla H\right)(\x)=0$.
		Cauchy-Schwarz inequality and Young inequality derive
		\begin{eqnarray}
				2(1-\b)H\left\langle\nabla \ln u,\nabla \P\right\rangle&\ge&-\frac{1}{2}\e\P H^2-\frac{8}{\e}\frac{|\nabla\P|^2}{\P^2}A,\nonumber\\
				qL\langle\nabla\ln u, \nabla \P\rangle\ge -qLH^{\frac{1}{2}}|\nabla\P|&=&-\left(\sqrt{\e}\P^{\frac{1}{2}}L\right)\left(\sqrt{\e}\P^{\frac{1}{4}}H^{\frac{1}{2}}\right)\left(\frac{q}{\e}\frac{|\nabla\P|}{\P^{\frac{3}{4}}}\right)\nonumber\\
				& \ge&-\frac{\e}{2}\P L^2-\frac{\e^2}{4}\P H^2-\frac{1}{4}\left(\frac{q}{\e}\right)^4\frac{|\nabla\P|^4}{\P^3}.\nonumber
		\end{eqnarray}
		So, substitute these inequalities into \eqref{37}, we have 
		\begin{eqnarray}\label{216}
			0\ge\left(\frac{\d \P}{\P}-\left(2+\frac{8}{\e}\right)\frac{|\nabla\P|^2}{\P^2}\right)A+\frac{1}{4}\e\P H^2-2KA-\frac{1}{4}\left(\frac{q}{\e}\right)^4\frac{|\nabla\P|^4}{\P^3}.
		\end{eqnarray}
		Multiplying both sides of \eqref{216} by $\P(\x)$ and  using the property of cut-off function in Lemma \ref{cut}, we see 
		\begin{equation}
			\max\limits_{B(x_0,2R)} A=A(\x)\le C(n,p,q)\left(\frac{1}{R^2}+K\right).\nonumber
		\end{equation}
		Furthermore, we have the desired estimate and finish the proof.

	\end{proof}

The remainder of this section will be devoted to proving Theorem \ref{sc} and Theorem \ref{lrm} in the case $q>2$. First, we need an elliptic inequality for the auxiliary function.

\begin{lem}\label{l43}
	Let $(\M,g)$ be an $n$-dimensional manifold with $Ric\ge-Kg$ and $u$ be a positive solution of \eqref{pqle} with $q>2$ in the domain $\O\subset\M$,  and define $H,l,L$ as \eqref{21} and the function $Z$ in the domain $\O_r(u)$ as \eqref{22}. For any $d\ge 0$, we define $M=L-dH$ on $\O_r(u)$ and have
	\begin{eqnarray}
		\d M&\ge&\frac{2}{n}\left(\frac{q}{2}-1\right)Z(H+L)^2-\left(l+\frac{2ql}{(q-2)d}\right)L^2\nonumber\\
		&&-(qL-2dH)K-(qZ+2)\left\langle\nabla M,\nabla\ln u\right\rangle+\frac{2qlL}{qL-2dH}\left\langle\nabla M,\nabla\ln u\right\rangle\nonumber
	\end{eqnarray}
	on the set $\{x\in \O: L-dH>0\}$.	
\end{lem}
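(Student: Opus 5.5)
The plan is to compute $\Delta M=\Delta L-d\,\Delta H$ directly. We write $L=H^{q/2}u^{l}$ and expand $\Delta L$ by the chain and Leibniz rules:
\[
\Delta L=\tfrac q2 Z\,\Delta H+\tfrac q2\big(\tfrac q2-1\big)\frac{L}{H^2}|\nabla H|^2+ql\,\frac{L}{H}\langle\nabla H,\nabla\ln u\rangle+L\,\Delta(l\ln u)+l^2L H .
\]
Here $\Delta\ln u=-H-L$ by \eqref{pqle}. The $\Delta H$ appearing in $\Delta L$, together with the term $-d\,\Delta H$, will be handled with Lemma \ref{l2} at $\beta=0$, $\sigma=0$.

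First we fix the signs on the set in question. On $\{L-dH>0\}$ we have $d<Z$. Since $q>2$ and $d\ge0$, this gives two facts:
\[
\tfrac q2 Z-d>\big(\tfrac q2-1\big)Z>0,\qquad qL-2dH=(q-2)L+2(L-dH)>(q-2)L>0 .
\]
The coefficient multiplying $\Delta H$ in $\Delta M$ is $\tfrac q2Z-d$, which is positive. We may therefore use the inequality version of \eqref{23}: drop $2|\T-\frac{tr\T}{n}g|^2\ge0$ and use $Ric\ge -Kg$. With $\beta=\sigma=0$ we have $tr\,\T=-(H+L)$, so the trace term is $\frac2n(H+L)^2$. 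Multiplying it by $\tfrac q2Z-d$ and using the first fact gives $\frac2n(\tfrac q2-1)Z(H+L)^2$. The curvature part gives $(\tfrac q2Z-d)(-2KH)=-(qL-2dH)K$, which matches the statement exactly.

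Next we handle the gradient terms, which is the main bookkeeping step. Differentiating $M$ gives
\[
\nabla M=\big(\tfrac q2Z-d\big)\nabla H+lL\,\nabla\ln u,
\]
so that
\[
\nabla H=\frac{2H}{qL-2dH}\big(\nabla M-lL\nabla\ln u\big).
\]
We substitute this into every term containing $\langle\nabla H,\nabla\ln u\rangle$. These come from $\Delta L$ and from $(\tfrac q2Z-d)[-2H-qL]\langle\nabla\ln u,\nabla\ln H\rangle$ in \eqref{23}. Collecting the $\langle\nabla M,\nabla\ln u\rangle$ coefficients should produce $-(qZ+2)+\frac{2qlL}{qL-2dH}$. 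The remaining pieces are multiples of $lLH$, $lL^2$ and $\frac{l^2L^2H}{qL-2dH}$. The term $\tfrac q2(\tfrac q2-1)\frac{L}{H^2}|\nabla H|^2$ is nonnegative and can be discarded; if a bad quadratic term in $\nabla H$ survives, it can alternatively be used to absorb that term.

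The expected obstacle is controlling the leftover zeroth-order terms. Those of the form $HL$ and $H^2$, coming from $-2lHL$ in \eqref{23} and from $L\Delta(l\ln u)+l^2LH$, must either cancel or be absorbed. The terms containing $\frac{1}{qL-2dH}$ will be bounded using the second fact, $qL-2dH>(q-2)L$, together with $H<L/d$. This should give a bound of the type $\frac{2ql}{(q-2)d}L^2$, and, combined with the $-lL^2$ from $\Delta(l\ln u)$, yields the stated $-(l+\frac{2ql}{(q-2)d})L^2$. If a stray $HL$ term with the wrong sign remains, the first thing to try is to absorb it into $\frac2n(\tfrac q2-1)Z(H+L)^2$, or to convert it into $L^2$ using $H<L/d$.
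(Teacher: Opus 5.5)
Your route is the paper's own. You expand $\Delta M=\Delta L-d\Delta H$ by the Leibniz rule, multiply Lemma~\ref{l2} at $\beta=\sigma=0$ (this is \eqref{48}) by $\frac q2Z-d\ge(\frac q2-1)Z>0$, and eliminate $\langle\nabla\ln H,\nabla\ln u\rangle$ through \eqref{47}. Your sign facts, the curvature term and the two coefficients of $\langle\nabla M,\nabla\ln u\rangle$ are all correct.

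The gap is the last step, which you leave as a guess, and the guess is wrong. Collecting the zeroth-order terms gives exactly
\[
l(l+1+2d)HL-lL^2-\frac{2ql^2HL^2}{qL-2dH}.
\]
Apart from the main term, there are no $H^2$ terms. The $HL$ term does not need absorbing: it is nonnegative when $l\ge0$ (since $d\ge0$), but some sign condition on $l$ is required to discard it. The paper has this condition because Lemma~\ref{l43} is only applied in the proof of Theorem~\ref{sc}, under the standing assumption $l=p+q-1>0$. The fraction, bounded as you propose via $qL-2dH>(q-2)L$ and $H<L/d$ (so $d>0$), is at most $\frac{2ql^2}{(q-2)d}L^2$. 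It carries $l^2$, not $l$.

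So this argument proves the inequality with $-l\bigl(1+\frac{2ql}{(q-2)d}\bigr)L^2$, which is exactly the coefficient the paper uses in \eqref{49}. The printed $\frac{2ql}{(q-2)d}$ has lost a factor $l$. Your claim that the bookkeeping ``should give'' the printed coefficient cannot be rescued by absorbing into $\frac2n(\frac q2-1)Z(H+L)^2$ or by converting via $H<L/d$. The $l^2$-part of the remainder is $-l^2HL\,\frac{qL+2dH}{qL-2dH}<0$. Nothing else in the computation grows like $l^2$: neither the slack $(Z-d)\frac2n(H+L)^2$ nor the discarded nonnegative Hessian terms. Hence for $l$ large compared with $d$ the printed coefficient is out of reach. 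State the hypothesis $l\ge0$, use the corrected coefficient, and your proof is complete.
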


\begin{proof}
By the definitions of 	$H,l,L$ and $M$, using the Leibniz rule, we immediately obtain 
\begin{eqnarray}
	\d M&=&\left(\frac{q}{2}Z-d\right)\d H+\frac{q}{2}\left(\frac{q}{2}-1\right)L|\nabla\ln H|^2\nonumber\\
	&&+l((l-1)H-L)L+qlL\left\langle\nabla \ln H,\nabla\ln u\right\rangle\nonumber
\end{eqnarray}	
and 
\begin{equation}\label{47}
\left(\frac{q}{2}L-dH\right)\left\langle\nabla \ln H,\nabla\ln u\right\rangle=\left\langle\nabla M,\nabla\ln u\right\rangle-lHL
\end{equation}
on $\O_r(u)$.

	Choosing $\b=\sigma=0$ in the equation of $\d H$ (recall Lemma \ref{l2}) and using the curvature condition $Ric\ge -Kg$, we have 
		\begin{eqnarray}\label{48}
		\Delta H\ge\frac{2}{n}(H+L)^2-2lHL-(2 H+q L)\langle\nabla\ln u, \nabla \ln H\rangle-2KH
	\end{eqnarray}
	on  $\Or(u)$. Notice that in the subset  $\{x\in \O: L-dH>0\}$, we have $$\frac{q}{2}Z-d\ge \left(\frac{q}{2}-1\right)Z>0.$$ Substituting this inequality, \eqref{47} and \eqref{48} into the equation of $\d M$ yields the desired elliptic inequality.

\end{proof}

\begin{proof}[Proof of Theorem \ref{sc}]
		Let $u$ be a positive solution of \eqref{pqle} with $q>2$ on $B(x_0,2R)$,  and define $H,l,L$ as \eqref{21} and the function $Z$ in the open subset $\{x\in B(x_0,2R):|\nabla u|>0\}$ as \eqref{22}. We always assume that $l=p+q-1>0$ or Theorem \ref{sc} is implied by Theorem \ref{t10} and the following Lemma \ref{control2}.
	Define the auxiliary function
	\[
	U = \Phi M
	\]
	on $B(x_0,2R)$, where $M=L-dH$ with undetermined $d>0$. Without loss of generality, \( U \) gets its positive maximal value in \( B(x_0, 2R) \) at point \( x^* \). 
	So, \( x^* \in \{ x \in B(x_0, 2R) : L - dH > 0 \} \).
	Then at the point $x^*$, by Leibniz rule and Lemma \ref{l43}, we have
\begin{eqnarray}\label{49}
	0 &\geq& \Delta U(x^*) = \left( \Delta \Phi - 2 \frac{|\nabla \Phi|^2}{\Phi} \right) M + \Phi \Delta M \nonumber\\
	&\geq& \left( \Delta \Phi - 2 \frac{|\nabla \Phi|^2}{\Phi} \right) M + \frac{2}{n} \left( \frac{q}{2} - 1 \right) \Phi Z (H + L)^2  \nonumber\\
	&&- l \Phi L^2 \left( 1 + \frac{2 q l}{(q - 2) d} \right) - K \Phi (q L - 2 d H) \nonumber\\
	&&+ (2 + q Z) M \langle \nabla \Phi, \nabla \ln u \rangle - \frac{2 q l L M}{q L - 2 d H} \langle \nabla \Phi, \nabla \ln u \rangle
\end{eqnarray}
	
	We deal with the gradient items as follows. For any $\varepsilon_1,\varepsilon_2>0$, we have
	\[
	\begin{split}
		(2 + q Z) M \langle \nabla \Phi, \nabla \ln u \rangle &\geq - (2 + q Z) M |\nabla \Phi| H^{\frac{1}{2}} \\
	&\geq - \left( \frac{2}{d} + q \right) Z L H^{\frac{1}{2}} |\nabla \Phi| \\
	&\geq - \left( \frac{2}{d} + q \right) Z \left( \varepsilon_1 \Phi L^2 + \frac{1}{4 \varepsilon_1} \frac{|\nabla \Phi|^2}{\Phi} H \right)
	\end{split}
	\]
	and
	\[
	\begin{split}
		- \frac{2 q l L M}{q L - 2 d H} \langle \nabla \Phi, \nabla \ln u \rangle &\geq - \frac{2 q l L M}{(q - 2) L} |\nabla \Phi| H^{\frac{1}{2}} \\
		&\geq - \frac{2 q l}{q - 2} \left( \varepsilon_2 \Phi L^2 + \frac{1}{4 \varepsilon_2} \frac{|\nabla \Phi|^2}{\Phi} H \right).
	\end{split}
	\]
	Now, we fix \[ d  = \frac{2 n q l}{q - 2}= \frac{2 n q (p + q - 1)}{q - 2}>0 \]
	and derive
	\[
	\begin{split}
		&\Phi \left[ \frac{1}{n} \left( \frac{q}{2} - 1 \right) Z (H + L)^2 - l L^2 \left( 1 + \frac{2 q l}{(q - 2) d} \right) \right]\\
		 &\geq \Phi \left[ \frac{1}{n} \left( \frac{q}{2} - 1 \right) d L^2 - l L^2 \left( 1 + \frac{2 q l}{(q - 2) d} \right) \right] \\
		&\geq \Phi \left( q l - l \left( 1 + \frac{1}{n} \right) \right) \geq 0.
	\end{split}
	\]
	Then, we fix \( \varepsilon_1=\varepsilon_1(n,p,q), \varepsilon_2=\varepsilon_2(n,p,q) > 0 \) such that
	\[
	\left( \frac{2}{d} + q \right) \varepsilon_1 \leq \frac{1}{4 n} \left( \frac{q}{2} - 1 \right)d\quad \text{and}\quad \frac{2 q l}{q - 2} \varepsilon_2 \leq \frac{1}{4 n} \left( \frac{q}{2} - 1 \right) d.
	\]
	Substituting these inequalities into \eqref{49} yields
	\begin{eqnarray}\label{4100}
			0 &\geq& \frac{1}{2 n} \left( \frac{q}{2} - 1 \right)\Phi Z (H + L)^{2} - q \Phi K L \nonumber\\
			&&- C(n, p, q) \frac{|\nabla \Phi|^2}{\Phi} (H + L) + \left( \Delta \Phi - 2 \frac{|\nabla \Phi|^2}{\Phi} \right) M.
	\end{eqnarray}
	Using the property of the cut-off function \( \Phi \) in Lemma \ref{cut}, in \eqref{4100}, we have
	\[
	0 \geq \frac{1}{2n} \left( \frac{q}{2} - 1 \right) d (H + L) \Phi - C(n, p, q) \left( K + \frac{1}{R^2} \right),
	\]
	which implies
	\[
	\Phi (H + L)(x^*) \leq C(n, p, q) \left( K + \frac{1}{R^2} \right).
	\]
		Therefore,
	\[
	\Phi M(x^*) \leq C(n, p, q) \left( K + \frac{1}{R^2} \right)
	\]
and
	\[
	\sup_{B(x_0, R)} (L - dH) \leq \sup_{B(x_0, 2R)} \Phi M = \Phi M(x^*) \leq C(n, p, q) \left( K + \frac{1}{R^2} \right). 
	\]
We finish the proof.	
	
	\end{proof}

For proving Theorem \ref{lrm} in the case $q>2$, we also need the properties of super-harmonic and sub-harmonic functions on the  Riemannian manifolds with non-negative Ricci curvature.

\begin{lem}\cite[Corollary 5.8]{So}\label{l45}
	 ($L^\epsilon$-estimate) Let $(\M,g)$ be an $n$-dimensional Riemannian manifolds with \( \text{Ric} \geq 0 \). Let  \( u  \) be a non-negative super-harmonic function on $B(x_0,2R)$.
	Then
	\[
	\left(\frac{1}{|B(x_0,R)|} \int_{B(x_0,R)} u^\epsilon(x) dx \right)^{\frac{1}{\epsilon}} \leq C \inf_{B(x_0,R)} u ,
	\]
	where the constants \( \epsilon \in (0, 1) \) and \( C > 0 \) depend only on $n$.
	
\end{lem}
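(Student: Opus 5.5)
This is Saloff-Coste's weak Harnack inequality, and I would prove it by the Moser--Bombieri--Giusti scheme. The only geometric inputs are volume doubling and a scale-invariant Poincaré inequality, and both follow from $Ric\ge 0$. Throughout, replace $u$ by $u_\delta=u+\delta$ with $\delta>0$, so that $u_\delta$ is a positive super-harmonic function. The final estimate has constants independent of $\delta$, so we may let $\delta\to0$ at the end.

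\textbf{Step 1: geometric tools.} By the Bishop--Gromov volume comparison, $|B(x,2r)|\le 2^n|B(x,r)|$ for all $x\in\M$ and $r>0$. Buser's inequality gives the Neumann Poincaré inequality on balls with a constant depending only on $n$:
$$\int_{B(x,r)}|f-f_{B(x,r)}|^2\le C(n)r^2\int_{B(x,r)}|\nabla f|^2 .$$
By Saloff-Coste's theorem, doubling plus Poincaré yields a local Sobolev inequality on $B(x_0,2R)$ with exponent $\nu=\nu(n)>2$ and constants normalized by $|B|^{-2/\nu}R^2$. This is the inequality needed to run Moser iteration with constants depending only on $n$.

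\textbf{Step 2: Moser iteration.}
\begin{enumerate}
\item Test $\Delta u_\delta\le 0$ against $\phi^2u_\delta^{-s}$ with $s>1$, where $\phi$ is a Lipschitz cut-off function. This gives a Caccioppoli inequality for the powers $w=u_\delta^{(1-s)/2}$.
\item Iterating this through the Sobolev inequality on shrinking balls between $R$ and $3R/2$ gives
$$\sup_{B(x_0,R)}u_\delta^{-1}\le C\Big(\frac1{|B|}\int_{B(x_0,3R/2)}u_\delta^{-\epsilon}\Big)^{1/\epsilon}$$
for any fixed $\epsilon>0$. The dependence on $\epsilon$ is harmless once $\epsilon=\epsilon(n)$ is fixed.
\item The same test function with exponent $s\in(0,1)$ gives a reverse Hölder chain for positive powers below $1$. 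This lets one move between small positive exponents of $u_\delta$ on nested balls.
\end{enumerate}

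\textbf{Step 3: the crossover (main obstacle).} This step links $\int u_\delta^{\epsilon}$ with $\int u_\delta^{-\epsilon}$.
\begin{enumerate}
\item Testing with $\phi^2u_\delta^{-1}$ gives $\int\phi^2|\nabla\ln u_\delta|^2\le 4\int|\nabla\phi|^2\le C r^{-2}|B(x,r)|$ on every ball $B(x,r)\subset B(x_0,2R)$.
\item Combined with the Poincaré inequality, this puts $\ln u_\delta$ in BMO with norm bounded by $C(n)$.
\item To avoid proving a John--Nirenberg inequality on the manifold, I would apply the abstract Bombieri--Giusti lemma to $u_\delta$ and $u_\delta^{-1}$ separately. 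Its hypotheses are the reverse Hölder inequalities of Step 2 and the weak-$L^1$ bound $|\{|\ln u_\delta-c|>t\}\cap B|\le C|B|/t$, which follows from the BMO bound.
\item The output is $\big(\frac1{|B|}\int u_\delta^{\epsilon}\big)\big(\frac1{|B|}\int u_\delta^{-\epsilon}\big)\le C$ for some $\epsilon(n)\in(0,1)$.
\end{enumerate}
Checking the constants carefully in doubling-space form is the delicate part, because the lemma is applied on a family of nested balls.

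\textbf{Step 4: conclusion.} Combining Step 2 with Step 3 gives
$$\Big(\frac1{|B|}\int_{B(x_0,R)} u_\delta^{\epsilon}\Big)^{1/\epsilon}\le C\Big(\frac1{|B|}\int u_\delta^{-\epsilon}\Big)^{-1/\epsilon}\le C\inf_{B(x_0,R)}u_\delta .$$
The ball sizes are adjusted using doubling, at the cost of constants depending only on $n$. Letting $\delta\to0$ gives the stated $L^\epsilon$-estimate.
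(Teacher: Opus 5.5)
Your argument is correct. It is the standard Saloff-Coste scheme:
\begin{itemize}
\item Bishop--Gromov doubling and Buser's Poincar\'e inequality give the local Sobolev inequality.
\item Moser iteration handles $u^{-1}$ and small positive powers of $u$.
\item The Bombieri--Giusti lemma supplies the crossover without a John--Nirenberg inequality.
\end{itemize}
The paper gives no proof of this lemma. It quotes it as the linear special case of \cite[Corollary 5.8]{So}, which it describes as a more general result for $p$-Laplacian equations on manifolds with curvature bounded below. The citation buys generality at no cost in length, but it is a black box. Your proof is self-contained and uses only the Saloff-Coste Sobolev inequality that the paper already invokes for Lemma \ref{l46}. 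It also makes clear why $\epsilon$ and $C$ depend on $n$ alone: through the doubling constant, Buser's constant and the Sobolev exponent.

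Two pieces of bookkeeping are worth recording.
\begin{enumerate}
\item Bishop--Gromov, Buser and Saloff-Coste's theorem require $\M$ to be complete, or at least the relevant balls to be relatively compact. The paper tacitly assumes this too.
\item Bombieri--Giusti needs Step 2 in a uniform form, not merely for one fixed $\epsilon$. You need
$\|f\|_{L^{\alpha_0}(U_{\sigma'})}\le \bigl(C(\sigma-\sigma')^{-a}|U_1|^{-1}\bigr)^{1/\alpha-1/\alpha_0}\|f\|_{L^{\alpha}(U_\sigma)}$
for all small $\alpha$, with $C$ independent of $\alpha$. Here $\alpha_0=\infty$ for $f=e^{c}u^{-1}$, and $\alpha_0<\nu/(\nu-2)$ fixed for $f=e^{-c}u$. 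Moser's iteration does deliver this form.
\end{enumerate}
Once you apply the lemma to $e^{c}u^{-1}$ with $\alpha_0=\infty$, you get $\sup u^{-1}$ directly. The separate appeal to Step 2 in Step 4 then becomes unnecessary.
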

Lemma \ref{l45} is a special case of Soojung \cite[Corollary 5.8]{So}, which claim more general result for $p$-laplacian equation on manifolds with curvature bounded below.

\begin{lem}\label{l46}
	(Upper bound of sub-harmonic function) Let $(\M,g)$ be an $n$-dimensional Riemannian manifolds with \( \text{Ric} \geq 0 \). Let  \( u  \) be a non-negative sub-harmonic function on $B(x_0,2R)$.
	Then for any $s\in(0,2]$, 
	\[
\sup_{B(x_0,R)} u \le	C(n,s)\left(\frac{1}{|B(x_0,R)|} \int_{B(x_0,R)} u^s(x) dx \right)^{\frac{1}{s}}.
	\]
	
\end{lem}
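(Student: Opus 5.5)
The plan is to obtain the estimate from the mean value inequality for nonnegative subharmonic functions on manifolds with $Ric\ge 0$, and then to handle the boundary layer of $B(x_0,R)$. I expect that last step to be the real obstacle; in fact I believe it cannot be done for the statement exactly as worded (see the third paragraph).

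\textbf{Step 1 (interior mean value inequality).} For any $y$ and $\rho>0$ with $B(y,2\rho)\subset B(x_0,2R)$, I would prove
\[
u(y)\le C(n,s)\Big(\frac{1}{|B(y,\rho)|}\int_{B(y,\rho)}u^s\Big)^{1/s}.
\]
Since $Ric\ge 0$, Bishop--Gromov gives volume doubling, and Buser's inequality gives a scale-invariant Neumann--Poincar\'e inequality. Together these yield a local Sobolev inequality on balls, which is the Saloff-Coste setting. For $s\ge 2$ one runs Moser iteration on $\Delta u\ge 0$, testing against $\eta^2u^{\gamma}$, with cut-off functions as in Lemma \ref{cut} (here $K=0$). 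For $s\in(0,2)$ one uses the standard Li--Schoen trick: apply the $s=2$ bound on a family of nested balls, interpolate $\int u^2\le(\sup u)^{2-s}\int u^s$, absorb with Young's inequality, and close with the usual iteration lemma over radii.

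\textbf{Step 2 (covering).} For $y\in B(x_0,R)$ with $d(x_0,y)\le R/2$, Step 1 applies with $\rho=R/4$. The ball $B(y,R/4)$ lies inside $B(x_0,R)$, and doubling gives $|B(x_0,R)|\le C(n)|B(y,R/4)|$. So on $B(x_0,R/2)$ the desired bound holds with the average taken over $B(x_0,R)$.

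\textbf{Step 3 (the boundary layer: main obstacle).} For $y$ close to $\partial B(x_0,R)$, every ball $B(y,\rho)$ of size comparable to $R$ leaves $B(x_0,R)$. The portion of $B(y,\rho)$ that stays inside $B(x_0,R)$ is not controlled by the mean value inequality. In Euclidean space this is a genuine obstruction. Take
\[
u(x)=\max\{0,\,x_1-(1-\delta)\}
\]
on $B(0,2)$; it is subharmonic, being a maximum of harmonic functions. Then
\[
\sup_{B(0,1)}u=\delta,\qquad \frac{1}{|B(0,1)|}\int_{B(0,1)}u^s\sim\delta^{\,s+\frac{n+1}{2}},
\]
so the ratio is $\sim\delta^{-(n+1)/(2s)}\to\infty$ as $\delta\to 0$. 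Hence I expect the proof to deliver the statement only after adjusting the radii. Either keep the sup on $B(x_0,R)$ and take the average over $B(x_0,2R)$, as in Step 1 with $\rho=R/2$ plus doubling; or keep the average on $B(x_0,R)$ and take the sup over $B(x_0,R/2)$, as in Step 2.

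\textbf{Intended use.} What I would do first is check how Lemma \ref{l46} is used together with Lemma \ref{l45} in the $q>2$ Liouville argument. All radii there are arbitrary on $\M$, with $u$ defined globally and $R\to\infty$, so replacing $R$ by $2R$ in one of the two lemmas costs only a doubling constant $C(n)$. I would therefore prove the adjusted version rigorously via Steps 1--2 and state that the lemma should be read in that form, rather than attempt the literal statement.
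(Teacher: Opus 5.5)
You are right, and the lemma is false as worded: with the supremum and the average on the same ball $B(x_0,R)$ no such constant exists. Your example $u=\max\{0,x_1-(1-\delta)\}$ on $B(0,2)\subset\mathbb{R}^n$ is valid. The positivity set in $B(0,1)$ is a cap of height $\delta$ with cross-sections of radius about $\sqrt{2t}$, so $\int_{B(0,1)}u^s\asymp\delta^{s+(n+1)/2}$ while $\sup_{B(0,1)}u=\delta$.

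The paper applies the lemma to the positive $C^2$ function $u_\ell^s$, so note that regularity does not rescue the statement. The function $u=e^{\lambda x_1}$ is smooth, positive and subharmonic on $B(0,2)$. It has $\sup_{B(0,1)}u=e^{\lambda}$, while $\big(\frac{1}{|B(0,1)|}\int_{B(0,1)}u^s\big)^{1/s}\asymp e^{\lambda}\lambda^{-(n+1)/(2s)}$ as $\lambda\to\infty$.

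The paper's only justification is a citation. It takes Mal\'y--Ziemer's local boundedness corollary in $\mathbb{R}^n$ and transfers it to $Ric\ge 0$ by replacing the Euclidean Sobolev inequality with Saloff-Coste's local one. That is the same route as your Step 1. Any correct estimate of this type must put the integral on a strictly larger ball than the supremum, as your example shows. So what the paper's argument actually yields is one of your two adjusted forms.

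Your Step 2 is correct. So is the variant with $\rho=R/2$: for $y\in B(x_0,R)$ it uses $B(y,R)\subset B(x_0,2R)$, $B(y,R/2)\subset B(x_0,2R)$ and $|B(x_0,2R)|\le 6^n|B(y,R/2)|$. It gives $\sup_{B(x_0,R)}u\le C(n,s)\big(\frac{1}{|B(x_0,2R)|}\int_{B(x_0,2R)}u^s\big)^{1/s}$ under exactly the hypotheses of the lemma.

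Your reading of the application is also right. In the $q>2$ case of Theorem \ref{lrm}, everything is defined on all of $\M$. Apply the adjusted lemma to the subharmonic function $u_\ell^s$ with exponent $\epsilon/s$, then Lemma \ref{l45} at radius $2R$. This gives $\sup_{B(x_0,R)}u_\ell\le C\inf_{B(x_0,2R)}u_\ell$. The right-hand side tends to $\inf_{\M}u_\ell=0$, while the left-hand side is positive and nondecreasing in $R$, so the contradiction goes through unchanged.

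Your plan therefore proves the version that is true and is all the paper needs, by essentially the paper's intended method. The only thing it does not do is prove the lemma as literally stated, and that cannot be done.
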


In the Euclidean case, Lemma \ref{l46} is a special case of Malý--Ziemer \cite[Corollary 3.10]{MZ}. Specifically, if we replace the Sobolev inequality in Euclidean spaces with 	Saloff-Coste's Sobolev inequality (\cite[Theorem 3.1]{Sa}) on manifolds, the proof in \cite{MZ} can be adapted without modification to yield Lemma \ref{l46}.  

\vspace{1mm}
Now, we give  a proof of Theorem \ref{lrm} in the case $q>2$.

\begin{proof}[Proof of Theorem \ref{lrm} in the case $q>2$]
First, we assume $l=p+q-1> 0$ or the Liouville theorem is implied by Theorem \ref{t10}.	Let \( u \) be a positive solution of \eqref{pqle} on \( \M\). Define \( \ell = \inf_{\M} u \) and \( u_{\ell} = u - \ell \). We may assume that  \( u_{\ell} > 0 \); otherwise, it follows from the strong maximum principle that \( u \) is  constant. Now, we fix \( s= 1+ \frac{2 n q (p + q - 1)}{q - 2}> 1\). Then using Theorem \ref{sc}, we derive
	\[
	\begin{split}
		\Delta(u_{\ell}^s) &= s u_{\ell}^{s - 1} \left[ (s - 1) \frac{|\nabla u_{\ell}|^2}{u_{\ell}} - |\nabla u_{\ell}|^q u_{\ell}^p \right] \\
		&\ge  s u_{\ell}^{s - 1} \left[ \frac{2 n q (p + q - 1)}{q - 2} \frac{|\nabla u|^p}{u} - |\nabla u|^q u^p \right] \geqslant 0
	\end{split}
	\]
	on \( \M \). So \( u_{\ell}^s \) is a sub-harmonic function on \( \M \) and $u_{\ell}$ is a super-harmonic function. Then, using Lemma \ref{l45} and Lemma \ref{l46}, we have (the $\epsilon$ is same as in Lemma \ref{l45})
	\begin{eqnarray}
	\sup_{B(x_0,R)} u_{\ell}^{s}&\le&  C(n,s)\left(\frac{1}{|B(x_0,R)|} \int_{B(x_0,R)} u_{\ell}^{\epsilon}(x) dx \right)^{\frac{s}{\epsilon}}\nonumber\\
	&\le& C(n, s) \inf_{B(x_0,R)} u_{\ell}^s \to 0\qquad \text{as}\quad R\to \infty,\nonumber
	\end{eqnarray}
	which yields a contradiction. So we have \( u \equiv \ell \) and finish the proof.
\end{proof}

\section{Equations of the auxiliary functions}\label{S4} 

In this section, we construct several auxiliary functions of solution to \eqref{pqle} and compute their elliptic equations.

Let $u$ be a positive solution of \eqref{pqle} in the domain $\O\subset\M$,  and define $H,l,L$ as \eqref{21} and function $Z$ in the domain $\O_r(u)$ as \eqref{22}. For undetermined real numbers $k,d,\gamma$, we define the auxiliary function 
\begin{equation}\label{F}
	F:=u^{k}H^{\frac{q\gamma}{2}}\left(H+dL\right):=I+II\quad\text{on the domain}\,\,\,\O_r(u),
\end{equation}
where
\begin{equation}\label{1a2}
	I:=u^kH^{1+\frac{q\gamma}{2}},\qquad II:=du^{k+l}H^{\frac{q(1+\g)}{2}}.
\end{equation}
When
$1+\frac{q\gamma}{2}\ge 0$ and $ \g\ge  -1,$
 $F\in C(\O)\cap C^{\infty}(\O_r(u))$.

\vspace{2mm}

The following lemma gives the elliptic equation of $F$.% which is key to prove the desired Liouville theorem.

\begin{lem}\label{kl}
	Let $u$ be a positive solution of \eqref{pqle} with $q\in[0,2)$ in the domain $\O$,  and define the functions $H,l,L, Z$ in the domain $\O_r(u)$ as \eqref{21} and  \eqref{22} respectively. For $k\in\R, d\ge 0$ and $\gamma\ge -1$, the auxiliary function $F$ is given by \eqref{F}. Then for any real numbers $\b,\delta$ and real-valued functions $\sigma,\tau$ on $\O$, we have
	\begin{eqnarray}
		\d F
		&=&\frac{JW\left(\S_1H^5+\S_2H^4L+\S_3H^3L^2+\S_4H^2L^3+\S_5HL^4+\S_6L^5\right)}{\left(\left(1+\frac{q\g}{2}\right)H+\frac{dq}{2}(1+\g)L\right)^{3}}\nonumber\\
		&&+\frac{JW\left(\left(1+\frac{q\g}{2}\right)N(\b,\sigma)H+\frac{dq}{2}(1+\g)N(\delta,\tau)L\right)}{\left(1+\frac{q\g}{2}\right)H+\frac{dq}{2}(1+\g)L}\nonumber\\
		&& +H W\left\{\frac{q\g}{2}\left(1+\frac{q \g}{2}\right) \frac{(1+d Z)^2}{J^2} |\nabla \ln F|^2\right.\nonumber\\
		&&\left.+2k\left(1+\frac{q \g}{2}\right) \frac{1+dZ}{J}\langle\nabla\ln F, \nabla \ln u\rangle\right.\nonumber\\
		&&\left.-q \g\left(1+\frac{q \g}{2}\right) \frac{1+dZ}{J^2}(k(1+dZ)+l d Z) \langle\nabla \ln F, \nabla \ln u\rangle\right. \nonumber\\
		&& \left.+\left(1+\frac{q \g}{2}\right)\left(2(\beta-1)+(2\sigma-q) Z\right) \frac{1+ d Z}{J} \langle\nabla \ln F, \nabla \ln u\rangle\right\}\nonumber\\
		&&+dLW\left\{\frac{q(1+\g)}{2}\left(\frac{q(1+\g)}{2}-1\right)\frac{(1+dZ)^2}{J^2}|\nabla\ln F|^2\right.\nonumber\\
		&&\left.+(k+l)q(1+\g)\frac{1+dZ}{J}\langle\nabla \ln F, \nabla \ln u\rangle\right.\nonumber\\
		&&\left.-q(1+\g)\left(\frac{q(1+\g)}{2}-1\right)\frac{1+dZ}{J^2}\left(k(1+dZ)+ldZ\right)\langle\nabla \ln F, \nabla \ln u\rangle
		\right.\nonumber\\
		&&\left.+\left(2(\delta-1)+(2\tau-q) Z\right)\frac{q(1+\g)}{2}\frac{1+dZ}{J}\langle\nabla \ln F, \nabla \ln u\rangle\right\}
	\end{eqnarray}
on $\O_r(u)$, where
\begin{equation}
	W=u^kH^{\frac{q\g}{2}},\qquad J=\left(1+\frac{q\g}{2}\right)+\frac{dq}{2}(1+\g)Z,\nonumber
\end{equation}
the function $N(\cdot,\cdot):\mathbb{R}^2\to\mathbb{R}$ is defined by
\begin{equation}\label{N}
	N(x,y)=2\left|\T(x,y)-\frac{tr(\T(x,y))}{n}  g\right|^2+2 Ric(\nabla \ln u, \nabla\ln u)
\end{equation}
and the symmetry covariant 2-tensor $\T$ given by
\begin{equation}\label{T}
	\T(x,y)=u^{-1} \nabla^2 u-[(1+x)+y Z] d \ln u \otimes d \ln u .
\end{equation}
The coefficients $\S_1,\cdots,\S_6$ are polynomial functions of $k,\g,d,\b,\delta,\sigma,\tau$ and  their expressions are given in Lemma \ref{a1} of Appendix A.

\end{lem}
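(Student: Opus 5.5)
The plan is a direct computation that reduces everything to Lemma \ref{l2}, applied twice. We split $F=I+II$ as in \eqref{1a2} and compute $\d I$ and $\d II$ separately.

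For $I=u^kH^{1+\frac{q\g}{2}}$, write $\ln I=k\ln u+(1+\frac{q\g}{2})\ln H$ and use
\[
\d I=I\left(\d\ln I+|\nabla\ln I|^2\right),\qquad \d\ln u=-H-L,\qquad \d\ln H=\frac{\d H}{H}-|\nabla\ln H|^2 .
\]
Here $\d\ln u=-H-L$ is just \eqref{pqle} divided by $u$. For $\d H$ we insert \eqref{23} with parameters $(\b,\sigma)$, which produces $N(\b,\sigma)$.

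For $II=du^{k+l}H^{\frac{q(1+\g)}{2}}$ we do the same, now with exponents $k+l$ and $\frac{q(1+\g)}{2}$, and insert \eqref{23} with the independent parameters $(\delta,\tau)$, which produces $N(\delta,\tau)$. Since \eqref{23} holds for every choice of parameters, using different pairs in the two pieces is legitimate.

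After this step $\d F$ is expressed through $H$, $L$, the terms $N$, and the three first-order quantities $|\nabla\ln u|^2=H$, $\langle\nabla\ln H,\nabla\ln u\rangle$ and $|\nabla\ln H|^2$.

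The key step is to eliminate $\nabla\ln H$ in favour of $\nabla\ln F$ and $\nabla\ln u$. Since $F=WH(1+dZ)$ with $W=u^kH^{\frac{q\g}{2}}$, we have
\[
\nabla F=W\Bigl[\bigl(k(H+dL)+ldL\bigr)\nabla\ln u+\bigl((1+\tfrac{q\g}{2})H+\tfrac{dq}{2}(1+\g)L\bigr)\nabla\ln H\Bigr].
\]
Dividing by $F$ gives
\[
\nabla\ln H=\frac{1+dZ}{J}\nabla\ln F-\frac{k(1+dZ)+ldZ}{J}\nabla\ln u,
\]
with $J$ as in the statement. Substituting this into $|\nabla\ln H|^2$ and $\langle\nabla\ln H,\nabla\ln u\rangle$ in both pieces generates three kinds of terms:
\begin{itemize}
\item the $|\nabla\ln F|^2$ terms, with coefficients $\frac{q\g}{2}(1+\frac{q\g}{2})\frac{(1+dZ)^2}{J^2}$ and its analogue for $II$;
\item the mixed $\langle\nabla\ln F,\nabla\ln u\rangle$ terms listed in the braces;
\item purely zeroth-order remainders, homogeneous of degree two in $(H,L)$ times $H$ or $L$, with denominators $J$ or $J^2$.
\end{itemize}
The cross term coming from $|\nabla\ln I|^2$ and the gradient terms of \eqref{23} are handled in the same way.

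Finally we collect all zeroth-order terms. The $N$ terms come with weights $I\propto WH$ and $II\propto dWL$. We rewrite $N$ using the factor $J/J$ so that it appears as in the second line of the formula, namely $JW\bigl((1+\frac{q\g}{2})N(\b,\sigma)H+\frac{dq}{2}(1+\g)N(\delta,\tau)L\bigr)$ divided by $JH$. The remaining algebraic terms have common denominator $J^2$ times powers of $H$. After multiplying numerator and denominator by $H^3$, the denominator becomes $\bigl((1+\frac{q\g}{2})H+\frac{dq}{2}(1+\g)L\bigr)^3$ and the numerator a homogeneous quintic $\sum\S_iH^{6-i}L^{i-1}$. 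This defines $\S_1,\dots,\S_6$, which are then recorded in Lemma \ref{a1}.

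No analytic obstacle arises; everything is valid on $\O_r(u)$, where $H>0$. The main difficulty is bookkeeping this final collection consistently, in particular checking that the degrees match, so that each coefficient is a polynomial in $k,\g,d,\b,\delta,\sigma,\tau$ (with $l,q,n$ as parameters). We would verify this symbolically, and cross-check by specializing to $d=0$, $\g=0$, $k=0$, where the formula must reduce to \eqref{23}.
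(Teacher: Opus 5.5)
Your proposal is correct and is essentially the paper's proof: split $F=I+II$, apply Lemma \ref{l2} with $(\b,\sigma)$ in $\d I$ and with $(\delta,\tau)$ in $\d II$, eliminate $\nabla\ln H$ through $\nabla\ln H=J^{-1}(1+dZ)\nabla\ln F-J^{-1}\bigl(k+(k+l)dZ\bigr)\nabla\ln u$ (the paper's \eqref{grh}), and collect the zeroth-order terms. Two bookkeeping remarks. First, the common denominator of those terms is $J^2H^3=(JH)^3/J$, so you also need a factor $J/J$, which is where the prefactor $J$ in the dominant term comes from. Second, your sanity check at $k=\g=d=0$ will not match Lemma \ref{a1} literally: there the Appendix's $\S_2$ reduces to $B(\b,\sigma)+(1-q+2\sigma)\b$ rather than $B(\b,\sigma)$, because that formula has already been simplified using $k=-(1+\frac{q\g}{2})\b$ (the relation imposed in Lemma \ref{cl}); so for general $k$ rely on your own collected coefficients, in which the corresponding term is $-(1-q+2\sigma)(1+\frac{q\g}{2})^2k$.
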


\begin{proof}
	
	Let $I,II$ defined by \eqref{1a2}, then Leibniz rule of Laplacian directly derives
	\begin{eqnarray}\label{d1}
		\Delta I &=& \Delta u^k \cdot H^{1+\frac{q\g}{2}} + \Delta(H^{1+\frac{q\g}{2}}) u^k + 2k\left(1+\frac{q\g}{2}\right)u^{k-1}H^{\frac{q\g}{2}} \langle \nabla u, \nabla H \rangle \nonumber  \\
		&=& \left( ku^{k-1} \Delta u + k(k-1)u^{k-2} |\nabla u|^2 \right)  H^{1+\frac{q\g}{2}} \nonumber \\
		&&\quad + \left[ \left(1+\frac{q\g}{2}\right)H^{\frac{q\g}{2}}  \Delta H +\left(1+\frac{q\g}{2}\right)\frac{q\g}{2}  H^{\frac{q\g}{2} - 1}  |\nabla H|^2 \right] u^k \nonumber \\
		&&\quad + 2k \left(1+\frac{q\g}{2}\right) H^{1+\frac{q\g}{2}}  u^k \langle \nabla \ln u, \nabla \ln H \rangle\nonumber  \\
		&=& \left(-ku^{k+l} H^{\frac{q\g}{2}} + k(k-1)u^k H \right) H^{1+\frac{q\g}{2}} \nonumber \\
		&&\quad + \left[ \left(1+\frac{q\g}{2}\right)H^{\frac{q\g}{2}}  \Delta H +\left(1+\frac{q\g}{2}\right)\frac{q\g}{2}  H^{1+\frac{q\g}{2}}  |\nabla \ln H|^2 \right] u^k \nonumber \\
		&&\quad + 2k \left(1+\frac{q\g}{2}\right) H^{1+\frac{q\g}{2}}  u^k \langle \nabla \ln u, \nabla \ln H \rangle
	\end{eqnarray}
and
		\begin{eqnarray}\label{d2}
				\Delta II &=& d \left[ (k+l)u^{k+l} \Delta u + (k+1)(k+\ell-1)u^{k+\ell-2} |\nabla u|^2 \right] H^{\frac{q(k+l)}{2}} \nonumber\\
			&& + u^{k+l} \left( \frac{q}{2}(1+\g) H^{\frac{q(1+\g)}{2} - 1} \Delta H +  \frac{q}{2}(1+\g) \left( \frac{q}{2}(1+\g)-1 \right) H^{\frac{q(1+\g)}{2} - 2} |\nabla H|^2 \right) \nonumber\\
			&&+ q(k+l)(1+\g)u^{k+l-1} H^{\frac{q(1+\g)}{2} - 1} \langle \nabla u, \nabla H \rangle
		\end{eqnarray}
	$\text{on } \Omega_r(u).$
	
	Notice that on the regular set $\O_r(u)$, we have
	\begin{align*}
		\nabla F &= ku^k H^{1+\frac{q\g}{2}} \nabla \ln u + \left(1+\frac{q\g}{2}\right) u^k H^{1+\frac{q\g}{2}} \nabla \ln H \\
		& + d \left( (k+l)u^{k+l} H^{\frac{q(1+\g)}{2}} \nabla \ln u + \frac{q}{2}(1+\g) u^{k+l} H^{\frac{q(1+\g)}{2}} \nabla \ln H \right) \\
		&= kF \nabla \ln u + l II\nabla \ln u + \left( \left(1+\frac{q\g}{2}\right) I + \frac{q}{2}(1+\g) II \right) \nabla \ln H,
	\end{align*}
	
	\begin{align*}
	II = (1+dZ)^{-1} dZ F,
	\end{align*}
	and so
\begin{eqnarray}
	\nabla F &=& \left( k + l (1+dZ)^{-1} dZ \right) F \nabla \ln u \nonumber\\
	&& + \left(  \left(1+\frac{q\g}{2}\right)(1+dZ)^{-1} + \frac{q}{2}(1+\g) (1+dZ)^{-1} dZ \right) F \nabla \ln H,\nonumber
\end{eqnarray}
which is equivalent to
\begin{equation}\label{grh}
	\nabla \ln H = J^{-1} (1+ dZ) \nabla \ln F - J^{-1} \left[ k + (k+l) dZ \right] \nabla \ln u,
\end{equation}
		where 
	\begin{equation}\label{j}
		J := \left( 1+\frac{q\g}{2} \right) + \frac{dq}{2}(1+\g) Z.
	\end{equation}
	From \eqref{grh}, we obtain 
	\begin{eqnarray}\label{410}
	|\nabla \ln H|^2 &=& J^{-2} (1+ dZ)^2 |\nabla \ln F|^2 + J^{-2} \left[ k + (k+l) dZ \right]^2 H \nonumber\\
	&& - 2J^{-2} (1+ dZ) (k+(k+l) dZ) \langle \nabla \ln F, \nabla \ln u \rangle
	\end{eqnarray}
and
	\begin{equation}\label{411}
			\langle \nabla \ln H, \nabla \ln u \rangle = J^{-1} (1+ dZ) \langle \nabla \ln F, \nabla \ln u \rangle - J^{-1} \left[ k + (k+l) dZ \right] H.
	\end{equation}
	Now, we define 
	\begin{equation}\label{w}
		W=u^kH^{\frac{q\g}{2}}.
	\end{equation}
	Then combining \eqref{d1}, \eqref{j}, \eqref{410}, \eqref{411} and \eqref{w}, we have
	\begin{eqnarray}\label{413}
		\Delta I 
		&=&\left(1+\frac{q\gamma}{2}\right) W \Delta H +W \left\{ k \left( -HL + (k-1)H^2 \right)\right. \nonumber\\
		&&\quad \left. + \frac{q\gamma}{2} \left(1+\frac{q\gamma}{2}\right) J^{-2} \left[ k + (k+l) dZ \right]^2 H^2 \right.\nonumber \\
		&&\qquad \left. - 2k \left( 1 + \frac{q\gamma}{2} \right) J^{-1} \left[ k + (k+l) dZ \right] H^2 \right\}\nonumber \\
		&&\quad + HW \left\{ \frac{q\gamma}{2} \left(1+\frac{q\gamma}{2}\right) J^{-2} (1+dZ)^2 |\nabla \ln F|^2 \right.\nonumber\\  &&\quad\left.-q\gamma \left(1+\frac{q\gamma}{2}\right) J^{-2} (1+dZ) (k + (k+l) dZ) \langle \nabla \ln F, \nabla \ln u \rangle \right.\nonumber \\
		&&\qquad \left. + 2k \left( 1 + \frac{q\gamma}{2} \right) J^{-1} (1+dZ) \langle \nabla \ln F, \nabla \ln u \rangle \right\}.
	\end{eqnarray}
	Combining \eqref{d2}, \eqref{j}, \eqref{410}, \eqref{411} and \eqref{w}, we have
	\begin{eqnarray}\label{414}
		\Delta II &=& \frac{dq}{2} (1+\gamma) WZ \Delta H + d W\left\{(k+l)(-L + (k+l-1) H)\right.\nonumber \\
		&&\left.+ \frac{q}{2} (1+\gamma) \left(\frac{q}{2}(1+\gamma) - 1\right) HL  J^{-2} (1+dZ)^2 |\nabla \ln F|^2 \right. \nonumber\\
		&&\left.-q(1+\gamma)(k+l)J^{-1} (k+(k+l) dZ) HL\right\}\nonumber\\
		&& + dLW \left\{ \frac{q}{2}(1+\gamma) \left( \frac{q}{2}(1+\gamma) - 1 \right) J^{-2} (1+dZ)^2 |\nabla \ln F|^2 \right. \nonumber\\
		&&- q(1+\gamma) \left( \frac{q}{2}(1+\gamma) - 1 \right) (1+dZ) (k+(k+l)dZ) J^{-2} \langle \nabla \ln F, \nabla \ln u \rangle \nonumber\\
		&&+ \left. q(k+l)  (1+\gamma) J^{-1} (1+dZ) \langle \nabla \ln F, \nabla \ln u \rangle \right\} .
	\end{eqnarray}
	Using Lemma \ref{l2} and identity \eqref{411}, for any $\b\in \mathbb{R}$ and real function $\sigma$, we have 
\begin{eqnarray}\label{415}
	\Delta H &=& A(\beta) H^2 + A(\sigma) L^2 + B(\beta, \sigma) HL + N(\beta, \sigma)\nonumber\\
	&&+ \left[ 2(\beta - 1)H + (2 \sigma-q)L \right]J^{-1}\times\nonumber\\
	&& \left[ (1+ dZ) \langle \nabla \ln F, \nabla \ln u \rangle -  (k + (k+l) dZ) H \right],
\end{eqnarray}
	where $A,B$ are defined by \eqref{A} and \eqref{B} respectively and  $N(\b,\sigma)$ is defined by \eqref{N}.
	Substituting \eqref{415} into \eqref{413}, then we have 
	\begin{eqnarray}\label{416}
		\Delta I& = & \left(1 + \frac{q\g}{2}\right) W \left[ A(\sigma) H^2+A(\sigma)L^2 + B(\beta, \sigma) HL + N(\beta, \sigma) \right] \nonumber\\
		&& + W \left\{ k \left( -HL + (k-1)H^2 \right) + \frac{q\g}{2} \left(1 + \frac{q\g}{2}\right) J^{-2} \left[ k + (k + l) dZ \right] H^2 \right. \nonumber\\
		&& \left. - 2k \left(1 + \frac{q\g}{2}\right) J^{-1} \left( k + (k + l) dZ \right) H^2 \right.\nonumber\\
		&&\left.- \left(1 + \frac{q\g}{2}\right) \left( 2(\beta - 1) H + (2\sigma - q) L \right) J^{-1} \left( k + (k + l) dZ \right) H \right\}\nonumber \\
		&& + HW \left\{ \frac{q\g}{2} \left(1 + \frac{q\g}{2}\right) J^{-2} \left( 1+dZ \right)^2  | \nabla \ln F|^2 \right.\nonumber\\
		&&\left.-q\g \left(1 + \frac{q\g}{2}\right) J^{-2} \left( 1+dZ \right) \left( k + (k + l) dZ \right) \left\langle \nabla \ln F, \nabla \ln u \right\rangle \right.\nonumber \\
		&& \left. + 2k \left(1 + \frac{q\g}{2}\right) J^{-1} \left(1+dZ \right) \left\langle \nabla \ln F, \nabla \ln u \right\rangle \right.\nonumber\\
		&&\left.+ \left(1 + \frac{q\g}{2}\right) \left( 2(\beta - 1) + (2 \sigma-q) Z \right) J^{-1} \left( 1+dZ \right) \left\langle \nabla \ln F, \nabla \ln u \right\rangle \right\}.
	\end{eqnarray}
	Replacing $\b,\sigma$ by $\delta,\tau$ in \eqref{415}, we see
	\begin{eqnarray}\label{417}
		\Delta H &=& A(\delta) H^2 + A(\tau) L^2 + B(\delta \tau) HL + N(\delta, \tau)\nonumber\\
		&&+ \left[ 2(\delta - 1)H + (2 \tau-q)L \right]J^{-1}\times\nonumber\\
		&& \left[ (1+ dZ) \langle \nabla \ln F, \nabla \ln u \rangle -  (k + (k+l) dZ) H \right].
	\end{eqnarray}
	And substituting \eqref{417} into the formula \eqref{414} yields
	\begin{eqnarray}\label{418}
		\Delta II &=& \frac{dq}{2} (1+\gamma) W Z \left[ A(\delta) H^2 + A(\tau) L^2 + B(\delta, \tau) HL + N(\delta, \tau) \right]\nonumber\\
		&&+ dWL \left\{ (k+l)(-L + (k + l - 1) H)\right. \nonumber\\
		&&\left.+ \frac{q}{2} (1+\gamma) \left( \frac{q}{2} (1+\gamma) - 1 \right) H J^{-2} (k + (k+l) dZ)^2\right.\nonumber\\
		&&\left.- q (k+l)(1+\gamma) J^{-1} (k + (k+l) dZ) H\right.\nonumber\\
		&&\left.- \frac{q}{2} (1+\gamma) \left[ 2(\delta-1) H + (2\tau-q) L \right] J^{-1} (k + (k+l) dZ)\right\}\nonumber\\
		&&+ dWL \left\{ \frac{q}{2} (1+\gamma) \left( \frac{q}{2} (1+\gamma) - 1 \right) J^{-2} (1+dZ)^2 \left| \nabla \ln F \right|^2 \right.\nonumber\\
		&&\left. - q (1+\gamma) \left( \frac{q}{2} (1+\gamma) - 1 \right) (1+dZ) (k + (k+l) dZ) J^{-2} \left\langle \nabla \ln F, \nabla \ln u \right\rangle \right.\nonumber\\
		&&\left.+ q (k+l)(1+\gamma) J^{-1} (1+dZ) \left\langle \nabla \ln F, \nabla \ln u \right\rangle\right.\nonumber\\
		&&\left.+ \frac{q}{2} (1+\gamma) \left[ 2(\delta-1) + (2\tau-q) Z \right] J^{-1} (1+dZ) \left\langle \nabla \ln F, \nabla \ln u \right\rangle \right\}.
	\end{eqnarray}

	Combining \eqref{416} and \eqref{418}, we represent $\Delta F$ as follows.	
	\begin{equation}
		\Delta F=\textbf{nonnegetive items}+\textbf{dominant terms}+\textbf{gradient items},\nonumber
	\end{equation}
	where
	\begin{equation}
		\textbf{nonnegetive items}=\frac{JW\left(\left(1+\frac{q\g}{2}\right)N(\b,\sigma)H+\frac{dq}{2}(1+\g)N(\delta,\tau)L\right)}{\left(1+\frac{q\g}{2}\right)H+\frac{dq}{2}(1+\g)L},\nonumber
	\end{equation}

	\begin{eqnarray}
		\textbf{dominant terms}
		&=&JW\left(\left(1+\frac{q\g}{2}\right)H+\frac{dq}{2}(1+\g)L\right)^{-3}\times\nonumber\\
		&& \left\{ \left[ \left(1 + \frac{q\g}{2}\right) A(\beta)H + \frac{dq}{2} (1+\gamma) A(\delta)L \right] H^2\times\right.\nonumber\\
		&& \left[ \left(1 + \frac{q\g}{2}\right) H + \frac{dq}{2} (1+\gamma) L \right]^2  \nonumber\\
		&& + \left[ \left(1 + \frac{q\g}{2}\right) B(\beta, \sigma) H + \frac{dq}{2} (1+\gamma)  B(\delta, \tau)L \right] HL\times\nonumber\\
		&& \left[ \left(1 + \frac{q\g}{2}\right) H + \frac{dq}{2} (1+\gamma) L \right]^2\nonumber \\
		&& + \left[ \left(1 + \frac{q\g}{2}\right) A(\sigma) H + \frac{dq}{2} (1+\gamma) A(\tau) L \right]L^2\times\nonumber\\
		&& \left[ \left(1 + \frac{q\g}{2}\right) H + \frac{dq}{2} (1+\gamma) L \right]^2 \nonumber\\
		&& + k(-L + (k-1)H) H^2 \left[ \left(1 + \frac{q\g}{2}\right) H + \frac{dq}{2} (1+\gamma) L \right]^2 \nonumber\\
		&& + \frac{q\g}{2} \left(1+\frac{q\g}{2}\right)H^3 (kH + (k+l)dL)^2\nonumber \\
		&& - 2k \left(1 + \frac{q\g}{2}\right) H^3 (kH + (k+l)dL)\times\nonumber\\
		&& \left( \left(1 + \frac{q\g}{2}\right) H + \frac{dq}{2} (1+\gamma) L \right)\nonumber \\
		&& - \left(1 + \frac{q\g}{2}\right) \left( 2(\beta-1) H + (2\sigma-q) L \right) H^2 \times\nonumber\\
		&& (kH + (k+l)dL)\left( \left(1 + \frac{q\g}{2}\right) H + \frac{dq}{2} (1+\gamma) L \right) \nonumber\\
		&& + d(k+l) HL (-L + (k+l-1)H)\times\nonumber\\
		&& \left( \left(1 + \frac{q\g}{2}\right) H + \frac{dq}{2} (1+\gamma) \right)^2 \nonumber\\
		&& + \frac{dq}{2} (1+\gamma) \left( \frac{q}{2} (1+\gamma) - 1 \right) H^2 L (kH + (k+l)dL)^2 \nonumber\\
		&& - dq (k+l) (1+\gamma) H^2 L (kH + (k+l)dL)\times\nonumber\\
		&& \left( \left(1 + \frac{q\g}{2}\right) H + \frac{dq}{2} (1+\gamma) L \right)\nonumber \\
		&& - \frac{dq}{2} (1+\gamma) HL \left( 2(\delta-1) H + (2\tau-q) L \right) \times\nonumber\\
		&& \left.(kH + (k+l)dL) \left( \left(1 + \frac{q\g}{2}\right) H + \frac{dq}{2} (1+\gamma) L \right) \right\},\nonumber
	\end{eqnarray}
	\begin{eqnarray}
	\textbf{gradient items}	&=&H W\left\{\frac{q\g}{2}\left(1+\frac{q \g}{2}\right) \frac{(1+d Z)^2}{J^2} |\nabla \ln F|^2\right.\nonumber\\
	&&\left.+2k\left(1+\frac{q \g}{2}\right) \frac{1+dZ}{J}\langle\nabla\ln F, \nabla \ln u\rangle\right.\nonumber\\
	&&\left.-q \g\left(1+\frac{q \g}{2}\right) \frac{1+dZ}{J^2}(k(1+dZ)+l d Z) \langle\nabla \ln F, \nabla \ln u\rangle\right. \nonumber\\
	&& \left.+\left(1+\frac{q \g}{2}\right)\left(2(\beta-1)+(2\sigma-q) Z\right) \frac{1+ d Z}{J} \langle\nabla \ln F, \nabla \ln u\rangle\right\}\nonumber\\
	&&+dLW\left\{\frac{q(1+\g)}{2}\left(\frac{q(1+\g)}{2}-1\right)\frac{(1+dZ)^2}{J^2}|\nabla\ln F|^2\right.\nonumber\\
	&&\left.+(k+l)q(1+\g)\frac{1+dZ}{J}\langle\nabla \ln F, \nabla \ln u\rangle\right.\nonumber\\
	&&\left.-q(1+\g)\left(\frac{q(1+\g)}{2}-1\right)\frac{1+dZ}{J^2}\times\right.\nonumber\\
	&&\left.\left(k(1+dZ)+ldZ\right)\langle\nabla \ln F, \nabla \ln u\rangle
	\right.\nonumber\\
	&&\left.+\left(2(\delta-1)+(2\tau-q) Z\right)\frac{q(1+\g)}{2}\frac{1+dZ}{J}\langle\nabla \ln F, \nabla \ln u\rangle\right\}.\nonumber
	\end{eqnarray}
	If we combine like terms of \textbf{dominant terms}, we can express it as 
	$$
	\frac{JW\sum_{i=1}^{6}\S_iH^{6-i}L^{i-1}}{\left(\left(1+\frac{q\g}{2}\right)H+\frac{dq}{2}(1+\g)L\right)^{3}},
	$$
	where $\S_i$ $(1\le i\le 6)$ are polynomial functions of $k,\g,d,\b,\delta,\sigma,\tau$. Then we complete the proof.

\end{proof}

%Before we give the proof of Lemma \ref{kl}, 
We provide a remark to illustrate the optimality of Lemma \ref{kl}.

\begin{rmk}\label{opt}

	When \( q \in [0,1) \) and \( l = p + q - 1 = \frac{(2-q)^2}{(1-q)(n-2)} \), if we take \( u \) to be the radially symmetric ground states on $\R^n$ with $n\ge 3$ 
	$$
	u(x)=\left(\frac{(1-q)(n-2)^{q-1}}{n-(n-1)q}+|x|^{\frac{2-q}{1-q}}\right)^{-\frac{(1-q)(n-2)}{2-q}},
	$$
	then $\O_r(u)=\R^n\setminus\{0\}$. If we set $$k=-(1-\frac{q}{2})\b,\quad \b=\frac{2}{n-2},\quad \g=-1\,\,\,\text{and}\,\,\, d=\frac{n-2}{(1-q)(n-(n-1)q)},$$  it can be shown that the function $F$ given by \eqref{F} is a positive constant.

	In the right hand side of the equation of $\d F$, we set $$\delta=\b=\frac{2}{n-2},\quad \sigma=\tau\equiv-\frac{q}{n-(n-1)q},$$ and we find that $\S_i\equiv 0$ for $i=1,2,\cdots,6$. Simultaneously, the non-negative items $N(\b,\sigma)$ and $N(\delta,\tau)$ both vanish. Therefore, the equation of $\d F$
	is optimal if we discard the items $N(\b,\sigma)$ and $N(\delta,\tau)$ (in fact, when we use the equation of auxiliary function $F$ to derive the estimate of $F$, we always discard the items $N(\b,\sigma)$ and $N(\delta,\tau)$).
	%   However, if \( u \) is taken to be a positive constant, \( F \) also reduces to a constant. This scenario cannot arise when \( q = 0 \), as the equation admits no positive solutions in this case.  
	
\end{rmk}

When $\g=-1$ and $k=-\left(1-\frac{q}{2}\right)\b$, the  following  lemmas provide the lower bound estimates of the leading coefficients $\S_i$ appearing in Lemma \ref{kl}.

\begin{lem}\label{cl}
	Let $\S_1,\cdots,\S_6$ be defined as in Lemma \ref{a1}. If we set $\g=-1$ and $k=-\left(1-\frac{q}{2}\right)\b$, then $\S_i=0$ for $ i=4,5,6$ and $\S_i=\left(1-\frac{q}{2}\right)^3S_i$, where
	\begin{eqnarray}
		S_1&:=& \left(\frac{2}{n}(1+\b)-\b\right)(1+\b),\\
		S_2&:=&  (B(\beta ,\sigma)+(2\sigma+1-q)\b)\nonumber\\
		&&+d\left(1-\frac{q}{2}\right)^{-1} \left(l+1+\left(\frac{q}{2}-1\right)\b\right)\left(l+\left(\frac{q}{2}-1\right)\b\right) ,\\
		S_3&:=& A(\sigma)-\frac{d^2 q}{2}  \left(1-\frac{q}{2}\right)^{-2} \left(l+\left(\frac{q}{2}-1\right)\b\right)^2\nonumber\\
		&&-d \left(1-\frac{q}{2}\right)^{-1} \left(l+\left(\frac{q}{2}-1\right)\b\right) (2 \sigma+1 -q),
	\end{eqnarray}
the functions $A,B$ are defined as  \eqref{A} and \eqref{B} in Lemma \ref{l2} respectively.
\end{lem}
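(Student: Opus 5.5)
The lemma is a pure algebraic specialization of the dominant-term polynomial in Lemma \ref{kl}, so I will not expand the general $\S_i$ of Lemma \ref{a1} blindly. Instead I will substitute $\g=-1$ and $k=-\left(1-\frac{q}{2}\right)\b$ directly into the \textbf{dominant terms} expression from the proof of Lemma \ref{kl}, and simplify before collecting powers of $H$ and $L$.

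\textbf{Step 1: what $\g=-1$ does.} Setting $\g=-1$ kills every factor $\frac{q}{2}(1+\g)$, so
\[
\left(1+\frac{q\g}{2}\right)H+\frac{dq}{2}(1+\g)L=\left(1-\frac{q}{2}\right)H .
\]
Also $J=1-\frac q2$ is constant. Consequently:
\begin{itemize}
\item All terms carrying $A(\delta),B(\delta,\tau),A(\tau)$ drop out, as do the terms from $\Delta II$ with prefactor $\frac{dq}{2}(1+\g)$ or $dq(k+l)(1+\g)$.
\item What remains from $\Delta II$ is $d(k+l)HL(-L+(k+l-1)H)\left(1-\frac q2\right)^2H^2$. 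In the displayed expression one of the $L$'s is misprinted, and I will recompute this term directly from \eqref{d2}.
\end{itemize}
The denominator is $\left(1-\frac q2\right)^3H^3$, while the numerator is a homogeneous polynomial of degree $5$ in $(H,L)$. The $\Delta I$ contributions all carry at least $H^2\cdot H^2$ or $H^3$ times a linear factor. Hence $L$ appears to degree at most $2$ overall. This immediately gives $\S_4=\S_5=\S_6=0$, and the $L^2$ terms already carry $H^3$.

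\textbf{Step 2: $\S_1$ and $\S_2$ from the $\Delta I$ contributions.} Write $\kappa=1-\frac q2$ and $k=-\kappa\b$, and factor out $\kappa^3$. From $\Delta I$ the contributions are:
\begin{itemize}
\item $\kappa^3\bigl[A(\b)H^3+B(\b,\sigma)H^2L+A(\sigma)HL^2\bigr]H^2$;
\item $\kappa^2k(-L+(k-1)H)H^4$;
\item $-\frac{q}{2}\kappa H^3(kH+(k+l)dL)^2$;
\item $-2k\kappa^2H^4(kH+(k+l)dL)$;
\item $-\kappa^2(2(\b-1)H+(2\sigma-q)L)H^2(kH+(k+l)dL)$.
\end{itemize}
For the $H^5$ coefficient, collect
\[
\kappa^3A(\b)+\kappa^2k(k-1)-\tfrac q2\kappa k^2-2\kappa^2k^2-2\kappa^2(\b-1)k .
\]
With $k=-\kappa\b$ this equals $\kappa^3\bigl[A(\b)-\b^2+2\b^2\cdots\bigr]$. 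I will check that it reduces to $\kappa^3\bigl(\frac2n(1+\b)-\b\bigr)(1+\b)$, using $\frac{q}{2}=1-\kappa$ to combine the $\b^2$ terms. The $H^4L$ and $H^3L^2$ coefficients are collected the same way, now including the $d(k+l)$ term from $\Delta II$. The factor $l+(\frac q2-1)\b=k+l$ should appear naturally, along with $l+1+(\frac q2-1)\b$ from $(k+l-1)+\dots$.

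\textbf{Main obstacle.} The main difficulty is bookkeeping in the $H^4L$ coefficient. Several sources mix there: the $-kL$ term, the cross terms of $(kH+(k+l)dL)^2$, and the $(2\sigma-q)L\cdot kH$ and $2(\b-1)H\cdot(k+l)dL$ products. I will organize them by separating $d$-free terms, which should give $\kappa^3(B(\b,\sigma)+(2\sigma+1-q)\b)$, from $d$-linear terms, which should give $\kappa^2d(k+l)(k+l+1)$. Throughout I will repeatedly use $q=2-2\kappa$ so that the factors of $\kappa$ cancel cleanly.
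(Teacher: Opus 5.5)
Your proposal is correct, and the coefficients you predict do come out. With $\kappa=1-\frac q2$ and $k=-\kappa\beta$, the $H^4L$ coefficient of the braces is $\kappa^3\bigl(B(\beta,\sigma)+(2\sigma+1-q)\beta\bigr)+\kappa^2d(k+l)(k+l+1)$. The $d$-linear part collapses because $q\beta+2\kappa\beta=2\beta$. The $H^3L^2$ coefficient is $\kappa^3A(\sigma)-\frac q2\kappa d^2(k+l)^2-\kappa^2d(k+l)(2\sigma+1-q)=\kappa^3S_3$.

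Your route differs from the paper's only in where you substitute. The paper plugs $\gamma=-1$, $k=-\kappa\beta$ into the closed-form $\S_i$ of Lemma \ref{a1}. There every term of $\S_4,\S_5,\S_6$ carries a factor $1+\gamma$, and only a few terms of $\S_1,\S_2,\S_3$ survive.

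Working one level upstream, as you do, has two advantages:
\begin{itemize}
\item It gives a structural reason for $\S_4=\S_5=\S_6=0$: the mixed denominator collapses to $\kappa H$, and every surviving term carries $H^3$.
\item It avoids relying on the formulas of Lemma \ref{a1}, which are not literally valid for general $k$. For example, the $d$-free part of $\S_2$ is written as $(1+\frac{q\gamma}{2})^3\bigl(B(\beta,\sigma)+(1-q+2\sigma)\beta\bigr)$. That is the collected coefficient only when $k=-(1+\frac{q\gamma}{2})\beta$, which happens to hold in this lemma.
\end{itemize}
The trade-off is that you prove the identity for the like-term coefficients arising in Lemma \ref{kl}, not for the displayed formulas. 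This is fine: those coefficients are what \eqref{435} actually uses, and for $\gamma=-1$ the two agree.

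Three bookkeeping remarks:
\begin{itemize}
\item Your fifth $\Delta I$ contribution should read $-\kappa^2(2(\beta-1)H+(2\sigma-q)L)H^3(kH+(k+l)dL)$. You already use $H^3$ when collecting the $H^5$ coefficient.
\item In the $H^4L$ coefficient you must keep the term $-2k\kappa^2(k+l)d$ from your fourth bullet, which your ``main obstacle'' list omits. Without it the $d$-linear part would come out as $\kappa^2d(k+l)(k+l+1-2\kappa\beta)$.
\item Rather than recomputing from \eqref{d2}, which itself contains misprints, note that for $\gamma=-1$ one has $II=du^{k+l}=dWL$. Hence $\Delta II=d(k+l)WL(-L+(k+l-1)H)$ immediately.
\end{itemize}
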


\begin{proof}
By Lemma \ref{a1}, direct computations end the proof.
\end{proof}

\begin{lem}\label{p1}
Let $S_1,S_2,S_3$ be defined as in Lemma \ref{cl}. For the following cases:\\
(1) $n=2$ and $q\in[0,2)$ and $l=p+q-1<\infty$;\\
(2) $n\ge 3$ and  $q\in [0,1)$ and $l=p+q-1\in[\frac{2}{n-2},\frac{(2-q)^2}{(1-q)(n-2)})$;\\ 
(3) $n\ge 3$ and $q\in[1,2)$ and $l=p+q-1\in[\frac{2}{n-2},\infty)$;\\
there exist $\b\in[0,\frac{2}{n-2})$, $d\ge 0$ and constant function which almost depend on $n,p,q$ such that
$$
S_i>0,\qquad i=1,2,3.
$$
\end{lem}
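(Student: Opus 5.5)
The plan is to reduce the three inequalities to an elementary two-parameter problem. Set $c:=1-\frac q2\in(0,1]$ and $m:=l-c\b$, and replace $d$ by $t:=dm/c$. This is legitimate because in cases (2) and (3) we have $l\ge\frac{2}{n-2}>c\b$ whenever $\b<\frac{2}{n-2}$, so $m>0$ and $d=tc/m\ge0$ whenever $t\ge0$. Lemma \ref{cl} then gives
\begin{eqnarray*}
S_1&=&\Big(\tfrac2n(1+\b)-\b\Big)(1+\b),\\
S_2&=&B(\b,\sigma)+(2\sigma+1-q)\b+t(m+1),\\
S_3&=&A(\sigma)-\tfrac q2t^2-t(2\sigma+1-q).
\end{eqnarray*}
$S_1>0$ holds exactly when $\b<\frac{2}{n-2}$ for $n\ge3$, and for all $\b\ge0$ when $n=2$. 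So the whole task is to choose a constant $\sigma$ and a number $t\ge0$, with $\b$ fixed, making $S_2,S_3>0$. Note that $S_3$ does not involve $l$ at all.

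\emph{Case (3), $q\in(1,2)$.} Take $\sigma=-1$ and $t=2-\eta$ with $\eta>0$ small. A direct expansion gives
\begin{eqnarray*}
S_3&=&(q-1)\eta-\tfrac q2\eta^2,\\
S_2&=&(3-q)\b-2l+(2-\eta)(l+1-c\b)\;=\;\b+2-\eta(l+1-c\b).
\end{eqnarray*}
Both are positive once $\eta$ is small in terms of $n,p,q$, and any $\b\in[0,\frac2{n-2})$ is allowed. The endpoint $q=1$ is delicate because there $S_3=-\eta^2/2$. I will instead perturb $\sigma=-1+s$ and compute $\partial_sS_3$ at $(s,\eta)=(0,0)$: this derivative equals $-2t$ plus $A'(-1)=4$, hence vanishes at $t=2$, so the expansion must be carried to second order. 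If this fails, the fallback is to let $\b$ vary as well, since $\b$ enters $S_2$ only through positive terms of size $O(1)$.

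\emph{Case (2) and case (1).} Here I anchor at the critical data of Remark \ref{opt}: $\b_0=\frac2{n-2}$, $\sigma_0=-\frac{q}{n-(n-1)q}$, and $d_0=\frac{n-2}{(1-q)(n-(n-1)q)}$, i.e.\ $t_0=d_0m/c$. On the curve $l=l_*:=\frac{(2-q)^2}{(1-q)(n-2)}$ these data make $S_1=S_2=S_3=0$. Step one is to verify this vanishing from Lemma \ref{cl}, which is a consistency check against Remark \ref{opt}. Step two is to compute the partial derivatives of $(S_1,S_2,S_3)$ in $(\b,\sigma,t,l)$ at this point. I will then show that moving $l$ below $l_*$ and $\b$ slightly below $\b_0$ can be compensated by a choice of $(\sigma,t)$ making all three quantities positive; concretely, I will exhibit a direction $(\delta\b,\delta\sigma,\delta t)$ with $\delta\b<0$ along which the linearization is positive when $\delta l<0$.

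This only covers $l$ near $l_*$. For $l\in[\frac2{n-2},l_*)$ away from $l_*$, the plan is a continuity/monotonicity argument. Since $\partial_lS_2=t>0$ and $S_1,S_3$ do not depend on $l$ at fixed $(\b,\sigma,t)$, increasing $l$ only helps $S_2$. So it suffices to treat $l$ near $l_*$ and then show that the admissible $(\sigma,t)$-region stays nonempty as $l$ decreases, for instance by keeping $t$ fixed and adjusting $\b$. Case (1), $n=2$, $q<1$, uses the same scheme with $S_1=1+\b$ imposing no constraint. Here I take $\b$ large and $\sigma=-1$, which gives
\[
S_2=\b(1-q)+\b\cdot(\text{positive})-2l+t(m+1),
\]
and choose $t<\frac{2(1+q)}{q}\cdot\frac12$ so that $S_3=-2+t(1+q)-\frac q2t^2>0$ is possible. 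When $q<1$ this requires $t$ between the two roots of that quadratic; these roots are $2/q$ and $2$, so the admissible window is $t\in(2,\frac2q)$. Then $S_2\ge -2l+t(l+1-c\b)+(3-q)\b>0$ for $t>2$, after choosing $\b$ to balance.

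The main obstacle is case (2) near the critical curve, where all inequalities degenerate simultaneously. The first thing I will try there is the explicit first-order perturbation around the data of Remark \ref{opt}.
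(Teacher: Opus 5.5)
Your reduction to $(\sigma,t)$ is the paper's $\theta$-substitution, and your case (3) argument for $q\in(1,2)$ with $\sigma=-1$, $t=2-\eta$ is correct. Two steps, however, fail as written.

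The first is case (1). You checked $m=l-c\beta>0$ only for cases (2),(3), and it is false here. For $n=2$ the exponent $l$ is an arbitrary real number. So for $l\le 0$ every $\beta\ge 0$ gives $m\le 0$, hence $t=dm/c\le 0$ for every admissible $d\ge 0$. Taking $\beta$ large, as you propose, makes $m<0$ even when $l>0$. The window $t\in(2,2/q)$ is therefore unreachable, and $q\in[1,2)$ with $n=2$ is not treated at all. The missing (easy) choice is $d=0$, $\sigma=0$. Then $S_1=1+\beta$, $S_3=A(0)=1$ and $S_2=2+(3-q)\beta-2l$, so $\beta=\max\{2l/(3-q),0\}$ gives $S_2\ge 2$.

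The second is case (2). Since $B(\beta,\sigma)$ contains $-2l$, you have $\partial_l S_2=t-2<0$, not $t$. Your inference ``increasing $l$ only helps, so it suffices to treat $l$ near $l_*$'' is therefore backwards as stated; it is the correct sign $t-2<0$ that would make such a reduction work.

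More to the point, nothing degenerates for a fixed $l<l_*$, so no local analysis in $l$ is needed. At $\beta_0=\frac{2}{n-2}$ the $\sigma$-terms of $S_2$ cancel, giving $S_2=\frac{6-2q+t(n-4+q)}{n-2}-(2-t)l$. At $t_0=\frac{2}{n-(n-1)q}$ this equals $(2-t_0)(l_*-l)>0$. Only $S_1$ and $S_3$ vanish at the anchor. Along $\sigma=\frac{2-nt}{2(n-1)}$ one has $S_3=\frac{n-(n-1)q}{2(n-1)}(t-2)(t-t_0)$, which is positive for $0<t<t_0$ when $q<1$. Lowering $\beta$ below $\beta_0$ and $t$ below $t_0$, by amounts depending on $l$, makes all three quantities positive by continuity. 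This is the paper's argument.

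Note also that the $d$ printed in Remark~\ref{opt} is a misprint. It gives $t_0=\frac{2}{(1-q)^2(n-(n-1)q)}$, at which $S_3\neq 0$ for $q>0$, so your consistency check would fail. The ground state actually requires $d_0=\frac{(n-2)(1-q)}{n-(n-1)q}$, i.e.\ $t_0=\frac{2}{n-(n-1)q}$.

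The same curve $\sigma=\frac{2-nt}{2(n-1)}$ settles your $q=1$ worry in case (3), since there $S_3=\frac{(t-2)^2}{2(n-1)}>0$ for $t\neq 2$. Your fallback of varying $\beta$ could not help, because $S_3$ does not involve $\beta$.
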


\begin{proof}
First, we consider $n=2$ case. Fix $\sigma=d=0$ and $\b=\max(\frac{2l}{3-q},0)$, then  $S_2=2$, $S_3=1$ and $S_1>0$. This proves case (1).

Second, we always assume $n\ge 3$ as follows.
 For any $\b\in[0,\frac{2}{n-2})$, $S_1>0$. Define
$$
\theta=\frac{d\left(l+\left(\frac{q}{2}-1\right)\b\right)}{1-\frac{q}{2}},
$$
we have $\theta>0$ if and only if $d>0$ under the assumption that $\b\in[0,\frac{2}{n-2})$, $q\in[0,2)$ and $l\ge\frac{2}{n-2}$. Notice that $S_3>0$ is equivalent to 
\begin{equation}\label{425}
	A(\sigma)+(q-1-2\sigma)\theta-\frac{q}{2}\theta^2>0.
\end{equation}
We choose (in fact, such selection of $\sigma$ is optimal)
$$\sigma\equiv\frac{2-n\theta}{2(n-1)},$$
then 
\begin{eqnarray}
	&&A(\sigma)+(q-1-2\sigma)\theta-\frac{q}{2}\theta^2\nonumber\\
	&\equiv&\left(\frac{n}{2(n-1)}-\frac{q}{2}\right)\theta^2+\left(q-1-\frac{2}{n-2}\right)\theta+\frac{2}{n-1}.
\end{eqnarray}
So, if $p\in[0,1],$ \eqref{425} is equivalent to
$$
\theta<\frac{2}{n-(n-1)q} \quad \text{or} \quad \theta>2;
$$
if 
$p\in(1,\frac{n}{n-1}),$ \eqref{425} is equivalent to
$$
\theta>\frac{2}{n-(n-1)q} \quad \text{or} \quad \theta<2;
$$
if 
$p=\frac{n}{n-1},$ \eqref{425} is equivalent to
$ \theta<2;
$\\
if 
$p\in(\frac{n}{n-1},2),$ \eqref{425} is equivalent to
$$
\frac{2}{n-(n-1)q}<\theta<2.
$$

Now, we divide the remaining arguments into two parts.

$\bullet$ If case (2) holds, then we choose $$\sigma_0\equiv\frac{2-n\theta_0}{2(n-1)}=\frac{-q}{n-(n-1)q}\quad (\rm{i.e.}\,\, \theta_0=\frac{2}{n-(n-1)q})$$ and $\b_0=\frac{2}{n-2}$, then 
$S_2>0$ is equivalent to $$l<\frac{(2-q)^2}{(1-q)(n-2)}.$$
By continuity, we can find $\sigma=\frac{2-n\theta}{2(n-1)}$ with $\theta<\frac{2}{n-(n-1)q}$ and $\b<\b_0$ such that $S_2>0$. At the present, we see $S_3>0$ and $S_1>0$.

$\bullet$ If case (3) holds, then we choose $$\sigma_0\equiv\frac{2-n\theta_0}{2(n-1)}=-1\quad ( \rm{i.e.}\,\,\theta_0=2)$$ and $\b=0$, then 
$S_2=2>0$ and $S_3=0$. By continuity,  we can find $\sigma=\frac{2-n\theta}{2(n-1)}$ with $\theta<2$ and $\b=0$ such that $S_1,S_2,S_3$ are positive.

\end{proof}

Now, we define a new auxiliary function related to solution. First, we fix $\g=-1$ and $k=-\left(1-\frac{q}{2}\right)\b$ in the definition of $F$ (recall \eqref{F}), where $\beta\in\mathbb{R}$ is undetermined. That is, 
\begin{equation}\label{F0}
	F := u^{-\left(1 -\frac{q}{2}\right)\beta} \left( H^{1 - \frac{q}{2}} + du^l \right)\quad \text{on}\, \,\, \O,
\end{equation}
where $H,l$ is defined by \eqref{21}. On the regular set $\O_r(u)$, we define
$
W:= u^{-\left(1 - \frac{q}{2}\right)\beta} H^{-\frac{q}{2}}
$
and $L,Z$ as \eqref{21} and \eqref{22} respectively. For $0\le q< 2$, we define the following two-parameter pertubation of $F$:
\begin{equation}\label{g}
	G:=F\times\left(\frac{H^{1-\frac{q}{2}}}{H^{1-\frac{q}{2}}+\varepsilon u^l}\right)^\rho \quad \text{on}\, \,\, \O,
\end{equation}
where $F$ is given by \eqref{F0} and $\varepsilon, \rho > 0$. On $\O_r(u)$, we see
$$
G =W(H+dL)\left( \frac{H}{H + \varepsilon L} \right)^\rho.
$$ 
Formally, the function \( G \) converges pointwise to the function \( F \) on $\O_r(u)$ whether \( \rho \) or \( \varepsilon \) tends to $0^{+}$.  
For deriving Liouville theorem of equation \eqref{pqle}, we need to calculate the elliptic equation of $G$. Before we deal with it, we  give a beneficial remark.

\begin{rmk}\label{r4}
(A) The simplest auxiliary function is $F$ given by \eqref{F} with $\g=-1$ and $k=-\left(1-\frac{q}{2}\right)\b$. For this function, using Lemma \ref{p1}, one see that if $(p,q)\in \mathscr{D}_L(n)$ (recall the definition \eqref{LD}), then the leading coefficients $\S_1,\S_2,\S_3$ can be positive simultaneously as long as we choose suitable parameters $d,\b,\sigma$. However, this important fact can not yield the desired Liouville theorem. There are several reasons as follows:
\begin{itemize}
	\item in this case, we can not ensure that the maximal value point of $F$ locates at $\O_r(u)$ and so we can not use Lemma \ref{kl};
	
	\item the technical issue is that when we deal with gradient items by basic inequalities or Cauchy--Schwarz inequality, except in the case of \( q = 0 \), the gradient terms cannot be canceled out by the dominant terms;
	
	\item formally, a key distinction from the case of \( q = 0 \) lies in the fact that the auxiliary function \( F \) fails to distinguish between positive constant solutions and radially symmetric ground states—for in both cases, \( F \) reduces to a positive constant.  
	
\end{itemize}	
In summary, we introduce above perturbation to $F$
in an attempt to overcome the aforementioned difficulties.\\
(B) If we choose $F$ given by \eqref{F} with $\g>-1$ and $k=-\left(1+\frac{q\g}{2}\right)\b$, we also can derive a Liouville theorem in $q\le 1$ case because in this case, the $L^{\infty}$ estimate  of solution at the maximal value of $F$ is automatically obtained and when $\g\to -1^{+}$, the positivity for coefficient functions are still valid.  However, we miss this important fact in $1<q<2$ case. So, we use another type pertubation $G$ (defined by \eqref{g} with $\rho=1$).
Finally, we can derive the decay estimate of $G$ without the priori $L^{\infty}$ estimate of solution. One can see concrete details  in Section \ref{S5}.
\end{rmk}

Now, we present the elliptic equation of $G$.

\begin{lem}\label{eg}
	Let $u$ be a positive solution of \eqref{pqle} with $q\in[0,2)$ in the domain $\O$,  and define the functions $H,l,L, Z$ in the domain $\O_r(u)$ as \eqref{21} and  \eqref{22} respectively. For real numbers $d\ge 0,\b\in\mathbb{R}$, $k=-\left(1-\frac{q}{2}\right)\b$ and $\gamma=-1$, we define $F$ as \eqref{F0} and the auxiliary function $G$ as \eqref{g} with parameters $\e,\rho>0$. Then for any real numbers $\delta$ and real functions $\sigma,\tau$, we have
	\begin{eqnarray}\label{529}
		\Delta G&=&\left(1-\frac{q}{2}\right)W\left( \frac{H}{H + \varepsilon L} \right)^\rho \left(N(\b,\sigma)+\frac{\rho\e(H+dL)Z}{H+\e L}N(\delta,\tau)\right)\nonumber\\
		&&+W\left( \frac{H}{H + \varepsilon L} \right)^\rho\left\{\frac{\left(1-\frac{q}{2}\right)\sum_{j=1}^{10}I_jH^{10-j}L^{j-1}}{H(H+\e L)^2(H^2+\e HL+\rho\e HL+\rho\e dL^2)^2}\right.\nonumber\\
		&& -\frac{q}{2} \left(1 - \frac{q}{2}\right)^{-1} (H + dL) \left( P_1^2 |\nabla \ln G|^2 + 2 P_1 Q_1 \langle \nabla \ln G, \nabla \ln u \rangle \right) \nonumber\\
		&& + q \left(1 - \frac{q}{2}\right)^{-1} (H + dL) (k + (k+l)dZ) P_1 \langle \nabla \ln G, \nabla \ln u \rangle \nonumber\\
		&& + 2k (H + dL) P_1 \langle \nabla \ln G, \nabla \ln u \rangle \nonumber\\
		&& + (2(\beta-1)H + (2\sigma-q)L)(1 + dZ) P_1 \langle \nabla \ln G, \nabla \ln u \rangle \nonumber\\
		&& + 2\rho \frac{\varepsilon L}{H + \varepsilon L} (H + dL) \left( P_1 P_2 |\nabla \ln G|^2 + (P_1 Q_2 + P_2 Q_1) \langle \nabla \ln G, \nabla \ln u \rangle \right)\nonumber \\
		&& + \rho (\rho - 1) (H + dL) \left( \frac{\varepsilon L}{H + \varepsilon L} \right)^2 \left( P_2^2 |\nabla \ln G|^2 + 2 P_2 Q_2 \langle \nabla \ln G, \nabla \ln u \rangle \right) \nonumber\\
		&& + \rho (H + dL) \frac{2\varepsilon HL}{(H + \varepsilon L)^2} \left( -P_2 P_3 |\nabla \ln G|^2 - (P_2 Q_3 + P_3 Q_2) \langle \nabla \ln G, \nabla \ln u \rangle \right) \nonumber\\
		&& + 2\rho (H + dL)\left( \frac{\varepsilon L}{H + \varepsilon L} \right)^2  \left( -P_2 P_4 |\nabla \ln G|^2 - (P_2 Q_4 + P_4 Q_2) \langle \nabla \ln G, \nabla \ln u \rangle \right) \nonumber\\
		&& + \left(1 - \frac{q}{2}\right) \rho\e \frac{H + dL}{H + \varepsilon L} Z (2(\delta-1)H + (2\tau - q)L) P_3 \langle \nabla \ln G, \nabla \ln u \rangle \nonumber\\
		&& + \frac{q}{2} \left(1 - \frac{q}{2}\right) \rho\e \frac{H + dL}{H + \varepsilon L} L \left( P_3^2 |\nabla \ln G|^2 + 2 P_3 Q_3 \langle \nabla \ln G, \nabla \ln u \rangle \right) \nonumber\\
		&&\left. + ql\rho\e \frac{H + dL}{H + \varepsilon L} L P_3 \langle \nabla \ln G, \nabla \ln u \rangle\right\}
	\end{eqnarray}
on $\O_r(u)$, where $I_j$ ($1\le j\le 10$) are  functions on $d,\e,\rho,\b,\delta,\sigma,\tau$, whose expressions are given in Lemma \ref{a2}, $N(\cdot,\cdot)$ is defined by \eqref{N} and  functions $P_i$ and $Q_i$ ($1\le i\le4 $) is given by

$$
P_1 = \frac{1 + \varepsilon Z}{1 + \varepsilon Z + \rho \varepsilon Z + \rho \varepsilon dZ^2},\qquad
Q_1 = \frac{\rho \varepsilon (k+l) Z (1 + dZ)}{1 + \varepsilon Z + \rho \varepsilon Z + \rho \varepsilon dZ^2},
$$ 

$$
P_2 = \frac{(1 + dZ)(1 + \varepsilon Z)}{1 + \varepsilon Z + \rho \varepsilon Z + \rho \varepsilon dZ^2},\quad
Q_2 = (k+l)P_2,\quad P_3 = \left(1 - \frac{q}{2}\right)^{-1} P_2,
$$ 

$$
Q_3 = \left(1 - \frac{q}{2}\right)^{-1} \left[ -k + (k+l)Z \frac{\rho \varepsilon (1 + dZ) - d(1 + \varepsilon Z)}{1 + \varepsilon Z + \rho \varepsilon Z + \rho \varepsilon dZ^2} \right],
$$ 

$$
P_4 = \frac{q}{2} \left(1 - \frac{q}{2}\right)^{-1} P_2,
\qquad
Q_4 = l + \frac{q}{2} Q_3.
$$

\end{lem}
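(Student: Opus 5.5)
The identity \eqref{529} is a direct, if lengthy, computation. I will write $G$ as a product, $G=F\cdot E^{\rho}$ with $E:=\frac{H}{H+\e L}$ on $\O_r(u)$. Then I apply the Leibniz rule
\[
\Delta G=E^{\rho}\Delta F+F\Delta E^{\rho}+2\langle\nabla F,\nabla E^{\rho}\rangle ,
\qquad
\Delta E^{\rho}=\rho E^{\rho}\Big(\Delta\ln E+\rho|\nabla\ln E|^2\Big).
\]
For $\Delta F$ I take Lemma \ref{kl} with $\g=-1$ and $k=-(1-\frac q2)\b$. There $J=1-\frac q2$ is constant, and the $dL$-part of the gradient bracket carries the factor $\frac{q(1+\g)}{2}=0$. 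So $\Delta F$ reduces to three pieces: the dominant part with $\S_1,\S_2,\S_3$ (Lemma \ref{cl}), the term $(1-\frac q2)WN(\b,\sigma)H$, and the $HW\{\cdots\}$ gradient terms.

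For $\Delta\ln E$ I write $\ln E=-\ln(1+\e Z)$, with $\ln Z=\ln L-\ln H=l\ln u-(1-\frac q2)\ln H$. Hence $\nabla\ln Z$, $\Delta\ln Z$ and $|\nabla\ln Z|^2$ are expressed through $\nabla\ln u$, $\nabla\ln H$, $\Delta\ln u=-H-L$ and $\Delta H$.

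The second copy of Lemma \ref{l2}, now with parameters $(\delta,\tau)$, is used only for the $\Delta H$ appearing inside $\Delta\ln E$. This is why $N(\delta,\tau)$ appears with the weight $\rho\e(H+dL)Z/(H+\e L)$. Concretely, $\Delta\ln(1+\e Z)$ contains $\frac{\e Z}{1+\e Z}\cdot(-(1-\frac q2))\frac{\Delta H}{H}$. Multiplying by $-\rho F=-\rho W(H+dL)$ gives exactly that coefficient times $(1-\frac q2)$. This is consistent with the first line of \eqref{529}.

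The key structural step is to eliminate $\nabla\ln H$ in favour of $\nabla\ln G$ and $\nabla\ln u$, as in \eqref{grh}. From
\[
\nabla\ln G=\nabla\ln W+\nabla\ln(H+dL)+\rho\big(\nabla\ln H-\nabla\ln(H+\e L)\big),
\]
together with $\nabla\ln W=k\nabla\ln u-\frac q2\nabla\ln H$ and $\nabla\ln L=l\nabla\ln u+\frac q2\nabla\ln H$, we get a linear relation. Solving it gives
\[
\nabla\ln H=\frac{P_1}{1-\frac q2}\nabla\ln G+(\cdots)\nabla\ln u .
\]
The common denominator $1+\e Z+\rho\e Z+\rho\e dZ^2$ arises precisely from $(1-\frac q2)(1+dZ)^{-1}-\rho(1-\frac q2)\frac{\e Z}{1+\e Z}$ after clearing. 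Similarly, $\nabla\ln Z$ and $\nabla\ln(H+dL)$ become combinations $P_i\nabla\ln G+Q_i\nabla\ln u$, $i=2,3,4$. I will verify the listed $P_i,Q_i$ by this linear algebra; for instance $Q_2=(k+l)P_2$ follows from the $\nabla\ln u$-coefficient.

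Substituting these into every $|\nabla\ln H|^2$, $\langle\nabla\ln H,\nabla\ln u\rangle$, $|\nabla\ln Z|^2$ and cross term produces the gradient lines of \eqref{529}. Each line corresponds to one source: the $-\frac q2$ Hessian-of-power term from $W$, the $2k$ and $(2(\b-1)H+(2\sigma-q)L)$ terms from Lemma \ref{l2}, the cross term $2\langle\nabla F,\nabla E^\rho\rangle$, and the $\rho(\rho-1)$ and $\Delta\ln(1+\e Z)$ pieces. The residual $|\nabla\ln u|^2=H$ contributions are not gradient terms; they are collected as zeroth-order terms.

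The remaining, and most laborious, step is to gather all zeroth-order terms. These are the $\S_i$ from $\Delta F$, the quadratic forms $A(\cdot)$ and $B(\cdot,\cdot)$ in $H,L$ from both copies of Lemma \ref{l2}, and the $H$-multiples produced by substitution. I will put them over the common denominator $H(H+\e L)^2(H^2+\e HL+\rho\e HL+\rho\e dL^2)^2$, which is $H^5(1+\e Z)^2(\cdots)^2$ written homogeneously, and expand the numerator as a degree-9 homogeneous polynomial $\sum_{j=1}^{10}I_jH^{10-j}L^{j-1}$. The $I_j$ are then whatever this bookkeeping yields, recorded in Lemma \ref{a2}.

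I expect this final expansion to be the main obstacle. It is a purely algebraic but error-prone computation, and I would check it symbolically and against the limit $\rho\to0$, where the formula must reduce to Lemma \ref{kl}.
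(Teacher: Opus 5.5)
Your plan follows the paper's proof. You write $G=F\,E^{\rho}$ with $E=H/(H+\varepsilon L)$, apply the Leibniz rule, and take $\Delta F$ from Lemma \ref{kl} with $\gamma=-1$, so $J=1-\frac{q}{2}$ and the $dLW\{\cdots\}$ bracket vanishes. You then use a second copy of Lemma \ref{l2} with $(\delta,\tau)$ for the $\Delta H$ produced by $\Delta E^{\rho}$, eliminate all gradients in favour of $\nabla\ln G$ and $\nabla\ln u$, and collect the zeroth-order terms over a common denominator. The only difference is cosmetic. You differentiate $\ln E=-\ln(1+\varepsilon Z)$ through $\Delta\ln Z$ and $\Delta\ln u=-H-L$, whereas the paper applies the quotient rule to $H/(H+\varepsilon L)$ and computes $\Delta L$ in \eqref{437}. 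Both routes produce the same term $\frac{\varepsilon}{H+\varepsilon L}(1-\frac{q}{2})Z\,\Delta H$.

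Several intermediate formulas in your sketch are misstated, though none of this affects the method.

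(i) With $\gamma=-1$ the nonnegative part of $\Delta F$ is $(1-\frac{q}{2})WN(\beta,\sigma)$, not $(1-\frac{q}{2})WN(\beta,\sigma)H$. The factor $H$ cancels against the denominator $(1-\frac{q}{2})H$ in Lemma \ref{kl}.

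(ii) You have $\nabla\ln E=\frac{\varepsilon Z}{1+\varepsilon Z}\bigl((1-\frac{q}{2})\nabla\ln H-l\nabla\ln u\bigr)$. Hence the coefficient of $\nabla\ln H$ in $\nabla\ln G$ is $(1-\frac{q}{2})\bigl[(1+dZ)^{-1}+\rho\frac{\varepsilon Z}{1+\varepsilon Z}\bigr]$, with a plus sign. Your minus sign would give $1+\varepsilon Z-\rho\varepsilon Z-\rho\varepsilon dZ^2$ rather than the stated denominator.

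(iii) Inverting gives $\nabla\ln H=P_3\nabla\ln G+Q_3\nabla\ln u$ with $P_3=(1-\frac{q}{2})^{-1}(1+dZ)P_1=(1-\frac{q}{2})^{-1}P_2$, not $(1-\frac{q}{2})^{-1}P_1$. $P_1$ is instead the coefficient in $\nabla\ln F=P_1\nabla\ln G+Q_1\nabla\ln u$ (cf.\ \eqref{440}). You need that relation too, because the gradient terms of Lemma \ref{kl} are written in terms of $\nabla\ln F$.

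(iv) The common denominator is $H^{7}(1+\varepsilon Z)^2(1+\varepsilon Z+\rho\varepsilon Z+\rho\varepsilon dZ^2)^2$, not $H^{5}(\cdots)$.

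Your $\rho\to0$ check is a good idea. Expect it, and your verification of the $P_i,Q_i$, to expose two harmless slips in the displayed statement itself.

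First, Lemma \ref{a2} already has $I_1=(1-\frac{q}{2})S_1$. At $\rho=0$ one gets $\sum_j I_jH^{10-j}L^{j-1}=(1-\frac{q}{2})H^3(S_1H^2+S_2HL+S_3L^2)(H+\varepsilon L)^4$, so the prefactor $(1-\frac{q}{2})$ in front of $\sum_j I_j$ in \eqref{529} is counted twice.

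Second, \eqref{441} gives $\nabla\ln H-\nabla\ln L=P_2\nabla\ln G-Q_2\nabla\ln u$ with $Q_2=(k+l)P_2$. So the sign of $Q_2$ in the gradient terms is off.

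Neither matters downstream, where only the lower bound on $\mathcal{D}$ and bounds on $|P_i|,|Q_i|$ are used.
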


\begin{proof}
	For simplicity, we assume that all formulas below are calculated in the regular set $\O_r(u)$. By the definition of $G$ and  Leibniz rule, we have
	\begin{eqnarray}\label{428}
		\Delta G = \Delta F \cdot \left(\frac{H}{H+\e L}\right)^\rho + F \Delta \left(\frac{H}{H+\e L}\right)^\rho + 2 \left\langle \nabla F, \nabla \left(\frac{H}{H+\e L}\right)^\rho \right\rangle.
	\end{eqnarray}
	Notice that 
	\begin{eqnarray}\label{429}
		\nabla \left(\frac{H}{H+\e L}\right) &= &\frac{1}{ H+\e L}\nabla H - \frac{H}{(H+\e L)^2} \nabla (H+\e L) \nonumber\\
		&=& \frac{\e L}{(H+\e L)^2} \nabla H - \frac{\e H}{(H+\epsilon L)^2} \nabla L \nonumber\\
		&=& \frac{\e H L}{(H+\e L)^2} (\nabla \ln H - \nabla \ln L),
	\end{eqnarray}
	so
	\begin{equation}\label{430}
		2 \left\langle \nabla F, \nabla \left(\frac{H}{H+\e L}\right)^\rho \right\rangle = 2 \rho \left(\frac{H}{H+\e L}\right)^\rho \cdot \frac{\e L}{H+\e L} \left\langle \nabla F, \nabla \ln H - \nabla \ln L \right\rangle.
	\end{equation}
	Again, by Leibniz rule, we see
	\begin{eqnarray}\label{431}
		\Delta \left(\frac{H}{H+\e L}\right)^\rho &=& \rho \left(\frac{H}{H+\e L}\right)^{\rho-1} \Delta \left(\frac{H}{H+\e L}\right)\nonumber\\
		&& + \rho (\rho-1) \left(\frac{H}{H+\e L}\right)^{\rho-2} \left| \nabla \left(\frac{H}{H+\e L}\right) \right|^2.
	\end{eqnarray}
	Then we concretely compute these two items. Using \eqref{429}, we have 
	\begin{eqnarray}\label{432}
		&&\rho (\rho-1) \left(\frac{H}{H+\e L}\right)^{\rho-2} \left| \nabla \left(\frac{H}{H+\e L}\right) \right|^2\nonumber \\
		&=& \rho (\rho-1) \left(\frac{H}{H+\e L}\right)^\rho \left(\frac{\e L}{H+\e L}\right)^2 \left| \nabla \ln H - \nabla \ln L \right|^2
	\end{eqnarray}
and
	\begin{eqnarray}\label{433}
		\Delta \left(\frac{H}{H+\e L}\right)& =& \Delta H \cdot (H+\e L)^{-1}  + 2 \langle \nabla H, \nabla (H+\e L)^{-1} \rangle \nonumber\\
		&&+ H \left(-(H+\e L)^{-2} \Delta (H+\e L) + 2(H+\e L)^{-3} |\nabla (H+\e L)|^2 \right)\nonumber\\
		&=& \frac{\e L}{(H+\e L)^2} \Delta H - \frac{\e H}{(H+\e L)^2} \Delta L \nonumber\\
		&&+ 2H(H+\e L)^{-3} |\nabla (H+\e L)|^2 - 2(H+\e L)^{-2} \langle \nabla H, \nabla (H+\e L) \rangle \nonumber\\
		&=& \frac{\e L}{(H+\e L)^2} \Delta H - \frac{\e H}{(H+\e L)^2} \Delta L\nonumber\\
		&& + 2(H+\e L)^{-3} \left\langle \nabla (H+\e L), H \nabla (H+\e L) - (H+\e L) \nabla H \right\rangle \nonumber\\
		&=& \e (H+\e L)^{-2} (L\Delta H - H \Delta L) \nonumber\\
		&&+ 2 \e H L (H+\e L)^{-3} \left\langle \nabla (H+\e L), \nabla \ln L - \nabla \ln H \right\rangle.
	\end{eqnarray}
	Substituting \eqref{432} and \eqref{433} into \eqref{431} first, then substituting \eqref{431} and \eqref{430} into \eqref{428} yields
	\begin{eqnarray}\label{434}
		\Delta G = \left(\frac{H}{H+\varepsilon L}\right)^\rho &\times&\left[ \Delta F +  \frac{2\rho\varepsilon L}{H+\varepsilon L} \cdot F \left\langle \nabla \ln F, \nabla \ln H - \nabla \ln L \right\rangle \right.\nonumber \\
		&&+ \left. \rho (\rho-1) F \left(\frac{\varepsilon L}{H+\varepsilon L}\right)^2 \left| \nabla \ln H - \nabla \ln L \right|^2 \right. \nonumber\\
		&&+ \left. \rho F \left( \frac{\varepsilon Z}{H+\varepsilon L} \Delta H - \frac{\varepsilon}{H+\varepsilon L} \Delta L \right)\right.\nonumber\\
		&&\left.+\rho F \frac{2 \varepsilon L}{(H+\varepsilon L)^2} \left\langle \nabla (H+\varepsilon L), \nabla \ln L - \nabla \ln H \right\rangle  \right].
	\end{eqnarray}
	From Lemma \ref{kl} and Lemma \ref{cl}, we have 
	\begin{eqnarray}\label{435}
		\Delta F &=& \left(1 - \frac{q}{2}\right) W \left[ S_1 H^2 + S_2 HL + S_3 L^2 + N(\beta, \delta) \right]\nonumber \\
		&&+ HW \left\{ -\frac{q}{2} \left(1 - \frac{q}{2}\right)^{-1} (1+dZ) \left| \nabla \ln F \right|^2\right. \nonumber\\
		&&\left.+ q \left(1 - \frac{q}{2}\right)^{-1} (1+dZ) (k + (k + l) dZ) \left\langle \nabla \ln F, \nabla \ln u \right\rangle \right.\nonumber \\
		&&+ 2k (1+dZ) \left\langle \nabla \ln F, \nabla \ln u \right\rangle  \nonumber\\
		&&\left.+\left(2(\beta - 1)+ (2\sigma - q)Z \right) (1+dZ) \left\langle \nabla \ln F, \nabla \ln u \right\rangle \right\},
	\end{eqnarray}
	where $k,l,W,Z$ are defined as the assumptions of lemma, $N(\b,\sigma)$ is defined by \eqref{N} and $S_1,S_2,S_3$ is given by Lemma \ref{cl}. Substituting \eqref{435} into \eqref{434} yields
	\begin{eqnarray}\label{436}
		\Delta G&=&W \left(\frac{H}{H+\varepsilon L}\right)^\rho \left[ \left(1 - \frac{q}{2}\right) \left( S_1 H^2 + S_2 HL + S_3 L^2 + N(\beta, \delta) \right) \right. \nonumber\\
		&&+ H \left( -\frac{q}{2} \left(1 - \frac{q}{2}\right)^{-1} (1+dZ) \left| \nabla \ln F \right|^2\right.\nonumber\\
		&&\left. + q \left(1 - \frac{q}{2}\right)^{-1} (1+dZ) (k + ( k + l) dZ) \left\langle \nabla \ln F, \nabla \ln u \right\rangle \right)\nonumber \\
		&&+ 2k (1+dZ) \left\langle \nabla \ln F, \nabla \ln u \right\rangle\nonumber\\
		&& + \left( 2(\beta - 1) + (2\sigma -q)Z \right) (1+ dZ) \left\langle \nabla \ln F, \nabla \ln u \right\rangle \nonumber\\
		&&+ 2\rho \frac{\varepsilon L}{H+\varepsilon L} (H+dL) \left\langle \nabla \ln F, \nabla \ln H - \nabla \ln L \right\rangle\nonumber \\
		&&+ \rho (\rho-1) (H+dL) \left(\frac{\varepsilon L}{H+\varepsilon L}\right)^2 \left| \nabla \ln H - \nabla \ln L \right|^2 \nonumber\\
		&&\rho (H+dL) \left( \frac{\varepsilon Z}{H+\varepsilon L} \Delta H - \frac{\varepsilon}{H+\varepsilon L} \Delta L \right)\nonumber\\
		&&+ \rho (H+dL)  \frac{2\varepsilon HL}{(H+\varepsilon L)^2} \left\langle \nabla \ln H, \nabla \ln L - \nabla \ln H \right\rangle\nonumber\\
		&&+ 2\rho (H+dL)\left(\frac{\e L}{H+\e L}\right)^2\left\langle \nabla \ln L, \nabla \ln L - \nabla \ln H \right\rangle \Big].
	\end{eqnarray}
	Now, we deal with $\Delta L$ and the relations between gradients of functions as follows.
	\begin{eqnarray}\label{437}
		\Delta L &=& \Delta \left( H^{\frac{q}{2}} u^{l} \right) \nonumber\\
		&=& u^l \Delta \left( H^{\frac{q}{2}} \right) + 2 \langle \nabla \left( H^{\frac{q}{2}} \right), \nabla (u^{l}) \rangle + H^{\frac{q}{2}} \Delta (u^{l}) \nonumber\\
		&=& u^{l} \left[ \frac{q}{2}  H^{\frac{q}{2}-1} \Delta H + \frac{q}{2} \left( \frac{q}{2} - 1 \right) H^{\frac{q}{2} - 2} |\nabla H|^2 \right]  \nonumber\\
		&&+ q lH^{\frac{q}{2}}u^l \langle \nabla \ln H, \nabla \ln u \rangle+H^{\frac{q}{2}} \left( l u^{l-1} \Delta u + l (l-1) u^{l-2} |\nabla u|^2 \right) \nonumber\\
		&=& \frac{q}{2} Z \Delta H + \frac{q}{2} \left(\frac{q}{2} - 1\right) L |\nabla \ln H|^2 \nonumber\\
		&&+ q l L \left\langle \nabla \ln H, \nabla \ln u \right\rangle + H^{\frac{q}{2}} \left( -l u^l L + \ell (l-1) u^l H \right) \nonumber\\
		&=& \frac{q}{2} Z \Delta H + \frac{q}{2} \left(\frac{q}{2} - 1\right) L |\nabla \ln H|^2 \nonumber\\
		&&+ ql L \left\langle \nabla \ln H, \nabla \ln u \right\rangle - l L^2 + l (l-1) H L.
	\end{eqnarray}
	Further, using Lemma \ref{21} and \eqref{437}, for any real functions $\delta,\tau$, we have 
	\begin{eqnarray}\label{438}
		 &&\frac{\varepsilon Z}{H+\varepsilon L} \Delta H - \frac{\varepsilon}{H+\varepsilon L} \Delta L\nonumber\\ &=&\frac{\varepsilon}{H+\varepsilon L} \left[ \left(1 - \frac{q}{2}\right) Z \left( A(\delta) H^2 + A(\tau) L^2 + B(\delta, \tau) HL + N(\delta, \tau) \right)\right. \nonumber\\
		 &&\left.+\left(1 - \frac{q}{2}\right) Z \left( 2(\delta - 1)H + (2\tau - q)L \right) \left\langle \nabla \ln H, \nabla \ln u \right\rangle\right. \nonumber\\
		 &&\left.+ \frac{q}{2} \left(1 - \frac{q}{2}\right) L \left| \nabla \ln H \right|^2 - ql L \left\langle \nabla \ln H, \nabla \ln u \right\rangle+l L^2 - l (l - 1) HL\right] .
	\end{eqnarray}
	Recall \eqref{grh} (with $\g=-1$), we see
	\begin{equation}\label{439}
		\nabla \ln H = \left(1-\frac{q}{2}\right)^{-1} (1+ dZ) \nabla \ln F - \left(1-\frac{q}{2}\right)^{-1} \left[ k + (k+l) dZ \right] \nabla \ln u.
	\end{equation}
	Therefore,
	\begin{eqnarray}
	\nabla \ln G &=& \nabla \ln F + \rho \nabla \ln \left(\frac{H}{H+\varepsilon L}\right)\nonumber \\
	&=& \nabla \ln F + \rho \left(\frac{H}{H+\varepsilon L}\right)^{-1} \nabla \left(\frac{H}{H+\varepsilon L}\right)\nonumber \\
	&=& \nabla \ln F + \rho \left(\frac{H}{H+\varepsilon L}\right)^{-1} \cdot \frac{\varepsilon HL}{(H+\varepsilon L)^2} \left(\nabla \ln H - \nabla \ln L\right)\nonumber \\
	&=& \nabla \ln F + \rho \frac{\varepsilon L}{H+\varepsilon L} \left(\nabla \ln H - \nabla \ln L\right) \nonumber\\
	&=& \nabla \ln F + \rho \frac{\varepsilon L}{H+\varepsilon L} \left(\left(1 - \frac{q}{2}\right) \nabla \ln H - l \nabla \ln u \right)\nonumber \\
	&=& \left(1 +  \frac{\rho \varepsilon L(1+dZ)}{H+\varepsilon L}\right) \nabla \ln F -  \frac{\rho \varepsilon L}{H+\varepsilon L}   (k+l) (1+dZ) \nabla \ln u,\nonumber
	\end{eqnarray}
	which is equivalent to 
	\begin{eqnarray}\label{440}
		\nabla\ln F=\frac{(1+\varepsilon Z) \nabla \ln G}{1+\varepsilon Z + \rho \varepsilon Z + \rho \varepsilon dZ^2} +  \frac{\rho \varepsilon(k+l) Z (1+dZ)}{1+\varepsilon Z + \rho \varepsilon Z + \rho \varepsilon dZ^2} \nabla \ln u.
	\end{eqnarray}
	From \eqref{439} and \eqref{440}, we have
	\begin{eqnarray}\label{441}
		&&\nabla \ln H - \nabla \ln L\nonumber\\
		 &=& \left(1 - \frac{q}{2}\right) \nabla \ln H - l \nabla \ln u\nonumber \\
		&=& (1+dZ) \nabla \ln F - (k+l)(1+dZ) \nabla \ln u\nonumber\\
		&=& \frac{(1+dZ)(1+\varepsilon Z) \nabla \ln G}{1+\varepsilon Z + \rho \varepsilon Z + \rho \varepsilon dZ^2} -  \frac{(k+l)(1+dZ)(1+\varepsilon Z)}{1+\varepsilon Z + \rho \varepsilon Z + \rho \varepsilon dZ^2} \nabla \ln u,
	\end{eqnarray}
	\begin{eqnarray}\label{442}
		\nabla\ln H&=&\left(1-\frac{q}{2}\right)^{-1}\frac{(1+dZ)(1+\varepsilon Z) \nabla \ln G}{1+\varepsilon Z + \rho \varepsilon Z + \rho \varepsilon dZ^2}\nonumber\\
		&&+\left(1 - \frac{q}{2}\right)^{-1} \left[ -k + (k+l)Z \frac{\rho \varepsilon (1 + dZ) - d(1 + \varepsilon Z)}{1 + \varepsilon Z + \rho \varepsilon Z + \rho \varepsilon dZ^2} \right]\nabla\ln u
	\end{eqnarray}
	and
	\begin{eqnarray}\label{443}
		\nabla\ln L&=&\frac{q}{2}\nabla\ln H+l\nabla\ln u\nonumber\\
		&=&\frac{q}{2-q}\frac{(1+dZ)(1+\varepsilon Z) \nabla \ln G}{1+\varepsilon Z + \rho \varepsilon Z + \rho \varepsilon dZ^2}\nonumber\\
		&&+\left[l+\frac{q}{2-q} \left(-k + (k+l)Z \frac{\rho \varepsilon (1 + dZ) - d(1 + \varepsilon Z)}{1 + \varepsilon Z + \rho \varepsilon Z + \rho \varepsilon dZ^2}\right)\right]\nabla\ln u.
	\end{eqnarray}
	If we define 
	$$
	P_1 = \frac{1 + \varepsilon Z}{1 + \varepsilon Z + \rho \varepsilon Z + \rho \varepsilon dZ^2},\qquad
	Q_1 = \frac{\rho \varepsilon (k+l) Z (1 + dZ)}{1 + \varepsilon Z + \rho \varepsilon Z + \rho \varepsilon dZ^2},
	$$ 
	
	$$
	P_2 = \frac{(1 + dZ)(1 + \varepsilon Z)}{1 + \varepsilon Z + \rho \varepsilon Z + \rho \varepsilon dZ^2},\quad
	Q_2 = (k+l)P_2,\quad P_3 = \left(1 - \frac{q}{2}\right)^{-1} P_2,
	$$ 
	
	$$
	Q_3 = \left(1 - \frac{q}{2}\right)^{-1} \left[ -k + (k+l)Z \frac{\rho \varepsilon (1 + dZ) - d(1 + \varepsilon Z)}{1 + \varepsilon Z + \rho \varepsilon Z + \rho \varepsilon dZ^2} \right],
	$$ 
	
	$$
	P_4 = \frac{q}{2} \left(1 - \frac{q}{2}\right)^{-1} P_2,
	\qquad
	Q_4 = l + \frac{q}{2} Q_3,
	$$ 
	we can rewrite \eqref{440}-\eqref{443} as
	\begin{eqnarray}
		\nabla\ln F&=&P_1\nabla\ln G+Q_1\nabla\ln u,\nonumber\\
	\nabla \ln H - \nabla \ln L	&=&P_2\nabla\ln G+Q_2\nabla\ln u,\nonumber\\
	\nabla\ln H	&=&P_3\nabla\ln G+Q_3\nabla\ln u,\nonumber\\
	\nabla\ln L	&=&P_4\nabla\ln G+Q_4\nabla\ln u.\nonumber
	\end{eqnarray}
	Substituting these equations into \eqref{436}, we have
	\begin{equation}
	\Delta G=\textbf{nonnegetive items}+\textbf{dominant terms}+\textbf{gradient items},\nonumber
	\end{equation}
	where 
	\begin{equation}
		\textbf{nonnegetive items}=\left(1-\frac{q}{2}\right)W\left( \frac{H}{H + \varepsilon L} \right)^\rho \left(N(\b,\sigma)+\frac{\rho\e(H+dL)Z}{H+\e L}N(\delta,\tau)\right),\nonumber
	\end{equation}

		\begin{eqnarray}
		\textbf{dominant terms}
		&=&W \left(\frac{H}{H+\varepsilon L}\right)^\rho \left\{ \left(1 - \frac{q}{2}\right) \left( S_1 H^2 + S_2 HL + S_3 L^2  \right) \right. \nonumber\\
		&&- \frac{q}{2-q} (k+l)^2 (\rho \e)^2 H(H+dL) \times\nonumber\\
		&& \left[ \frac{Z (1+ dZ)}{1 + \varepsilon Z + \rho \varepsilon Z + \rho \varepsilon dZ^2} \right]^2\nonumber \\
		&&+ q \left(1 - \frac{q}{2}\right)^{-1}(k+l) \rho \varepsilon H(H+dL) \times\nonumber\\
		&&\left( k + (k+l) dZ \right)  \frac{Z (1+dZ)}{1 + \varepsilon Z + \rho \varepsilon Z + \rho \varepsilon dZ^2}  \nonumber\\
		&&+ 2 k(k+l) \rho \varepsilon  \frac{Z (H+dL)^2}{1 + \varepsilon Z + \rho \varepsilon Z + \rho \varepsilon dZ^2} \nonumber\\
		&&+ \rho \varepsilon(k+l)\left( 2 (\b-1) H + (2 \sigma- q) \right)   L\times\nonumber\\
		&& \frac{(1+dZ)^2}{1 + \varepsilon Z + \rho \varepsilon Z + \rho \varepsilon dZ^2}  \nonumber\\
		&&- 2 \rho^2\left( k+l \right)^2 \frac{\varepsilon^2 L}{H+\varepsilon L} (H+dL)L \times\nonumber\\
		&&\frac{(1+dZ)^2 (1+\varepsilon Z)}{(1 + \varepsilon Z + \rho \varepsilon Z + \rho \varepsilon dZ^2)^2}  \nonumber\\
		&&+ \rho (\rho-1) (k+l)^2 H (H+dL) \times\nonumber\\
		&&\left( \frac{\varepsilon L}{H+\varepsilon L} \right)^2\left[ \frac{(1+dZ) (1+\varepsilon Z)}{1 + \varepsilon Z + \rho \varepsilon Z + \rho \varepsilon dZ^2} \right]^2  \nonumber\\
		&&+\left(1 - \frac{q}{2}\right)^{-1} \rho H (H+dL) \frac{2 \varepsilon HL}{(H+\varepsilon L)^2} \times\nonumber\\
		&& \left[ -k + (k+l) Z \frac{\rho \e(1+dZ) - d (1+\varepsilon Z)}{1 + \varepsilon Z + \rho \varepsilon Z + \rho \varepsilon dZ^2} \right] \nonumber\\
		&&\left. \times (k+l) \frac{(1+ dZ)(1+\varepsilon Z)}{1 + \varepsilon Z + \rho \varepsilon Z + \rho \varepsilon dZ^2}   \right. \nonumber\\
		&&+ \rho(k+l) H(H+dL) \frac{2 (\varepsilon L)^2}{(H+\varepsilon L)^2}  \frac{(1+dZ) (1+\varepsilon Z)}{1 + \varepsilon Z + \rho \varepsilon Z + \rho \varepsilon dZ^2} \times\nonumber \\
		&&\left[ l-\frac{qk}{2-q} + \frac{q}{2-q} (k+l) Z \frac{\rho\e (1+ dZ) - d (1+\varepsilon Z)}{1 + \varepsilon Z + \rho \varepsilon Z + \rho \varepsilon dZ^2} \right] \nonumber\\
		&&+\left(1 - \frac{q}{2}\right) \rho \varepsilon Z\frac{H+dL}{H+\varepsilon L}  \times\left[ A(\delta) H^2 + A(\tau) L^2 + B(\delta, \tau) HL \right. \nonumber\\
	&&+ \left(1 - \frac{q}{2}\right)^{-1}\left. (2(\delta-1)H + (2\tau-q)L) \times\right.\nonumber\\
	&& \left. \left(-k + (k+l)Z \frac{\rho \varepsilon (1+ dZ) - d (1+\varepsilon Z)}{1+\varepsilon Z+\rho \varepsilon Z+\rho \varepsilon dZ^2}\right) \right]\nonumber \\
		&&+ \frac{q\rho \varepsilon}{2-q}  \frac{H+dL}{H+\varepsilon L}  HL \left[ -k + (k+l)Z \frac{\rho \varepsilon (1+dZ) - d (1+\varepsilon Z)}{1+\varepsilon Z+\rho \varepsilon Z+\rho \varepsilon dZ^2} \right]^2 \nonumber\\
		&&- q \left(1 - \frac{q}{2}\right)^{-1} l \rho \varepsilon \frac{H+dL}{H+\varepsilon L}  HL\times\nonumber\\
		&& \left[ -k + (k+l)Z \frac{\rho \varepsilon (1+dZ) - d (1+\varepsilon Z)}{1+\varepsilon Z+\rho \varepsilon Z+\rho \varepsilon dZ^2} \right] \nonumber\\
		&&\left.+ \rho \varepsilon \frac{H+dL}{H+\varepsilon L} \left( l L^2 - l (l-1) HL \right)\right\}.\nonumber
	\end{eqnarray}
	
	\begin{eqnarray}
		\textbf{gradient items}	&=& W\left( \frac{H}{H + \varepsilon L} \right)^\rho\left\{-\frac{q}{2} \left(1 - \frac{q}{2}\right)^{-1} (H + dL)\times\right.\nonumber\\
		&& \left( P_1^2 |\nabla \ln G|^2 + 2 P_1 Q_1 \langle \nabla \ln G, \nabla \ln u \rangle \right) \nonumber\\
		&& + q \left(1 - \frac{q}{2}\right)^{-1} (H + dL) (k + (k+l)dZ) P_1 \langle \nabla \ln G, \nabla \ln u \rangle \nonumber\\
		&& + 2k (H + dL) P_1 \langle \nabla \ln G, \nabla \ln u \rangle \nonumber\\
		&& + (2(\beta-1)H + (2\sigma-q)L)(1 + dZ) P_1 \langle \nabla \ln G, \nabla \ln u \rangle \nonumber\\
		&& + 2\rho \frac{\varepsilon L}{H + \varepsilon L} (H + dL)\times\nonumber\\
		&& \left( P_1 P_2 |\nabla \ln G|^2 + (P_1 Q_2 + P_2 Q_1) \langle \nabla \ln G, \nabla \ln u \rangle \right)\nonumber \\
		&& + \rho (\rho - 1) (H + dL) \left( \frac{\varepsilon L}{H + \varepsilon L} \right)^2\times\nonumber\\
		&& \left( P_2^2 |\nabla \ln G|^2 + 2 P_2 Q_2 \langle \nabla \ln G, \nabla \ln u \rangle \right) \nonumber\\
		&& + \rho (H + dL) \frac{2\varepsilon HL}{(H + \varepsilon L)^2} \times\nonumber\\
		&&\left( -P_2 P_3 |\nabla \ln G|^2 - (P_2 Q_3 + P_3 Q_2) \langle \nabla \ln G, \nabla \ln u \rangle \right) \nonumber\\
		&& + 2\rho (H + dL)\left( \frac{\varepsilon L}{H + \varepsilon L} \right)^2 \times\nonumber\\
		&& \left( -P_2 P_4 |\nabla \ln G|^2 - (P_2 Q_4 + P_4 Q_2) \langle \nabla \ln G, \nabla \ln u \rangle \right) \nonumber\\
		&& + \left(1 - \frac{q}{2}\right) \rho\e \frac{H + dL}{H + \varepsilon L} Z\times\nonumber\\
		&& (2(\delta-1)H + (2\tau - q)L) P_3 \langle \nabla \ln G, \nabla \ln u \rangle \nonumber\\
		&& + \frac{q}{2} \left(1 - \frac{q}{2}\right) \rho\e \frac{H + dL}{H + \varepsilon L} L\times\nonumber\\
		&& \left( P_3^2 |\nabla \ln G|^2 + 2 P_3 Q_3 \langle \nabla \ln G, \nabla \ln u \rangle \right) \nonumber\\
		&&\left. + ql\rho\e \frac{H + dL}{H + \varepsilon L} L P_3 \langle \nabla \ln G, \nabla \ln u \rangle\right\}.\nonumber
	\end{eqnarray}

	If we combine like terms of \textbf{dominant terms}, we can represent it as 
	$$
	W\left( \frac{H}{H + \varepsilon L} \right)^\rho\frac{\left(1-\frac{q}{2}\right)\sum_{j=1}^{10}I_jH^{10-j}L^{j-1}}{H(H+\e L)^2(H^2+\e HL+\rho\e HL+\rho\e dL^2)^2},
	$$
	where $I_j$ $(1\le j\le 10)$ are functions of $\rho,\e,d,\b,\delta,\sigma,\tau$. Then we complete the proof.

\end{proof}

The factor
\begin{equation}\label{543}
\mathcal{D}:=\frac{\sum_{j=1}^{10}I_jH^{10-j}L^{j-1}}{H(H+\e L)^2(H^2+\e HL+\rho\e HL+\rho\e dL^2)^2} 
\end{equation}
in the dominant term plays a crucial role in the proof of the estimate. It should be noted that the numerator of this principal term is a 9th-degree polynomial, while the denominator is a 7th-degree polynomial with a common factor \( H \). Therefore, when the numerator is ``positive", the entire fraction is not merely a  ``positive" quadratic polynomial but an enhanced cubic rational fraction (with the denominator being \( H \)). This is completely different from the case when \( q = 0 \).
 It is very important that only this enhanced principal term can control all the gradient terms appearing in the equation \eqref{529}. The following two lemmas provide the lower bound estimates for the dominant term in its enhanced form. %The inequality conditions appearing in the Definition \ref{set} will be used in the proof of the following lemmas.

%The following two lemmas gives the positivity of leading coefficients of $\Delta F$ $S_1,\cdots,S_6$ under the condition of Theorem \ref{t27} and \ref{t28}  respectively.

\begin{lem}\label{dk1}
	Let $u$ be a positive solution of \eqref{pqle} with $q\in[0,2)$ in the domain $\O\subset\M$ ($n\ge 3$). Define $H,L$ as \eqref{21} and the function $Z$ in the domain $\O_r(u)$ as \eqref{22}. For the factor $\mathcal{D}$ given by \eqref{543}, if $(p,q)\in \mathbb{BL}(n)$ (whose definition is given in Definition \ref{set}) and $p+q\ge\frac{n}{n-2}$, there exist constants \(\e= d>0 \), \(\delta=\beta\in[0,\frac{2}{n-2}) \), \( \rho\in(0,1) \), $k=-\left(1-\frac{q}{2}\right)\b$, $\gamma=-1$ and $\kappa>0$, all of which depend only on \( n \), \( p \), and \( q \), as well as simple functions $\sigma,\tau$ on $\O$ such that
	$$
	\mathcal{D}\ge \kappa\left(1+Z\right)(H+L)^2\quad on \,\,\,\O_r(u).
	$$   
	
\end{lem}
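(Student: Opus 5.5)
The plan is to reduce the inequality to the positivity of a single polynomial in one variable. On $\O_r(u)$ we have $1+Z=(H+L)/H$, so the target reads $\mathcal{D}\ge \kappa (H+L)^3/H$. With $t:=Z=L/H\in(0,\infty)$, and recalling that the numerator of $\mathcal{D}$ in \eqref{543} is homogeneous of degree $9$, it suffices to find parameters for which
$$
P_\rho(t):=\sum_{j=1}^{10} I_j t^{j-1}\ \ge\ \kappa\,(1+t)^3(1+\e t)^2\bigl(1+\e t+\rho\e t+\rho\e d t^2\bigr)^2,\qquad t\in(0,\infty).
$$
I first fix the simplifications $\e=d$, $\delta=\b$, $k=-(1-\frac q2)\b$, $\g=-1$. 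With $\e=d$ the recurring factor $(H+dL)/(H+\e L)$ in Lemma \ref{eg} equals $1$, which removes one source of $t$-dependence. After this, the coefficients $I_j$ in Lemma \ref{a2} become explicit polynomials in $(\rho,d,\b,\sigma,\tau,l,q,n)$.

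\textbf{Step 1: the case $\rho=0$.} At $\rho=0$ we have $G=F$, and the dominant part of \eqref{529} collapses to $(1-\frac q2)(S_1H^2+S_2HL+S_3L^2)$ from Lemma \ref{cl}. Hence
$$
P_0(t)=(1+dt)^4\,(S_1+S_2t+S_3t^2).
$$
This polynomial has degree only $6$ in $t$, so $P_0$ alone gives $\mathcal{D}\gtrsim (H+L)^2$ but not the enhanced bound. For the quadratic factor I choose $\b<\frac2{n-2}$ and $\sigma=x=\frac{2-ny}{2(n-1)}$, with $y$ playing the role of $\theta=d(l-(1-\frac q2)\b)/(1-\frac q2)$ as in the proof of Lemma \ref{p1}. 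I take $y\in\mathbb{D}(n,q)$ close to the supremum defining $\mathcal{L}(n,q)$. I expect the second inequality in \eqref{12} to be exactly $S_3>0$, and the constraint $p+q-1<\mathcal{L}(n,q)$ to be exactly $S_2>0$ for $\b$ close to $\frac2{n-2}$; here $p+q\ge \frac n{n-2}$ guarantees $\theta>0$. For $q\le 1-\frac1{\sqrt{n-1}}$, case (2) of Lemma \ref{p1} applies directly, and for $q\ge\frac32$, case (3) does. In all cases $S_1,S_2,S_3>0$.

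\textbf{Step 2: the first-order term in $\rho$.} Next I expand $P_\rho=P_0+\rho P'+O(\rho^2)$. The coefficients of $t^7,t^8,t^9$ must come from $\rho P'$. The main candidate is the term $(1-\frac q2)\rho\e Z[A(\tau)H^2+A(\tau)L^2+\dots]$ in the dominant part, together with the gradient-converted terms containing $Q_3$. I will compute the leading coefficient $I_{10}$ to first order in $\rho$. The claim to verify is that, with $\tau=x$ for large $Z$, positivity of this coefficient is the third inequality in \eqref{12}: the combination $4(\cdot)+2y(\frac1{\sqrt n+1}+x)+(q-1)y^2>0$. I expect the factor $\frac1{\sqrt n+1}$ to come from optimising a Cauchy--Schwarz split. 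The first inequality in \eqref{12} should give positivity of the intermediate coefficients $I_8,I_9$.

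Because $\tau$ (and possibly $\sigma$) is allowed to be a simple function, I will split $\O_r(u)$ into the set $\{Z\le T\}$ and the set $\{Z>T\}$, with $T$ depending only on $n,p,q$, and choose $\tau$ separately on each set:
\begin{itemize}
\item On $\{Z\le T\}$ the right-hand side is comparable to $(1+t)^2$, so $P_0$ dominates. Taking $\rho$ small then gives the bound on the compact interval $[0,T]$.
\item On $\{Z>T\}$ the cubic terms $\rho P'(t)\sim\rho c\,t^9$ dominate. For $T$ large, the lower-degree remainder is absorbed.
\end{itemize}
Uniformity in $\rho$ has to be checked. The denominator contains $\rho\e d t^2$, so the correct normalisation for large $t$ is $t^9\cdot\rho^2$ in the denominator versus $\rho t^9$ in the numerator. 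The bound on $\{Z>T\}$ therefore actually improves as $\rho\to0$, which is why I fix $\rho\in(0,1)$ last.

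\textbf{Main obstacle.} The hard step is the symbolic identification of the top coefficients of $P'$. In particular I must show that the negative contributions, namely $-2\rho^2(k+l)^2\cdots$, the terms with $\rho(\rho-1)$, and the $Q_3$ cross terms, combine into exactly the quadratic form in $y$ appearing in \eqref{12}, rather than something weaker. I would first verify this at the optimal configuration of Remark \ref{opt}, where $I_j$ should vanish identically. I would then differentiate in the parameters $\b,\sigma,\tau$ to see which sign conditions survive. Finally, all constants $\b,\rho,\e,\kappa$ and the threshold $T$ are chosen depending only on $n,p,q$.
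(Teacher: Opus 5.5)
Your reduction to a one-variable inequality is correct: with $t=Z$ it suffices to show $\sum_j I_j t^{j-1}\ge c\,(1+t^9)$. So is the identity $P_0=(1-\frac q2)(1+dt)^4(S_1+S_2t+S_3t^2)$, and so is the choice of $\beta,d,\sigma=x$ giving $S_1,S_2,S_3>0$.

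\textbf{The gap is in Step~2: the expansion $P_\rho=P_0+\rho P'+O(\rho^2)$ is not uniform in $t$.} By Lemma~\ref{a3}, $I_7=O(1)$, $I_8=O(\rho)$, $I_9=O(\rho^2)$, and $I_{10}=\rho^3\varepsilon^4d^3(1-\frac q2)A(\tau)$ exactly. Hence $P'$ has degree $7$, the $t^9$ coefficient has no first-order part, and the four terms $I_7t^6,\dots,I_{10}t^9$ are all of the same size when $t\sim 1/\rho$. Writing $s=\rho t$, what must be proved in that regime is positivity on $(0,\infty)$ of a cubic $c_7+c_8s+c_9s^2+c_{10}s^3$ whose coefficients do not depend on $\rho$. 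A split at a threshold $T$ fixed before letting $\rho\to0$ never sees this regime: on $T<t\ll 1/\rho$ only $P_0$ matters, and beyond that there is no ``$\rho c\,t^9$'' dominance. Your remark that the large-$Z$ bound improves as $\rho\to0$ is also backwards. The top coefficient is $O(\rho^3)$, against $\rho^2$ on the right-hand side, so the ratio degrades like $\rho$.

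\textbf{This cubic is exactly where \eqref{12} comes from, and your attributions are off.}
\begin{itemize}
\item $I_{10}>0$ means $A(\tau)>0$, i.e.\ $\tau>-\frac{1}{\sqrt n+1}$. That constraint, not a Cauchy--Schwarz optimisation, is the source of $\frac{1}{\sqrt n+1}$.
\item The first line of \eqref{12} is the $HL^8$ bracket $2A(\tau_0)+2\theta(\sigma_0-\tau_0)+q\theta^2$.
\item The third line is the $H^2L^7$ bracket $A(\tau_0)+(q-1)\theta^2+2\theta(\sigma_0-\tau_0)+(2+m)S_3$. The extra $mS_3$ is borrowed via $\frac{m^2}{4}S_3H^3L^6+\rho^2d^2S_3HL^8\ge m\rho d\,S_3H^2L^7$.
\item Both brackets are taken in the limit $\tau_0\downarrow-\frac{1}{\sqrt n+1}$, $m\uparrow 2$ (cf.\ \eqref{544}--\eqref{546}).
\end{itemize}
So $\tau$ must differ from $\sigma=x$. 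With $\tau=x$ the resulting conditions are not those of \eqref{12}, and $A(x)\le 0$ as soon as $y\ge 2/\sqrt n$. That choice therefore only covers the sharp range $q\le 1-\frac{1}{\sqrt{n-1}}$, where the paper indeed takes $\sigma=\tau=x$ and checks $A(\tau)+(q-1)\theta^2>0$.

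\textbf{For $q\ge\frac32$ the switching threshold must itself scale like $1/\rho$.} Since $p$ is unrestricted there and $l$ enters $S_2$ with coefficient $\theta-2$, large $p$ forces $\theta\to 2$ and $\sigma\to-1$. Hence $S_3\to0$ and $c_7\to 0$. Then:
\begin{itemize}
\item $\tau=-1$ makes the $H^2L^7$ bracket equal to $4(q-\frac32)\ge 0$, but gives $c_{10}\propto A(-1)=-2<0$.
\item Every $\tau_0$ with $A(\tau_0)>0$ makes that bracket $<4(q-2)+\frac{4}{\sqrt n+1}$, which is negative for $q$ near $\frac32$.
\end{itemize}
Thus $\tau$ has to switch within the range $s\sim 1$. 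The paper does this by taking $\tau=-1$ on $\{H>\chi\rho dL\}$ and $\tau=\tau_0$ on $\{H\le\chi\rho dL\}$, with $\chi=1+\frac{2}{\sqrt n+1}$. Your ordering (fix $T$, then shrink $\rho$) cannot produce such a switch.
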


\begin{lem}\label{dk2}
	Let $u$ be a positive solution of \eqref{pqle} with $q\in(1,2)$ in the domain $\O\subset\M$ ($n\ge 3$). Define $H,L$ as \eqref{21} and the function $Z$ in the domain $\O_r(u)$ as \eqref{22}. For the factor $\mathcal{D}$ given by \eqref{543}, if $(p,q)\in \mathbb{L}(n)$ (whose definition is given in Definition \ref{set}) and $p+q\ge\frac{n}{n-2}$, there exist constants \(\e>0, d>0 \), \(\delta=\beta\in[0,\frac{2}{n-2}) \), \( \rho=1 \), $k=-\left(1-\frac{q}{2}\right)\b$, $\gamma=-1$ and $\kappa>0$, all of which depend only on \( n \), \( p \), and \( q \), as well as simple functions $\sigma,\tau$ on $\O$ such that
$$
\mathcal{D}\ge \kappa\left(1+Z\right)(H+L)^2 \quad on \,\,\,\O_r(u).
$$   
\end{lem}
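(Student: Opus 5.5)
The argument mirrors Lemma \ref{dk1}, but with $\rho=1$ fixed and $\e$ small and independent of $d$. The lower bound on $\mathcal{D}$ is a statement about one homogeneous rational function of $(H,L)$. So we set $Z=L/H\in(0,\infty)$ and divide the numerator and denominator of \eqref{543} by $H^9$ and $H^7$. The claim $\mathcal{D}\ge\kappa(1+Z)(H+L)^2$ then becomes the one-variable polynomial inequality
\[
\sum_{j=1}^{10}I_jZ^{j-1}\ \ge\ \kappa(1+Z)^3(1+\e Z)^2\bigl(1+\e Z+\e Z+\e dZ^2\bigr)^2 \quad\text{for all } Z>0,
\]
with the $I_j$ from Lemma \ref{a2} evaluated at $\rho=1$, $\delta=\b$, $k=-(1-\frac q2)\b$. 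Both sides have degree $9$ in $Z$, so it suffices to prove three things: $I_1>0$, $I_{10}>0$, and strict positivity of the left-hand side divided by $(1+Z)^9$ on the compact range $Z\in[0,\infty]$.

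\textbf{Endpoint $Z\to0$.} Here $G\to F$, and the leading coefficient $I_1$ reduces (after factoring the denominator's $H^7$) to a positive multiple of $S_1=(\frac2n(1+\b)-\b)(1+\b)$. This is positive for $\b\in[0,\frac2{n-2})$. The next coefficients should reproduce $S_2,S_3$ of Lemma \ref{cl} plus $O(\e)$ corrections.

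\textbf{Intermediate range.} We choose the simple functions $\sigma,\tau$ piecewise in $Z$, which Lemma \ref{l2} allows since $\sigma,\tau$ may be arbitrary functions. For $Z$ below a threshold $Z_0$, take $\sigma=\frac{2-n\theta}{2(n-1)}$ with $\theta=d(l-(1-\frac q2)\b)/(1-\frac q2)$, the optimal choice from Lemma \ref{p1}. The quadratic $S_1+S_2Z+S_3Z^2$ then dominates, and the $\e$-perturbation is absorbed by taking $\e$ small. For $Z\ge Z_0$, the terms coming from $N(\delta,\tau)$ with weight $\rho\e Z(H+dL)/(H+\e L)$ dominate. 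There we choose $\tau$ as the parameter $y$ in \eqref{13}, again setting $x=\frac{2-ny}{2(n-1)}$ as the $\sigma$-type parameter. The three inequalities in \eqref{13} are exactly what should make the top coefficients $I_8,I_9,I_{10}$ positive:
\begin{itemize}
\item the first inequality is positivity of the cubic-order cross term;
\item the second is positivity of $A(x)+y(q-2x-1)-\frac q2y^2$, the $L^2$ coefficient;
\item the third, carrying the factor $(\frac32q-2)y^2$, is the discriminant-type condition that appears when $\rho=1$.
\end{itemize}
This is precisely where $(\frac32q-2)$ replaces the $(q-1)$ of \eqref{12}. The relation $l=p+q-1<\mathcal{H}(n,q)$ is then used to select $y\in\mathbb{E}(n,q)$ and $d$ with $\frac{2-q}{n-2}+\frac{y(n-2)+2}{(n-2)(2-y)}>l$. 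This in turn determines $\b<\frac2{n-2}$ via $y\leftrightarrow$ the rescaled $\theta$. For $q\ge\frac53$ no constraint on $l$ is needed: the $(\frac32q-2)y^2$ term is nonnegative, so $y$ close to $2$ works.

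\textbf{Main obstacle.} The hardest step is the explicit identification of the relevant $I_j$ from Lemma \ref{a2} at $\rho=1$ in terms of $(x,y,\b,d,\e)$, together with showing that the middle coefficients do not spoil positivity. I would handle this by continuity. At the limiting parameters (equality cases $\b\to\frac2{n-2}$, $\e\to0$), the polynomial factors as a product of the $F$-quadratic and a positive cubic. Strict inequalities in \eqref{13} and $l<\mathcal{H}$ then leave room to perturb, and the constants $\kappa,\e,d$ depend only on $n,p,q$ by compactness of $[0,\infty]$. If the factorization fails, the fallback is to bound the middle coefficients crudely by AM--GM between neighbouring terms, using that $\e$ can be taken small relative to $d$.
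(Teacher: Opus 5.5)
Your reduction to a degree-nine polynomial in $Z=L/H$ and your small-$Z$ part agree with the paper: $I_1=(1-\frac q2)S_1$, $I_2,I_3\to(1-\frac q2)S_2,(1-\frac q2)S_3$ as $\varepsilon\to0$, $\sigma=\frac{2-n\theta}{2(n-1)}$, and $\beta$ slightly below $\frac2{n-2}$ so that $S_2>0$. The large-$Z$ step, however, misreads \eqref{13}. With $\rho=1$ and $\varepsilon\to0^+$ (Lemma~\ref{a4}) the coefficients that carry positivity are $I_3\approx(1-\frac q2)S_3$, $I_5\approx\varepsilon d(1-\frac q2)[A(\tau)+(\frac{3q}2-2)\theta^2+2\theta(\sigma-\tau)+2S_3]$, $I_7\approx\varepsilon^2d^2(1-\frac q2)[2A(\tau)+q\theta^2+2\theta(\sigma-\tau)+S_3]$, $I_9\approx\varepsilon^3d^3(1-\frac q2)A(\tau)$ and $I_{10}=\varepsilon^4d^3(1-\frac q2)A(\tau)$. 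These compete at the scale $Z\sim\varepsilon^{-1/2}$. The even coefficients $I_4,I_6,I_8$ are only $O(\varepsilon),O(\varepsilon^2),O(\varepsilon^3)$ and are absorbed by Young's inequality, not made positive; the ``$I_8,I_9,I_{10}$'' picture is the $\rho\to0$ mechanism of Lemma~\ref{dk1}. In \eqref{13}, $y$ is $\theta$, $x$ is $\sigma$, and $\frac1{\sqrt n+1}$ is the limiting value of $-\tau$. One takes a constant $\tau=\tau_0$ slightly above $-\frac1{\sqrt n+1}$, the lower zero of $A$, so that $A(\tau_0)>0$ keeps $I_9,I_{10}$ positive while $\sigma-\tau$ is as large as possible. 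The three lines of \eqref{13} are the limits ($m\to2$, $\tau_0\to-\frac1{\sqrt n+1}$) of: $S_3>0$; the $I_7$-bracket with its $S_3$ removed; and the $I_5$-bracket with $(2+m)S_3$. The transfer of $S_3$ comes from $\frac{m^2}4S_3H^7L^2+S_3\varepsilon^2d^2H^3L^6\ge mS_3\varepsilon dH^5L^4$. Setting $\tau=y$ as you propose gives $A(y)<0$ once $y>\frac1{\sqrt n-1}$, in particular for the $y$ near $2$ needed when $l$ is large. Then $I_{10}<0$ and the polynomial is negative for large $Z$. Your continuity fallback cannot replace this bookkeeping: as $\varepsilon\to0$ every $I_j$ with $j\ge4$ vanishes, so the limit is only $(1-\frac q2)(S_1+S_2Z+S_3Z^2)$ and there is no degree-nine factorization to perturb.

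Your claim that $y$ near $2$ settles $q\ge\frac53$ is also false. As $y\to2$ one has $x\to-1$, $S_3\to0$, and the third line of \eqref{13} tends to $6(q-2)+\frac4{\sqrt n+1}$. This is negative for $\frac53\le q<2-\frac2{3(\sqrt n+1)}$, a nonempty range for every $n\ge3$. So $\mathbb{E}(n,q)$ stays away from $2$ and $\mathcal{H}(n,q)<\infty$, and this route cannot give every $p$, as $\{q\ge\frac53\}\subset\mathbb{L}(n)$ demands. The obstruction is structural. At $\theta=2$, $\sigma=-1$ the $I_5$-bracket equals $A(\tau)-4\tau+6(q-2)$, which in that range is at most $\frac4{\sqrt n+1}+6(q-2)<0$ whenever $A(\tau)\ge0$, while $A(\tau)<0$ makes $I_9,I_{10}$ negative. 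The paper therefore takes $\beta=0$, $\theta=2$ (i.e.\ $d=\frac{2-q}l$) and $\sigma\equiv-1$, so $S_1=\frac2n$, $S_2=2$, $S_3=0$. It pairs this with a genuinely two-valued $\tau$: $\tau=-1$ on $\{H>\chi\sqrt{\varepsilon d}\,L\}$ and $\tau=\tau_0$ on $\{H\le\chi\sqrt{\varepsilon d}\,L\}$, with $\chi=(8/7)^{1/3}$. On the first set the $I_5$-bracket is $A(-1)+6(q-2+\frac23)=6(q-\frac53)\ge0$, which is exactly where $\frac53$ comes from. There the negative $I_9\approx-2\varepsilon^3d^3(1-\frac q2)$ and $I_{10}$ are beaten by $I_7\approx4(q-1)\varepsilon^2d^2(1-\frac q2)$ using $H>\chi\sqrt{\varepsilon d}\,L$. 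On the second set $A(\tau_0)>0$ makes $I_9,I_{10}$ positive, and $I_7$ absorbs the $6(q-2)$ in $I_5$.
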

\vspace{3mm}
We give their proofs as follows.

\begin{proof}[Proof of Lemma \ref{dk1}]
	
	We will proceed with the long argument by dividing it into three cases. Regardless of the case, we always fix $\e=d$, $\g=-1$ and $k=-(1-\frac{q}{2})\b$ with an undetermined $\b$.

	Case 1. \( q \in \left[ 0, 1 - \frac{1}{\sqrt{n - 1}} \right] \). In this case,
	\[l=p+q-1\in\left[\frac{2}{n-2},\frac{(2-q)^2}{(1-q)(n-2)}\right).\] 
		First, by Lemma \ref{p1}, we can fix \( d > 0 \), \( \beta \in \left[ 0, \frac{2}{n - 1} \right) \) such that 
		\[\theta= \frac{d \left( l - \left( 1 - \frac{q}{2} \right) \beta \right)}{1 - \frac{q}{2}}\in\left(0,\frac{2}{n - (n-1)q}\right) ,\]
		 and choose  \( \sigma = \tau \equiv \frac{2 - n \theta}{2(n - 1)} \) such that (the definitions of $S_i$  are given by Lemma \ref{cl})
	\[
	S_i > 0, \quad i = 1, 2, 3.
	\]
Then by the formulas of \( I_1, \cdots, I_{10} \) in Lemma \ref{a3}, we easily see that when we set \( \varepsilon = d \),
	\[
	I_j > 0, \quad j = 1, 2, \cdots, 7 \quad \text{if } \rho \to 0^+.
	\]
		Now, we focus on the estimate of \( I_8, I_9,I_{10} \). By Lemma \ref{a3}, we see (the function $A$ is given by \eqref{A})
		\[
		I_8 = \rho  \varepsilon^4 d\left( 1 - \frac{q}{2} \right) \left[ A(\tau) + (q-1)\theta^2 + 2 \theta (\sigma - \tau) + 2 S_3 \right] + o(\rho),
		\]
		
		\[
		I_9 = \rho^2 \varepsilon^4 d^2 \left( 1 - \frac{q}{2} \right) \left[ 2 A(\tau) + q \theta^2 + 2 \theta (\sigma - \tau) + S_3 \right] + o(\rho^2),
		\]
		
		\[
		I_{10} = \rho^3 \varepsilon^4 d^3 \left( 1 - \frac{q}{2} \right) A(\tau).
		\]
		Notice that \( I_8 \) is positive when \( \rho \to 0^+ \) if
	\[
	A(\tau) + (q - 1) \theta^2 > 0.
	\]
	At this case, \( I_9 \) and \( I_{10} \) are obviously positive when \( \rho \to 0^+ \).
	
	Direct computation gives
	\[
	\begin{split}
		A(\tau) + (q - 1) \theta^2 &= \frac{2}{n} \left( 1 + \frac{2 - n \theta}{2(n - 1)} \right)^2 - 2 \left( \frac{2 - n \theta}{2(n - 1)} \right)^2 + (q - 1) \theta^2 \\
		&= \frac{2}{n - 1} + \left( q - 1 - \frac{n}{2(n - 1)} \right) \theta^2 \\
		&> \frac{2}{n - 1} + \left( q - 1 - \frac{n}{2(n - 1)} \right) \frac{4}{(n - (n - 1)q)^2} \\
		&= \frac{2}{(n - (n - 1)q)^2} \left[ (n - 1)(q - 1)^2 - 1 \right] \geq 0. \,\,\, \left( \text{by } q \leq 1 - \frac{1}{\sqrt{n - 1}} \right)
	\end{split}
	\]
	This is the only place where we use the condition $q \leq 1 - \frac{1}{\sqrt{n - 1}}$. Therefore, under these fixed parameters and a sufficiently small and positive $\rho$, we conclude that $I_j>0$, $j=1,2,\cdots,10$. So there exists a $\kappa=\kappa(n,p,q)>0$ such that
		$$
	\mathcal{D}\ge \kappa\left(1+Z\right)(H+L)^2\quad on \,\,\,\O_r(u).
	$$

	Case 2. $q\in(1-\frac{1}{\sqrt{n-1}},\frac{3}{2})$. In this case, 
	\[l=p+q-1<\mathcal{L}(n, q).\]
	From the definition of \( \mathcal{L}(n, q) \) (recall Definition \ref{set}) and the continuity, there exist \( \theta \in (0, 2) \), \( \tau_0 \in \left( -\frac{1}{\sqrt{n}+1}, 0 \right) \), and \( m \in (0, 2) \) which depend only on $n,p,q$ such that
	\[
	\sigma_0 := \frac{2 - n \theta}{2(n - 1)},
	\]
\begin{equation}\label{544}
	2 A(\tau_0) + 2 \theta (\sigma_0 - \tau_0) + q \theta^2 > 0,
\end{equation}

\begin{equation}\label{545}
	A(\sigma_0) + (q - 1 - 2 \sigma_0) \theta - \frac{q}{2} \theta^2 > 0,
\end{equation}

\begin{equation}\label{546}
	(q - 1) \theta^2 + 2 \theta (\sigma_0 - \tau_0) + (m + 2) \left( A(\sigma_0) + (q - 1 - 2 \sigma_0) \theta - \frac{q}{2} \theta^2 \right) + A(\tau_0) > 0,
\end{equation}
	\[
	\begin{split}
		l &= p + q - 1 < \frac{2 - q}{n - 2} + \frac{\theta (n - 2) + 2}{(n - 2)(2 - \theta)} \\
		&= f\left( \frac{2}{n - 2} \right),
	\end{split}
	\]
	where
	\[
	f(\beta) := \frac{1}{2 - \theta} \left[ (3 - q) \beta + \theta \left( 1 - \left( 1 - \frac{q}{2} \right) \beta \right) \right].
	\]
	
	%So, there exists  \( \beta_0=\b_0(n,p,q) \in \left( 0, \frac{2}{n - 2} \right) \) such that when $\b\in[\b_0,\frac{2}{n-2})$, \( l = p + q - 1 < f(\beta) \).
	Then for an undetermined $\b\in(0,\frac{2}{n-2})$, we define \( d \) as follows:
	\[
	\theta = \frac{d \left( l - \left( 1 - \frac{q}{2} \right) \beta \right)}{1 - \frac{q}{2}},
	\]
	and we claim that for these fixed constant \( \theta \) and functions
	\(
	\sigma \equiv \sigma_0, \quad \tau \equiv\tau_0,
	\) there exist $\delta=\b=\b(n,p,q)\in[0,\frac{2}{n-2})$, $d=d(n,p,q)>0$ and $\rho=\rho(n,p,q)>0$ such that
	\( S_i > 0 \), \( i = 1, 2, 3 \), and furthermore, \[ \sum_{j=1}^{10} I_j H^{10 - j} L^{j - 1} \geq C(n, p, q) (H^9 + L^9) .\]
	
	First, from the definition of $S_1,S_2$, we immediately have
	\[
	0 < \beta < \frac{2}{n - 2} \implies S_1 > 0,
	\]
	\[
	S_2 = \frac{4}{n }(1 + \b)(1 + \sigma) - 2 \beta \sigma - 2 l + (1 - q) \beta + \theta l + \theta \left( 1 - \left( 1 - \frac{q}{2} \right) \beta \right),
	\]
	when \( \beta = \frac{2}{n - 2} \),
	\[
	\begin{split}
		S_2 &= (2 - \theta) \left[ \frac{2 - q}{n - 2} + \frac{\theta (n - 2) + 2}{(n - 2)(2 - \theta)} - l \right] \\
		&= (2 - \theta) \left( f\left( \frac{2}{n - 2} \right) - l \right) > 0.
	\end{split}
	\]
	By the continuity again, there exists \( \beta = \beta(n, p, q) \in \left(0, \frac{2}{n - 2} \right) \) such that \(S_1>0, S_2 > 0 \).
	By \eqref{545} and the definition of $S_3$, we directly have
	\[
	S_3 = A(\sigma_0) + (q - 1 - 2 \sigma_0) \theta - \frac{q}{2} \theta^2 > 0.
	\]
	Up to now, for these fixed $\delta,\b,d,\sigma,\tau$, we have $S_i>0$ for $i=1,2,3$ and hence there exists a $\rho_0=\rho_0(n,p,q)>0$ such that when $\rho\in(0,\rho_0)$, $I_j>0$ for $j=1,2,\cdots,6$. We then estimate the remaining terms.
	
	\[
	\begin{split}
		&I_7 H^3 L^6 + I_8 H^2 L^7 + I_9 H L^8 \\
		&= \left[ \left( 1 -\frac{q}{2} \right) S_3 \varepsilon^4 + O(\rho) \right] H^3 L^6\\
		& \quad+ \left[ \rho d \varepsilon^4 \left( 1 - \frac{q}{2} \right) \left[ A(\tau_0) + (q - 1) \theta^2 + 2 \theta (\sigma_0 - \tau_0) + 2 S_3 \right] + O(\rho^2) \right] H^2 L^7 \\
		&\quad + \left[ \rho^2 d^2 \varepsilon^4 \left( 1 - \frac{q}{2} \right) \left[ 2 A(\tau_0) + q \theta^2 + 2 \theta (\sigma_0 - \tau_0) + S_3 \right] + O(\rho^3) \right] H L^8 \\
		&\geq \left[ \left( 1 - \frac{q}{2} \right) \left( 1 - \left( \frac{m}{2} \right)^2 \right) S_3 \varepsilon^4 + O(\rho) \right] H^3 L^6 \\
		&\quad + \left[ \rho d \varepsilon^4 \left( 1 - \frac{q}{2} \right) \left[ A(\tau_0) + (q - 1) \theta^2 + 2 \theta (\sigma_0 - \tau_0) + (m+2) S_3 \right] + O(\rho^2) \right] H^2 L^7 \\
		&\quad + \left[ \rho^2 d^2 \varepsilon^4 \left( 1 - \frac{q}{2} \right) \left[ 2 A(\tau_0) + q \theta^2 + 2 \theta (\sigma_0 - \tau_0) \right] + O(\rho^3) \right] H L^8 \\
		&\geq 0, \quad \text{when } \rho \text{ is sufficiently small.}
	\end{split}
	\]
	The first inequality above makes use of the Cauchy-Schwarz inequality, while the second one employs inequalities \eqref{544} and \eqref{546}. Therefore, there exists $\rho=\rho(n,p,q)>0$ such that 
	\[ \sum_{j=1}^{10} I_j H^{10 - j} L^{j - 1} \geq C(n, p, q) (H^9 + L^9) ,\]
and	 same estimate about $	\mathcal{D}$ is also obtained.

	Case 3. $q\in[\frac{3}{2},2)$. In this case, we choose \( \tau \) to be a function (rather than a constant) for the first time. The key role of this simple function is to take different values in the regions where \( H \) is greater than \( L \) and vice versa, thereby allowing them to control each other’s negative parts. Furthermore, this enables us to derive a uniform lower bound estimate for the numerator of \( \mathcal{D} \).

	First, we fix \( \beta = 0 \), \( \theta = 2 \), \(  d = \frac{\theta \left( 1 - \frac{q}{2} \right)}{l - \left( 1 - \frac{q}{2} \right) \beta} =\frac{2-q}{l}> 0 \) and \( \sigma \equiv -1 \) such that
	\[
	S_1 = \frac{2}{n}, \quad S_2 = 2, \quad S_3 = 0.
	\]
	 When \( \rho \) is positive and sufficiently small,
	\[
	I_j > 0, \quad j = 1, 2, \cdots, 6.
	\]
	For any undetermined \( \rho > 0 \), we have (by Lemma \ref{a3})
	\[
	I_8 = \rho d \varepsilon^4 \left( 1 - \frac{q}{2} \right) \left[ A(\tau) + 4(-\tau-1) + 4(q-1) \right] + O(\rho^2),
	\]
	\[
	I_9 = \rho^2 d^2 \varepsilon^4 \left( 1 - \frac{q}{2} \right) \left[ 2 A(\tau) + 4 q + 4(-\tau-1) \right] + O(\rho^3),
	\]
	\[
	I_{10} = \rho^3 d^3 \varepsilon^4 \left( 1 - \frac{q}{2} \right) A(\tau).
	\]

	Define
	\[\Large	\tau = - \mathbbm{1}_{\{ x \in \Omega: H > \chi \rho d L \}} + \tau_0 \mathbbm{1}_{\{ x \in \Omega: H \leq \chi \rho d L \}},
	\]
	where $\chi = \chi(n) = 1 + \frac{2}{\sqrt{n}+1}$, \( \tau_0 \in \left( -\frac{1}{\sqrt{n}+1}, 0 \right) \) satisfies \( A(\tau_0) > 0 \) and

	\[
	\frac{\frac{1}{2} - \tau_0}{\chi}  -\tau_0 - \frac{1}{2} > 0.
	\]
	At this time, 	 $I_j$ ($1\le j\le 10$) are simple functions with a range of at most two points.
We then carefully analysis the lower bound of $\sum_{j=1}^{10} I_j H^{10 - j} L^{j - 1}$.
	
	On the subset \( \{ x \in \Omega: H > \chi \rho d L \} \),
	\[
	\begin{split}
		&I_9 H L^8 + I_{10} L^9 \\
		&= \rho^2 \varepsilon^4 d^2 \left( 1 - \frac{q}{2} \right) \left[ \frac{4(q - 1)}{\sqrt{\chi}} + 4(q - 1) \left( 1 - \frac{1}{\sqrt{\chi}} \right) + O(\rho) \right] H L^8\\
		&\quad - 2 \rho^3 \varepsilon^4 d^3 \left( 1 - \frac{q}{2} \right) L^9 \\
		&\geq \rho^2 \varepsilon^4 d^2 \left( 1 - \frac{q}{2} \right) \left[ 4(q - 1) \left( 1 - \frac{1}{\sqrt{\chi}} \right)+ O(\rho) \right] H L^8 \\
		&\quad + \left[ 4(q - 1) \sqrt{\chi} - 2 \right] \rho^3 \varepsilon^4 d^3 \left( 1 - \frac{q}{2} \right) L^9.
	\end{split}
	\]
	
	%Notice that \( q \geq \frac{3}{2} \) and \( \chi \to 1 \).
	
	At this time, by Young's inequality,
	\[
	I_7 H^3 L^6 = O(\rho) H^3 L^6 \geq - \frac{1}{4} \left( 1 - \frac{q}{2} \right) S_2 \varepsilon^4 H^4 L^5 - O(\rho^3) H L^8,
	\]
	\[
	\begin{split}
		I_8 H^2 L^7 &= \rho d \varepsilon^4 \left( 1 - \frac{q}{2} \right) \left[ 4 \left( q - \frac{3}{2} \right) + O(\rho) \right] H^2 L^7 \\
		&\geq O(\rho^2) \varepsilon^4 d \left( 1 - \frac{q}{2} \right) H^2 L^7 \\
		&\geq - \frac{1}{4} \left( 1 - \frac{q}{2} \right) S_2 \varepsilon^4 H^4 L^5 - O(\rho^3) H L^8.
	\end{split}
	\]
	So (recall the $I_6$ in Lemma \ref{a3} and $q\ge\frac{3}{2}$)
	\[
	\begin{split}
		\sum_{j=6}^{10} I_j H^{10 - j} L^{j - 1} &\geq \left(\frac{1}{2} \left( 1 - \frac{q}{2} \right) S_2 \varepsilon^4+O(\rho)\right) H^4 L^5 \\
		&\quad + \rho^2 \varepsilon^4 d^2 \left( 1 - \frac{q}{2} \right) \left[ 4(q - 1) \left( 1 - \frac{1}{\sqrt{\chi}} \right) + O(\rho) \right] H L^8 \\
		&\quad + \left[ 4(q - 1) \sqrt{\chi} - 2 \right] \rho^3 \varepsilon^4 d^3 \left( 1 - \frac{q}{2} \right) L^9\\
		&\ge \left[ 2 \sqrt{\chi} - 2 \right] \rho^3 \varepsilon^4 d^3 \left( 1 - \frac{q}{2} \right) L^9 \quad \text{when } \rho \text{ is sufficiently small.}
	\end{split}
	\]
	Therefore, there exist \( \rho_1 = \rho_1(n, p, q) \) and \( \eta_1 = \eta_1(n,p,q, \rho)>0 \), such that when \( \rho \in (0, \rho_1] \), 
	\[
	\sum_{j=1}^{10} I_j H^{10 - j} L^{j - 1} \geq \eta_1 \left( H^9 + L^9 \right)
	\]
	on the subset \( \{ x \in \Omega: H > \chi \rho d L \} \).

	On the other hand, on the subset \( \{ x \in \Omega: H \leq \chi \rho d L \} \),
	\[
	\begin{split}
		&I_7 H^3 L^6 + I_8 H^2 L^7 + I_9 H L^8 \\
		&= O(\rho) H^3 L^6 + \rho d \varepsilon^4 \left( 1 - \frac{q}{2} \right) HL^7 \times \\
		&\quad \left[ \left( A(\tau_0) + 4(q-2 - \tau_0) + O(\rho) \right) H + \left( 2 A(\tau_0) + 4(q-1- \tau_0) + O(\rho) \right) \rho d L \right] \\
		&\geq \rho d \varepsilon^4 \left( 1 - \frac{q}{2} \right) H L^7 \left[ \left( 4 \left( \frac{1}{2} - \tau_0 \right) + O(\rho) \right) \rho d L - \left( 4 \left( \frac{1}{2} + \tau_0 \right) + O(\rho) \right) H \right] \\
		&\geq \rho d \varepsilon^4 \left( 1 - \frac{q}{2} \right) H L^7 \left[ 4 \left( \frac{\frac{1}{2} - \tau_0}{\chi} - \frac{1}{2} - \tau_0 +O(\rho)\right) \rho d L \right]\\
		&\ge 0, \quad \text{when } \rho \text{ is sufficiently small.}
	\end{split}
	\]
	Here, we use $$A(\tau_0)>0,\quad  \frac{\frac{1}{2} - \tau_0}{\chi} - \frac{1}{2} - \tau_0>0,$$ $q\ge \frac{3}{2}$ and $O(\rho) H^3 L^6\ge O(\rho^2)H^2L^7$ on \( \{ x \in \Omega: H \leq \chi \rho d L \} \).
	Therefore, there exist \( \rho_2 = \rho_2(n, p, q) \) and \( \eta_2 = \eta_2(n,p,q, \rho)>0 \), such that when \( \rho \in (0, \rho_2] \), 
	\[
	\sum_{j=1}^{10} I_j H^{10 - j} L^{j - 1} \geq \eta_2 \left( H^9 + L^9 \right)
	\]
	on the subset \( \{ x \in \Omega: H > \chi \rho d L \} \).
	
	Finally, we choose \( \rho = \min(\rho_1, \rho_2) \) and \( \eta = \min(\eta_1, \eta_2) \) such that 
	$	\sum_{j=1}^{10} I_j H^{10 - j} L^{j - 1} \geq \eta \left( H^9 + L^9 \right)$ on $\O$. Furthermore, we have $$
	\mathcal{D}\ge \kappa\left(1+Z\right)(H+L)^2 \quad on \,\,\,\O_r(u)
	$$   for some positive $\kappa=\kappa(n,p,q)$
	and finish the proof of Lemma \ref{dk1}.

\end{proof}

\begin{proof}[Proof of Lemma \ref{dk2}]
%The line of reasoning here is highly analogous to the proof of Lemma \ref{dk1}.  

		We  proceed with the argument by dividing it into two cases. Regardless of the case, we always fix $\rho=1$, $\g=-1$ and $k=-(1-\frac{q}{2})\b$ with an undetermined $\b$.

	Case 1. $q\in(1,\frac{5}{3}).$ The proof for this case is very similar to Case 2 in the proof of Lemma \ref{dk1}, except for differences in some algebraic computations.
	In this case, 
	\[l=p+q-1<\mathcal{H}(n, q).\]
	From the definition of \( \mathcal{H}(n, q) \) (recall Definition \ref{set}) and the continuity, there exist \( \theta \in (0, 2) \), \( \tau_0 \in \left( -\frac{1}{\sqrt{n}+1}, 0 \right) \), and \( m \in (0, 2) \) which depend only on $n,p,q$ such that
	\[
	\sigma_0 := \frac{2 - n \theta}{2(n - 1)},
	\]
	\begin{equation}\label{547}
		2 A(\tau_0) + 2 \theta (\sigma_0 - \tau_0) + q \theta^2 > 0,
	\end{equation}
	
	\begin{equation}\label{548}
		A(\sigma_0) + (q - 1 - 2 \sigma_0) \theta - \frac{q}{2} \theta^2 > 0,
	\end{equation}
	
	\begin{equation}\label{549}
		\left(\frac{3}{2}q-2\right) \theta^2 + 2 \theta (\sigma_0 - \tau_0) + (m + 2) \left( A(\sigma_0) + (q - 1 - 2 \sigma_0) \theta - \frac{q}{2} \theta^2 \right) + A(\tau_0) > 0,
	\end{equation}
	\[
		l = p + q - 1 < \frac{2 - q}{n - 2} + \frac{\theta (n - 2) + 2}{(n - 2)(2 - \theta)} 
		= f\left( \frac{2}{n - 2} \right),
	\]
	where
	\[
	f(\beta) := \frac{1}{2 - \theta} \left[ (3 - q) \beta + \theta \left( 1 - \left( 1 - \frac{q}{2} \right) \beta \right) \right].
	\]
		Then for undetermined $\b\in(0,\frac{2}{n-2})$, we define \( d \) by
	\[
	\theta = \frac{d \left( l - \left( 1 - \frac{q}{2} \right) \beta \right)}{1 - \frac{q}{2}}.
	\]
	By completely same argument as the Case 2 in the proof of Lemma \ref{dk1} we know that for \(
	\sigma \equiv \sigma_0
	\),  there exist
	 $\delta=\b=\b(n,p,q)\in[0,\frac{2}{n-2})$ and $d=d(n,p,q)>0$ such that
	\( S_i > 0 \), \( i = 1, 2, 3 \).
	Now we fix $\b,\delta,d,\sigma$, and the function  $\tau\equiv\tau_0$ and have (see the asymptotic behavior in Lemma \ref{a4})
	\[
	I_5 = \e d \left( 1 - \frac{q}{2} \right) \left[  A(\tau_0) + (\frac{3q}{2}-2) \theta^2 + 2 \theta (\sigma_0 - \tau_0) + 2S_3 + O(\e) \right],
	\]
	
	\[
	I_7 = \e^2 d^2 \left( 1 - \frac{q}{2} \right) \left[ 2A(\tau_0) + q \theta^2 + 2 \theta (\sigma_0 - \tau_0) +  S_3 + O(\e) \right].
	\]
	So when $\e$ is sufficiently small and positive, by Young's inequality, we have
	
		\[
	\begin{split}
		&I_3 H^7 L^2 + I_5 H^5 L^4 + I_7 H^3 L^6 \\
		&= \left[ \left( 1 -\frac{q}{2} \right) S_3 + O(\e) \right] H^7 L^2\\
		& \quad+  \e d \left( 1 - \frac{q}{2} \right) \left[ A(\tau_0) + (\frac{3q}{2} - 2) \theta^2 + 2 \theta (\sigma_0 - \tau_0) + 2 S_3+ O(\e) \right]  H^5 L^4 \\
		&\quad + \e^2 d^2  \left( 1 - \frac{q}{2} \right) \left[ 2 A(\tau_0) + q \theta^2 + 2 \theta (\sigma_0 - \tau_0) + S_3 + O(\e)\right]  H^3 L^6 \\
		&\geq \left[ \left( 1 - \frac{q}{2} \right) \left( 1 - \left( \frac{m}{2} \right)^2 \right) S_3 + O(\e) \right] H^7 L^2 \\
		& \quad+  \e d \left( 1 - \frac{q}{2} \right) \left[ A(\tau_0) + \left(\frac{3q}{2} - 2\right) \theta^2 + 2 \theta (\sigma_0 - \tau_0) + (2+m) S_3+ O(\e) \right]  H^5 L^4 \\
		&\quad + \e^2 d^2  \left( 1 - \frac{q}{2} \right) \left[ 2 A(\tau_0) + q \theta^2 + 2 \theta (\sigma_0 - \tau_0) +  O(\e)\right]  H^3 L^6 ,
	\end{split}
	\]
	
\begin{align*}
	I_4 H^6 L^3 &= O(\varepsilon) H^6 L^3 \geq -\frac{1}{4} \left(1 - \frac{q}{2}\right)\left(1 - \left(\frac{m}{2}\right)^2\right) S_3 H^7 L^2 + O(\varepsilon^2) H^5 L^4, \\
	I_6 H^4 L^5 &= O(\varepsilon^2) H^4 L^5 \geq -\frac{1}{4} \left(1 - \frac{q}{2}\right)\left(1 - \left(\frac{m}{2}\right)^2\right) S_3 H^7 L^2 + O(\varepsilon^4) H L^8, \\
	I_8 H^2 L^7 &= O(\varepsilon^3) H^2 L^7 \geq -\frac{1}{4} \left(1 - \frac{q}{2}\right)\left(1 - \left(\frac{m}{2}\right)^2\right) S_3 H^7 L^2 + O(\varepsilon^{\frac{18}{5}}) H L^8,
\end{align*}
	then combining these inequalities and using inequalities \eqref{547}--\eqref{549} will yield
	\begin{align*}
		\sum_{j=3}^{10} I_j H^{10 - j} L^{j - 1} &\geq \left[ \frac{1}{4} \left(1 - \frac{q}{2}\right) \left(1 - \left(\frac{m}{2}\right)^2\right) S_3 + O(\varepsilon) \right] H^7 L^2 \\
		&\quad + \varepsilon^3 d^3 \left(1 - \frac{q}{2}\right) \left[ A(\tau_0) + O\left(\varepsilon^{\frac{3}{5}}\right) \right] H L^8 \\
		&\quad + \varepsilon^4 d^3 \left(1 - \frac{q}{2}\right) A(\tau_0) L^9\\
		&\ge \varepsilon^4 d^3 \left(1 - \frac{q}{2}\right) A(\tau_0) L^9\quad \text{when } \e \text{ is sufficiently small.}
	\end{align*}
	Therefore, there exists $\e=\e(n,p,q)>0$ such that 
	\[ \sum_{j=1}^{10} I_j H^{10 - j} L^{j - 1} \geq C(n, p, q) (H^9 + L^9) ,\]
	and	 same estimate about $	\mathcal{D}$ is also obtained.

		Case 2. $q\in[\frac{5}{3},2)$.

	First, we fix \( \beta = 0 \), \( \theta = 2 \), \(  d = \frac{\theta \left( 1 - \frac{q}{2} \right)}{l - \left( 1 - \frac{q}{2} \right) \beta} =\frac{2-q}{l}> 0 \) and \( \sigma \equiv -1 \) such that
	\[
	S_1 = \frac{2}{n}, \quad S_2 = 2, \quad S_3 = 0.
	\]
	When \( \e \) is positive and sufficiently small,
	\[
	I_j > 0, \quad j = 1, 2.
	\]
	For any undetermined \( \e > 0 \), we have (by Lemma \ref{a4})
\[
I_5 = \varepsilon d \left( 1 - \frac{q}{2} \right) \left[ A(\tau) + 6\left(q-2-\frac{2}{3}\tau\right) +  O(\varepsilon)\right] ,
\]

\[
I_7 =  \varepsilon^2 d^2 \left( 1 - \frac{q}{2} \right) \left[ 2 A(\tau) + 4(q-1-\tau) + O(\varepsilon)\right],
\]

\[
I_9 = \varepsilon^3 d^3 \left( 1 - \frac{q}{2} \right) \left[ A(\tau) + O(\varepsilon) \right],
\]

\[
I_{10} = \varepsilon^4 d^3 \left( 1 - \frac{q}{2} \right) A(\tau) .
\]
	Define the function
	\[\Large	\tau = - \mathbbm{1}_{\{ x \in \Omega: H > \chi \sqrt{\e d} L \}} + \tau_0 \mathbbm{1}_{\{ x \in \Omega: H \leq \chi   \sqrt{\e d}L\}},
	\]
	where we set $\chi = \left(\frac{8}{7}\right)^{\frac{1}{3}}$, \( \tau_0 \in \left( -\frac{1}{\sqrt{n}+1}, 0 \right) \) such that \( A(\tau_0) > 0 \).
	
	At this time, 	 $I_j$ ($1\le j\le 10$) are simple functions with a range of at most two points.
	We then estimate the lower bound of $\sum_{j=1}^{10} I_j H^{10 - j} L^{j - 1}$.
	
		On the subset \( \{ x \in \Omega: H > \chi \sqrt{\e d} L \} \),
by Young's inequality, we have
	\[
	I_3 H^7 L^2 = O(\varepsilon) H^7 L^2 \geq O(\sqrt{\varepsilon}) H^8 L,\footnote{The inequality represnents that there exists $\e_0=\e_0(n,p,q)>0$ such that when $\e\in(0,\e_0)$, $I_3 H^7 L^2=f(\e)H^7 L^2, |f(\e)|\le C(n,p,q)\e,$ and so $f(\e) H^7 L^2\geq -C(n,p,q)\sqrt{\varepsilon} H^8 L.$ The other similar inequalities below are to be interpreted in the same manner.}
	\]
	
	\[
	I_4 H^6 L^3 = O(\varepsilon) H^6 L^3 \geq -\frac{1}{6} \left( 1 - \frac{q}{2} \right) S_2 H^8 L + O(\varepsilon^4) L^9,
	\]
	
	\[
	I_6 H^4 L^5 = O(\varepsilon^2) H^4 L^5 \geq -\frac{1}{6} \left( 1 - \frac{q}{2} \right) S_2 H^8 L + O(\varepsilon^4) L^9,
	\]
	
	\[
	I_8 H^2 L^7 = O(\varepsilon^3) H^2 L^7 \geq -\frac{1}{6} \left( 1 - \frac{q}{2} \right) S_2 H^8 L + O(\varepsilon^4) L^9,
	\]

	\begin{eqnarray}
		I_5 H^5 L^4= \varepsilon d \left( 1 - \frac{q}{2} \right) \left[  6\left(q-\frac{5}{3}\right) + O(\varepsilon)\right] \ge O(\e^2)H^5 L^4\ge O(\sqrt{\e})H^8 L,\nonumber
	\end{eqnarray}
	
	\[
	I_7 =  \varepsilon^2 d^2 \left( 1 - \frac{q}{2} \right) \left[  4(q-1) + O(\varepsilon)\right],
	\]
	
	\[
	I_9 = \varepsilon^3 d^3 \left( 1 - \frac{q}{2} \right) \left[ -2 + O(\varepsilon) \right],
	\]
	
		\[
	O(\varepsilon^4) L^9 \geq O\left( \varepsilon^{3} \right) H^2 L^7.
	\]
	So, when  $\e$ is sufficiently small, we have (recall $q\ge\frac{5}{3}$)
	\[
	\begin{split}
		&\sum_{j=2}^{10} I_j H^{10 - j} L^{j - 1} \\
		&\geq \left( \frac{1}{2} \left( 1 - \frac{q}{2} \right) S_2 + O(\sqrt{\varepsilon}) \right) H^8 L \\
		&\quad + \varepsilon^2 d^2 \left( 1 - \frac{q}{2} \right) \left[ 4(q - 1) + O(\varepsilon) \right] H^3 L^6 \\
		&\quad + \varepsilon^3 d^3 \left( 1 - \frac{q}{2} \right) \left[ -2 + O\left( \sqrt{\e} \right) \right] H L^8 \\
		&\geq \left( \frac{1}{2} \left( 1 - \frac{q}{2} \right) S_2 + O(\sqrt{\varepsilon}) \right) H^8 L \\
		&\quad + \varepsilon^2 d^2 \left( 1 - \frac{q}{2} \right) \left[ 4(q - 1) \left( 1 - \frac{1}{\chi} \right) + O(\varepsilon) \right] H^3 L^6 \\
		&\quad + \varepsilon^3 d^3 \left( 1 - \frac{q}{2} \right) \left[ 4(q - 1)\chi - 2 + O\left( \sqrt{\e}\right) \right] H L^8\\
		&\ge \frac{2}{3}\varepsilon^3 d^3 \left( 1 - \frac{q}{2} \right)H L^8\ge \frac{2\chi}{3}(\e d)^{\frac{7}{2}}\left( 1 - \frac{q}{2} \right)L^9.
	\end{split}
	\]
		Therefore, there exist \( \e_1 = \e_1(n, p, q) \) and \( \eta_1 = \eta_1(n,p,q, \e)>0 \), such that when \( \e \in (0, \e_1] \), 
	\[
	\sum_{j=1}^{10} I_j H^{10 - j} L^{j - 1} \geq \eta_1 \left( H^9 + L^9 \right)
	\]
	on the subset \( \{ x \in \Omega: H > \chi \sqrt{\e d} L \} \).
	
		On the other hand, on the subset  \( \{ x \in \Omega: H \le \chi \sqrt{\e d} L \} \), when $\e$ is positive and small,
	\[
	I_3 H^7 L^2 = O(\varepsilon) H^7 L^2 \geq O(\varepsilon^4) H L^8,
	\]
	
	\[
	I_4 H^6 L^3 = O(\varepsilon) H^6 L^3 \geq O\left( \varepsilon^{\frac{7}{2}} \right) H L^8,
	\]
	
	\[
	I_6 H^4 L^5 = O(\varepsilon^2) H^4 L^5 \geq O\left( \varepsilon^{\frac{7}{2}} \right) H L^8,
	\]
	
	\[
	I_8 H^2 L^7 = O(\varepsilon^3) H^2 L^7 \geq O\left( \varepsilon^{\frac{7}{2}} \right) H L^8,
	\]
	\[
	I_9 = \varepsilon^3 d^3 \left( 1 - \frac{q}{2} \right) \left[ A(\tau_0) + O(\varepsilon) \right]>0,
	\]
	
	\[
	I_{10} = \varepsilon^4 d^3 \left( 1 - \frac{q}{2} \right) A(\tau_0) >0,
	\]

	\[
	\begin{split}
		&I_5 H^5 L^4 + I_7 H^3 L^6 \\
		&\geq  \varepsilon d \left( 1 - \frac{q}{2} \right) \left[ 6(q - 2) + O(\varepsilon) \right] H^5 L^4 \\
		&\quad +  \varepsilon^2 d^2 \left( 1 - \frac{q}{2} \right) \left[ 4(q - 1) + O(\varepsilon) \right] H^3 L^6 \\
		&\geq  \varepsilon d \left( 1 - \frac{q}{2} \right) \left[ 6(q - 2) +  \frac{4(q - 1)}{\chi^3} + O(\varepsilon) \right] H^5 L^4 \\
		&\quad +  \varepsilon^2 d^2 \left( 1 - \frac{q}{2} \right) \left[ 4(q - 1)\left(1-\frac{1}{\chi}\right) + O(\varepsilon) \right] H^3 L^6 \\
		&\geq  \varepsilon d \left( 1 - \frac{q}{2} \right) \left[ \frac{1}{3} + O(\varepsilon) \right] H^5 L^4 \ge 0,	\end{split}
	\]
	and so 
		 there exist \( \e_2 = \e_2(n, p, q) \) and \( \eta_2 = \eta_2(n,p,q, \e)>0 \), such that when \( \e \in (0, \e_2] \), 
	\[
	\sum_{j=1}^{10} I_j H^{10 - j} L^{j - 1} \geq \eta_2 \left( H^9 + L^9 \right)
	\]
	on the subset \( \{ x \in \Omega: H > \chi \sqrt{\e d} L \} \).
	
		Finally, we choose \( \rho = \min(\rho_1, \rho_2) \) and \( \eta = \min(\eta_1, \eta_2) \) such that 
	$	\sum_{j=1}^{10} I_j H^{10 - j} L^{j - 1} \geq \eta \left( H^9 + L^9 \right)$ on $\O$. Furthermore, we have the desired estimate
	and finish the proof.

\end{proof}

\section{Proofs of Theorem \ref{2d}, \ref{t27} and \ref{t28}}\label{S5}
In this section, we give the proofs of Theorems \ref{2d}, \ref{t27} and \ref{t28}. Thus, Theorem \ref{lrm} is proven.  Due to Theorem \ref{t10}, we always assume $p+q-1\ge\frac{n+3}{n-1}\ge \frac{n}{n-2}$ ($n\ge 3$) in the whole section.

First, we need the following lower bounds for positive superharmonic functions on a annular domain in the complete Riemannian manifold with non-negative Ricci curvature, see also \cite[Lemma 2.8]{CM} and \cite[Lemma 2.3]{SZ} for a similar estimate on the exterior domain.
	\begin{lem}\cite[Lemma 2.5]{LU3}\label{lb}
	Let $\left(\M, g\right)$ be an n-dimensional ($n\ge 2$) complete Riemannian manifold with $Ric\ge 0$. Let $u \in C(\overline{B(x_0,R) \backslash B(x_0,r)})\cap C^2(B(x_0,R) \backslash B(x_0,r))$ be a positive function satisfying $\d u\le 0$  on $B(x_0,R) \backslash B(x_0,r)$ with $r<R$. Then 
	$$
	u(x) \geq \min_{\partial B(x_0,r)}u\left(\left(\frac{r}{r(x)}\right)^{\a(n)}-\left(\frac{r}{R}\right)^{\a(n)}\right) \quad \text { on }  B(x_0,R) \backslash B(x_0,r),
	$$
	where $r(x)$ is the distance from $x_0$ to $x$, $\a(n)=\a>0$ for $n=2$ and $\a(n)=n-2$ for $n\ge 3$. 
\end{lem}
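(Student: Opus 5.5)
We compare $u$ with the explicit function
$$
h(x)=m\left(\left(\frac{r}{r(x)}\right)^{\a}-\left(\frac{r}{R}\right)^{\a}\right),\qquad m:=\min_{\partial B(x_0,r)}u ,
$$
and apply the maximum principle on the annulus $B(x_0,R)\setminus B(x_0,r)$. Since $u$ is superharmonic, it suffices to show that $h$ is a (weak) subharmonic function there and that $h\le u$ on the boundary. On $\partial B(x_0,r)$ we have $h=m(1-(r/R)^{\a})\le m\le u$. On $\partial B(x_0,R)$ we have $h=0<u$. Hence $w:=u-h$ satisfies $w\ge 0$ on the boundary of the annulus. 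Once we know $\d w\le 0$ in the barrier/viscosity/distribution sense, the minimum principle gives $w\ge0$ inside.

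The key step is $\d h\ge 0$. Write $h=\psi(r(x))$ with $\psi(s)=m(r^{\a}s^{-\a}-(r/R)^{\a})$. Then $\psi'<0$ and $\psi''=\a(\a+1)m r^{\a}s^{-\a-2}>0$. Away from the cut locus,
$$
\d h=\psi''(r(x))+\psi'(r(x))\,\d r(x).
$$
By the Laplacian comparison theorem for $Ric\ge0$, $\d r\le \frac{n-1}{r}$. Since $\psi'<0$,
$$
\d h\ge \psi''+\frac{n-1}{r}\psi'=\a m r^{\a}s^{-\a-2}\bigl(\a+1-(n-1)\bigr)=\a m r^{\a}s^{-\a-2}(\a+2-n).
$$
For $n\ge3$ we take $\a=n-2$, which makes this vanish. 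For $n=2$ any $\a>0$ gives a positive quantity. In both cases $\d h\ge0$ pointwise outside the cut locus.

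The main obstacle is the cut locus of $x_0$, where $r$ is only Lipschitz. We will use the standard fact that the comparison $\d r\le \frac{n-1}{r}$ holds globally in the barrier (Calabi) sense, equivalently in the distribution sense. At a cut point, $r$ is touched from above by smooth support functions $r_\epsilon$ with $\d r_\epsilon\le\frac{n-1}{r_\epsilon}+\epsilon$. Since $\psi$ is decreasing, $\psi(r_\epsilon)$ touches $h$ from below, and $\d\psi(r_\epsilon)\ge -C\epsilon$. So $h$ is subharmonic in the barrier sense.

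The minimum principle then applies. Suppose $w=u-h$ had a negative interior minimum at $y$. Adding a small multiple of a function that is strictly subharmonic near $y$ (for example $\eta\,(r(x)^2$-barrier$)$, or simply $\eta|x-y|^2$ in normal coordinates) produces a strict contradiction with $\d u\le0$ at $y$, using the smooth lower barrier for $h$ at $y$. Hence $u\ge h$ on the annulus, which is the claim.
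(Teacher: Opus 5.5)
Everything before your last paragraph is correct, and it is the natural proof of this lemma (the paper quotes it without proof). The boundary comparison is right. So is the identity $\psi''+\frac{n-1}{s}\psi'=\alpha m r^{\alpha}s^{-\alpha-2}(\alpha+2-n)$. So is the Calabi lower barrier $\psi(r_\epsilon)\le h$ with $\Delta\psi(r_\epsilon)\ge -C\epsilon$ at cut points.

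The gap is the minimum principle in the borderline case $n\ge 3$, $\alpha=n-2$, where $h$ is only weakly subharmonic. At a negative interior minimum $y$ of $w$, the barrier gives only $0\le\Delta\bigl(u-\psi(r_\epsilon)\bigr)(y)\le C\epsilon$, which is no contradiction. Nothing local excludes $w$ being constant near $y$, so the contradiction must come from the boundary values and the fix has to be global. Your suggested local perturbations cannot supply it. Adding $\eta\,d(x,y)^2$ to $w$ keeps $y$ a minimum but only adds $2n\eta\ge0$ to the Laplacian. Subtracting it destroys the minimum: if $w$ is flat near $y$, the minimum of $w-\eta\,d(\cdot,y)^2$ over a small ball lies on its boundary sphere. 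Finally, $r(x)^2$ is not subharmonic under $Ric\ge0$. Laplacian comparison gives $\Delta r^2\le 2n$, the wrong inequality, and $r^2$ has concave kinks along the cut locus, so it admits no lower barriers there.

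The repair is short. For $n\ge 3$, run your argument with any exponent $\alpha'>n-2$. The function $h_{\alpha'}=m\bigl((r/r(x))^{\alpha'}-(r/R)^{\alpha'}\bigr)$ still lies below $u$ on both boundary spheres. Now $\psi_{\alpha'}''+\frac{n-1}{s}\psi_{\alpha'}'\ge c_0:=\alpha'(\alpha'+2-n)\,m\,r^{\alpha'}R^{-\alpha'-2}>0$ for $s\in[r,R]$. Since $\overline{B(x_0,R)}$ is compact by completeness, a negative minimum of $u-h_{\alpha'}$ is attained at some $y$ with $r<r(y)<R$. There $u-\psi_{\alpha'}(r_\epsilon)$ is $C^2$ with a local minimum at $y$, yet $\Delta\bigl(u-\psi_{\alpha'}(r_\epsilon)\bigr)(y)\le -c_0+C\epsilon<0$ for small $\epsilon$, a contradiction. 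Hence $u\ge h_{\alpha'}$ for every $\alpha'>n-2$, and letting $\alpha'\downarrow n-2$ gives the lemma.

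For $n=2$ your $h$ is already strictly subharmonic, with the uniform constant $\alpha^2 m r^{\alpha}R^{-\alpha-2}$, so no perturbation is needed. Alternatively, apply Calabi's strong maximum principle to the barrier-subharmonic function $h-u$ on the component of $\{r<r(x)<R\}$ containing $y$, whose boundary lies in the two spheres.
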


First, we give the proof of Theorem \ref{2d}, which yields the Liouville theorem in two dimension case (though the Liouville theorem can be directly derived from the parabolicity of the manifold).

\begin{proof}[Proof of Theorem \ref{2d}]
	Let $u$ be a positive solution of \eqref{pqle} on $B(x_0,2R)\subset\mathcal{M}^2$. We define the function $F$ by \eqref{F} with $\g=-1$, $d=0$ and $k=-\b$ for some undetermined $\b>0$. By Lemma \ref{kl} and Lemma \ref{a1}, we immediately have (choose $\sigma\equiv 0$)
	\begin{eqnarray}
		\d F&\ge& u^{-\b}\left((1+\b)H^2+((3-q)\b-2l)HL+L^2\right)\nonumber\\
		&&-u^{-\b}[2H+qL]\left\langle\nabla\ln F,\nabla\ln u\right\rangle \quad\text{on}\quad \{x\in B(x_0,2R):|\nabla u|>0\},\nonumber
	\end{eqnarray}
	where $H,L,l$ are give in \eqref{21} and we use the condition $Ric\ge 0$. We fix $\b=\max(1,\frac{2|l|}{3-q})$ (notice that $q\in[0,2)$) and we have 
	\begin{equation}\label{6200}
			\d F\ge u^{-\b}\left(H^2+L^2\right)-u^{-\b}(2H+qL)\left\langle\nabla\ln F,\nabla\ln u\right\rangle 
	\end{equation}
	on $\{x\in B(x_0,2R):|\nabla u|>0\}$. Now we define 
	\begin{equation}
		A(x)=\P(x)F(x),\qquad\text{on $B(x_0,2R)$},\nonumber
	\end{equation}
	where $\P(x)$ is an undetermined cut-off function as in Lemma \ref{cut}. Then by Leibniz rule, we see
	\begin{equation}
		\d A=\frac{\d \P}{\P}A+2\left\langle\nabla \P,\nabla H\right\rangle+\P\d H,\nonumber
	\end{equation}
	on $\{x\in B(x_0,2R):A(x)>0\,\, \text{and}\,\, x\notin cut(x_0)\}$, where
	$cut(x_0)$ is the cut locus of $x_0$.
	Without loss of generality, we assume  the maximum value point $\x$ of $A$ is positive (or the proof is finished) and is outside of cut locus of $x_0$.
	At the point $\x$, by the chain rule $\nabla A(\x)=\left(\nabla\P H+\P\nabla H\right)(\x)=0$ and using \eqref{6200}, we obtain
	\begin{eqnarray}\label{63}
		0&\ge&\d A=\left(\frac{\d \P}{\P}-2\frac{|\nabla\P|^2}{\P^2}\right)A+\P\Delta F\nonumber\\
		&\ge &\left(\frac{\d \P}{\P}-2\frac{|\nabla\P|^2}{\P^2}\right)A+u^{-\b}\P\left(H^2+L^2\right)\nonumber\\
		&&+u^{-\b}(2H+qL)\left\langle\nabla\P,\nabla\ln u\right\rangle \nonumber\\
		&\ge& \left(\d \P-2\frac{|\nabla\P|^2}{\P}\right)u^{-\b}H+u^{-\b}\P\left(H^2+L^2\right)\nonumber\\
		&&-u^{-\b}(2H+qL)\left(\frac{1}{8}\P H+2\frac{|\nabla\P|^2}{\P}\right),
	\end{eqnarray}
	which implies (by the property of cut-off function) \[\P H(\x)\le\frac{C(p,q)}{R^2}.\] Notice that $\x\in B(x_0,\frac{3}{2}R)\setminus B(x_0,R)$ or $\d A(\x)=\d F(\x)\ge u^{-\b}\left(H^2+L^2\right)(\x)>0$ (by \eqref{6200}), which yields a contradiction. Now,  choose $\a=\frac{1}{\b}>0$ and use Lemma \ref{lb}, we have for any $0<r\le R$, 
	\[
	u(\x)\ge  \min_{\partial B(x_0,r)}u\left(\left(\frac{r}{{\rm d}(x_0,\x)}\right)^{\a}-\left(\frac{r}{2R}\right)^{\a}\right)\ge C(\a)\left(\frac{r}{R}\right)^{\a}\min_{\partial B(x_0,r)}u.
	\]
	Therefore, \[ 
	\sup\limits_{B(x_0,R)}u^{-\b}|\nabla\ln u|^{2}\le A(\x)\le u^{-\b}(\x)\times \frac{C(p,q)}{R^2}\le \frac{C(p,q)\left(\min\limits_{\partial B(x_0,r)}u\right)^{-\b}}{Rr}.
	\]

\end{proof}

Then, we give the proof of Theorem \ref{t27}. Comparing with $q=0$ case, there are many new arguments emerging here. In particular, at maximal value points of auxiliary functions, the upper bound of maximal value of the solutions is automatically obtained when $q\le 1$.

\begin{proof}[Proof of Theorem \ref{t27}]
We just prove $q>0$ case because $q=0$ case is similar and simpler. To clarify the proof, we divide it into there steps.

Step 1: Setting of the auxiliary functions.

Let $u$ be a positive solution of \eqref{pqle} on $B(x_0,2R)\subset\mathcal{M}^n$ and $H,l,L,Z$ be given by \eqref{21} and \eqref{22}.
	Define the auxiliary function $G$ as \ref{g} with parameters $\b,d,\e,\rho$.
	Without of loss generality, we assume that 
	$\sup\limits_{B(x_0,R)}G>0$  or our proof is complete. 	Define
	\begin{equation}
		A(x)=\P(x)G(x)\qquad\text{on $B(x_0,2R)$},\nonumber
	\end{equation}
	where $\P(x)$ is a cut-off function as in Lemma \ref{cut}. Then as routine, we have
	\begin{equation}\label{53}
		\d A=\frac{\d \P}{\P}A+2\left\langle\nabla \P,\nabla G\right\rangle+\P\d G,
	\end{equation}
	on $\{x\in B(x_0,2R):A(x)>0\,\, \text{and}\,\, x\notin cut(x_0)\}$, where
	$cut(x_0)$ is the cut locus of $x_0$.
	Without loss of generality, we assume  the maximum value point $\x$ of $A$ is outside of cut locus of $x_0$.
	At the point $\x$, we have
	\begin{eqnarray}\label{640}
		0\ge\d A=\left(\frac{\d \P}{\P}-2\frac{|\nabla\P|^2}{\P^2}\right)A+\P\Delta G,
	\end{eqnarray}
	where we use $\nabla A(\x)=0$.
	\vspace{3mm}
	
	We next consider two cases.\vspace{4mm}
	
	Case 1. $q\in[0,1]$ and $(p,q)\in\mathbb{L}(n)$. 
	Notice that in this case, $(p,q)\in \mathbb{BL}(n)$.

	By Lemma \ref{dk1}, 
	we can choose $k=-\left(1-\frac{q}{2}\right)\b, \gamma=-1$,  there exist constants \(\e= d>0 \), \(\delta=\beta\in[0,\frac{2}{n-2}) \), \( \rho\in(0,1) \),   and $\kappa>0$, all of which depend only on \( n \), \( p \), and \( q \), as well as simple functions $\sigma,\tau$ on $\O$ such that
	$$
	\mathcal{D}\ge \kappa\left(1+Z\right)(H+L)^2\quad on\quad \{x\in B(x_0,2R):|\nabla u|>0\},.
	$$   
	where $\mathcal{D}$ is given by \eqref{543}. From Lemma \ref{eg}, using the condition $Ric\ge 0$, we have 
	\begin{eqnarray}\label{650}
		\d G \ge \kappa_0 W\left(\frac{H}{H+\e L}\right)^{\rho}(1+Z)(H+L)^2+\textbf{gradient items}
	\end{eqnarray}
	on $\{x\in B(x_0,2R):|\nabla u|>0\}$, where $W=u^{-\left(1-\frac{q}{2}\right)\b}H^{-\frac{q}{2}}$, $\kappa_0=(1-\frac{q}{2})\kappa$ and  \textbf{gradient items} is given in the proof of Lemma \ref{eg}.
	
	\vspace{4mm}
	
		Case 2. $q\in(1,2)$ and $(p,q)\in\mathbb{L}(n)$.

	By Lemma \ref{dk2}, 
	we can choose $k=-\left(1-\frac{q}{2}\right)\b, \gamma=-1$,  there exist constants \(d>0 \), \(\delta=\beta\in[0,\frac{2}{n-2}) \), \( \rho=1, \e>0 \),   and $\kappa>0$, all of which depend only on \( n \), \( p \), and \( q \), as well as simple functions $\sigma,\tau$ on $\O$ such that
	$$
	\mathcal{D}\ge \kappa\left(1+Z\right)(H+L)^2\quad on\quad \{x\in B(x_0,2R):|\nabla u|>0\},
	$$   
	where $\mathcal{D}$ is given by \eqref{543}. By same reason as Case 1, inequality \eqref{650} is also valid.

	\vspace{3mm}

Step 2: Deal with gradient terms.
	
	For estimating the item $\P\d G$, we need to deal with the item $\P\times\textbf{gradient items}$, where  \textbf{gradient items} is given in the proof of Lemma \ref{eg}. First, from the definitions of $P_i,Q_i$ ($1\le i\le 4$), we have (the constant $C(n,p,q)$ may be different)
	\begin{align*}
		|P_1| &\leq \frac{C(n,p,q)}{1 + Z}, \\
		|P_i| &\leq C(n,p,q), \quad i = 2,3,4, \\
		|Q_i| &\leq C(n,p,q), \quad i = 1,2,3,4.
	\end{align*}
	
	We then estimate each term of $\P\times\textbf{gradient items}$ at the point $\x$ in detail as follows (here we factor out the common factor $W\left(\frac{H}{H+\e L}\right)^{\rho}$). 
	
	\begin{align*}
		& -\frac{q}{2} \left(1 - \frac{q}{2}\right)^{-1} (H + dL) \left( P_1^2 \P|\nabla \ln G|^2 + 2 P_1 Q_1\P \langle \nabla \ln G, \nabla \ln u \rangle \right)\\
		&=-\frac{q}{2} \left(1 - \frac{q}{2}\right)^{-1} (H + dL) \left( P_1^2 \frac{|\nabla \P|^2}{\P} - 2 P_1 Q_1 \langle \nabla \P, \nabla \ln u \rangle \right)\\
		&\ge-C(n,p,q)H\left(\frac{|\nabla \P|^2}{\P}+|\nabla\P|H^{\frac{1}{2}}\right)\\
		&\ge -\frac{\kappa_0}{32} \Phi H^2 - C(n,p,q) H \frac{|\nabla \Phi|^2}{\Phi},\\
		&\\
		& q\left(1 - \frac{q}{2}\right)^{-1}(H + dL)(k + (k + l)dZ) P_1 \Phi \langle \nabla \ln G, \nabla \ln u \rangle \\
		&= -q\left(1 - \frac{q}{2}\right)^{-1}(H + dL)(k + (k + l)dZ) P_1 \langle \nabla \Phi, \nabla \ln u \rangle \\
		& \geq -C(n,p,q) (H + L) H^{\frac{1}{2}} |\nabla \Phi| \\
		& \geq -\frac{\kappa_0}{32} \P(H + L)^2  - C(n,p,q) H \frac{|\nabla \Phi|^2}{\Phi}, \\
		& \\
		& 2k(H + dL) P_1 \Phi \langle \nabla \ln G, \nabla \ln u \rangle \\
		&= -2k(H + dL) P_1 \langle \nabla \Phi, \nabla \ln u \rangle \\
		& \geq -C(n,p,q) H^{\frac{3}{2}} |\nabla \Phi| \geq -\frac{\kappa_0}{32} \Phi H^2 - C(n,p,q) H \frac{|\nabla \Phi|^2}{\Phi},
	\end{align*}

	\begin{align*}
		&(2(\b - 1)H + (2\delta - q)L)(1 + dZ)P_1\Phi\langle\nabla\ln G, \nabla\ln u\rangle \\
		&=-(2(\b - 1)H + (2\delta - q)L)(1 + dZ)P_1\langle\nabla\Phi, \nabla\ln u\rangle \\
		&\geq -C(n,p,q)(H + L)H^{\frac{1}{2}}|\nabla\Phi| \\
		&\geq -\frac{\kappa_0}{32}\Phi(H + L)^2 - C(n,p,q)H\frac{|\nabla\Phi|^2}{\Phi}, \\
		& \\
		&2\rho \frac{\varepsilon L}{H + \e L}(H + dL)\left(P_1P_2\Phi|\nabla\ln G|^2 + (P_1Q_2 + Q_1P_2)\Phi\langle\nabla\ln G, \nabla\ln u\rangle\right) \\
		&= 2\rho \frac{\varepsilon L}{H + \e L}(H + dL)\left(P_1P_2\frac{|\nabla\Phi|^2}{\Phi} - (P_1Q_2 + Q_1P_2)\langle\nabla\Phi, \nabla\ln u\rangle\right) \\
		&\geq -C(n,p,q)(H + L)\left(\frac{|\nabla\Phi|^2}{\Phi} + H^{\frac{1}{2}}|\nabla\Phi|\right) \\
		&\geq -\frac{\kappa_0}{32}\P(H + L)^2 - C(n,p,q)(H + L)\frac{|\nabla\Phi|^2}{\Phi}, \\
		& \\
		&\rho(\rho  - 1)(H + dL)\left(\frac{\varepsilon L}{H + \e L}\right)^2\left(P_2^2\Phi|\nabla\ln G|^2 + 2P_2Q_2\Phi\langle\nabla\ln G, \nabla\ln u\rangle\right) \\
		&=\rho(\rho  - 1)(H + dL)\left(\frac{\varepsilon L}{H + \e L}\right)^2 \left(P_2^2\frac{|\nabla\Phi|^2}{\Phi} - 2P_2Q_2\langle\nabla\Phi, \nabla\ln u\rangle\right) \\
		&\geq -C(n,p,q)(H + L)\left(\frac{|\nabla\Phi|^2}{\Phi} + |\nabla\Phi|H^{\frac{1}{2}}\right) \\
		&\geq -\frac{\kappa_0}{32}\P(H + L)^2 - C(n,p,q)(H + L)\frac{|\nabla\Phi|^2}{\Phi}, \\
		& \\
		&\rho (H + dL)\frac{2\varepsilon HL}{(H + \varepsilon L)^2}\left(-P_2P_3\Phi|\nabla\ln G|^2 - (P_2Q_3 + P_3Q_2)\Phi\langle\nabla\ln G, \nabla\ln u\rangle\right) \\
		&= \rho (H + dL)\frac{2\varepsilon HL}{(H + \varepsilon L)^2}\left(-P_2P_3\frac{|\nabla\Phi|^2}{\Phi} + (P_2Q_3 + P_3Q_2)\langle\nabla\Phi, \nabla\ln u\rangle\right)\\
		&\geq -C(n,p,q)(H + L)\left(\frac{|\nabla\Phi|^2}{\Phi} + |\nabla\Phi|H^{\frac{1}{2}}\right) \\
		&\geq -\frac{\kappa_0}{32}\P(H + L)^2 - C(n,p,q)(H + L)\frac{|\nabla\Phi|^2}{\Phi},
	\end{align*}

	\begin{align*}
		&2\rho (H + dL)\left(\frac{\varepsilon L}{H + \varepsilon L}\right)^2\left(-P_2P_4\Phi|\nabla\ln G|^2 - (P_2Q_4 + P_4Q_2)\Phi\langle\nabla\ln G, \nabla\ln u\rangle\right) \\
		&= 2\rho(H + dL)\left(\frac{\varepsilon L}{H + \varepsilon L}\right)^2\left(-P_2P_4\frac{|\nabla\Phi|^2}{\Phi} + (P_2Q_4 + P_4Q_2)\langle\nabla\Phi, \nabla\ln u\rangle\right) \\
		&\geq -C(n,p,q)(H + L)\left(\frac{|\nabla\Phi|^2}{\Phi} + |\nabla\Phi|H^{\frac{1}{2}}\right) \\
		&\geq -\frac{\kappa_0}{32}\Phi(H + L)^2 - C(n,p,q)(H + L)\frac{|\nabla\Phi|^2}{\Phi}, \\
		& \\
		&\left(1 - \frac{q}{2}\right)\rho \varepsilon\frac{H + dL}{H + \varepsilon L}Z\left(2(\delta - 1)H + (2\tau - q)L\right)P_3\Phi\langle\nabla\ln G, \nabla\ln u\rangle \\
		&= -\left(1 - \frac{q}{2}\right)\rho \varepsilon\frac{H + dL}{H + \varepsilon L}Z\left(2(\delta - 1)H + (2\tau - q)L\right)P_3\langle\nabla\Phi, \nabla\ln u\rangle \\
		&\geq -C(n,p,q)Z(H + L)H^{\frac{1}{2}}|\nabla\Phi| \\
		&\geq -\frac{\kappa_0}{32}\Phi Z(H + L)^2 - C(n,p,q)L\frac{|\nabla\Phi|^2}{\Phi}, \\
		& \\
		&\frac{q}{2}\left(1 - \frac{q}{2}\right)\rho \varepsilon\frac{H + dL}{H + \varepsilon L}L\left(P_3^2\Phi|\nabla\ln G|^2 + 2P_3Q_3\Phi\langle\nabla\ln G, \nabla\ln u\rangle\right) \\
		&= \frac{q}{2}\left(1 - \frac{q}{2}\right)\rho \varepsilon\frac{H + dL}{H + \varepsilon L}L\left(P_3^2\frac{|\nabla\Phi|^2}{\Phi} - 2P_3Q_3\langle\nabla\Phi, \nabla\ln u\rangle\right) \\
		&\geq -C(n,p,q)L\left(\frac{|\nabla\Phi|^2}{\Phi} + |\nabla\Phi|H^{\frac{1}{2}}\right) \\
		&\geq -\frac{\kappa_0}{32}\Phi L^2 - C(n,p,q)(H + L)\frac{|\nabla\Phi|^2}{\Phi}, \\
		& \\
		& l q\rho \varepsilon\frac{H + dL}{H + \varepsilon L}LP_3\Phi\langle\nabla\ln G, \nabla\ln u\rangle \\
		&= -l q\rho \varepsilon\frac{H + dL}{H + \varepsilon L}LP_3\langle\nabla\Phi, \nabla\ln u\rangle \\
		&\geq -C(n,p,q)L|\nabla\Phi|H^{\frac{1}{2}}\geq -\frac{\kappa_0}{32}\Phi L^2 - C(n,p,q)H\frac{|\nabla\Phi|^2}{\Phi}.
	\end{align*}
Then combining above inequalities yields
	\[
	\P\times\textbf{gradient items}\ge -\frac{\kappa_0}{2}\Phi(H + L)^2 - C(n,p,q)(H + L)\frac{|\nabla\Phi|^2}{\Phi},
	\]
	and so
	\begin{equation}\label{660}
		\P\d G\ge W\left(\frac{H}{H+\e L}\right)^{\rho}\left[\frac{\kappa_0}{2}\P (1+Z)(H+L)^2- C(n,p,q)(H + L)\frac{|\nabla\Phi|^2}{\Phi}\right].
	\end{equation}

	Step 3: End of the proof.

	Combining \eqref{640} and \eqref{660} and using the property of cut-off function, we obtain
	\[
	0\ge  W\left(\frac{H}{H+\e L}\right)^{\rho}\left[\frac{\kappa_0}{2} \P(1+Z)(H+L)^2- C(n,p,q)(H + L)\frac{1}{R^2}\right],
	\]
	which implies
	\begin{equation}\label{670}
	\P(1+Z)(H+L)(\x)\le \frac{C(n,p,q)}{R^2}.
	\end{equation}
	
	Then we consider two cases in Step 1.
	\vspace{3mm}
	
	Case 1. $q\in[0,1]$.
	From \eqref{670}, we have (at the point $\x$)
	\begin{equation}\label{680}
		\P	L^2\le C(n,p,q)R^{-2}H\Rightarrow\P^{2-q} u^{2l}\le C(n,p,q)R^{-2}(\P H)^{1-q}\le C(n,p,q)R^{-2(2-q)}.
	\end{equation}
	Notice that $\x\in B(x_0,\frac{3}{2}R)\setminus B(x_0,R)$ or
	by \eqref{640} and \eqref{650},
	\[
	0\ge \d A(\x)=\d G(\x)\ge\kappa_0 W\left(\frac{H}{H+\e L}\right)^{\rho}(1+Z)(H+L)^2(\x)>0
	\]
	 which yields a contradiction.
	So, by Lemma \ref{lb}
	we have for any $0<r\le R$, 
	\begin{equation}\label{690}
			u(\x)\ge  \min_{\partial B(x_0,r)}u\left(\left(\frac{r}{{\rm d}(x_0,\x)}\right)^{n-2}-\left(\frac{r}{2R}\right)^{n-2}\right)\ge C(n)\left(\frac{r}{R}\right)^{n-2}\min_{\partial B(x_0,r)}u.
	\end{equation}
	Therefore, by \eqref{680} and \eqref{690}, we have
	\begin{align*}
		A(x^*) &= \Phi u^{-\left(1 - \frac{q}{2}\right)\beta} H^{-\frac{q}{2}} (H + dL) \left( \frac{H}{H + \varepsilon L} \right)^{\rho} (x^*) \\
		&\leq \Phi \left[ \left( u^{-\beta} H \right)^{\left(1 - \frac{q}{2}\right)} + d u^{l - \left(1 - \frac{q}{2}\right)\beta} \right] (x^*) \\
		&\leq \left[ \left( \Phi u^{-\beta} H \right)^{1-\frac{q}{2}} + d \left( \Phi^{\frac{2-q}{2l}} u \right)^{l - \left(1 - \frac{q}{2}\right)\beta} \right] (x^*)\quad \left(by\,\, \P\le 1,\,\, \frac{2-q}{2l}\times \left(l - \left(1 - \frac{q}{2}\right)\right)< 1\right) \\
		&\leq C(n,p,q) \left(R^{-(2-q)} \left(  \left( \frac{r}{R} \right)^{n - 2} \min_{\partial B(x_0,r)} u \right)^{-\left(1 - \frac{q}{2}\right)\beta} +  R^{-\frac{2 - q}{l} \times \left( l - \left(1 - \frac{q}{2}\right)\beta \right)} \right).
	\end{align*}
	In this case, $\e=d$, $\rho=\rho(n,p,q)\in(0,1)$ and so in $B(x_0,R)$,
	\[
	A=\P u^{-\left(1-\frac{q}{2}\right)\b}H^{-\frac{q}{2}}(H+dL)^{1-\rho}H^{\rho}\ge \P(u^{-\b}H)^{1-\frac{q}{2}}  .
	\]
	Finally,
		\begin{eqnarray}
		&&\sup\limits_{B(x_0,R)}u^{-(1-\frac{q}{2})\b}|\nabla\ln u|^{2-q}\nonumber\\
		&\le& \sup\limits_{B(x_0,2R)}\P u^{-(1-\frac{q}{2})\b}|\nabla\ln u|^{2-q}\le\sup\limits_{B(x_0,2R)}A= A(\x)\nonumber\\
		&\le& C(n,p,q)\left(R^{-(2-(n-2)\b)(1-\frac{q}{2})}\left(r^{n-2}\min_{\partial B(x_0,r)}u\right)^{-\left(1-\frac{q}{2}\right)\b}+R^{-\eta}\right),\nonumber
	\end{eqnarray}
	where $\eta=\frac{2 - q}{l} \times \left( l - \left(1 - \frac{q}{2}\right)\beta \right)$.
	
	\vspace{3mm}
	
	Case 2. $q\in(1,2)$. Currently, we cannot directly derive \eqref{680} from \eqref{670}; however, we have  $\rho = 1$. Then we directly have 
	\vspace{3mm}
	
		\begin{eqnarray}
			A(x^*) &=& \Phi u^{-\left(1 - \frac{q}{2}\right)\beta} H^{-\frac{q}{2}} (H + dL)  \frac{H}{H + \varepsilon L}  (x^*)\nonumber\\
			&\leq& C(n,p,q) \left( \Phi u^{-\beta} H \right)^{1-\frac{q}{2}}(x^*) \nonumber\\
			&\leq& C(n,p,q) R^{-(2-q)} \left(  \left( \frac{r}{R} \right)^{n - 2} \min_{\partial B(x_0,r)} u \right)^{-\left(1 - \frac{q}{2}\right)\beta} \nonumber
		\end{eqnarray}
	and 
	\begin{eqnarray}
		&&\sup\limits_{B(x_0,R)}u^{-(1-\frac{q}{2})\b}|\nabla\ln u|^{2-q}\nonumber\\
		&\le& \sup\limits_{B(x_0,2R)}\P \left(u^{-(1-\frac{q}{2})}H\right)^{-\b}\le C(n,p,q)\sup\limits_{B(x_0,2R)}A= C(n,p,q)A(\x)\nonumber\\
		&\le& C(n,p,q)R^{-(2-(n-2)\b)(1-\frac{q}{2})}\left(r^{n-2}\min_{\partial B(x_0,r)}u\right)^{-\left(1-\frac{q}{2}\right)\b}.
	\end{eqnarray}
	Whether in Case 1 or Case 2, we complete the proof of the theorem.

\end{proof}

Then we give the proof of Theorem \ref{t28}, which is nearly same as the proof of Theorem \ref{t27} except we do not have the upper bound \eqref{680} when $q\in(1,2)$.

\begin{proof}[Proof of Theorem \ref{t28}]
As the proof of Theorem \ref{t27}, let $u$ be a positive solution of \eqref{pqle} on $B(x_0,2R)\subset\mathcal{M}^n$ and $H,l,L,Z$ be given by \eqref{21} and \eqref{22}.
Define the auxiliary function $G$ as \ref{g} with parameters $\b,d,\e,\rho$.
Without of loss generality, we assume that 
$\sup\limits_{B(x_0,R)}G>0$  or our proof is complete. 	Define
\begin{equation}
	A(x)=\P(x)G(x)\qquad\text{on $B(x_0,2R)$},\nonumber
\end{equation}
where $\P(x)$ is a cut-off function as in Lemma \ref{cut}. Then as routine,  we assume  the maximum value point $\x$ of $A$ is outside of cut locus of $x_0$.
At the point $\x$, we have the estimate \eqref{640}.

Now, $q\in(1,2)$ and $(p,q)\in\mathbb{BL}(n)$. 
By Lemma \ref{dk1} and completely same argument as the proof of Theorem \ref{t27}, 
we can choose $k=-\left(1-\frac{q}{2}\right)\b, \gamma=-1$,  there exist constants \(\e= d>0 \), \(\delta=\beta\in[0,\frac{2}{n-2}) \), \( \rho\in(0,1) \),   and $\kappa>0$, all of which depend only on \( n \), \( p \), and \( q \), as well as simple functions $\sigma,\tau$ on $\O$ such that
\[
\left(1-\rho\right)l-\left(1-\frac{q}{2}\right)\b>0\footnote{Because we assume $l\ge\frac{2}{n-2}$ and Lemma \ref{dk2} is always valid with sufficiently small and positive $\rho$. }
\]
and the estimate \eqref{650} is valid. Same reason as Case 2 in the proof of Theorem \ref{t27} yields \eqref{670}. However, we cannot yields \eqref{680} from \eqref{670} in this case. Anyway, we have
\begin{eqnarray}
	A(\x)&=&\P u^{-\left(1-\frac{q}{2}\right)\b}H^{-\frac{q}{2}}(H+dL)^{1-\rho}H^{\rho}\nonumber\\
	&\le&\P u^{-\left(1-\frac{q}{2}\right)\b}H^{-\frac{q}{2}}\left(H+d^{1-\rho}H^{\rho}L^{1-\rho}\right)\nonumber\\
	&\le&C(n,p,q)\P\left[\left(u^{-\b}H\right)^{1-\frac{q}{2}}+H^{\left(1-\frac{q}{2}\right)\rho}u^{l(1-\rho)-\left(1-\frac{q}{2}\right)\b}\right]\nonumber\\
	&\le&C(n,p,q)\left[\left(\P u^{-\b}H\right)^{1-\frac{q}{2}}+(\P H)^{\left(1-\frac{q}{2}\right)\rho}\Vert u\Vert_{L^{\infty}(B(x_0,2R))}^{l(1-\rho)-\left(1-\frac{q}{2}\right)\b}\right]\nonumber\\
		&\le& C(n,p,q)R^{-(2-(n-2)\b)(1-\frac{q}{2})}\left(r^{n-2}\min_{\partial B(x_0,r)}u\right)^{-\left(1-\frac{q}{2}\right)\b}\nonumber\\
		&&+C(n,p,q)R^{-(2-q)\rho}\Vert u\Vert^{(1-\rho)(p+q-1)-(1-\frac{q}{2})\b}_{L^{\infty}(B(0,2R))},
\end{eqnarray}
where we use the estimate \eqref{690} at last inequality.
Finally, 
\begin{eqnarray}
	&&\sup\limits_{B(x_0,R)}\left(u^{-(1-\frac{q}{2})\b}|\nabla\ln u|^{2-q}+u^{(1-\rho)(p+q-1)-(1-\frac{q}{2})\b}|\nabla\ln u|^{(2-q)\rho}\right)\nonumber\\
	&\le& C(n,p,q)\sup\limits_{B(x_0,2R)}A\le C(n,p,q)A(\x)\nonumber\\
		&\le&C(n,p,q)R^{-(2-(n-2)\b)(1-\frac{q}{2})}\left(r^{n-2}\min_{\partial B(x_0,r)}u\right)^{-\left(1-\frac{q}{2}\right)\b}\nonumber\\
	&&+C(n,p,q)R^{-(2-q)\rho}\Vert u\Vert^{(1-\rho)(p+q-1)-(1-\frac{q}{2})\b}_{L^{\infty}(B(0,2R))}.\nonumber
\end{eqnarray}
We complete the proof.

\end{proof}

We have already provided the proof of Theorem \ref{lrm} for the case when \( q \ge 2 \) in Section \ref{S3}; below, we present the proof for the case when \( 0\le q < 2 \).

\begin{proof}[Proof of Theorem \ref{lrm}]
	
	When $n=2$, Theorem \ref{lrm} is implied by Lemma \ref{2d} by letting $r=1$ and $R\to\infty$.
	
	When $n\ge 3$, we have:
	
	 \begin{itemize}
	 	\item if $p+q<\frac{n}{n-2}\le\frac{n+3}{n-1}$, Theorem \ref{lrm} is implied by Theorem \ref{t10};
	 	
	 	\item if $p+q\ge\frac{n}{n-2}$ and $(p,q)\in\mathbb{L}(n)$,  Theorem \ref{lrm} is implied by Theorem \ref{t27} by letting $r=1$ and $R\to\infty$; 
	 	
	 	\item if $p+q\ge\frac{n}{n-2}$ and $(p,q)\in\mathbb{BL}(n)$, the part $(2)$ of Theorem \ref{lrm} is implied by Theorem \ref{t27} by letting $r=1$ and $R\to\infty$.
	 \end{itemize}

\end{proof}

\section{Mutual control lemmas}\label{S6}
In this independent section, we provide the relations between items $H$ and $L$ (see the definitions of  them in \eqref{21}), which is important to the proof of Theorem \ref{LFG} in Section \ref{S7}. Concretely, their $L^{\infty}$ norm are controlled by each other.  Besides Euclidean case, we also consider  manifold case because they themselves are also interesting facts. Same idea already appeared in \cite[Section 5]{LU1} for $q=0$ case. Hence the content of this section can be seen as the partial generalizations of \cite[Section 5]{LU1}. We call the following two lemmas as mutual control lemmas.

\begin{lem}\label{control1}
	Let $(\M,g)$ be an $n$-dimensional complete Riemannian manifold with $Ric\ge-Kg$ and $K\ge 0$. Assume that $q\ge0$ and $p\in\mathbb{R}$. Let $u$ be a positive solution of equation \eqref{pqle} on $B(x_0,2R)\subset\M$ , then
	\begin{equation}\label{61}
		\sup\limits_{B(x_0,R)}\frac{|\nabla u|^2}{u^2}\le C(n,p,q)\left(\sup\limits_{B(x_0,2R)}u^{p-1}|\nabla u|^q+\frac{1}{R^2}+K\right).
	\end{equation}
\end{lem}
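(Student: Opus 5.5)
We argue by the direct Bernstein method, using Lemma \ref{l2} with parameters chosen so that the cross term $HL$ is absorbed, leaving a lower bound for $\Delta H$ of the form $\e H^2 - C L^2$.

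Take $\b=\frac{2}{n-1}$ (any $\b$ with $A(\b)>0$ works) and $\sigma\equiv0$ in \eqref{23}. Discarding the nonnegative tensor term and using $Ric\ge -Kg$ gives, on $\O_r(u)$,
\[
\Delta H\ge A(\b)H^2+\frac{2}{n}L^2+B(\b,0)HL-2KH+[2(\b-1)H-qL]\langle\nabla\ln u,\nabla\ln H\rangle .
\]
The sign of $B(\b,0)$ is irrelevant. By Young's inequality,
\[
|B(\b,0)|HL\le \frac{A(\b)}{2}H^2+C(n,p,q)L^2 ,
\]
so
\[
\Delta H\ge \e H^2-C_1L^2-2KH+[2(\b-1)H-qL]\langle\nabla\ln u,\nabla\ln H\rangle,
\]
with $\e=\e(n)>0$ and $C_1=C_1(n,p,q)$.

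Next set $A=\P H$, with $\P$ the cut-off function of Lemma \ref{cut}, and let $\x$ be a positive maximum point of $A$ (moved off the cut locus by Calabi's trick). At $\x$ we have $\nabla\ln H=-\nabla\ln\P$, so
\[
0\ge \Big(\Delta\P-2\frac{|\nabla\P|^2}{\P}\Big)H+\P\big(\e H^2-C_1L^2-2KH\big)-[2(\b-1)H-qL]\langle\nabla\ln u,\nabla\P\rangle .
\]
We absorb the gradient terms as in the proof of Theorem \ref{t10}:
\[
|H\langle\nabla\ln u,\nabla\P\rangle|\le H^{3/2}|\nabla\P|\le \frac{\e}{8}\P H^2+C\frac{|\nabla\P|^4}{\P^3},
\]
\[
|L\langle\nabla\ln u,\nabla\P\rangle|\le LH^{1/2}|\nabla\P|\le \frac{\e}{8}\P H^2+C\P L^2+C\frac{|\nabla\P|^4}{\P^3}.
\]
Multiplying by $\P$ and using Lemma \ref{cut}(ii),(iii) then gives
\[
(\P H)^2(\x)\le C\Big(\frac1{R^2}+K\Big)\P H(\x)+C\,\P^2L^2(\x)+\frac{C}{R^4},
\]
hence
\[
\P H(\x)\le C\Big(\frac1{R^2}+K+L(\x)\Big).
\]
Since $L=|\nabla\ln u|^qu^{l}=u^{p-1}|\nabla u|^q$, we have $L(\x)\le \sup_{B(x_0,2R)}u^{p-1}|\nabla u|^q$, and restricting to $B(x_0,R)$ yields \eqref{61}.

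The main obstacle is the degenerate case where $\x$ lies where $|\nabla u|=0$; there $A=0$, so a positive maximum automatically lies in $\O_r(u)$, which is enough. A second point of care: the term $qL\langle\nabla\ln u,\nabla\P\rangle$ has no companion positive term in $L$ beyond the $\frac2nL^2$ we already traded away. This is harmless, because the target estimate allows $L$ on the right-hand side; the Young split is therefore chosen to produce $\P L^2$ and not to absorb it. When $q=0$, $L=u^{p-1}$ and the same argument goes through verbatim.
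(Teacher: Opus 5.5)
Your proposal is correct and follows essentially the same route as the paper: a Bernstein argument for $\Phi H$ at its maximum point, based on Lemma \ref{l2} with $\sigma=0$, where the cross term $HL$ and the gradient terms are absorbed by Young's inequality so that only $L(x^{\star})$ remains on the right-hand side. The differences are cosmetic. The paper takes $\beta=0$ rather than $\beta=\frac{2}{n-1}$, and it carries the $HL$ term, with coefficient $\frac{4}{n}-2l-\frac{q}{2}$, through to the final quadratic inequality instead of absorbing it at the outset.
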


\begin{lem}\label{control2}
	Let $(\M,g)$ be an $n$-dimensional complete Riemannian manifold with $Ric\ge-Kg$ and $K\ge 0$. Assume that $q\ge0$ and $p\in\mathbb{R}$. Let $u$ be a positive solution of equation \eqref{pqle} on $B(x_0,2R)\subset\M$, then
	\begin{equation}\label{62}
		\sup\limits_{B(x_0,R)}u^{p-1}|\nabla u|^q\le C(n,p,q)\left(\sup\limits_{B(x_0,2R)}\frac{|\nabla u|^2}{u^2} +\frac{1}{R^2}+K\right).
	\end{equation}
\end{lem}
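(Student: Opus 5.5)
The plan is to run a Bernstein argument with a cutoff, in the spirit of the proofs of Theorems \ref{t10} and \ref{sc}, but with $H$ treated as a bounded coefficient. Write $M:=\sup_{B(x_0,2R)}H$ and recall that $L=|\nabla\ln u|^qu^l=|\nabla u|^qu^{p-1}$, so \eqref{62} is a bound for $\sup_{B(x_0,R)}L$ in terms of $M+R^{-2}+K$. Lemma \ref{l2} with $\beta=\sigma=0$ gives, exactly as in \eqref{48},
\[
\Delta H\ge \frac{2}{n}(H+L)^2-2lHL-(2H+qL)\langle\nabla\ln u,\nabla\ln H\rangle-2KH .
\]
The point is that $L^2$ appears on the right with a positive coefficient, so $H$ is a subsolution with an $L^2$ source. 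I would take the auxiliary function
\[
\Psi:=\Phi\,(L+aH),
\]
where $\Phi$ is the cutoff of Lemma \ref{cut} and $a=a(n,p,q)$ is a large constant to be fixed later. For $\Delta L$ I would use identity \eqref{437}. In it I rewrite $\frac q2 Z\Delta H$ by Lemma \ref{l2} (again with $\beta=\sigma=0$), and I express $\nabla\ln H$ through $\nabla\ln(L+aH)$ and $\nabla\ln u$ by the same elimination as in \eqref{grh}.

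The key steps, in order, are as follows.

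(i) Derive a differential inequality of the form
\[
\Delta(L+aH)\ \ge\ \frac{a}{n}L^2+\frac{a}{n}H^2-C(n,p,q)\,(aHL+L^2)-2K(a+1)(H+L)+\textbf{gradient items}.
\]
Here the term $-lL^2$ from \eqref{437} is absorbed into $\frac{a}{n}L^2$ once $a>2n|l|+1$. The term $\frac q2 Z\Delta H$ contributes the nonnegative quantity $\frac qn Z(H+L)^2\ge 0$, together with a cross term $-qlL^2$, which is again absorbed by choosing $a$ large.

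(ii) At an interior maximum point $x^\star$ of $\Psi$ (placed off the cut locus, as before), use $\nabla\Psi=0$ to convert every gradient item into terms involving $\nabla\Phi$. Then apply Young's inequality, as in the proof of Theorem \ref{t10}, so that these terms are bounded by $\frac{a}{4n}\Phi(H+L)^2+C\frac{|\nabla\Phi|^2}{\Phi}(H+L)+C\frac{|\nabla\Phi|^4}{\Phi^3}$.

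(iii) Bound the cross term $aHL\le aML$ and combine everything with $0\ge\Delta\Psi(x^\star)$ and the cutoff estimates of Lemma \ref{cut}. This should give $\Phi L(x^\star)\le C\big(M+R^{-2}+K\big)$, after using the elementary inequality $ML\le \eta L^2+C_\eta M^2$. Since $\Psi\ge\Phi L$ and $\Psi\le\Phi L+aM$, the bound $\sup_{B(x_0,R)}L\le \Psi(x^\star)\le C(M+R^{-2}+K)$ then follows. When $q=0$ the $Z$-terms disappear and the computation is simpler.

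The main obstacle I expect is the term $\frac q2(\frac q2-1)L|\nabla\ln H|^2$ in \eqref{437}, which has a bad sign for $0<q<2$. The same difficulty arises with the gradient items multiplied by $Z$ that come from $\frac q2 Z\Delta H$, because $Z$ is not bounded. After substituting $\nabla\ln H$ via $\nabla\Psi=0$, these produce terms like $L\,|\nabla\ln u|^2\,(\ldots)$ with coefficients that depend on the ratio $L/(L+aH)$. My first attempt would be to check that all coefficients stay bounded, since $L/(L+aH)\in[0,1]$. Then the worst term is $C L H\le C ML$, which step (iii) already handles. If some term grows like $Z$ instead, I would replace $\Psi$ by $\Phi\,(L+aH)^{\mu}$ with $\mu$ slightly below $1$, or by $\Phi L\,(2M-H)^{-1}$ in the Cheng--Yau style. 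The aim is to generate a positive multiple of $|\nabla\ln H|^2$ that cancels the bad-sign term.
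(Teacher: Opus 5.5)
Step (ii) fails. The culprit is the $Z$-weighted gradient item, not the $|\nabla\ln H|^2$ term you call the main obstacle. Expanding $\Delta(L+aH)=(a+\frac q2 Z)\Delta H+\dots$ with \eqref{48}, the gradient part contains $-\frac{q^2}{2}ZL\langle\nabla\ln u,\nabla\ln H\rangle$. At $x^\star$, $\nabla\Psi=0$ gives
\[
\nabla\ln H=-\frac{(L+aH)\nabla\ln\Phi+lL\nabla\ln u}{\frac q2L+aH}.
\]
After multiplying by $\Phi$, this term therefore contains $\frac{q^2}{2}\frac{L+aH}{\frac q2 L+aH}\,ZL\,\langle\nabla\Phi,\nabla\ln u\rangle$, whose size is of order $L^2H^{-1/2}|\nabla\Phi|$. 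This cannot be bounded by $\frac{a}{4n}\Phi(H+L)^2+C\frac{|\nabla\Phi|^2}{\Phi}(H+L)+C\frac{|\nabla\Phi|^4}{\Phi^3}$: let $H\to0$ with $L,\Phi,\nabla\Phi$ fixed. Your first attempt (checking that coefficients stay bounded) does not help, because this coefficient really carries a factor $Z$. Your fallbacks ($\mu<1$, a Cheng--Yau factor) aim at producing a positive $|\nabla\ln H|^2$ term, which does not touch it either.

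The bad-sign term is in fact harmless. By the formula above, $\frac q2(1-\frac q2)L|\nabla\ln H|^2\le C(L|\nabla\ln\Phi|^2+LH)$ with $C$ independent of $a$. This holds because $\frac{L+aH}{\frac q2L+aH}\le\max(1,\frac 2q)$ and $\frac{L}{\frac q2L+aH}\le\frac2q$. Likewise, the $\nabla\ln u$-part of the $Z$-weighted term equals $\frac{q^2l}{2}\frac{L^3}{\frac q2L+aH}=O(L^2)$.

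The missing idea is that you must not discard $\frac{q}{n}Z(H+L)^2$ in step (i). It is at least $\frac qn\frac{L^3}{H}$, and since $ZH=L$, Young's inequality gives
\[
ZLH^{1/2}|\nabla\Phi|\le\eta\,\Phi ZL^2+C_\eta\,L\frac{|\nabla\Phi|^2}{\Phi},
\]
which absorbs the offending term. With this repair the rest of your scheme goes through. Large $a$ absorbs $-(1+q)lL^2$ and the $O(L^2+HL)$ leftovers. Your use of $M=\sup_{B(x_0,2R)}H$ handles the cross term $-2alHL$, which is indefinite for $l>4/n$. The result is $\Phi L(x^\star)\le C(M+R^{-2}+K)$.

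This is exactly the mechanism in the paper. It takes $F=H+L$ (Lemma \ref{kl} with $d=1$ and all other parameters zero) and keeps the factor $J=1+\frac q2 Z$ in front of the dominant quotient. It then spends $\frac{\varepsilon}{10}J\Phi L(H+L)$ and similar terms in \eqref{616}--\eqref{620} on precisely these $Z$-weighted gradient items. Where you use $M$, the paper instead splits at $x^\star$ according to the ratio $L/H$. If the ratio is bounded, then $(H+L)(x^\star)\le CH(x^\star)$ directly. If it is large, the $S_6L^5$ term dominates the quintic numerator.
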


\begin{proof}[Proof of Lemma \ref{control1}]
	
	Let $u$ be a positive solution of equation \eqref{pqle} on $B(x_0,2R)\subset\M$ and $H,l,L$ are defined by \eqref{21}. By choosing $\b=\sigma=0$ in Lemma \ref{l2} and using condition $Ric\ge -Kg$, then we have
	\begin{equation}\label{63}
	\Delta H\ge \frac{2}{n}\left(H^2+L^2\right)+\left(\frac{4}{n}-2l\right)HL-\left(2H+qL\right)\left\langle\nabla\ln u,\nabla \ln H\right\rangle-KH
	\end{equation}
	on $\{x\in B(x_0,2R):|\nabla u|(x)>0\}$.
	Define
	\begin{equation}
		A(x)=\P(x)H(x),\qquad\text{on $B(x_0,2R)$},
	\end{equation}
	where $\P(x)$ is an undetermined cut-off function as in Lemma \ref{cut}. Then
	\begin{equation}\label{65}
		\d A=\frac{\d \P}{\P}A+2\left\langle\nabla \P,\nabla H\right\rangle+\P\d H,
	\end{equation}
	on $N:=\{x\in B(x_0,2R):A(x)>0\,\, \text{and}\,\, x\notin cut(x_0)\}$, where
	$cut(x_0)$ is the cut locus of $x_0$.

	Without loss of generality, we assume $A(x^{\star})=\max\limits_{B(x_0,2R)} A>0$ and the maximum value point $\x$ is outside of cut locus of $x_0$.
	At the point $\x$, by \eqref{63} and \eqref{65}, we see
		\begin{eqnarray}\label{66}
		0\ge\d A&\ge&\left(\frac{\d \P}{\P}-2\frac{|\nabla\P|^2}{\P^2}\right)A+\frac{2}{n}\P\left(H^2+L^2\right)-2KA\nonumber\\
		&&+\left(\frac{4}{n}-2l\right)\P HL -\P\left(2 H+q L\right)\langle\nabla\ln u, \nabla \ln H\rangle\nonumber\\
	&\ge&\left(\frac{\d \P}{\P}-2\frac{|\nabla\P|^2}{\P^2}\right)A+\frac{2}{n}\P\left(H^2+L^2\right)-2KA\nonumber\\
	&&+\left(\frac{4}{n}-2l\right)\P HL +\left(2 H+q L\right)\langle\nabla\ln u, \nabla \P\rangle,
	\end{eqnarray}
	where we use $\nabla A(\x)=\left(\nabla\P H+\P\nabla H\right)(\x)=0$. By Cauchy-Schwarz inequality and basic inequality, we have
	$$
	2H\langle\nabla\ln u, \nabla \P\rangle\ge- \left(\frac{1}{n}\P H^2+n H\frac{|\nabla \P|^2}{\P}\right)
	$$
	and 
	$$
	qL\langle\nabla\ln u, \nabla \P\rangle\ge -\frac{q}{2}\P HL-\frac{q}{2}\frac{|\nabla\P|^2}{\P}L\ge -\frac{q}{2}\P HL-\frac{2}{n}L^2-\frac{nq^2}{2}\frac{|\nabla\P|^4}{\P^2}.
	$$
	Substituting these inequalities into \eqref{66} yields
	\begin{equation}\label{67}
	0\ge \left(\frac{\d \P}{\P}-(2+n)\frac{|\nabla\P|^2}{\P^2}\right)A+\frac{1}{n}\P H^2-2KA+\left(\frac{4}{n}-2l-\frac{q}{2}\right)\P HL-\frac{nq^2}{2}\frac{|\nabla\P|^4}{\P^2}.
	\end{equation}
	Multiplying both sides of \eqref{67} by $\P(\x)$ and  using the property of cut-off function in Lemma \ref{cut}, we see 
	\begin{eqnarray}\label{68}
		\max\limits_{B(x_0,2R)} A=A(\x)&\le& C(n,p,q)\left(L(\x)+\frac{1}{R^2}+K\right)\nonumber\\
		&\le& C(n,p,q)\left(\sup\limits_{B(x_0,2R)} L+\frac{1}{R^2}+K\right).
	\end{eqnarray}
Finally,	\eqref{68} implies desired \eqref{61} and we complete the proof.

\end{proof}

\begin{proof}[Proof of Theorem \ref{control2}]
We just prove $q>0$ case because $q=0$ case is similar and simpler.
Let $u$ be a positive solution of equation \eqref{pqle} on $B(x_0,2R)\subset\M$. Define $H,l,L$ as \eqref{21}, $Z$ as \eqref{22} on $\{x\in B(x_0,2R):|\nabla u|(x)>0\}$ and $F$ as \eqref{F}. By choosing $k=\g=\b=\delta=\sigma=\tau=0$, $d=1$ in Lemma \ref{kl} and the using condition $Ric\ge -Kg$, then we have
	\begin{eqnarray}\label{69}
	\d F
	&\ge&\frac{J\left(S_1H^5+S_2H^4L+S_3H^3L^2+S_4H^2L^3+S_5HL^4+S_6L^5\right)}{\left(H+\frac{dq}{2}L\right)^{3}}\nonumber\\
	&&-2K\left(H+\frac{q}{2}L\right) -\left(2H+q L\right) \frac{1+  Z}{J} \langle\nabla \ln F, \nabla \ln u\rangle\nonumber\\
	&&+\frac{1+Z}{J}L\left\{\frac{q}{2}\left(\frac{q}{2}-1\right)\frac{1+Z}{J}|\nabla\ln F|^2+lq\langle\nabla \ln F, \nabla \ln u\rangle\right.\nonumber\\
	&&\left.-q\left(\frac{q}{2}-1\right)\frac{lZ}{J}\langle\nabla \ln F, \nabla \ln u\rangle
	-\left(q+\frac{q^2}{2} Z\right)\langle\nabla \ln F, \nabla \ln u\rangle\right\}
\end{eqnarray}
on $\{x\in B(x_0,2R):|\nabla u|(x)>0\}$, where $ J=1+\frac{dq}{2}Z$ and $S_1,\cdots,S_6$ are defined as in Lemma \ref{kl}.

Now, we define
\begin{equation}
	A(x)=\P(x)F(x),\qquad\text{on $B(x_0,2R)$},
\end{equation}
where $\P(x)$ is an undetermined cut-off function as in Lemma \ref{cut}. Then
\begin{equation}\label{610}
	\d A=\frac{\d \P}{\P}A+2\left\langle\nabla \P,\nabla F\right\rangle+\P\d F,
\end{equation}
on $N:=\{x\in B(x_0,2R):A(x)>0\,\, \text{and}\,\, x\notin cut(x_0)\}$, where
$cut(x_0)$ is the cut locus of $x_0$.

Without loss of generality, we assume $A(x^{\star})=\max\limits_{B(x_0,2R)} A>0$ and the maximum value point $\x$ is outside of cut locus of $x_0$.
At the point $\x$, by \eqref{69} and \eqref{610}, we have (use $\nabla A(\x)=\left(\nabla\P H+\P\nabla H\right)(\x)=0$)

	\begin{eqnarray}\label{612}
	0\ge\d A&\ge&\frac{J\P\left(S_1H^5+S_2H^4L+S_3H^3L^2+S_4H^2L^3+S_5HL^4+S_6L^5\right)}{\left(H+\frac{q}{2}L\right)^{3}}\nonumber\\
	&&+\left(\frac{\d \P}{\P}-2\frac{|\nabla\P|^2}{\P^2}\right)A-2K\P\left(H+\frac{q}{2}L\right)+\left(2H+q L\right) \frac{1+  Z}{J} \langle\nabla \P, \nabla \ln u\rangle\nonumber\\
	&&+\frac{1+Z}{J}L\left\{\frac{q}{2}\left(\frac{q}{2}-1\right)\frac{1+Z}{J}\frac{|\nabla\P|^2}{\P}-lq\langle\nabla \P, \nabla \ln u\rangle\right.\nonumber\\
	&&\left.+q\left(\frac{q}{2}-1\right)\frac{lZ}{J}\langle\nabla \P, \nabla \ln u\rangle
	+\left(q+\frac{q^2}{2} Z\right)\langle\nabla \P, \nabla \ln u\rangle\right\}.
\end{eqnarray}

Notice that in current case, $S_1=\frac{2}{n}$ and $S_6=\frac{q^3}{4n}$\footnote{Notice that if $q=0$, then $S_4,S_5,S_6$ vanish and $S_3=\frac{2}{n}$ and the following arguments are also  valid.}. We do not give the formula of $S_2,S_3,S_4,S_5$ and just remenber that they are only dependent on $n,p,q$. Assume that
\begin{equation}\label{cd}
	\frac{L}{H}(\x)\ge \max\left(\frac{64n}{q^3}|S_5|,\left(\frac{64n}{q^3}|S_4|\right)^{\frac{1}{2}},\left(\frac{64n}{q^3}|S_3|\right)^{\frac{1}{3}},\left(\frac{64n}{q^3}|S_2|\right)^{\frac{1}{4}}\right),
\end{equation}
otherwise, we have
$$
\sup\limits_{B(x_0,R)}H+L\le (H+L)(\x)\le C(n,p,q)H(\x)\le C(n,p,q)\sup\limits_{B(x_0,2R)}H 
$$
and our proof is complete. For simplifying the calculations, in the following calculation, we use symbol $A\gtrsim B$ (or $A\lesssim B$) to represent $A\ge C(n,p,q)B$ (or $A\le C(n,p,q)B$) with some positive real number $C(n,p,q)$.

Under the condition \eqref{cd}, at the point $\x$, we have
\begin{eqnarray}\label{614}
	&&S_1H^5+S_2H^4L+S_3H^3L^2+S_4H^2L^3+S_5HL^4+S_6L^5\nonumber\\
	&\ge&\frac{2}{n}H^5+|S_2|H^4L+|S_3|H^3L^2+|S_4|H^2L^3+|S_5|HL^4+\frac{q^3}{8n}L^5.\nonumber
\end{eqnarray}
So, there exists $\e=\e(n,p,q)>0$ such that
\begin{eqnarray}\label{615}
	\frac{J\P\left(S_1H^5+S_2H^4L+S_3H^3L^2+S_4H^2L^3+S_5HL^4+S_6L^5\right)}{\left(H+\frac{q}{2}L\right)^{3}}\ge\e J\P(H+L)^2.
\end{eqnarray}
By Cauchy-Schwarz and Young inequalities, we derive
\begin{equation}\label{616}
	\left(2H+q L\right) \frac{1+  Z}{J} \langle\nabla \P, \nabla \ln u\rangle+\frac{\e}{10}J\P H(H+L)\gtrsim-\frac{|\nabla\P|^2}{\P}(H+L),
\end{equation}
\begin{equation}
	\frac{q}{2}\left(\frac{q}{2}-1\right)L\frac{(1+Z)^2}{J^2}\frac{|\nabla\P|^2}{\P}\gtrsim -\frac{|\nabla\P|^2}{\P}L,
\end{equation}
\begin{equation}
	-lq\frac{1+Z}{J}L\langle\nabla \P, \nabla \ln u\rangle+\frac{\e}{10}J\P HL\gtrsim-\frac{|\nabla\P|^2}{\P}L,
\end{equation}
\begin{equation}
	q\left(\frac{q}{2}-1\right)\frac{lZ(1+Z)}{J^2}L\langle\nabla \P, \nabla \ln u\rangle+\frac{\e}{10}J\P HL\gtrsim-\frac{|\nabla\P|^2}{\P}L,
\end{equation}
\begin{equation}\label{620}
	\left(q+\frac{q^2}{2} Z\right)\frac{1+Z}{J}L\langle\nabla \P, \nabla \ln u\rangle+\frac{\e}{10}J\P L(H+L)\gtrsim-\frac{|\nabla\P|^2}{\P}L.
\end{equation}
Substituting \eqref{615}, \eqref{616},$\cdots$,\eqref{620} into \eqref{612} yields
\begin{equation}\label{621}
\frac{\e}{2}J\P(H+L)^2\lesssim  \frac{|\nabla\P|^2}{\P}(H+L)+K\P(H+L)+\left(-\Delta\P+2\frac{|\nabla\P|^2}{\P}\right)(H+L).
\end{equation}
	Then, using the property of cut-off function in Lemma \ref{cut}, we obtain
\begin{equation}
	\max\limits_{B(x_0,2R)}A=A(\x)\lesssim \frac{1}{R^2}+K,\nonumber
\end{equation}
which implies
\begin{equation}
	\sup\limits_{B(x_0,R)}H+L\le C(n,p,q)\left(\frac{1}{R^2}+K\right).
\end{equation}
Therefore, whether the condition \eqref{cd} holds, the estimate \eqref{62} is valid and we complete the proof.

\end{proof}

\section{Local estimates from Liouville theorems}\label{S7}
	
	In this section, we shall prove Theorem \ref{LFG}. Due to Theorem \ref{t10} and \cite[Theorem 1.6]{LU1}, in the whole section, we always assume $p+q>1$ and $q>0$. As explained in Subsection \ref{ss3}, based on known Liouville theorems for \eqref{pqle}, we will derive these estimates by blow up arguments. In fact, our final contradictions can be attributed to a weaker Liouville theorem. So, first we define the weak Liouville  domain of \eqref{pqle} on $\mathbb{R}^n$ as follows.
	
	\begin{eqnarray}\label{WLD}
		\mathscr{D}_{WL}(n)=\left\{\begin{array}{l|l}
			(p,q) \in \mathbb{R}^2_{+} & \begin{array}{l}
				\text{any positive classical solution $u$ of \eqref{pqle} on } \\
				\text{ $\mathbb{R}^n$  with indices $p,q$ satisfying $|\nabla\ln u|\le 1$}\\
				\text{and $|\nabla u|^qu^{p-1}\le 1$ must be constant.}
			\end{array}
		\end{array}\right\}.
	\end{eqnarray}
	It is obvious that $\mathscr{D}_{L}(n)\subset \mathscr{D}_{WL}(n)$, where $\mathscr{D}_{L}(n)$ is defined by \eqref{LD}.
	
	\vspace{2mm}
	Now, we state the main result in this section, which  strengthen Theorem \ref{LFG}.
	
		\begin{thm}\label{SLFG}
		Let $\O\subsetneq \mathbb{R}^n$ be a domain and $u$ be a positive solution  of \eqref{pqle} with indices $p,q$ on $\O$. If $(p,q)\in \mathscr{D}_{WL}(n)$, where $\mathscr{D}_{WL}(n)$ is defined by \eqref{WLD},  then 
		\begin{eqnarray}
			\left(\frac{|\nabla u|^2}{u^2}+|\nabla u|^qu^{p-1}\right)(x)\le \frac{C(n,p,q)}{{\rm dist}^2(x,\partial\O)} \qquad \text{on\,\, $\O$}.
		\end{eqnarray}
	\end{thm}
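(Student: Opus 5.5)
We argue by contradiction via a doubling/blow-up argument, using the rescaling-invariant quantity
\[
M(u):=\frac{|\nabla u|}{u}+|\nabla u|^{\frac q2}u^{\frac{p-1}{2}},
\]
whose square is comparable to $\frac{|\nabla u|^2}{u^2}+|\nabla u|^qu^{p-1}$. Suppose the estimate fails. Then there are domains $\O_k\subsetneq\mathbb{R}^n$, positive solutions $u_k$ on $\O_k$ and points $y_k\in\O_k$ with $M(u_k)(y_k)>2k\,{\rm dist}^{-1}(y_k,\partial\O_k)$. Since $M(u_k)$ is continuous on $\O_k$, the doubling lemma of Pol\'a\v{c}ik--Quittner--Souplet \cite[Lemma 5.1]{PQS} yields $x_k\in\O_k$ with $M(u_k)(x_k)>2k\,{\rm dist}^{-1}(x_k,\partial\O_k)$ and $M(u_k)\le 2M(u_k)(x_k)$ on $B(x_k,k\lambda_k)$, where $\lambda_k:=M(u_k)^{-1}(x_k)$. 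We then rescale:
\[
v_k(y):=c_k\,u_k(x_k+\lambda_k y),\qquad |y|\le k,
\]
with the constant $c_k>0$ chosen so that $v_k$ still solves \eqref{pqle}. The equation is invariant under $u\mapsto c\,u(x_0+\lambda\,\cdot)$ when $c^{p+q-1}\lambda^{q-2}=1$, i.e. $c=\lambda^{(2-q)/(p+q-1)}$ for $q\neq2$; for $q=2$ we instead normalize $v_k(0)$ freely. Because $M$ scales exactly like $\lambda^{-1}$, we obtain $M(v_k)(0)=1$ and $M(v_k)\le 2$ on $B(0,k)$.

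The key step is compactness, and the main obstacle is controlling the values $v_k(0)$. The bound $|\nabla\ln v_k|\le 2$ gives the Harnack-type comparison $v_k(y)/v_k(0)\in[e^{-2|y|},e^{2|y|}]$, so it suffices to show that $v_k(0)$ stays in a compact subset of $(0,\infty)$. Here I would use the mutual control lemmas. Applying Lemma \ref{control1} and Lemma \ref{control2} on balls $B(0,2R)$ with $R\le k/2$ and $K=0$, both $|\nabla\ln v_k|^2$ and $L_k:=|\nabla v_k|^qv_k^{p-1}$ are bounded by $C$ on $B(0,R)$. The point is that at $0$ neither quantity can degenerate alone. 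If $|\nabla\ln v_k|(0)\to0$, then $L_k(0)\to1$. On the other hand, $L_k=|\nabla\ln v_k|^qv_k^{p+q-1}$, and the bound $|\nabla\ln v_k|\le2$ then forces $v_k(0)^{p+q-1}$ bounded below. An interior estimate for $\Delta v_k=-|\nabla v_k|^qv_k^p$ then shows the gradient cannot vanish to leading order while the forcing is of unit size; making this rigorous is the delicate point.

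With $p+q>1$ and $q>0$, the two relations
\[
v_k(0)^{p+q-1}=\frac{L_k(0)}{|\nabla\ln v_k|^q(0)},\qquad |\nabla\ln v_k|(0)+L_k(0)^{1/2}=1,
\]
together with the lower bounds (whichever term is small, the other is near $1$), pin $v_k(0)$ between positive constants. This holds provided both $|\nabla\ln v_k|(0)$ and $L_k(0)$ are bounded below. I would try to derive those lower bounds by applying Lemma \ref{control2} on a small ball $B(0,\delta)$, used in a localized-sup form that makes $L_k$ small near $0$ if $|\nabla\ln v_k|$ is small on $B(0,2\delta)$, and symmetrically with Lemma \ref{control1}. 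Combined with the $C^{1,\alpha}$ equicontinuity of $v_k/v_k(0)$ coming from elliptic estimates, this rules out subsequential limits where one quantity vanishes at $0$.

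Once $v_k(0)$ is bounded above and below, the $v_k$ are locally uniformly bounded with bounded gradients. The right-hand side $|\nabla v_k|^qv_k^p$ is then locally bounded, so $W^{2,s}$ estimates and Sobolev embedding give, for a subsequence, $v_k\to v$ in $C^1_{loc}(\mathbb{R}^n)$. The limit $v$ is a positive classical solution of \eqref{pqle} (by Schauder bootstrapping, using continuity of the nonlinearity for $q>0$). It satisfies $|\nabla\ln v|\le2$, $|\nabla v|^qv^{p-1}\le4$ and $M(v)(0)=1$, so $v$ is nonconstant. A further fixed dilation reduces both bounds to $1$, which contradicts $(p,q)\in\mathscr{D}_{WL}(n)$.
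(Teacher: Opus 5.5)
Your set-up matches the paper's: the same quantity $M$, the same doubling lemma and the same rescaling. You also correctly identify the crux. One must show that $\alpha=\lim_k|\nabla\ln v_k|(0)$ and $\beta=\lim_k\big(|\nabla v_k|^{q/2}v_k^{(p-1)/2}\big)(0)$ are both positive. Then $v_k(0)^{p+q-1}=L_k(0)/|\nabla\ln v_k|^q(0)$, with $L_k=|\nabla v_k|^qv_k^{p-1}$, pins $v_k(0)$, and the rest of your argument goes through: Harnack, $W^{2,s}$ compactness, and a final dilation by $\tfrac12$.

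\textbf{The gap.} Your mechanism for excluding the degenerate limits $(\alpha,\beta)=(1,0)$ and $(0,1)$ fails.
\begin{itemize}
\item Lemmas \ref{control1} and \ref{control2} carry an additive error $C(n,p,q)R^{-2}$. On a small ball $B(0,\delta)$ this error is $C\delta^{-2}$, which is large. So an estimate $\sup_{B(0,\delta)}L_k\le C\big(\sup_{B(0,2\delta)}|\nabla\ln v_k|^2+\delta^{-2}\big)$ can never make $L_k$ small near $0$, however small $|\nabla\ln v_k|$ is.
\item Your use of the lemmas on $B(0,2R)$ only reproduces the bound $M(v_k)\le 2$ you already have from the doubling lemma.
\item In the case $\alpha=0$, the $C^{1,\alpha}$ compactness of $v_k/v_k(0)$ gives at best that the limit has a critical point at $0$. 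It provides no modulus of continuity for $L_k$, so it cannot contradict $L_k(0)\to1$. The ``interior estimate'' you allude to is exactly what is missing.
\end{itemize}

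\textbf{The missing idea: use the control lemmas on large balls.}
\begin{itemize}
\item In the degenerate case, the identity above forces $v_k(0)\to0$ when $\beta=0$, and $v_k(0)\to\infty$ when $\alpha=0$. This is a divergence, not merely the one-sided bound you wrote.
\item The Harnack bound $e^{-2|y|}\le v_k(y)/v_k(0)\le e^{2|y|}$ propagates this uniformly to $B(0,2R)$, for each fixed $R$ and all $k>2R$.
\item In the first case this gives $L_k=|\nabla\ln v_k|^qv_k^{p+q-1}\le 2^q v_k^{p+q-1}\to0$ uniformly on $B(0,2R)$.
\item In the second case it gives $|\nabla\ln v_k|^q=L_k\,v_k^{-(p+q-1)}\le 4v_k^{-(p+q-1)}\to0$ uniformly on $B(0,2R)$.
\item Applying Lemma \ref{control1} (respectively \ref{control2}) on $B(0,R)$ then yields $1\le C(n,p,q)R^{-2}$ as $k\to\infty$. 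Letting $R\to\infty$ gives the contradiction. This is exactly how the paper proves $\alpha,\beta\in(0,1)$.
\end{itemize}

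\textbf{Two smaller points.}
\begin{itemize}
\item For $q=2$ the admissible scaling is simply $c=1$. You cannot normalize $v_k(0)$ freely unless $p=-1$.
\item The cases $p+q\le 1$ and $q=0$ must be handled separately; for $p+q=1$ there is no nontrivial scaling at all. The paper does this via Theorem \ref{t10} and the $q=0$ result of \cite{LU1}.
\end{itemize}
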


	The direct corollary of Theorem \ref{SLFG} is the equivalence of Liouville theorem and weak Liouville theorem.
	
	\begin{cor}
		Let $n\in \mathbb{N}$, then $\mathscr{D}_{L}(n)=\mathscr{D}_{WL}(n)$.
	\end{cor}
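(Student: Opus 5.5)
The inclusion $\mathscr{D}_{L}(n)\subset\mathscr{D}_{WL}(n)$ is immediate from the definitions \eqref{LD} and \eqref{WLD}. A point of $\mathscr{D}_{L}(n)$ forces every positive classical solution on $\mathbb{R}^n$ to be constant. In particular it forces this for solutions satisfying the extra bounds $|\nabla\ln u|\le 1$ and $|\nabla u|^qu^{p-1}\le 1$. So the whole content is the reverse inclusion, which I will deduce from Theorem \ref{SLFG} by exhausting $\mathbb{R}^n$ with balls.

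Fix $(p,q)\in\mathscr{D}_{WL}(n)$ and let $u$ be any positive classical solution of \eqref{pqle} on $\mathbb{R}^n$. For each $R>0$, the restriction of $u$ to $\O_R:=B(0,R)$ is a positive solution on a domain $\O_R\subsetneq\mathbb{R}^n$. Theorem \ref{SLFG} then gives, for every $x\in B(0,R)$,
$$
\left(\frac{|\nabla u|^2}{u^2}+|\nabla u|^qu^{p-1}\right)(x)\le \frac{C(n,p,q)}{(R-|x|)^2}.
$$
The constant $C(n,p,q)$ is independent of $R$. Fixing $x$ and letting $R\to\infty$ yields $|\nabla\ln u|(x)=0$. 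Hence $\nabla u\equiv 0$ and $u$ is constant, so $(p,q)\in\mathscr{D}_{L}(n)$.

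The only point needing care is that Section \ref{S7} works under the standing assumption $p+q>1$, $q>0$, so Theorem \ref{SLFG} is only claimed there. In the remaining cases the corollary holds because those index pairs already lie in $\mathscr{D}_L(n)$, so both sets contain them.
\begin{itemize}
\item If $p+q\le 1<\frac{n+3}{n-1}$, the global version of Theorem \ref{t10} (take $K=0$ and let $R\to\infty$) gives $|\nabla\ln u|\equiv0$.
\item If $q=0$, one can either use \cite[Theorem 1.6]{LU1} to get the corresponding local estimate and run the same $R\to\infty$ argument, or note that $\mathscr{D}_{WL}(n)\cap\{q=0\}$ and $\mathscr{D}_{L}(n)\cap\{q=0\}$ coincide with the Gidas--Spruck range.
\end{itemize}
I expect no real obstacle beyond checking that these excluded cases are indeed covered by the earlier estimates. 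All the difficulty is contained in Theorem \ref{SLFG} itself, via the blow-up argument and the mutual control Lemmas \ref{control1} and \ref{control2}.
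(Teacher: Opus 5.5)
Your argument is correct and is the proof the paper intends; the paper just calls the corollary a direct consequence of Theorem \ref{SLFG}. The inclusion $\mathscr{D}_L(n)\subset\mathscr{D}_{WL}(n)$ is trivial, and for the reverse one applies Theorem \ref{SLFG} on $B(0,R)$ and lets $R\to\infty$.

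One correction to your case discussion. For $q=0$ and $p\ge p_S(n)$ with $n\ge 3$, the pairs lie in \emph{neither} set, so they do not "already lie in $\mathscr{D}_L(n)$", and your first alternative via \cite[Theorem 1.6]{LU1} does not reach them. Your second alternative is the right one, but it needs a justification you did not give. Rescalings $\lambda^{2/(p-1)}u(\lambda x)$, with $\lambda$ small, of the Aubin--Talenti bubble ($p=p_S(n)$) and of radial ground states ($p>p_S(n)$) are nonconstant positive solutions with $|\nabla\ln u|\le 1$ and $u^{p-1}\le 1$, so $(p,0)\notin\mathscr{D}_{WL}(n)$.
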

	
	For proving Theorem \ref{SLFG}, we first recall the following important doubling lemma from Poláčik-Quittner-Souplet \cite{PQS}.%, which is fundamental for the proof of Theorem \ref{LFG}.
	\begin{lem}\cite[Lemma 5.1]{PQS}\label{DL}
		Let $\O\subset\mathbb{R}^n$ be a nonempty domain. Let $F: \O \rightarrow(0, \infty)$ be bounded on compact subsets of $\O$. For  a real number $k>0$, if $y \in \O$ is such that
		$$
		F(y) \operatorname{dist}(y, \partial\O)>2 k,
		$$
		then there exists $x \in \O$ such that
		$$
		F(x) \operatorname{dist}(x, \partial\O)>2 k, \quad F(x) \geq F(y),
		$$
		and
		$$
		F(z) \leq 2 F(x) \quad \text { for all } z \in\overline{B\left(x, \frac{k}{F(x)}\right)}\subset\O.
		$$
	\end{lem}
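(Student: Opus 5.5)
The plan is the standard iteration-by-contradiction argument of Poláčik--Quittner--Souplet. The idea is to build a sequence of points with doubling values of $F$ that stays in a fixed compact subset of $\O$, which contradicts the local boundedness of $F$.

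First, a preliminary observation. If $x\in\O$ satisfies $F(x)\operatorname{dist}(x,\partial\O)>2k$, then $k/F(x)<\frac12\operatorname{dist}(x,\partial\O)$. Hence $\overline{B(x,k/F(x))}\subset\O$ automatically, and the inclusion part of the conclusion costs nothing.

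Now suppose the lemma fails for a given $y$. Set $x_0=y$; this point satisfies $F(x_0)\operatorname{dist}(x_0,\partial\O)>2k$ and $F(x_0)\ge F(y)$. Inductively, assume $x_j$ satisfies both of these properties. Since the lemma fails, $x_j$ cannot be the desired point. So there is $x_{j+1}\in\overline{B(x_j,k/F(x_j))}$ with $F(x_{j+1})>2F(x_j)$. This gives $F(x_{j+1})>2^{j+1}F(y)$ and $|x_{j+1}-x_j|\le k/F(x_j)< k2^{-j}/F(y)$ for $j\ge1$ (with $\le k/F(y)$ for $j=0$).

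The key step is to check that $x_{j+1}$ again satisfies the distance condition, so that the induction continues. Using $\operatorname{dist}(x_j,\partial\O)>2k/F(x_j)$, I estimate
\[
\operatorname{dist}(x_{j+1},\partial\O)\ge \operatorname{dist}(x_j,\partial\O)-\frac{k}{F(x_j)}>\frac{k}{F(x_j)}.
\]
Therefore $F(x_{j+1})\operatorname{dist}(x_{j+1},\partial\O)>2F(x_j)\cdot k/F(x_j)=2k$. Also $F(x_{j+1})\ge F(y)$, so the induction proceeds and produces an infinite sequence $(x_j)$.

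Finally, summing the increments gives $|x_j-y|\le \frac{k}{F(y)}\sum_{i\ge0}2^{-i}=\frac{2k}{F(y)}$ for all $j$. Since $\operatorname{dist}(y,\partial\O)>2k/F(y)$, the closed ball $\overline{B(y,2k/F(y))}$ is a compact subset of $\O$. It contains every $x_j$, while $F(x_j)\ge 2^jF(y)\to\infty$. This contradicts the assumption that $F$ is bounded on compact subsets of $\O$, which proves the lemma.

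The only delicate point is the distance propagation in the key step. It works because each jump has length at most half of the current distance to the boundary, and the doubling of $F$ exactly compensates for the loss of distance.
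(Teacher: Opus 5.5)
Your proof is correct, and it is the standard doubling-sequence argument of Pol\'a\v{c}ik--Quittner--Souplet \cite[Lemma 5.1]{PQS}, which the paper quotes without reproducing a proof: assume the conclusion fails, build $x_{j+1}\in\overline{B(x_j,k/F(x_j))}$ with $F(x_{j+1})>2F(x_j)$, propagate $F\operatorname{dist}(\cdot,\partial\O)>2k$ along the sequence, and use the geometric series to keep every $x_j$ within $2k/F(y)$ of $y$. The only difference is that PQS state the lemma in a general complete metric space and close the argument with the Cauchy limit of $(x_j)$, whereas in $\mathbb{R}^n$ you can use directly that $\overline{B(y,2k/F(y))}$ is a compact subset of $\O$, which is enough here.
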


	Now, we give the proof of Theorem \ref{SLFG}.

	\begin{proof}[Proof of Theorem \ref{SLFG}]
		To make the proof clearer, we divide it into three steps.
		
		Step 1: setting of blow-up arguments.

		If the estimate \eqref{lgefromL} fails, then there exist sequences $\Omega_k, u_k, y_k \in \Omega_k$ such that $u_k$ solves equation \eqref{pqle} on $\Omega_k$ and the functions
		$$
		F_k:=\left|\nabla\ln u_k\right|+|\nabla u_k|^{\frac{q}{2}}u_k^{\frac{p-1}{2}}, \quad k=1,2, \ldots,
		$$
		satisfy
		$$
		F_k\left(y_k\right)>2 k \operatorname{dist}^{-1}\left(y_k, \partial \Omega_k\right) .
		$$
		By Lemma \ref{DL}, it follows that there exists $x_k \in \Omega_k$ such that
		$$
		F_k\left(x_k\right) \geq F_k\left(y_k\right), \quad F_k\left(x_k\right)>2 k \operatorname{dist}^{-1}\left(x_k, \partial \Omega_k\right),
		$$
		and
		$$
		F_k(z) \leq 2 F_k\left(x_k\right), \quad \operatorname{dist}(x_k,z) \leq \frac{k}{F_k\left(x_k\right)} .
		$$
		
		Now, define the rescaling of $u_k$ as 
		$$
		v_k(y):=\lambda_k^{\frac{2-q}{p+q-1}} u_k\left(x_k+\lambda_k y\right), \quad|y| \leq k, \text { with } \lambda_k=\frac{1}{F_k\left(x_k\right)} .
		$$
		The function $v_k$ solves
		$$
		\Delta v_k(y)+v_k^p(y)|\nabla v_k|^q(y)=0, \quad|y| \leq k,
		$$
		and
		\begin{eqnarray}\label{kk}
			\begin{cases}
				\left(\left|\nabla \ln v_k\right|+|\nabla v_k|^{\frac{q}{2}}v_k^{\frac{p-1}{2}}\right)(0)=\lambda_k F_k\left(x_k\right)=1, \\
				\\
				\left(\left|\nabla \ln v_k\right|+|\nabla v_k|^{\frac{q}{2}}v_k^{\frac{p-1}{2}}\right)(y) \leq 2, \quad\quad|y| \leq k .
			\end{cases}
		\end{eqnarray}
		Without loss of generality, we assume that 
		$$
		\alpha:=\lim\limits_{k\to \infty}\left|\nabla \ln v_k\right|(0),\qquad \beta:=\lim\limits_{k\to \infty}|\nabla v_k|^{\frac{q}{2}}v_k^{\frac{p-1}{2}}(0).
		$$
		Now we have reached the core of the proof, which is the following claim:
		$$ \alpha,\beta\in (0,1). $$

		Step 2: proof of the claim.
		
		If it does not hold, there will be two situations.
		\begin{itemize}
		\item Case 1: $ \alpha=1,\beta=0$;
		\item Case 2: $ \alpha=0,\beta=1. $
		\end{itemize}
		 Then we discuss these two cases in detail.
		If Case 1 holds, then
		$$
		\lim\limits_{k\to\infty}v_k(0)=0.
		$$
		For any fixed $R>0$, when $k>2R$ and $x\in B(0,2R)$, we have
		\begin{equation}
		\ln\left(\frac{v_k(x)}{v_k(0)}\right)=\ln\left(\frac{v_k(\gamma(1))}{v_k(\gamma(0))}\right)=\int_{0}^{1}\nabla\ln v_k(\gamma(t))\cdot \dot{\gamma}(t)dt\le 2|x|\le 4R,\nonumber
		\end{equation}
		where $\gamma$ is the segment connecting points $0$ and $x$ with constant speed $|x|$. Therefore, 
		\begin{equation}
		\limsup\limits_{k\to \infty}	\Vert v_k\Vert_{L^{\infty}(B(0,2R))}\le e^{4R}\lim\limits_{k\to \infty}v_k(0)=0,\nonumber
		\end{equation}
		and using \eqref{kk} derives
		\begin{eqnarray}\label{71}
				&&\limsup\limits_{k\to \infty}	\Vert |\nabla v_k|^{\frac{q}{2}}v_k^{\frac{p-1}{2}} \Vert_{L^{\infty}(B(0,2R))}\nonumber\\
				&\le& 	\limsup\limits_{k\to \infty}	\Vert |\nabla \ln v_k|^{\frac{q}{2}} \Vert_{L^{\infty}(B(0,2R))} \Vert v_k^{\frac{p+q-1}{2}} \Vert_{L^{\infty}(B(0,2R))}\nonumber\\
				&\le& 2^{\frac{q}{2}}\limsup\limits_{k\to \infty}	\Vert v_k^{\frac{p+q-1}{2}}\Vert_{L^{\infty}(B(0,2R))}=0.
				\end{eqnarray}
		Then using Lemma \ref{control1} and \eqref{71}, we have
		\begin{eqnarray}
			1&=&\lim\limits_{k\to \infty}\left|\nabla \ln v_k\right|(0)\nonumber\\
			&\le& 	\limsup\limits_{k\to \infty}	\Vert |\nabla \ln v_k| \Vert_{L^{\infty}(B(0,R))}\nonumber\\
			&\le& 	\limsup\limits_{k\to \infty} C(n,p,q)	\left(\Vert |\nabla v_k|^{\frac{q}{2}}v_k^{\frac{p-1}{2}} \Vert_{L^{\infty}(B(0,2R))}+\frac{1}{R}\right)\nonumber\\
			&=&\frac{C(n,p,q)}{R}.\nonumber
		\end{eqnarray}
		By the arbitrariness of $R$, this yields a contradiction and so we exclude the Case 1.

			If Case 2 holds, then
		$$
		\lim\limits_{k\to\infty}v_k(0)=\infty.
		$$
		For any fixed $R>0$, when $k>2R$ and $x\in B(0,2R)$, we have
		\begin{equation}
			\ln\left(\frac{v_k(x)}{v_k(0)}\right)=\ln\left(\frac{v_k(\gamma(1))}{v_k(\gamma(0))}\right)=\int_{0}^{1}\nabla\ln v_k(\gamma(t))\cdot \dot{\gamma}(t)dt\ge -2|x|\ge -4R,\nonumber
		\end{equation}
		where $\gamma$ is the segment connecting points $0$ and $x$ with constant speed $|x|$. Therefore, 
		\begin{equation}
			\liminf\limits_{k\to \infty} \inf\limits_{B(x_0,2R)} v_k\ge e^{-4R}\lim\limits_{k\to \infty}v_k(0)=\infty,\nonumber
		\end{equation}
		and using \eqref{kk} yields
		\begin{eqnarray}\label{72}
			&&\limsup\limits_{k\to \infty} \Vert |\nabla \ln v_k|^{\frac{q}{2}} \Vert_{L^{\infty}(B(0,2R))}\nonumber\\	&\le& 	\limsup\limits_{k\to \infty} \frac{\Vert |\nabla v_k|^{\frac{q}{2}}v_k^{\frac{p-1}{2}} \Vert_{L^{\infty}(B(0,2R))}}{\inf\limits_{B(x_0,2R)} v_k^{\frac{p+q-1}{2}}} 	 \nonumber\\
			&\le& \frac{2}{\liminf\limits_{k\to \infty}\inf\limits_{B(x_0,2R)} v_k^{\frac{p+q-1}{2}}} 	=0.
		\end{eqnarray}
			Then using Lemma \ref{control2} and \eqref{72}, we have
		\begin{eqnarray}
			1&=&\lim\limits_{k\to \infty} |\nabla v_k|^{\frac{q}{2}}v_k^{\frac{p-1}{2}}(0)\nonumber\\
			&\le& 	\limsup\limits_{k\to \infty}	\Vert |\nabla v_k|^{\frac{q}{2}}v_k^{\frac{p-1}{2}} \Vert_{L^{\infty}(B(0,R))}\nonumber\\
			&\le& 	\limsup\limits_{k\to \infty} C(n,p,q)	\left(\Vert |\nabla\ln v_k| \Vert_{L^{\infty}(B(0,2R))}+\frac{1}{R}\right)\nonumber\\
			&=&\frac{C(n,p,q)}{R}.\nonumber
		\end{eqnarray}
		By the arbitrariness of $R$, this yields a contradiction and so we exclude the Case 2.

		Step 3: end of the proof.
		
		 From the definitions of $\alpha,\b$, we have
		\begin{equation}
			\lim\limits_{k\to \infty}v_k(0)=\left(\frac{\b}{\alpha^{\frac{q}{2}}}\right)^{\frac{2}{p+q-1}}>0
		\end{equation}
		and 
		\begin{equation}
			\lim\limits_{k\to \infty}|\nabla v_k|(0)=\alpha\left(\frac{\b}{\alpha^{\frac{q}{2}}}\right)^{\frac{2}{p+q-1}}>0.
		\end{equation}
		For any fixed $R>0$, using the same arguments as Step 2, we see that when $k>R$ and $x\in B(0,R)$, 
		\begin{equation}
			\Vert v_k\Vert_{L^{\infty}(B(0,R))}\le e^{2R}v_k(0).\nonumber
		\end{equation}
		That is $\Vert v_k\Vert_{L^{\infty}(B(0,R))}$ is uniformly bounded and so $\Vert |\nabla v_k|^{q}v_k^{p} \Vert_{L^{\infty}(B(0,R))}$ is also.
		By elliptic $W^{2,d}$ estimates (for any $d>1$) and Sobolev imbedding, we deduce that some subsequence of $v_k$ (still recorded as $v_k$) converges in $C^1\left(B(0,R)\right)$ to a nonnegative classical solution $u_R$ of \eqref{pqle}.
		Notice that when $k>R$,
		\begin{equation}
			\inf\limits_{B(0,R)}v_k\ge e^{-2R}v_k(0).\nonumber
		\end{equation}
		Hence $u_R$ is positive. By diagonal arguments, there exists a subsequence of $v_k$ that $C^1_{loc}(\mathbb{R}^n)$ converges to a positive function $u$. Finally, $u$ is a global solution of \eqref{pqle} satisfying
		\begin{equation}
			|\nabla u|(0)=\alpha\left(\frac{\b}{\alpha^{\frac{q}{2}}}\right)^{\frac{2}{p+q-1}}>0\nonumber
		\end{equation}
	and 
	$$
	\sup\limits_{\mathbb{R}^n}\left(|\nabla\ln u|+|\nabla u|^{\frac{q}{2}}u^{\frac{p-1}{2}}\right)\le 2,
	$$
		which contradicts with the weak  Liouville theorem assumption.

	\end{proof}
	
%\section*{Appendices}
\appendix
\setcounter{equation}{0}

\setcounter{subsection}{0}

\setcounter{thm}{0}

\renewcommand{\theequation}{A.\arabic{equation}}

\renewcommand{\thesubsection}{A.\arabic{subsection}}

\renewcommand{\thethm}{A.\arabic{thm}}

\section*{Appendix A: Algebraic computations of coefficient functions}\label{SA}
We collect the coefficient functions which appear in the dominant terms of elliptic equations of auxiliary functions related to solutions. These results can be obtained through pure algebraic calculations and readers can check them by Wolfram Mathematica.
\begin{lem}\label{a1}
	The coefficients $\S_1,\cdots,\S_6$  in Lemma \ref{kl} is given by
	\begin{eqnarray}
		\S_1&=&\left(1+\frac{q\g}{2}\right)^3A(\b)-k(k+1)\left(1+\frac{q\g}{2}\right)^2\nonumber\\
		&&+\frac{q\g k^2}{2}\left(1+\frac{q\g}{2}\right)-2k(\b-1)\left(1+\frac{q\g}{2}\right)^2,\nonumber
	\end{eqnarray}
	
	\begin{eqnarray}
		\S_2&=&\left(1+\frac{q\g}{2}\right)^2\frac{dq}{2}(1+\g)(2A(\b)+A(\delta))\nonumber\\
		&&+\left(B(\b,\sigma)+(1-q+2\sigma)\b\right)\left(1+\frac{q\g}{2}\right)^3\nonumber\\
		&&+2(k-1)\b\left(1+\frac{q\g}{2}\right)^2\frac{dq}{2}(1+\g)-dq\b\g(k+l)\left(1+\frac{q\g}{2}\right)^2\nonumber\\
		&&+2\b\left(1+\frac{q\g}{2}\right)^3\left((k+l)d-\frac{dq}{2}(1+\g)\b\right)\nonumber\\
		&&-2\b(\b-1)\left(1+\frac{q\g}{2}\right)^2\frac{dq}{2}(1+\gamma)\nonumber\\
		&&+d(k+l)\left(1+\frac{q\g}{2}\right)^2\left(k+l+1-2\b\right)\nonumber\\
		&&+\left(1+\frac{q\g}{2}\right)^2\frac{dq}{2}(1+\g)\left(\frac{q(1+\g)}{2}-1\right)\b^2\nonumber\\
		&&+dq(1+\g)(k+l)\left(1+\frac{q\g}{2}\right)^2\b+dq(1+\g)\left(1+\frac{q\g}{2}\right)^2(\delta-1)\b,\nonumber
	\end{eqnarray}
	
	\begin{eqnarray}
		\S_3&=&\left(1+\frac{q\g}{2}\right)\left(\frac{dq}{2}(1+\g)\right)^2(A(\b)+2A(\delta))\nonumber\\
		&&+\left(1+\frac{q\g}{2}\right)^2\frac{dq}{2}(1+\g)\left(2B(\b,\sigma)+B(\delta,\tau)\right)+A(\sigma)\left(1+\frac{q\g}{2}\right)^3\nonumber\\
		&&+k\left((k-1)\left(\frac{dq}{2}(1+\g)\right)^2-2\left(1+\frac{q\g}{2}\right)\left(\frac{dq}{2}(1+\g)\right)\right)\nonumber\\
		&&+\frac{q\g}{2}\left(1+\frac{q\g}{2}\right)d^2(k+l)^2-2k(k+l)d\left(1+\frac{q\g}{2}\right)\left(\frac{dq}{2}(1+\g)\right)\nonumber\\
		&&-\left(1+\frac{q\g}{2}\right)\left[2(\b-1)d(k+l)\frac{dq}{2}(1+\g)\right.\nonumber\\
		&&\left.+(2\sigma-q)\left((k+l)d\left(1+\frac{q\g}{2}\right)+k\frac{dq}{2}(1+\g)\right)\right]\nonumber\\
		&&+d(k+l)\left(-\left(1+\frac{q\g}{2}\right)^2+2(k+l-1)\left(1+\frac{q\g}{2}\right)\frac{dq}{2}(1+\g)\right)\nonumber\\
		&&+k(k+l)d^2q(1+\g)\left(\frac{q}{2}(1+\g)-1\right)\nonumber\\
		&&-(k+l)dq(1+\g)\left(\frac{kdq}{2}(1+\g)+(k+l)d\left(1+\frac{q\g}{2}\right)\right)\nonumber\\
		&&-\frac{dq}{2}(1+\g)\left[(2\tau-q)k\left(1+\frac{q\g}{2}\right)\right.\nonumber\\
		&&\left.+2(\delta-1)\left(\frac{kdq}{2}(1+\g)+(k+l)d\left(1+\frac{q\g}{2}\right)\right)\right],\nonumber
	\end{eqnarray}
	
	\begin{eqnarray}
		\S_4&=&\left(\frac{dq}{2}(1+\g)\right)^3A(\delta)+\left(1+\frac{q\g}{2}\right)\left(\frac{dq}{2}(1+\g)\right)^2\left(B(\b,\sigma)+2B(\delta,\tau)\right)\nonumber\\
		&&+\left(1+\frac{q\g}{2}\right)^2\frac{dq}{2}(1+\g)\left(2A(\sigma)+A(\tau)\right)-k\left(\frac{dq}{2}(1+\g)\right)^2\nonumber\\
		&&-(2\sigma-q)(k+l)d\left(1+\frac{q\g}{2}\right)\frac{dq}{2}(1+\g)\nonumber\\
		&&+d(k+l)\left((k+l-1)\left(\frac{dq}{2}(1+\g)\right)^2-2\left(1+\frac{q\g}{2}\right)\frac{dq}{2}(1+\g)\right)\nonumber\\
		&&+\frac{dq}{2}(1+\g)\left(\frac{q\g}{2}(1+\g)-1\right)(k+l)^2d^2-2d(k+l)^2\left(\frac{dq}{2}(1+\g)\right)^2\nonumber\\
		&&-\frac{dq}{2}(1+\g)\left[2(\delta-1)(k+l)d\frac{dq}{2}(1+\g)\right.\nonumber\\
		&&\left.+(2\tau-q)\left(\frac{kdq}{2}(1+\g)+(k+l)d\left(1+\frac{q\g}{2}\right)\right)\right],\nonumber
	\end{eqnarray}
	\begin{eqnarray}
		\S_5&=&B(\delta,\tau)\left(\frac{dq}{2}(1+\g)\right)^3+\left(A(\delta)+2A(\tau)\right)\left(1+\frac{q\g}{2}\right)\left(\frac{dq}{2}(1+\g)\right)^2\nonumber\\
		&&+(q-1-2\tau)\left(k+l\right)d\left(\frac{dq}{2}(1+\g)\right)^2,\nonumber
	\end{eqnarray}
	\begin{equation}
		\S_6=\left(\frac{dq}{2}(1+\g)\right)^3A(\tau),\nonumber
	\end{equation}
	where the functions $A,B$ are expressed as  \eqref{A} and \eqref{B} in Lemma \ref{l2} respectively.
	
\end{lem}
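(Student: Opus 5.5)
The plan is to obtain the formulas by direct expansion of the \textbf{dominant terms} displayed in the proof of Lemma \ref{kl}. I would introduce the abbreviations
$$a:=1+\frac{q\g}{2},\qquad b:=\frac{dq}{2}(1+\g),\qquad D:=aH+bL,$$
so that $J=D/H$. The dominant part is $JW D^{-3}\,\mathcal{P}(H,L)$, where $\mathcal{P}$ is the sum of the fourteen bracketed contributions listed there. Each contribution is a homogeneous polynomial of degree $5$ in $(H,L)$. By definition, $\S_i$ is then the coefficient of $H^{6-i}L^{i-1}$ in $\mathcal{P}$, and the lemma reduces to a finite coefficient extraction.

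I would carry this out term by term, in this order.
\begin{enumerate}
\item First the three "Bochner" brackets $[aA(\b)H+bA(\delta)L]H^2D^2$, $[aB(\b,\sigma)H+bB(\delta,\tau)L]HL\,D^2$ and $[aA(\sigma)H+bA(\tau)L]L^2D^2$, using $D^2=a^2H^2+2abHL+b^2L^2$.
\item Next the terms coming from $\Delta u^k$ and $\Delta u^{k+l}$, namely $k(-L+(k-1)H)H^2D^2$ and $d(k+l)HL(-L+(k+l-1)H)D^2$.
\item Finally the cross terms involving $(kH+(k+l)dL)$, which arise from substituting \eqref{410} and \eqref{411}; each of these is a product of at most three linear factors times a monomial.
\end{enumerate}
For each term I would record its contributions to the six monomials $H^5,\dots,L^5$ in a table and then sum the columns.

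Two extreme coefficients serve as sanity checks before the middle ones are attempted.
\begin{itemize}
\item Only the terms without an $L$ factor contribute to $H^5$. This gives
$$a^3A(\b)+k(k-1)a^2+\tfrac{q\g}{2}ak^2-2k^2a^2-2k(\b-1)a^2,$$
which recombines to the stated $\S_1$, because $k(k-1)-2k^2=-k(k+1)$.
\item Only $bA(\tau)L\cdot L^2\cdot b^2L^2$ contributes to $L^5$, which gives $\S_6=b^3A(\tau)$.
\item For $\S_5$, the contributions to $HL^4$ come from the three Bochner brackets together with the $(2\tau-q)L$ and $d(k+l)$ terms. These produce $B(\delta,\tau)b^3+(A(\delta)+2A(\tau))ab^2+(q-1-2\tau)(k+l)db^2$.
\end{itemize}

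The coefficients $\S_2,\S_3,\S_4$ are obtained the same way but collect many more contributions. In the listing of the $d(k+l)$ term, the factor $\bigl(aH+b\bigr)^2$ should be read as $D^2$; this is evidently a typo for $aH+bL$.

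The main obstacle is purely bookkeeping in $\S_2$--$\S_4$: sign conventions from the $-2J^{-2}(1+dZ)(k+(k+l)dZ)$ terms, and the interplay between $\b$ and $k$ when $\g\neq-1$. For these I would follow the paper's suggestion and verify the expansion symbolically, for example in Mathematica, by comparing $\mathcal{P}$ against $\sum\S_iH^{6-i}L^{i-1}$ as polynomials in $H,L,k,l,d,\g,\b,\delta,\sigma,\tau,q,n$.

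As an independent consistency check I would specialize to $\g=-1$, $k=-(1-\frac{q}{2})\b$. In that case $b=0$, and the expansion should reproduce Lemma \ref{cl}: $\S_4=\S_5=\S_6=0$ and $\S_i=(1-\frac q2)^3S_i$ for $i=1,2,3$. I would also check the ground-state computation of Remark \ref{opt}, where all $\S_i$ must vanish.
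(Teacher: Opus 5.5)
Your method is exactly the paper's: expand the bracket in the proof of Lemma \ref{kl} with $a=1+\frac{q\gamma}{2}$, $b=\frac{dq}{2}(1+\gamma)$, $D=aH+bL$, reading $(aH+b)^2$ as $D^2$ as you rightly note, and extract the coefficient of $H^{6-i}L^{i-1}$. Your computations of $\mathcal{S}_1$ and $\mathcal{S}_6$ are correct. The gap is that for the other coefficients you assert agreement with the printed formulas instead of computing it, and the agreement is false.

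\textbf{$\mathcal{S}_5$.} The bracket $[aA(\beta)H+bA(\delta)L]H^2D^2$ is divisible by $H^2$, so it contributes nothing to $HL^4$. The bracket $[aA(\sigma)H+bA(\tau)L]L^2D^2$ contributes $ab^2A(\sigma)+2ab^2A(\tau)$. The expansion therefore gives $(A(\sigma)+2A(\tau))ab^2$, not the printed $(A(\delta)+2A(\tau))ab^2$.

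\textbf{$\mathcal{S}_4$.} The term $b(c-1)H^2L(kH+(k+l)dL)^2$, with $c=\frac q2(1+\gamma)$, yields $b\bigl(\frac q2(1+\gamma)-1\bigr)(k+l)^2d^2$. The printed formula instead has $\frac{q\gamma}{2}(1+\gamma)-1$.

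\textbf{$\mathcal{S}_2$.} The printed term $a^3\beta(1-q+2\sigma)$ stands where the expansion gives $-a^2k(1+2\sigma-q)$, coming from $k(-L+(k-1)H)H^2D^2$ and the $(2\sigma-q)L$ cross term. So the formula has silently substituted $k=-(1+\frac{q\gamma}{2})\beta$. Since $k$ and $\beta$ are independent in Lemma \ref{kl}, your planned comparison treating them as independent polynomial variables cannot succeed. Even after imposing $k=-a\beta$, the contributions $2abk(k-1)$ and $-2a(\beta-1)kb$ become $-2a^2b\beta(k-1)$ and $+2a^2b\beta(\beta-1)$. These have the opposite signs to the printed $2(k-1)\beta a^2b$ and $-2\beta(\beta-1)a^2b$. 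For example, take $q=1$, $\gamma=0$, $d=2$, $\beta=2$, $k=-2$, $l=3$, $\delta=\sigma=\tau=0$. The non-Bochner part of the $H^4L$ coefficient is then $10$, while the printed formula gives $-22$.

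So the statement as printed is not correct for general parameters. What your expansion actually establishes is a corrected version: $A(\sigma)$ in $\mathcal{S}_5$, $\frac q2(1+\gamma)-1$ in $\mathcal{S}_4$, and $\mathcal{S}_2$ recomputed with $k$ kept independent.

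Each discrepancy either carries a factor $b$ or vanishes when $k=-(1+\frac{q\gamma}{2})\beta$. Hence the paper's applications are unaffected:
\begin{itemize}
\item Lemma \ref{cl} uses $\gamma=-1$, hence $b=0$, and $k=-(1-\frac q2)\beta$.
\item Lemma \ref{control2} uses $k=\beta=\delta=\sigma=\tau=0$ and relies explicitly only on $\mathcal{S}_1$ and $\mathcal{S}_6$.
\end{itemize}
For the same reason, the Lemma \ref{cl} specialization you offer as an independent consistency check is blind to every one of these errors. A spot check with $\gamma>-1$ and $k\neq-(1+\frac{q\gamma}{2})\beta$ would have exposed them.
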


\begin{lem}\label{a2}
	The coefficients $I_j$ ($1\le j\le 10 $) in Lemma \ref{eg} is given by 
	\begin{equation}
	I_1=\left(1-\frac{q}{2}\right)S_1,\nonumber
	\end{equation}
	\begin{eqnarray}
		I_2&=&\left(1-\frac{q}{2}\right) \rho  \varepsilon  A(\delta)+\frac{k^2 q \rho  \varepsilon }{2 \left(1-\frac{q}{2}\right)}-2 (\delta -1) k \rho  \varepsilon\nonumber\\
		&& +2 (\beta -1) \rho  \varepsilon  (k+l)+\frac{k l q \rho  \varepsilon }{1-\frac{q}{2}}+\frac{k q \rho  \varepsilon  (k+l)}{1-\frac{q}{2}}\nonumber\\
		&&-\frac{2 k \rho  \varepsilon  (k+l)}{1-\frac{q}{2}}+2 k \rho  \varepsilon  (k+l)+(1-l) l \rho  \varepsilon\nonumber\\
		&& +2 \left(1-\frac{q}{2}\right) S_1 (\rho  \varepsilon +\varepsilon )+2 \left(1-\frac{q}{2}\right) S_1 \varepsilon +\left(1-\frac{q}{2}\right) S_2,\nonumber
	\end{eqnarray}

	\begin{eqnarray}
		I_3&=&\left(1-\frac{q}{2}\right) S_3+6 \left(1-\frac{q}{2}\right) S_1 \varepsilon ^2+4 \left(1-\frac{q}{2}\right) S_2 \varepsilon \nonumber\\
		&&+	\rho ^2 \varepsilon ^2 \left(2 \left(1-\frac{q}{2}\right) A(\delta)+\frac{k^2 q}{1-\frac{q}{2}}\right.\nonumber\\
		&&-4 (\delta -1) k+2 (\beta -1) (k+l)\nonumber\\
		&&+2 (\delta -1) (k+l)+\frac{2 k l q}{1-\frac{q}{2}}\nonumber\\
		&&-\frac{2 k (k+l)}{1-\frac{q}{2}}+\frac{2 (k+l)^2}{1-\frac{q}{2}}\nonumber\\
		&&-\frac{l q (k+l)}{1-\frac{q}{2}}-\frac{q (k+l)^2}{2 \left(1-\frac{q}{2}\right)}\nonumber\\
		&&+2 k (k+l)-(k+l)^2\nonumber\\
		&&\left.-2 (l-1) l+\left(1-\frac{q}{2}\right) S_1\right)\nonumber\\
		&&+\rho \varepsilon  \left(\left(1-\frac{q}{2}\right) B(\delta ,\tau )+d \left(1-\frac{q}{2}\right) A(\delta)\right.\nonumber\\
		&&+\frac{d k^2 q}{2 \left(1-\frac{q}{2}\right)}-2 d (\delta -1) k\nonumber\\
		&&+4 (\beta -1) d (k+l)-2 d (\delta -1) (k+l)\nonumber\\
		&&+\frac{d k l q}{1-\frac{q}{2}}+\frac{3 d k q (k+l)}{1-\frac{q}{2}}\nonumber\\
		&&-\frac{4 d k (k+l)}{1-\frac{q}{2}}+\frac{d q (k+l)^2}{1-\frac{q}{2}}\nonumber\\
		&&+\frac{d l q (k+l)}{1-\frac{q}{2}}-\frac{2 d (k+l)^2}{1-\frac{q}{2}}\nonumber\\
		&&+4 d k (k+l)-d (l-1) l\nonumber\\
		&&+2 d \left(1-\frac{q}{2}\right) S_1+(k+l) (2 \sigma -q)\nonumber\\
		&&\left.-k (2 \tau -q)+l+2 \left(1-\frac{q}{2}\right) S_2\right)\nonumber\\
		&&+\rho \varepsilon ^2 \left(3 \left(1-\frac{q}{2}\right) A(\delta)+\frac{3 k^2 q}{2 \left(1-\frac{q}{2}\right)}\right.\nonumber\\
		&&-6 (\delta -1) k+6 (\beta -1) (k+l)\nonumber\\
		&&+\frac{3 k l q}{1-\frac{q}{2}}+\frac{3 k q (k+l)}{1-\frac{q}{2}}-\frac{4 k (k+l)}{1-\frac{q}{2}}\nonumber\\
		&&+2 (k+l) \left(l-\frac{k q}{2 \left(1-\frac{q}{2}\right)}\right)+6 k (k+l)\nonumber\\
		&&\left.-(k+l)^2+3(1-l) l+6 \left(1-\frac{q}{2}\right) S_1\right),\nonumber
	\end{eqnarray}

	\begin{eqnarray}
		I_4&=&4 \left(1-\frac{q}{2}\right) S_1 \varepsilon ^3+6 \left(1-\frac{q}{2}\right) S_2 \varepsilon ^2+4 \left(1-\frac{q}{2}\right) S_3 \varepsilon\nonumber\\
		&& +\rho \varepsilon  \left(d \left(1-\frac{q}{2}\right) B(\delta ,\tau )+\left(1-\frac{q}{2}\right) A(\tau )\right.\nonumber\\
		&&+2 (\beta -1) d^2 (k+l)-2 (\delta -1) d^2 (k+l)+\frac{5 d^2 q (k+l)^2}{2 \left(1-\frac{q}{2}\right)}\nonumber\\
		&&+\frac{2 d^2 k q (k+l)}{1-\frac{q}{2}}+\frac{d^2 l q (k+l)}{1-\frac{q}{2}}\nonumber\\
		&&-\frac{4 d^2 (k+l)^2}{1-\frac{q}{2}}-\frac{2 d^2 k (k+l)}{1-\frac{q}{2}}\nonumber\\
		&&+2 d^2 k (k+l)+2 d (k+l) (2 \sigma -q)-d (k+l) (2 \tau -q)\nonumber\\
		&&\left.-d k (2 \tau -q)+d l+2 d \left(1-\frac{q}{2}\right) S_2+2 \left(1-\frac{q}{2}\right) S_3\right)\nonumber\\
		&&+\rho\varepsilon ^2 \left(3 \left(1-\frac{q}{2}\right) B(\delta ,\tau)+3 d \left(1-\frac{q}{2}\right) A(\delta)\right.\nonumber\\
		&&+\frac{3 d k^2 q}{2 \left(1-\frac{q}{2}\right)}-6 d (\delta -1) k\nonumber\\
		&&+12 (\beta -1) d (k+l)-6 d (\delta -1) (k+l)\nonumber\\
		&&+\frac{3 d k l q}{1-\frac{q}{2}}+\frac{9 d k q (k+l)}{1-\frac{q}{2}}\nonumber\\
		&&-\frac{8 d k (k+l)}{1-\frac{q}{2}}+4 d (k+l) \left(l-\frac{k q}{2 \left(1-\frac{q}{2}\right)}\right)\nonumber\\
		&&+\frac{2 d q (k+l)^2}{1-\frac{q}{2}}+\frac{3 d l q (k+l)}{1-\frac{q}{2}}\nonumber\\
		&&-\frac{4 d (k+l)^2}{1-\frac{q}{2}}+12 d k (k+l)-3 d (k+l)^2\nonumber\\
		&&-3 d (l-1) l+6 d \left(1-\frac{q}{2}\right) S_1\nonumber\\
		&&+3 (k+l) (2 \sigma -q)-3 k (2 \tau -q)\nonumber\\
		&&\left.+3 l+6 \left(1-\frac{q}{2}\right) S_2\right)\nonumber\\
		&&+\rho \varepsilon ^3 \left(3 \left(1-\frac{q}{2}\right) A(\delta)+\frac{3 k^2 q}{2 \left(1-\frac{q}{2}\right)}\right.\nonumber\\
		&&-6 (\delta -1) k+6 (\beta -1) (k+l)\nonumber\\
		&&+\frac{3 k l q}{1-\frac{q}{2}}+\frac{3 k q (k+l)}{1-\frac{q}{2}}\nonumber\\
		&&-\frac{2 k (k+l)}{1-\frac{q}{2}}+4 (k+l) \left(l-\frac{k q}{2 \left(1-\frac{q}{2}\right)}\right)\nonumber\\
		&&+6 k (k+l)-2 (k+l)^2+(1-l) l\nonumber\\
		&&\left.-2 (l-1) l+6 \left(1-\frac{q}{2}\right) S_1\right)\nonumber\\
		&&+\rho^2\varepsilon ^2 \left(2 \left(1-\frac{q}{2}\right) B(\delta ,\tau)+4 d \left(1-\frac{q}{2}\right) A(\delta)\right.\nonumber\\
		&&+\frac{2 d k^2 q}{1-\frac{q}{2}}-8 d (\delta -1) k\nonumber\\
		&&+6 (\beta -1) d (k+l)+2 d (\delta -1) (k+l)\nonumber\\
		&&+\frac{4 d k l q}{1-\frac{q}{2}}+\frac{2 d k q (k+l)}{1-\frac{q}{2}}\nonumber\\
		&&-\frac{6 d k (k+l)}{1-\frac{q}{2}}+\frac{6 d (k+l)^2}{1-\frac{q}{2}}\nonumber\\
		&&-\frac{d l q (k+l)}{1-\frac{q}{2}}-\frac{3 d q (k+l)^2}{2 \left(1-\frac{q}{2}\right)}\nonumber\\
		&&+6 d k (k+l)-3 d (k+l)^2\nonumber\\
		&&-4 d (l-1) l+2 d \left(1-\frac{q}{2}\right) S_1\nonumber\\
		&&+(k+l) (2 \sigma -q)+(k+l) (2 \tau -q)\nonumber\\
		&&\left.-2 k (2 \tau -q)+2 l+\left(1-\frac{q}{2}\right) S_2\right)\nonumber\\
		&&+\rho^2\varepsilon ^3 \left(4 \left(1-\frac{q}{2}\right) A(\delta)+\frac{2 k^2 q}{1-\frac{q}{2}}\right.\nonumber\\
		&&	-8 (\delta -1) k+4 (\beta -1) (k+l)\nonumber\\
		&&+4 (\delta -1) (k+l)+\frac{4 k l q}{1-\frac{q}{2}}\nonumber\\
		&&-\frac{2 k (k+l)}{1-\frac{q}{2}}+\frac{2 (k+l)^2}{1-\frac{q}{2}}\nonumber\\
		&&+2 (k+l) \left(l-\frac{k q}{2 \left(1-\frac{q}{2}\right)}\right)-\frac{2 l q (k+l)}{1-\frac{q}{2}}\nonumber\\
		&&+4 k (k+l)-2 (k+l)^2+2 (1-l) l\nonumber\\
		&&\left.-2 (l-1) l+2 \left(1-\frac{q}{2}\right) S_1\right)\nonumber\\&&+\rho ^3 \varepsilon ^3 \left(\left(1-\frac{q}{2}\right) A(\delta)+\frac{k^2 q}{2 \left(1-\frac{q}{2}\right)}\right.\nonumber\\&&-2 (\delta -1) k+2 (\delta -1) (k+l)\nonumber\\&&+\frac{k l q}{1-\frac{q}{2}}-\frac{k q (k+l)}{1-\frac{q}{2}}\nonumber\\&&\left.+\frac{q (k+l)^2}{2 \left(1-\frac{q}{2}\right)}-\frac{l q (k+l)}{1-\frac{q}{2}}+(1-l) l\right),\nonumber
	\end{eqnarray}
	
\begin{eqnarray}
	I_5&=&\left(1-\frac{q}{2}\right) S_1 \varepsilon ^4+4 \left(1-\frac{q}{2}\right) S_2 \varepsilon ^3+6 \left(1-\frac{q}{2}\right) S_3 \varepsilon ^2\nonumber\\
	&&+\rho \varepsilon  \left(d \left(1-\frac{q}{2}\right) A(\tau )+\frac{3 d^3 q (k+l)^2}{2 \left(1-\frac{q}{2}\right)}\right.\nonumber\\
	&&-\frac{2 d^3 (k+l)^2}{1-\frac{q}{2}}+d^2 (k+l) (2 \sigma -q)\nonumber\\
	&&\left.-d^2 (k+l) (2 \tau -q)+2 d \left(1-\frac{q}{2}\right) S_3\right)\nonumber\\
	&&+\rho\varepsilon ^2 \left(3 d \left(1-\frac{q}{2}\right) B(\delta ,\tau)+3 \left(1-\frac{q}{2}\right) A(\tau)\right.\nonumber\\
	&&+6 (\beta -1) d^2 (k+l)-6 (\delta -1) d^2 (k+l)\nonumber\\
	&&+2 d^2 (k+l) \left(l-\frac{k q}{2 \left(1-\frac{q}{2}\right)}\right)+\frac{11 d^2 q (k+l)^2}{2 \left(1-\frac{q}{2}\right)}\nonumber\\
	&&+\frac{6 d^2 k q (k+l)}{1-\frac{q}{2}}+\frac{3 d^2 l q (k+l)}{1-\frac{q}{2}}\nonumber\\
	&&-\frac{8 d^2 (k+l)^2}{1-\frac{q}{2}}-\frac{4 d^2 k (k+l)}{1-\frac{q}{2}}\nonumber\\
	&&-3 d^2 (k+l)^2+6 d^2 k (k+l)\nonumber\\
	&&+6 d (k+l) (2 \sigma -q)-3 d (k+l) (2 \tau -q)\nonumber\\
	&&-3 d k (2 \tau -q)+3 d l\nonumber\\
	&&\left.+6 d \left(1-\frac{q}{2}\right) S_2+6 \left(1-\frac{q}{2}\right) S_3\right)\nonumber\\
	&&+\rho \varepsilon ^3 \left(3 \left(1-\frac{q}{2}\right) B(\delta ,\tau)+3 d \left(1-\frac{q}{2}\right) A(\delta)\right.\nonumber\\
	&&+\frac{3 d k^2 q}{2 \left(1-\frac{q}{2}\right)}-6 d (\delta -1) k\nonumber\\
	&&+12 (\beta -1) d (k+l)-6 d (\delta -1) (k+l)\nonumber\\
	&&+\frac{3 d k l q}{1-\frac{q}{2}}+\frac{9 d k q (k+l)}{1-\frac{q}{2}}\nonumber\\
	&&-\frac{4 d k (k+l)}{1-\frac{q}{2}}+8 d (k+l) \left(l-\frac{k q}{2 \left(1-\frac{q}{2}\right)}\right)\nonumber\\
	&&+\frac{d q (k+l)^2}{1-\frac{q}{2}}+\frac{3 d l q (k+l)}{1-\frac{q}{2}}\nonumber\\
	&&-\frac{2 d (k+l)^2}{1-\frac{q}{2}}+12 d k (k+l)\nonumber\\
	&&-6 d (k+l)^2-3 d (l-1) l\nonumber\\
	&&+6 d \left(1-\frac{q}{2}\right) S_1+3 (k+l) (2 \sigma -q)\nonumber\\
	&&\left.-3 k (2 \tau -q)+3 l+6 \left(1-\frac{q}{2}\right) S_2\right)\nonumber\\
	&&+\rho^2\varepsilon ^2 \left(4 d \left(1-\frac{q}{2}\right) B(\delta ,\tau)+2 d^2 \left(1-\frac{q}{2}\right) A(\delta)\right.\nonumber\\
	&&+2 \left(1-\frac{q}{2}\right) A(\tau)+\frac{d^2 k^2 q}{1-\frac{q}{2}}\nonumber\\
	&&-4 (\delta -1) d^2 k+6 (\beta -1) d^2 (k+l)\nonumber\\
	&&-2 (\delta -1) d^2 (k+l)+\frac{6 d^2 (k+l)^2}{1-\frac{q}{2}}\nonumber\\
	&&+\frac{2 d^2 k l q}{1-\frac{q}{2}}+\frac{4 d^2 k q (k+l)}{1-\frac{q}{2}}\nonumber\\
	&&+\frac{d^2 l q (k+l)}{1-\frac{q}{2}}-\frac{6 d^2 k (k+l)}{1-\frac{q}{2}}\nonumber\\
	&&-\frac{d^2 q (k+l)^2}{2 \left(1-\frac{q}{2}\right)}-3 d^2 (k+l)^2\nonumber\\
	&&+6 d^2 k (k+l)-2 d^2 (l-1) l\nonumber\\
	&&+d^2 \left(1-\frac{q}{2}\right) S_1+3 d (k+l) (2 \sigma -q)\nonumber\\
	&&+d (k+l) (2 \tau -q)-4 d k (2 \tau -q)+4 d l\nonumber\\
	&&\left.+2 d \left(1-\frac{q}{2}\right) S_2+\left(1-\frac{q}{2}\right) S_3\right)\nonumber\\
	&&+\rho^2\varepsilon ^3 \left(4 \left(1-\frac{q}{2}\right) B(\delta ,\tau )+8 d \left(1-\frac{q}{2}\right) A(\delta)\right.\nonumber\\
	&&+\frac{4 d k^2 q}{1-\frac{q}{2}}-16 d (\delta -1) k\nonumber\\
	&&+12 (\beta -1) d (k+l)+4 d (\delta -1) (k+l)\nonumber\\
	&&+\frac{8 d k l q}{1-\frac{q}{2}}+\frac{4 d k q (k+l)}{1-\frac{q}{2}}\nonumber\\
	&&-\frac{6 d k (k+l)}{1-\frac{q}{2}}+\frac{6 d (k+l)^2}{1-\frac{q}{2}}\nonumber\\
	&&+6 d (k+l) \left(l-\frac{k q}{2 \left(1-\frac{q}{2}\right)}\right)-\frac{2 d l q (k+l)}{1-\frac{q}{2}}\nonumber\\
	&&+12 d k (k+l)-6 d (k+l)^2\nonumber\\
	&&-8 d (l-1) l+4 d \left(1-\frac{q}{2}\right) S_1\nonumber\\
	&&+2 (k+l) (2 \sigma -q)+2 (k+l) (2 \tau -q)\nonumber\\
	&&\left.-4 k (2 \tau -q)+4 l+2 \left(1-\frac{q}{2}\right) S_2\right)\nonumber\\
	&&+\rho^2\varepsilon ^4 \left(2 \left(1-\frac{q}{2}\right) A(\delta)+\frac{k^2 q}{1-\frac{q}{2}}\right.\nonumber\\
	&&-4 (\delta -1) k+2 (\beta -1) (k+l)\nonumber\\
	&&+2 (\delta -1) (k+l)+\frac{2 k l q}{1-\frac{q}{2}}\nonumber\\
	&&+2 (k+l) \left(l-\frac{k q}{2 \left(1-\frac{q}{2}\right)}\right)+\frac{q (k+l)^2}{2 \left(1-\frac{q}{2}\right)}\nonumber\\
	&&-\frac{l q (k+l)}{1-\frac{q}{2}}+2 k (k+l)\nonumber\\
	&&\left.-(k+l)^2+2 (1-l) l+\left(1-\frac{q}{2}\right) S_1\right)\nonumber\\
	&&+\rho^3\varepsilon ^3 \left(\left(1-\frac{q}{2}\right) B(\delta ,\tau)+3 d \left(1-\frac{q}{2}\right) A(\delta )\right.\nonumber\\
	&&+\frac{3 d k^2 q}{2 \left(1-\frac{q}{2}\right)}-6 d (\delta -1) k\nonumber\\
	&&+6 d (\delta -1) (k+l)+\frac{3 d k l q}{1-\frac{q}{2}}\nonumber\\
	&&-\frac{3 d k q (k+l)}{1-\frac{q}{2}}+\frac{3 d q (k+l)^2}{2 \left(1-\frac{q}{2}\right)}\nonumber\\
	&&-\frac{3 d l q (k+l)}{1-\frac{q}{2}}-3 d (l-1) l\nonumber\\
	&&\left.+(k+l) (2 \tau -q)-k (2 \tau -q)+l\right)\nonumber\\
	&&+\rho^3\varepsilon ^4 \left(\left(1-\frac{q}{2}\right) A(\delta)+\frac{k^2 q}{2 \left(1-\frac{q}{2}\right)}\right.\nonumber\\
	&&-2 (\delta -1) k+2 (\delta -1) (k+l)+\frac{k l q}{1-\frac{q}{2}}\nonumber\\
	&&-\frac{k q (k+l)}{1-\frac{q}{2}}+\frac{q (k+l)^2}{2 \left(1-\frac{q}{2}\right)}\nonumber\\
	&&\left.-\frac{l q (k+l)}{1-\frac{q}{2}}+(1-l) l\right),\nonumber
\end{eqnarray}
	\begin{eqnarray}
		I_6&=&\left(1-\frac{q}{2}\right) S_2 \varepsilon ^4+4 \left(1-\frac{q}{2}\right) S_3 \varepsilon ^3\nonumber\\
		&&+\rho\varepsilon ^2 \left(3 d \left(1-\frac{q}{2}\right) A(\tau)+\frac{7 d^3 q (k+l)^2}{2 \left(1-\frac{q}{2}\right)}\right.\nonumber\\
		&&-\frac{4 d^3 (k+l)^2}{1-\frac{q}{2}}-d^3 (k+l)^2\nonumber\\
		&&+3 d^2 (k+l) (2 \sigma -q)-3 d^2 (k+l) (2 \tau -q)\nonumber\\
		&&\left.+6 d \left(1-\frac{q}{2}\right) S_3\right)\nonumber\\
		&&+\rho \varepsilon ^3 \left(3 d \left(1-\frac{q}{2}\right) B(\delta ,\tau )+3 \left(1-\frac{q}{2}\right) A(\tau)\right.\nonumber\\
		&&+6 (\beta -1) d^2 (k+l)-6 (\delta -1) d^2 (k+l)\nonumber\\
		&&+4 d^2 (k+l) \left(l-\frac{k q}{2 \left(1-\frac{q}{2}\right)}\right)+\frac{7 d^2 q (k+l)^2}{2 \left(1-\frac{q}{2}\right)}\nonumber\\
		&&+\frac{6 d^2 k q (k+l)}{1-\frac{q}{2}}+\frac{3 d^2 l q (k+l)}{1-\frac{q}{2}}\nonumber\\
		&&-\frac{4 d^2 (k+l)^2}{1-\frac{q}{2}}-\frac{2 d^2 k (k+l)}{1-\frac{q}{2}}\nonumber\\
		&&-6 d^2 (k+l)^2+6 d^2 k (k+l)\nonumber\\
		&&+6 d (k+l) (2 \sigma -q)-3 d (k+l) (2 \tau -q)\nonumber\\
		&&-3 d k (2 \tau -q)+3 d l\nonumber\\
		&&\left.+6 d \left(1-\frac{q}{2}\right) S_2+6 \left(1-\frac{q}{2}\right) S_3\right)\nonumber\\
		&&+\rho\varepsilon ^4 \left(\left(1-\frac{q}{2}\right) B(\delta ,\tau )+d \left(1-\frac{q}{2}\right) A(\delta)\right.\nonumber\\
		&&+\frac{d k^2 q}{2 \left(1-\frac{q}{2}\right)}-2 d (\delta -1) k\nonumber\\
		&&+4 (\beta -1) d (k+l)-2 d (\delta -1) (k+l)\nonumber\\
		&&+\frac{d k l q}{1-\frac{q}{2}}+\frac{3 d k q (k+l)}{1-\frac{q}{2}}\nonumber\\
		&&+4 d (k+l) \left(l-\frac{k q}{2 \left(1-\frac{q}{2}\right)}\right)+\frac{d l q (k+l)}{1-\frac{q}{2}}\nonumber\\
		&&+4 d k (k+l)-3 d (k+l)^2\nonumber\\
		&&-d (l-1) l+2 d \left(1-\frac{q}{2}\right) S_1+(k+l) (2 \sigma -q)\nonumber\\
		&&\left.-k (2 \tau -q)+l+2 \left(1-\frac{q}{2}\right) S_2\right)\nonumber\\
	    &&+\rho^2\varepsilon ^2 \left(2 d^2 \left(1-\frac{q}{2}\right) B(\delta ,\tau )+4 d \left(1-\frac{q}{2}\right) A(\tau)\right.\nonumber\\
	    &&+2 (\beta -1) d^3 (k+l)-2 (\delta -1) d^3 (k+l)\nonumber\\
	    &&+\frac{2 d^3 (k+l)^2}{1-\frac{q}{2}}+\frac{3 d^3 q (k+l)^2}{2 \left(1-\frac{q}{2}\right)}\nonumber\\
	    &&+\frac{2 d^3 k q (k+l)}{1-\frac{q}{2}}+\frac{d^3 l q (k+l)}{1-\frac{q}{2}}\nonumber\\
	    &&-\frac{2 d^3 k (k+l)}{1-\frac{q}{2}}-d^3 (k+l)^2\nonumber\\
	    &&+2 d^3 k (k+l)+3 d^2 (k+l) (2 \sigma -q)\nonumber\\
	    &&-d^2 (k+l) (2 \tau -q)-2 d^2 k (2 \tau -q)\nonumber\\
	    &&\left.+2 d^2 l+d^2 \left(1-\frac{q}{2}\right) S_2+2 d \left(1-\frac{q}{2}\right) S_3\right)\nonumber\\
	    &&+\rho^2\varepsilon ^3 \left(8 d \left(1-\frac{q}{2}\right) B(\delta ,\tau )+4 d^2 \left(1-\frac{q}{2}\right) A(\delta)\right.\nonumber\\
	    &&+4 \left(1-\frac{q}{2}\right) A(\tau)+\frac{2 d^2 k^2 q}{1-\frac{q}{2}}\nonumber\\
	    &&-8 (\delta -1) d^2 k+12 (\beta -1) d^2 (k+l)\nonumber\\
	    &&-4 (\delta -1) d^2 (k+l)+\frac{6 d^2 (k+l)^2}{1-\frac{q}{2}}\nonumber\\
	    &&+6 d^2 (k+l) \left(l-\frac{k q}{2 \left(1-\frac{q}{2}\right)}\right)+\frac{2 d^2 q (k+l)^2}{1-\frac{q}{2}}\nonumber\\
	    &&+\frac{4 d^2 k l q}{1-\frac{q}{2}}+\frac{8 d^2 k q (k+l)}{1-\frac{q}{2}}\nonumber\\
	    &&+\frac{2 d^2 l q (k+l)}{1-\frac{q}{2}}-\frac{6 d^2 k (k+l)}{1-\frac{q}{2}}\nonumber\\
	    &&-6 d^2 (k+l)^2+12 d^2 k (k+l)\nonumber\\
	    &&-4 d^2 (l-1) l+2 d^2 \left(1-\frac{q}{2}\right) S_1\nonumber\\
	    &&+6 d (k+l) (2 \sigma -q)+2 d (k+l) (2 \tau -q)\nonumber\\
	    &&-8 d k (2 \tau -q)+8 d l\nonumber\\
	    &&\left.+4 d \left(1-\frac{q}{2}\right) S_2+2 \left(1-\frac{q}{2}\right) S_3\right)\nonumber\\
		&&+\rho^2\varepsilon ^4 \left(2 \left(1-\frac{q}{2}\right) B(\delta ,\tau )+4 d \left(1-\frac{q}{2}\right) A(\delta)\right.\nonumber\\
		&&+\frac{2 d k^2 q}{1-\frac{q}{2}}-8 d (\delta -1) k\nonumber\\
		&&+6 (\beta -1) d (k+l)+2 d (\delta -1) (k+l)\nonumber\\
		&&+\frac{4 d k l q}{1-\frac{q}{2}}+\frac{2 d k q (k+l)}{1-\frac{q}{2}}\nonumber\\
		&&+6 d (k+l) \left(l-\frac{k q}{2 \left(1-\frac{q}{2}\right)}\right)+\frac{3 d q (k+l)^2}{2 \left(1-\frac{q}{2}\right)}\nonumber\\
		&&-\frac{d l q (k+l)}{1-\frac{q}{2}}+6 d k (k+l)\nonumber\\
		&&-3 d (k+l)^2-4 d (l-1) l\nonumber\\
		&&+2 d \left(1-\frac{q}{2}\right) S_1+(k+l) (2 \sigma -q)+(k+l) (2 \tau -q)\nonumber\\
		&&\left.-2 k (2 \tau -q)+2 l+\left(1-\frac{q}{2}\right) S_2\right)\nonumber\\
		&&+\rho^3\varepsilon ^3 \left(3 d \left(1-\frac{q}{2}\right) B(\delta ,\tau )+3 d^2 \left(1-\frac{q}{2}\right) A(\delta)\right.\nonumber\\
		&&+\left(1-\frac{q}{2}\right) A(\tau)+\frac{3 d^2 k^2 q}{2 \left(1-\frac{q}{2}\right)}\nonumber\\
		&&-6 (\delta -1) d^2 k+6 (\delta -1) d^2 (k+l)\nonumber\\
		&&+\frac{3 d^2 q (k+l)^2}{2 \left(1-\frac{q}{2}\right)}+\frac{3 d^2 k l q}{1-\frac{q}{2}}\nonumber\\
		&&-\frac{3 d^2 k q (k+l)}{1-\frac{q}{2}}-\frac{3 d^2 l q (k+l)}{1-\frac{q}{2}}\nonumber\\
		&&-3 d^2 (l-1) l+3 d (k+l) (2 \tau -q)\nonumber\\
		&&\left.-3 d k (2 \tau -q)+3 d l\right)\nonumber\\
		&&+\rho^3\varepsilon ^4 \left(\left(1-\frac{q}{2}\right) B(\delta ,\tau)+3 d \left(1-\frac{q}{2}\right) A(\delta)\right.\nonumber\\
		&&+\frac{3 d k^2 q}{2 \left(1-\frac{q}{2}\right)}-6 d (\delta -1) k\nonumber\\
		&&+6 d (\delta -1) (k+l)+\frac{3 d k l q}{1-\frac{q}{2}}\nonumber\\
		&&-\frac{3 d k q (k+l)}{1-\frac{q}{2}}+\frac{3 d q (k+l)^2}{2 \left(1-\frac{q}{2}\right)}\nonumber\\
		&&-\frac{3 d l q (k+l)}{1-\frac{q}{2}}-3 d (l-1) l\nonumber\\
		&&\left.+(k+l) (2 \tau -q)-k (2 \tau -q)+l\right),\nonumber
	\end{eqnarray}

\begin{eqnarray}
	I_7&=&\left(1-\frac{q}{2}\right) S_3 \varepsilon ^4\nonumber\\
	&&+\rho\varepsilon ^3 \left(3 d \left(1-\frac{q}{2}\right) A(\tau)+\frac{5 d^3 q (k+l)^2}{2 \left(1-\frac{q}{2}\right)}\right.\nonumber\\
	&&-\frac{2 d^3 (k+l)^2}{1-\frac{q}{2}}-2 d^3 (k+l)^2+3 d^2 (k+l) (2 \sigma -q)\nonumber\\
	&&\left.-3 d^2 (k+l) (2 \tau -q)+6 d \left(1-\frac{q}{2}\right) S_3\right)\nonumber\\
	&&+\rho \varepsilon ^4 \left(d \left(1-\frac{q}{2}\right) B(\delta ,\tau )+\left(1-\frac{q}{2}\right) A(\tau )\right.\nonumber\\
	&&+2 (\beta -1) d^2 (k+l)-2 (\delta -1) d^2 (k+l)\nonumber\\
	&&+2 d^2 (k+l) \left(l-\frac{k q}{2 \left(1-\frac{q}{2}\right)}\right)+\frac{d^2 q (k+l)^2}{2 \left(1-\frac{q}{2}\right)}\nonumber\\
	&&+\frac{2 d^2 k q (k+l)}{1-\frac{q}{2}}+\frac{d^2 l q (k+l)}{1-\frac{q}{2}}\nonumber\\
	&&-3 d^2 (k+l)^2+2 d^2 k (k+l)\nonumber\\
	&&+2 d (k+l) (2 \sigma -q)-d (k+l) (2 \tau -q)\nonumber\\
	&&-d k (2 \tau -q)+d l\nonumber\\
	&&\left.+2 d \left(1-\frac{q}{2}\right) S_2+2 \left(1-\frac{q}{2}\right) S_3\right)\nonumber\\
	&&+\rho^2\varepsilon ^2 \left(2 d^2 \left(1-\frac{q}{2}\right) A(\tau)+\frac{d^4 q (k+l)^2}{1-\frac{q}{2}}\right.\nonumber\\
	&&\left.+d^3 (k+l) (2 \sigma -q)-d^3 (k+l) (2 \tau -q)+d^2 \left(1-\frac{q}{2}\right) S_3\right)\nonumber\\
	&&+\rho^2 \varepsilon ^3 \left(4 d^2 \left(1-\frac{q}{2}\right) B(\delta ,\tau)+8 d \left(1-\frac{q}{2}\right) A(\tau)\right.\nonumber\\
	&&+4 (\beta -1) d^3 (k+l)-4 (\delta -1) d^3 (k+l)\nonumber\\
	&&+\frac{2 d^3 (k+l)^2}{1-\frac{q}{2}}+2 d^3 (k+l) \left(l-\frac{k q}{2 \left(1-\frac{q}{2}\right)}\right)\nonumber\\
	&&+\frac{4 d^3 q (k+l)^2}{1-\frac{q}{2}}+\frac{4 d^3 k q (k+l)}{1-\frac{q}{2}}\nonumber\\
	&&+\frac{2 d^3 l q (k+l)}{1-\frac{q}{2}}-\frac{2 d^3 k (k+l)}{1-\frac{q}{2}}\nonumber\\
	&&-2 d^3 (k+l)^2+4 d^3 k (k+l)\nonumber\\
	&&+6 d^2 (k+l) (2 \sigma -q)-2 d^2 (k+l) (2 \tau -q)\nonumber\\
	&&-4 d^2 k (2 \tau -q)+4 d^2 l\nonumber\\
	&&\left.+2 d^2 \left(1-\frac{q}{2}\right) S_2+4 d \left(1-\frac{q}{2}\right) S_3\right)\nonumber\\
	&&+\rho^2\varepsilon ^4 \left(4 d \left(1-\frac{q}{2}\right) B(\delta ,\tau)+2 d^2 \left(1-\frac{q}{2}\right) A(\delta)\right.\nonumber\\
	&&+2 \left(1-\frac{q}{2}\right) A(\tau )+\frac{d^2 k^2 q}{1-\frac{q}{2}}\nonumber\\
	&&-4 (\delta -1) d^2 k+6 (\beta -1) d^2 (k+l)\nonumber\\
	&&-2 (\delta -1) d^2 (k+l)+6 d^2 (k+l) \left(l-\frac{k q}{2 \left(1-\frac{q}{2}\right)}\right)\nonumber\\
	&&+\frac{5 d^2 q (k+l)^2}{2 \left(1-\frac{q}{2}\right)}+\frac{2 d^2 k l q}{1-\frac{q}{2}}\nonumber\\
	&&+\frac{4 d^2 k q (k+l)}{1-\frac{q}{2}}+\frac{d^2 l q (k+l)}{1-\frac{q}{2}}\nonumber\\
	&&-3 d^2 (k+l)^2+6 d^2 k (k+l)\nonumber\\
	&&-2 d^2 (l-1) l+d^2 \left(1-\frac{q}{2}\right) S_1\nonumber\\
	&&+3 d (k+l) (2 \sigma -q)+d (k+l) (2 \tau -q)\nonumber\\
	&&-4 d k (2 \tau -q)+4 d l\nonumber\\
	&&\left.+2 d \left(1-\frac{q}{2}\right) S_2+\left(1-\frac{q}{2}\right) S_3\right)\nonumber\\
	&&+\rho^3\varepsilon ^3 \left(3 d^2 \left(1-\frac{q}{2}\right) B(\delta ,\tau )+d^3 \left(1-\frac{q}{2}\right) A(\delta)\right.\nonumber\\
	&&+3 d \left(1-\frac{q}{2}\right) A(\tau )+\frac{d^3 k^2 q}{2 \left(1-\frac{q}{2}\right)}\nonumber\\
	&&-2 (\delta -1) d^3 k+2 (\delta -1) d^3 (k+l)\nonumber\\
	&&+\frac{d^3 q (k+l)^2}{2 \left(1-\frac{q}{2}\right)}+\frac{d^3 k l q}{1-\frac{q}{2}}\nonumber\\
	&&-\frac{d^3 k q (k+l)}{1-\frac{q}{2}}-\frac{d^3 l q (k+l)}{1-\frac{q}{2}}\nonumber\\
	&&-\left(d^3 (l-1) l\right)+3 d^2 (k+l) (2 \tau -q)\nonumber\\
	&&\left.-3 d^2 k (2 \tau -q)+3 d^2 l\right)\nonumber\\
	&&+\rho^3 \varepsilon ^4 \left(3 d \left(1-\frac{q}{2}\right) B(\delta ,\tau)+3 d^2 \left(1-\frac{q}{2}\right) A(\delta )\right.\nonumber\\
	&&+\left(1-\frac{q}{2}\right) A(\tau)+\frac{3 d^2 k^2 q}{2 \left(1-\frac{q}{2}\right)}\nonumber\\
	&&-6 (\delta -1) d^2 k+6 (\delta -1) d^2 (k+l)\nonumber\\
	&&+\frac{3 d^2 q (k+l)^2}{2 \left(1-\frac{q}{2}\right)}+\frac{3 d^2 k l q}{1-\frac{q}{2}}\nonumber\\
	&&-\frac{3 d^2 k q (k+l)}{1-\frac{q}{2}}-\frac{3 d^2 l q (k+l)}{1-\frac{q}{2}}\nonumber\\
	&&-3 d^2 (l-1) l+3 d (k+l) (2 \tau -q)\nonumber\\
	&&\left.-3 d k (2 \tau -q)+3 d l\right),\nonumber
\end{eqnarray}
	
	\begin{eqnarray}
		I_8&=&\rho  \varepsilon ^4 \left(d \left(1-\frac{q}{2}\right) A(\tau)+\frac{d^3 q (k+l)^2}{2 \left(1-\frac{q}{2}\right)}\right.\nonumber\\&&-d^3 (k+l)^2+d^2 (k+l) (2 \sigma -q)\nonumber\\&&
		\left.-d^2 (k+l) (2 \tau -q)+2 d \left(1-\frac{q}{2}\right) S_3\right)\nonumber\\&&
		\rho^2 \varepsilon ^3 \left(4 d^2 \left(1-\frac{q}{2}\right) A(\tau)+\frac{2 d^4 q (k+l)^2}{1-\frac{q}{2}}\right.\nonumber\\&&\left.+2 d^3 (k+l) (2 \sigma -q)-2 d^3 (k+l) (2 \tau -q)+2 d^2 \left(1-\frac{q}{2}\right) S_3\right)\nonumber\\&&+\rho^2\varepsilon ^4 \left(2 d^2 \left(1-\frac{q}{2}\right) B(\delta ,\tau)+4 d \left(1-\frac{q}{2}\right) A(\tau)\right.\nonumber\\&&+2 (\beta -1) d^3 (k+l)-2 (\delta -1) d^3 (k+l)\nonumber\\&&+2 d^3 (k+l) \left(l-\frac{k q}{2 \left(1-\frac{q}{2}\right)}\right)+\frac{5 d^3 q (k+l)^2}{2 \left(1-\frac{q}{2}\right)}\nonumber\\&&+\frac{2 d^3 k q (k+l)}{1-\frac{q}{2}}+\frac{d^3 l q (k+l)}{1-\frac{q}{2}}\nonumber\\&&+d^3 \left(-(k+l)^2\right)+2 d^3 k (k+l)\nonumber\\&&+3 d^2 (k+l) (2 \sigma -q)-d^2 (k+l) (2 \tau -q)\nonumber\\&&-2 d^2 k (2 \tau -q)+2 d^2 l\nonumber\\&&\left.+d^2 \left(1-\frac{q}{2}\right) S_2+2 d \left(1-\frac{q}{2}\right) S_3\right)\nonumber\\&&+\rho^3\varepsilon ^3 \left(d^3 \left(1-\frac{q}{2}\right) B_{\delta ,\tau }+3 d^2 \left(1-\frac{q}{2}\right) A_{\tau }\right.\nonumber\\
		&&\left.+d^3 (k+l) (2 \tau -q)-d^3 k (2 \tau -q)+d^3 l\right)\nonumber\\
		&&+\rho^3\varepsilon ^4 \left(3 d^2 \left(1-\frac{q}{2}\right) B(\delta ,\tau )+d^3 \left(1-\frac{q}{2}\right) A(\delta )\right.\nonumber\\&&+3 d \left(1-\frac{q}{2}\right) A(\tau )+\frac{d^3 k^2 q}{2 \left(1-\frac{q}{2}\right)}\nonumber\\&&-2 (\delta -1) d^3 k+2 (\delta -1) d^3 (k+l)\nonumber\\&&+\frac{d^3 q (k+l)^2}{2 \left(1-\frac{q}{2}\right)}+\frac{d^3 k l q}{1-\frac{q}{2}}\nonumber\\&&-\frac{d^3 k q (k+l)}{1-\frac{q}{2}}-\frac{d^3 l q (k+l)}{1-\frac{q}{2}}\nonumber\\&&-\left(d^3 (l-1) l\right)+3 d^2 (k+l) (2 \tau -q)\nonumber\\&&\left.-3 d^2 k (2 \tau -q)+3 d^2 l\right),\nonumber
	\end{eqnarray}
	\begin{eqnarray}
		I_9&=&\rho^2\varepsilon ^4 \left(2 d^2 \left(1-\frac{q}{2}\right) A(\tau)+\frac{d^4 q (k+l)^2}{1-\frac{q}{2}}\right.\nonumber\\
		&&\left.+d^3 (k+l) (2 \sigma -q)-d^3 (k+l) (2 \tau -q)+d^2 \left(1-\frac{q}{2}\right) S_3\right)\nonumber\\
		&&+\rho^3 \varepsilon ^3 d^3 \left(1-\frac{q}{2}\right)  A(\tau )+\rho^3 \varepsilon ^4\left(d^3 \left(1-\frac{q}{2}\right) B(\delta ,\tau )+3 d^2 \left(1-\frac{q}{2}\right) A(\tau )\right.\nonumber\\&&\left.+d^3 (k+l) (2 \tau -q)-d^3 k (2 \tau -q)+d^3 l\right),\nonumber
	\end{eqnarray}
	\begin{equation}
		I_{10}=\rho ^3\varepsilon ^4d^3 \left(1-\frac{q}{2}\right) A(\tau),\nonumber
	\end{equation}
	where $k=-\left(1-\frac{q}{2}\right)\b$, $l=p+q-1$, $S_1,S_2,S_3$ are defined as in Lemma \ref{cl} and  the functions $A,B$ are expressed as  \eqref{A} and \eqref{B} in Lemma \ref{l2} respectively.

\end{lem}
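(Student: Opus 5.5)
The plan is to verify Lemma \ref{a2} by direct algebra: clear denominators in the \textbf{dominant terms} computed in the proof of Lemma \ref{eg}, then collect monomials $H^{10-j}L^{j-1}$. I would start by substituting $Z=L/H$ throughout, with $\g=-1$ and $k=-\left(1-\frac{q}{2}\right)\b$. This turns every summand of the dominant terms into a homogeneous rational function of $(H,L)$ of degree $2$. The key identity is
$$1+\e Z+\rho\e Z+\rho\e dZ^2=\frac{H^2+\e HL+\rho\e HL+\rho\e dL^2}{H^2}.$$
Using it, each fraction containing $P_i$ or $Q_i$, each factor $\frac{\e L}{H+\e L}$ and each factor $\frac{H+dL}{H+\e L}$ becomes a quotient whose denominator divides
$$\mathcal{Q}:=H(H+\e L)^2(H^2+\e HL+\rho\e HL+\rho\e dL^2)^2,$$
which has degree $7$.

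Next I would multiply every summand by $\mathcal{Q}$ and divide by the common factor $\left(1-\frac{q}{2}\right)$. Each product is then a degree-$9$ polynomial in $(H,L)$ whose coefficients are polynomials in $\e,\rho,d,\b,\delta,\sigma,\tau,l,q$. The terms $A(\delta)H^2+A(\tau)L^2+B(\delta,\tau)HL$ enter only through the $\rho\e Z\frac{H+dL}{H+\e L}$ summand. The $S_i$ enter through $(S_1H^2+S_2HL+S_3L^2)\mathcal{Q}$. Everything else comes from the $(k+l)$-dependent corrections. Summing and reading off the coefficient of $H^{10-j}L^{j-1}$ gives $I_j$, and I would organize each $I_j$ by powers $\rho^a\e^b$ as in the statement. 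In practice this expansion is done symbolically (Mathematica), and I would present it that way.

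To guard against errors I would run three independent checks.
\begin{itemize}
\item \emph{The extreme coefficients.} The $H^9$ coefficient comes only from $S_1H^2\cdot H\cdot H^2\cdot H^4$, giving $I_1=\left(1-\frac{q}{2}\right)S_1$. The $L^9$ coefficient comes only from the $A(\tau)L^2$ term times $\rho\e Z\,dL$ times the top $L$-powers, giving $I_{10}=\rho^3\e^4d^3\left(1-\frac{q}{2}\right)A(\tau)$.
\item \emph{The limit $\rho=0$.} Here $G=F$ and $\mathcal{Q}=H^3(H+\e L)^4$. The numerator must reduce to $(S_1H^2+S_2HL+S_3L^2)\,H^3(H+\e L)^4/H^3$ after the factor bookkeeping. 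So the $\rho^0$ parts of the $I_j$ must be binomial combinations of $S_1,S_2,S_3$. For example, the $\rho^0$ part of $I_5$ is $S_1\e^4+4S_2\e^3+6S_3\e^2$, matching the stated formula up to the overall $\left(1-\frac{q}{2}\right)$.
\item \emph{Optimality.} With the data of Remark \ref{opt}, the $\rho$-linear parts should vanish consistently whenever $S_i=0$.
\end{itemize}

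The main obstacle is pure bookkeeping in this expansion. In particular, a few summands in the displayed dominant terms carry typographical slips: a missing factor $L$ after $(2\sigma-q)$, and a missing $L$ in one $\left(1+\frac{q\g}{2}\right)H+\frac{dq}{2}(1+\g)$ factor. Each of these must be restored from homogeneity (every summand has degree $2$) before expanding. I would therefore rederive each summand directly from \eqref{436} using the relations \eqref{440}--\eqref{443}, rather than trusting the transcribed list, and only then feed everything into the symbolic expansion.
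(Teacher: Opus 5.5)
Your method is the paper's own. Appendix A supports Lemma \ref{a2} only by ``pure algebraic calculations'' checkable in Mathematica. Re-deriving each summand from \eqref{436} with \eqref{440}--\eqref{443}, repairing misprints by homogeneity, and clearing $\mathcal{Q}=H(H+\varepsilon L)^2(H^2+\varepsilon HL+\rho\varepsilon HL+\rho\varepsilon dL^2)^2$ is the right way to carry it out. Your $H^9$ and $L^9$ extractions and the $\rho=0$ reduction are also correct. The ``$/H^3$'' in your second check is a slip: the $\rho^0$ part of the numerator is $(1-\frac q2)(S_1H^2+S_2HL+S_3L^2)H^3(H+\varepsilon L)^4$.

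Two points of the plan would nevertheless fail as written. The first is the normalization: you must \emph{not} divide by $1-\frac q2$. The stated $I_j$ are the raw coefficients of $\mathcal{Q}$ times the dominant terms. Thus $I_1=(1-\frac q2)S_1$, and the summand $(1-l)l\rho\varepsilon$ of $I_2$ comes from $\rho\varepsilon\frac{H+dL}{H+\varepsilon L}\bigl(lL^2-l(l-1)HL\bigr)$ and carries no such factor. So the $(1-\frac q2)$ printed in front of $\sum_j I_jH^{10-j}L^{j-1}$ in Lemma \ref{eg} is a misprint; it is harmless downstream since $1-\frac q2>0$. The summands share no common factor $1-\frac q2$. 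Dividing as you propose yields $I_j/(1-\frac q2)$, in particular $I_1=S_1$, which contradicts your own first check. Since homogeneity cannot detect signs, note also that $Q_2$ as printed in Lemma \ref{eg} should be $-(k+l)P_2$, as \eqref{441} shows. Working from \eqref{441} directly, as you intend, avoids this.

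The second point is your optimality check, which rests on a false premise. At the data of Remark \ref{opt} all $S_i$ vanish and $F$ is constant. But $G=F\bigl(\frac{H}{H+\varepsilon L}\bigr)^{\rho}$ is not constant, because $Z=L/H$ is a constant multiple of $|x|^{-\frac{2-q}{1-q}}$. Hence $\Delta G\neq0$ there, and the gradient items in \eqref{529} have a nonzero $O(\rho)$ part through $\nabla\ln G=\rho\nabla\ln\frac{H}{H+\varepsilon L}$. Nothing forces the $\rho$-linear parts of the $I_j$ to vanish, and they do not.

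For $q=0$ one computes directly that the $O(\rho)$ part of the dominant term equals $(H+dL)\bigl(\Delta\ln f+2\langle\nabla\ln f,\nabla\ln u\rangle\bigr)$ with $f=\frac{H}{H+\varepsilon L}$. This is of order $|x|^{-2}$ near the origin, so it is not identically zero. Consistently, Lemma \ref{a3} gives the $\rho$-linear part of $I_8$ there as $\varepsilon^4d(1-\frac q2)\bigl[A(\tau)+(q-1)\theta^2\bigr]$ (with $\sigma=\tau$, $S_3=0$). This equals $\varepsilon^4d\,\frac{2(n-2)}{n^2}\neq0$ at $q=0$, and its positivity is exactly what Case 1 of Lemma \ref{dk1} uses.

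Run as stated, your third check would flag correct formulas as wrong. Replace it either by evaluating both sides of \eqref{529}, gradient items included, on the explicit ground state, or by comparing the expanded numerator with the unexpanded dominant terms at random numerical values of the parameters and of $H,L$.
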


The following lemmas give the limit behavior of $I_i$ in Lemma \ref{a2} when $\e\to 0^{+}$ and $\rho\to 0^{+}$ respectively.

\begin{lem}\label{a3}
Let $I_j$ be expressed in Lemma \ref{a2} with $1\le j\le 10$.	For any $\e>0$, when $\rho\to 0^{+}$, then
	\begin{eqnarray}
		I_1&=&\left(1-\frac{q}{2}\right)S_1,\nonumber\\
		I_2&=&\left(1-\frac{q}{2}\right) S_2+4 \left(1-\frac{q}{2}\right) S_1 \varepsilon+O(\rho) ,\nonumber\\
		I_3&=&\left(1-\frac{q}{2}\right) S_3+4 \left(1-\frac{q}{2}\right) S_2 \varepsilon+6 \left(1-\frac{q}{2}\right) S_1 \varepsilon ^2+O(\rho),\nonumber\\
		I_4&=&4 \left(1-\frac{q}{2}\right) S_3 \varepsilon++6 \left(1-\frac{q}{2}\right) S_2 \varepsilon ^2+4 \left(1-\frac{q}{2}\right) S_1 \varepsilon^3+O(\rho),\nonumber\\
		I_5&=&6 \left(1-\frac{q}{2}\right) S_3 \varepsilon ^2+4 \left(1-\frac{q}{2}\right) S_2 \varepsilon ^3+\left(1-\frac{q}{2}\right) S_1 \varepsilon ^4+O(\rho),\nonumber\\
		I_6&=&4 \left(1-\frac{q}{2}\right) S_3 \varepsilon ^3+\left(1-\frac{q}{2}\right) S_2 \varepsilon ^4+O(\rho),\nonumber\\
		I_7&=&\left(1-\frac{q}{2}\right) S_3 \varepsilon ^4+O(\rho),\nonumber\\
		I_8&=&\rho  \varepsilon ^4 \left(d \left(1-\frac{q}{2}\right) A(\tau)+\frac{d^3 q (k+l)^2}{ \left(2-q\right)}-d^3 (k+l)^2\right.\nonumber\\&&\left.+2d^2 (k+l) (\sigma -\tau)+2 d \left(1-\frac{q}{2}\right) S_3\right)+O(\rho^2),\nonumber\\
		I_9&=&\rho^2\varepsilon ^4 \left(2 d^2 \left(1-\frac{q}{2}\right) A(\tau)+\frac{d^4 q (k+l)^2}{1-\frac{q}{2}}\right.\nonumber\\
		&&\left.+2d^3 (k+l) ( \sigma -\tau)+d^2 \left(1-\frac{q}{2}\right) S_3\right)+O(\rho^3),\nonumber\\
		I_{10}&=&\rho ^3\varepsilon ^4d^3 \left(1-\frac{q}{2}\right) A(\tau),\nonumber
	\end{eqnarray}
	where $k=-\left(1-\frac{q}{2}\right)\b$, $l=p+q-1$, $S_1,S_2,S_3$ are defined as in Lemma \ref{cl} and  the functions $A,B$ are expressed as  \eqref{A} and \eqref{B} in Lemma \ref{l2} respectively.
\end{lem}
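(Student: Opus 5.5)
The statement to prove is Lemma \ref{a3}, which gives the expansions of the coefficients $I_1,\dots,I_{10}$ of Lemma \ref{a2} as $\rho\to0^{+}$ with $\varepsilon$ fixed. My plan is to read these directly off the explicit formulas in Lemma \ref{a2}, sorting every term of each $I_j$ by its power of $\rho$. Each $I_j$ there is written as
\[
I_j=\sum_{m=0}^{3}\rho^m P_{j,m}(\varepsilon),
\]
where the $P_{j,m}$ are polynomials in $\varepsilon$ whose coefficients are built from $d,k,l,q,\beta,\delta,\sigma,\tau$. These parameters are independent of $\rho$, and $\sigma,\tau$ are bounded simple functions. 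Hence every remainder $\sum_{m\ge m_0}\rho^m P_{j,m}$ is $O(\rho^{m_0})$, with a constant independent of $\rho\in(0,1]$.

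First I collect the $\rho^0$ terms. For $I_1,\dots,I_7$ the only $\rho$-free terms in Lemma \ref{a2} are the multiples of $(1-\frac q2)S_i\varepsilon^m$. I would check that they carry the binomial-type pattern coming from expanding $(H+\varepsilon L)^2(H^2+\varepsilon HL)^2=H^2(H+\varepsilon L)^4$ at $\rho=0$. At $\rho=0$ the denominator of $\mathcal D$ becomes $H^3(H+\varepsilon L)^4$, so the numerator must be $(S_1H^2+S_2HL+S_3L^2)\,H^3(H+\varepsilon L)^4$. Using this identity, the coefficients $1,4,6,4,1$ in $\varepsilon$ are verified at once, so the listed expressions for $I_1$–$I_7$ hold up to $O(\rho)$.

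One discrepancy has to be resolved. In $I_2$, Lemma \ref{a2} literally shows $2(1-\frac q2)S_1(\rho\varepsilon+\varepsilon)+2(1-\frac q2)S_1\varepsilon$, which gives $4(1-\frac q2)S_1\varepsilon+O(\rho)$. That agrees with the structural identity above.

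For $I_8,I_9,I_{10}$ no $\rho^0$ term appears, so the leading orders are $\rho$, $\rho^2$ and $\rho^3$. This is consistent with the structural picture: at $\rho=0$ the numerator has degree at most $6$ in $L$. I extract the single $\rho\varepsilon^4$ bracket of $I_8$ and simplify it with
\[
(2\sigma-q)-(2\tau-q)=2(\sigma-\tau),\qquad \frac{q}{2(1-\frac q2)}=\frac{q}{2-q}.
\]
This yields the stated $\rho\varepsilon^4(\dots)$ term, and all other terms are $O(\rho^2)$. The same reduction applied to the $\rho^2\varepsilon^4$ bracket gives $I_9$ up to $O(\rho^3)$. $I_{10}$ is exact as written.

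The main obstacle is purely bookkeeping. The formulas of Lemma \ref{a2} are long and contain typographical slips (for instance $B_{\delta,\tau}$ vs.\ $B(\delta,\tau)$, a missing ``$+$'' in $I_8$, and ``$++$'' in the statement for $I_4$). To guard against transcription errors, I would cross-check every $\rho^0$ coefficient against the factorization $H^3(H+\varepsilon L)^4(S_1H^2+S_2HL+S_3L^2)$, which follows from setting $\rho=0$ in $G$ (so that $G=F$) and using Lemma \ref{kl} with Lemma \ref{cl}. For the leading terms of $I_8$ and $I_9$ I would confirm the results symbolically (e.g.\ in Mathematica), as the appendix indicates.
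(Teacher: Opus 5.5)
Your proposal is correct and follows the paper's implicit route: Lemma~\ref{a3} is just the formulas of Lemma~\ref{a2} regrouped by powers of $\rho$, and your readings all check out term by term, namely the $\rho^0$ parts of $I_1$--$I_7$ (including the merged $S_1\varepsilon$ terms in $I_2$), the leading brackets of $I_8,I_9$ via $(2\sigma-q)-(2\tau-q)=2(\sigma-\tau)$, and the exactness of $I_{10}$. One slip is in your cross-check: by \eqref{435} the $\rho=0$ dominant part is $(1-\frac q2)W(S_1H^2+S_2HL+S_3L^2)$, and Lemma~\ref{a2} has $I_1=(1-\frac q2)S_1$, so the $\rho=0$ numerator $\sum_j I_jH^{10-j}L^{j-1}$ must be $(1-\frac q2)(S_1H^2+S_2HL+S_3L^2)H^3(H+\varepsilon L)^4$; your identity omits this factor (as the extra $(1-\frac q2)$ prefactor displayed in Lemma~\ref{eg} would suggest) and would give $I_1=S_1$, so as written it confirms the binomial pattern but not the stated coefficients, though this positive-factor inconsistency in the paper is harmless downstream.
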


\begin{lem}\label{a4}
	Let $I_j$ be expressed in Lemma \ref{a2} with $1\le j\le 10$.	For any $\rho>0$, when $\e\to 0^{+}$, then
	\begin{eqnarray}
		I_1&=&\left(1-\frac{q}{2}\right)S_1,\nonumber\\
		I_2&=&\left(1-\frac{q}{2}\right) S_2+O(\e) ,\nonumber\\
		I_3&=&\left(1-\frac{q}{2}\right) S_3+O(\e),\nonumber\\
		I_4&=&O(\e),\nonumber\\
		I_5&=&\rho \varepsilon  \left(d \left(1-\frac{q}{2}\right) A(\tau )+\frac{3 d^3 q (k+l)^2}{2 \left(1-\frac{q}{2}\right)}-\frac{2 d^3 (k+l)^2}{1-\frac{q}{2}}\right.\nonumber\\
		&&\left.+2d^2 (k+l) (\sigma-\tau)+2 d \left(1-\frac{q}{2}\right) S_3\right)+O(\e^2),\nonumber\\
		I_6&=&O(\e^2),\nonumber\\
		I_7&=&\rho^2\varepsilon ^2 \left(2 d^2 \left(1-\frac{q}{2}\right) A(\tau)+\frac{d^4 q (k+l)^2}{1-\frac{q}{2}}\right.\nonumber\\
		&&\left.+2d^3 (k+l) (\sigma -\tau)+d^2 \left(1-\frac{q}{2}\right) S_3\right)+O(\e^3),\nonumber\\
		I_8&=&O(\e^3),\nonumber\\
		I_9&=&\rho^3 \varepsilon ^3 d^3 \left(1-\frac{q}{2}\right)  A(\tau )+O(\e^4),\nonumber\\
		I_{10}&=&\rho ^3\varepsilon ^4d^3 \left(1-\frac{q}{2}\right)  A(\tau),\nonumber
	\end{eqnarray}
	where $k=-\left(1-\frac{q}{2}\right)\b$, $l=p+q-1$, $S_1,S_2,S_3$ are defined as in Lemma \ref{cl} and  the functions $A,B$ are expressed as  \eqref{A} and \eqref{B} in Lemma \ref{l2} respectively.
\end{lem}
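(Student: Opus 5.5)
The statement to be proved is Lemma \ref{a4}: the asymptotic behaviour of the coefficients $I_j$ of Lemma \ref{a2} as $\e\to0^{+}$ with $\rho>0$ fixed. The plan is pure bookkeeping on the explicit formulas of Lemma \ref{a2}. First we regroup every $I_j$ as a polynomial in $\e$ whose coefficients depend on $\rho,d,\b,\delta,\sigma,\tau,l,q$. Then we read off the lowest power of $\e$ that occurs and its coefficient. Throughout, $\sigma,\tau$ are bounded (constants or two-valued simple functions), so all $O(\e^m)$ remainders are uniform.

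\textbf{Step 1: the $\e$-free terms.} In Lemma \ref{a2} only $I_1,I_2,I_3$ contain $\e^0$ contributions. These are $(1-\frac{q}{2})S_1$, $(1-\frac{q}{2})S_2$ and $(1-\frac{q}{2})S_3$. Every other summand carries a factor $\e^m$ with $m\ge1$. This gives the stated forms of $I_1,I_2,I_3$, and also $I_4=O(\e)$, since each summand of $I_4$ has at least one factor $\e$.

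\textbf{Step 2: leading terms of $I_5,I_7,I_9,I_{10}$.}
\begin{itemize}
\item For $I_5$ the only $\e^1$ part is $\rho\e(\cdots)$. We simplify it using $(k+l)(2\sigma-q)-(k+l)(2\tau-q)=2(k+l)(\sigma-\tau)$; this yields the displayed bracket, and everything else is $O(\e^2)$.
\item For $I_7$ the lowest-order part is the $\rho^2\e^2$ bracket. After the same simplification, $d^3(k+l)(2\sigma-q)-d^3(k+l)(2\tau-q)=2d^3(k+l)(\sigma-\tau)$, and the remaining terms are $\rho\e^3,\rho^2\e^3,\ldots$, i.e. $O(\e^3)$.
\item For $I_9$ the lowest power of $\e$ is the $\rho^3\e^3d^3(1-\frac{q}{2})A(\tau)$ term, and the rest is $O(\e^4)$.
\item $I_{10}$ is already a single monomial, so nothing needs to be done.
\end{itemize}

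\textbf{Step 3: the orders of $I_6$ and $I_8$.} We check that the smallest exponent of $\e$ appearing in $I_6$ is $2$ and in $I_8$ is $3$. This gives $O(\e^2)$ and $O(\e^3)$, with no need for the coefficients.

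\textbf{Main obstacle.} The only delicate point is the sheer length of the expressions in Lemma \ref{a2}. It is easy to miss a lower-order summand, for instance the $\rho^2\e^2$ and $\rho^3\e^3$ brackets hidden inside $I_7$ and $I_8$. To guard against this, we would cross-check by expanding the dominant-term expression from the proof of Lemma \ref{eg} directly at small $\e$. In that expression the denominator is $H(H+\e L)^2(H^2+\e HL+\rho\e HL+\rho\e dL^2)^2$, and taking its $\e\to0$ limit (equal to $H^5$) recovers $S_1H^2+S_2HL+S_3L^2$ as the leading behaviour. We would also verify the computation symbolically (Mathematica), as the paper suggests.
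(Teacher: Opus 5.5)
Your proposal is correct and matches what the paper does: Lemma~\ref{a4} is read off directly from the explicit expressions of Lemma~\ref{a2}, which the paper obtains symbolically and proves no further, by collecting each $I_j$ as a polynomial in $\varepsilon$ and keeping its lowest-order coefficient; your simplification $(2\sigma-q)-(2\tau-q)=2(\sigma-\tau)$ is exactly what produces the displayed brackets.

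There is one small slip in your optional cross-check. At $\varepsilon=0$ the denominator $H(H+\varepsilon L)^2(H^2+\varepsilon HL+\rho\varepsilon HL+\rho\varepsilon dL^2)^2$ equals $H^7$, not $H^5$, which is what makes the surviving numerator $(1-\frac{q}{2})H^7(S_1H^2+S_2HL+S_3L^2)$ reduce to a quadratic form. If you carry the check out, you will also find that the prefactor $(1-\frac{q}{2})$ in Lemma~\ref{eg}, combined with $I_1=(1-\frac{q}{2})S_1$, leaves an extra factor $1-\frac{q}{2}$ relative to the direct expression in the proof of Lemma~\ref{eg}. This normalization mismatch is in the paper, not your argument, and is harmless since $1-\frac{q}{2}>0$.
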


\setcounter{thm}{0}

\renewcommand{\theequation}{B.\arabic{equation}}

\renewcommand{\thesubsection}{B.\arabic{subsection}}

\renewcommand{\thethm}{B.\arabic{thm}}

\section*{Appendix B: Estimates of non-sharp Liouville domain}

For \( q \in \left[0, 1 - \frac{1}{\sqrt{n-1}}\right] \) and \( q \ge \frac{5}{3} \), part (1) of Theorem \ref{lrm} is optimal. Next, we provide some estimates to the Liouville domain $\mathbb{L}(n)$ for the remaining case and compare them with existing partial results. All subsequent estimates can be derived through tedious mathematical analysis; however, to shorten the length of the paper, we directly provide the proofs of these facts using Mathematica. Readers may verify the correctness of these inequalities using the same code.

\begin{lem}\label{b1}
	Let $n\ge 3$, we have
	\begin{align*}
		&\left\{ (p,q) \in \mathbb{R}^2: p + q \leq \frac{n + 2}{n - 2} \right\} \cap \left( \mathbb{R} \times \left(0, \frac{5}{3} \right) \right) \subseteq \mathbb{L}(n), \\
		&\left( -\infty, \frac{3\sqrt{n + 6}}{2(n - 2)} \right] \times \left( 0, \frac{5}{3} \right) \subseteq \mathbb{L}(n).
	\end{align*}
This verifies the part $(4)$ and $(5)$ of Theorem \ref{m1}.
\end{lem}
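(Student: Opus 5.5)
The plan is to check, for each $q\in(0,\frac53)$ and each $p$ in the stated ranges, that $(p,q)$ lies in one of the pieces making up $\mathbb{L}(n)$ in Definition \ref{set}. The pieces are:
\[
\{q\ge \tfrac53\},\qquad \mathbb{B}(n)\ (1<q<\tfrac53),\qquad \mathbb{BL}(n)\cap(\mathbb{R}\times[0,1]).
\]
For $q\in[0,1]$ the last piece consists of $\mathbb{R}^2_+\setminus\mathscr{D}(n)$ when $q\le 1-\frac{1}{\sqrt{n-1}}$, and of $\mathbb{A}(n)$ when $1-\frac1{\sqrt{n-1}}<q\le1$.

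Everything reduces to lower bounds on the suprema $\mathcal{L}(n,q)$ and $\mathcal{H}(n,q)$, and I will get these by exhibiting one explicit admissible $y$ in each case. The objective function is
\[
g(y)=\frac{2-q}{n-2}+\frac{y(n-2)+2}{(n-2)(2-y)} ,
\]
which is increasing in $y$. Note that $g(0)=\frac{3-q}{n-2}$, so $g(y)\ge \frac{4}{n-2}-q$ is needed for part (4).

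\textbf{Region $q\le 1-\frac1{\sqrt{n-1}}$.} Here no work is needed for part (4). The tangent-line remark in the introduction shows that $p+q\le\frac{n+2}{n-2}$ lies below curve $V$, since $V$ is convex in $p$ as a function of $q$ and the line is tangent at $(\frac{n+2}{n-2},0)$. I will verify this directly: for $q\in[0,1)$,
\[
\frac{(2-q)^2}{(1-q)(n-2)}\ge \frac{4}{n-2}-q .
\]
This is equivalent to $(2-q)^2\ge(1-q)\big(4-(n-2)q\big)$, i.e. $q^2+(n-2)q(1-q)\ge0$, which holds. The strict inequality needed to exclude $\mathscr{D}(n)$ holds for $q>0$. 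Part (5) follows because $\frac{3\sqrt{n+6}}{2(n-2)}$ lies below the $V$-threshold $1-q+\frac{(2-q)^2}{(1-q)(n-2)}$. I will check this by minimizing the right-hand side over $q$.

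\textbf{Region $1-\frac{1}{\sqrt{n-1}}<q\le1$ and $1<q<\frac53$.} I plan to test a single candidate $y$. The natural one is the value that gave $\theta$ in Lemma \ref{p1}, namely $\theta=\frac{2}{n-(n-1)q}$ for $q<1$, and some $y<2$ for $q\ge1$. I need to verify the three inequalities in \eqref{12} (respectively \eqref{13}) and compute $g(y)$. The first two inequalities are mild for small $y$. The third is the binding one: it has a $(q-1)y^2$ term, or $(\frac32q-2)y^2$ in \eqref{13}.

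For part (4) it suffices to find $y\in\mathbb{D}(n,q)$ (resp. $\mathbb{E}(n,q)$) with
\[
g(y)>\frac{4}{n-2}-q .
\]
Since $g(0)=\frac{3-q}{n-2}$, this reduces to a small-$y$ condition. I also expect small $y$ to satisfy all constraints, because at $y\to0^+$ we have $x\to\frac1{n-1}$ and the quadratic constraint tends to $\frac{2(x+1)^2}{n}-2x^2=\frac{2}{n-1}>0$. Moreover, $\frac{3-q}{n-2}>\frac{4}{n-2}-q$ iff $q(n-3)>-1$, which always holds for $q>0$. So part (4) in this region should follow by taking $y$ tiny, with strictness coming from $q>0$.

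For part (5) I need a larger admissible $y$, so that
\[
g(y)+1-q\ge\frac{3\sqrt{n+6}}{2(n-2)} .
\]
I will pick $y$ of order $1/\sqrt n$ to balance the $(q-1)y^2$ and $(\frac32q-2)y^2$ penalties against the linear term $2y(\frac{1}{\sqrt n+1}+x)$. The worst case is $q$ near $\frac53$, where $(\frac32q-2)y^2$ is small and harmless but $1-q$ is negative.

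\textbf{Main obstacle.} The hard step is this uniform-in-$(n,q)$ verification of the third inequality together with the bound $g(y)+1-q\ge\frac{3\sqrt{n+6}}{2(n-2)}$. As the appendix preface indicates, I would reduce it to one-variable polynomial inequalities in $q$ for each fixed form of $y$, and certify them symbolically (e.g. by checking endpoints and monotonicity) with Mathematica assistance.
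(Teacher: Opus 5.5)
Your overall architecture matches the paper's. You use the same three ranges of $q$ and reduce everything to exhibiting one admissible $y$ in $\mathbb{D}(n,q)$ or $\mathbb{E}(n,q)$. However, your part (4) argument rests on a wrong threshold. Membership in $\mathbb{A}(n)$, $\mathbb{B}(n)$ and $\mathbb{R}^2_+\setminus\mathscr{D}(n)$ is a condition on $l=p+q-1$, and $p+q\le\frac{n+2}{n-2}$ means $l\le\frac{4}{n-2}$. So you need $\mathcal{L}(n,q)>\frac{4}{n-2}$ (resp.\ $\frac{(2-q)^2}{(1-q)(n-2)}>\frac{4}{n-2}$), not $>\frac{4}{n-2}-q$. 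You compared against $p-1$ instead of $p+q-1$. Separately, $\frac{3-q}{n-2}>\frac{4}{n-2}-q$ is equivalent to $(n-3)q>1$, not $>-1$, so your step fails for $n=3$ even on its own terms.

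For $q\le 1-\frac{1}{\sqrt{n-1}}$ the slip is harmless: the correct inequality $(2-q)^2>4(1-q)$ is just $q^2>0$. For $1-\frac{1}{\sqrt{n-1}}<q\le 1$ it breaks the argument. Since $g$ is increasing with $g(0^+)=\frac{3-q}{n-2}<\frac{4}{n-2}$, no small $y$ can work. You need $y\in\mathbb{D}(n,q)$ with $y>\frac{2(1+q)}{n+q}$. At such $y$ the third inequality in \eqref{12}, with its negative $(q-1)y^2$ term, is genuinely in play. It is tight for small $n$: for $n=3$, $q=0.3$ the admissible window is only about $0.788<y<0.816$. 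This is exactly the check \eqref{B11} that the paper delegates to Mathematica, and your proposal gives no argument for it.

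For $1<q<\frac53$, do not attempt (4) directly. As in the paper, $q>1$ and $p+q\le\frac{n+2}{n-2}$ give $p<\frac{4}{n-2}<\frac{3\sqrt{n+6}}{2(n-2)}$, because $64<9(n+6)$. So (4) follows from (5).

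Your part (5) in the first range is correct and cleaner than the paper's Mathematica call. With $s=1-q$, the threshold $s+\frac{(1+s)^2}{(n-2)s}$ is increasing for $s\ge\frac{1}{\sqrt{n-1}}$. Its minimum there is $\frac{2+2\sqrt{n-1}}{n-2}$, which exceeds $\frac{3\sqrt{n+6}}{2(n-2)}$ since $7n+32\sqrt{n-1}>54$.

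In the other two ranges your heuristic $y\sim n^{-1/2}$ is only appropriate near $q=1$. For fixed $q>1$, $g(y)+1-q\to\frac{y}{2-y}-(q-1)$ as $n\to\infty$. So you need $y>\frac{2(q-1)}{q}$, which is about $0.8$ near $q=\frac53$, i.e.\ $y$ of order one. Near $q=1^+$ the window for $y=c/\sqrt{n}$ is extremely thin. At leading order in $n$ it closes completely at $q\approx 1+\frac{1}{2\sqrt{n}}$, $c=4$. So any by-hand certification there must keep lower-order terms in $n$.
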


\begin{proof}
	We divide the argument into three cases.
	
	Case 1. $q\in(0,1 - \frac{1}{\sqrt{n-1}}]$. It is easy to see 
	\[
	\left\{ (p,q) \in \mathbb{R}^2: p + q \leq \frac{n + 2}{n - 2} \right\} \subseteq 	\left\{ (p,q) \in \mathbb{R}^2: p + q < 1+ \frac{(2-q)^2}{(1-q)(n-2)} \right\}\subseteq \mathbb{L}(n).
	\]
	When $p\le \frac{3\sqrt{n + 6}}{2(n - 2)}$ and $q\in(0,1 - \frac{1}{\sqrt{n-1}}]$, we have
	\[
	p+q-1\le \frac{3\sqrt{n + 6}}{2(n - 2)}+q-1<\frac{(2-q)^2}{(1-q)(n-2)},
	\]
	and so $(p,q)\in\mathbb{L}(n)$. Here the second inequality is given by Mathematica:
	
	\begin{figure}[htbp]
		\centering
		\includegraphics[scale=0.36]{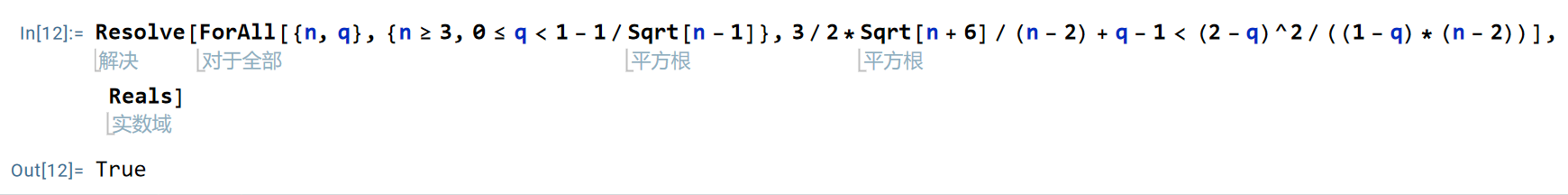}
	%	\caption{}
		\label{fig:ineq1}
	\end{figure}
	
	Case 2. $q\in(1 - \frac{1}{\sqrt{n-1}},1]$. In this case,% by Mathematica,  the constant $\mathcal{L}(n,q)$ given in Definition \ref{set} 	is 
	\begin{align*}
		&(p, q) \in \mathbb{L}(n) \Longleftrightarrow p + q - 1 < \mathcal{L}(n, q) \\
		&\qquad\qquad\quad\,\,\Longleftrightarrow p < \mathcal{L}(n, q) + 1 - q.
	\end{align*}
	So we only need to check that when $ q \in (1 - \frac{1}{\sqrt{n-1}},1], n \geq 3$, 
	\[
	\mathcal{L}(n, q) + 1 - q > \frac{3\sqrt{n + 6}}{2(n - 2)}
	\]
	and 
	\[
	\mathcal{L}(n, q)>\frac{4}{n-2}.
	\]
	That is, when $ q \in (1 - \frac{1}{\sqrt{n-1}},1], n \geq 3$, there exists a $y\in\mathbb{D}(n,q)$ (see Definition \ref{set}) such that
	\begin{equation}\label{B10}
		\frac{2-q}{n-2}+\frac{y(n-2)+2}{(n-2)(2-y)}+ 1 - q > \frac{3\sqrt{n + 6}}{2(n - 2)}
	\end{equation}
	and 
	\begin{equation}\label{B11}
		\frac{2-q}{n-2}+\frac{y(n-2)+2}{(n-2)(2-y)} > \frac{4}{n - 2}.
	\end{equation}
	By Mathematica, $\mathbb{D}(n,q)$ ($n\ge 4$) is given by 
	\begin{figure}[H]
	\centering
	\includegraphics[scale=0.36]{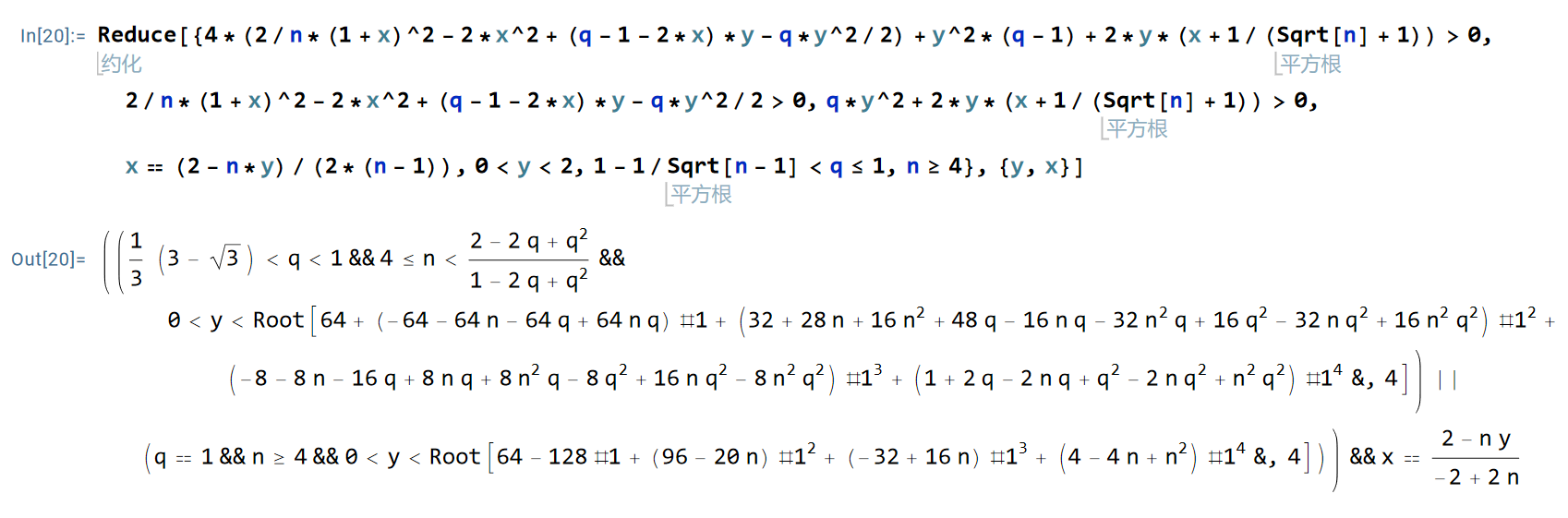}
	\end{figure}

	The inequality \eqref{B10}
	is given by
	
	\begin{figure}[H]
			\centering
		\includegraphics[scale=0.36]{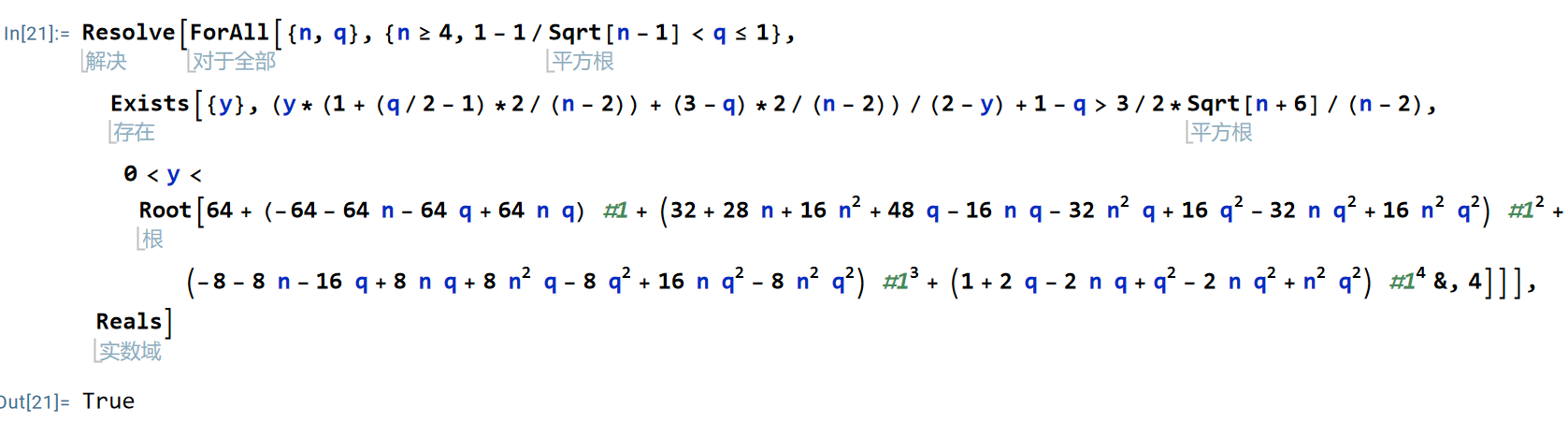}
	\end{figure}

	The inequality \eqref{B11} is given by

	\begin{figure}[H]
		\centering
		\includegraphics[scale=0.36]{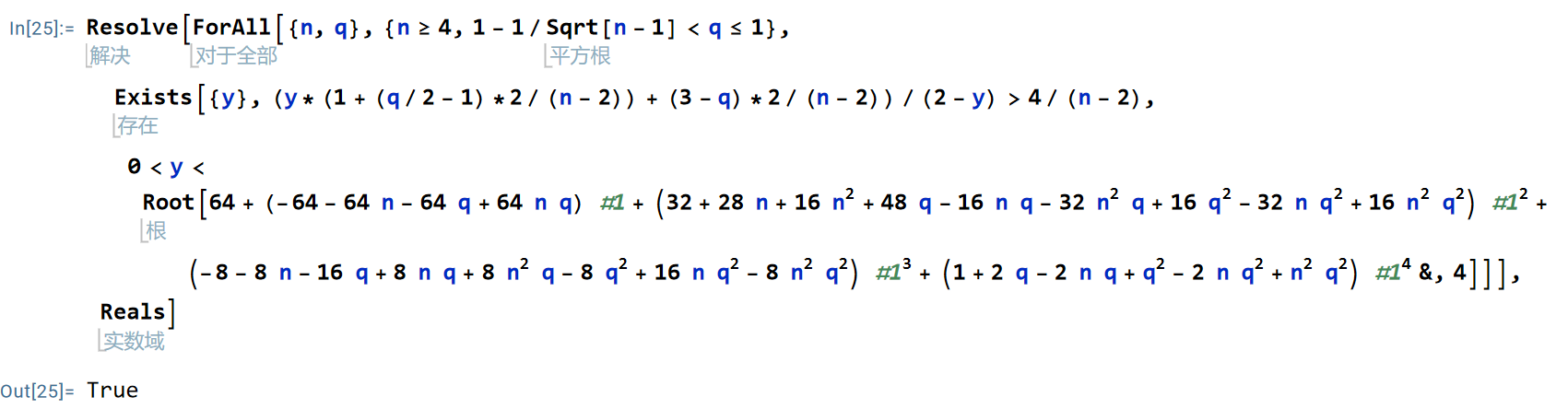}
		%	\caption{}
		%	\label{fig:ineq2}
	\end{figure}

When $n=3$, $\mathcal{L}(3,q)$ is given by
\begin{figure}[H]
	\centering
	\includegraphics[scale=0.36]{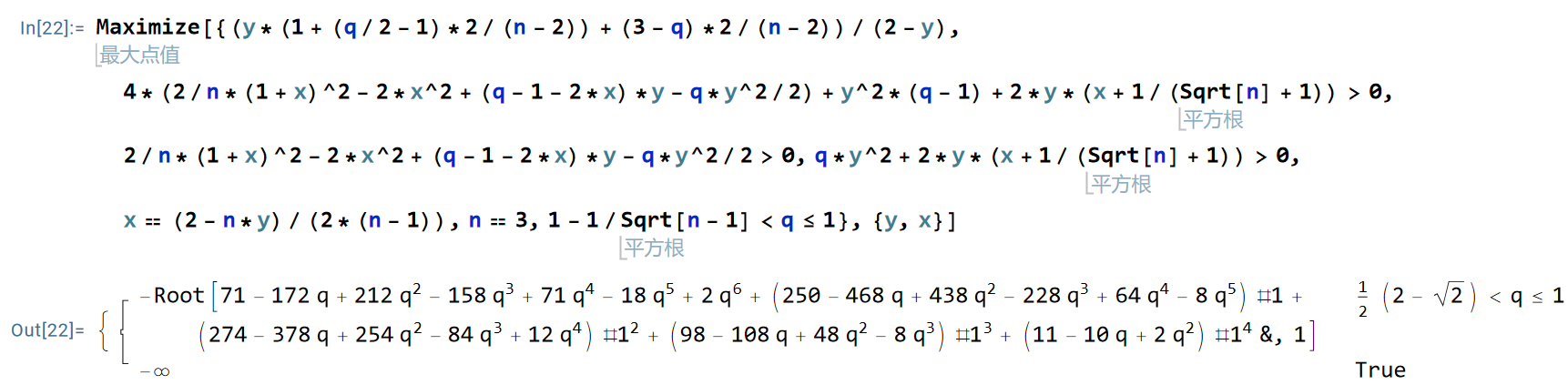}
	%	\caption{}
	%	\label{fig:ineq2}
\end{figure}

	The inequalities \eqref{B10} and \eqref{B11} are given by
	\begin{figure}[H]
		\centering
		\includegraphics[scale=0.36]{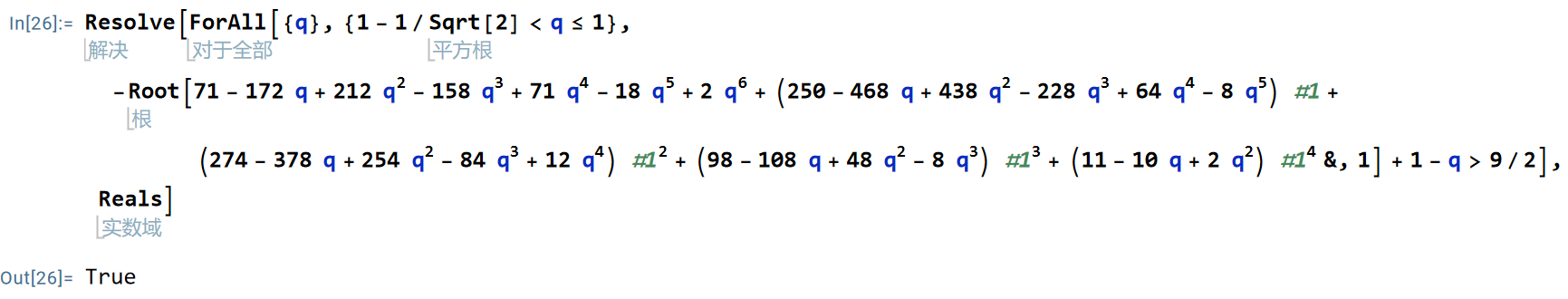}
		%	\caption{}
		%	\label{fig:ineq2}
	\end{figure}
	
	and
	
	\begin{figure}[H]
		\centering
		\includegraphics[scale=0.36]{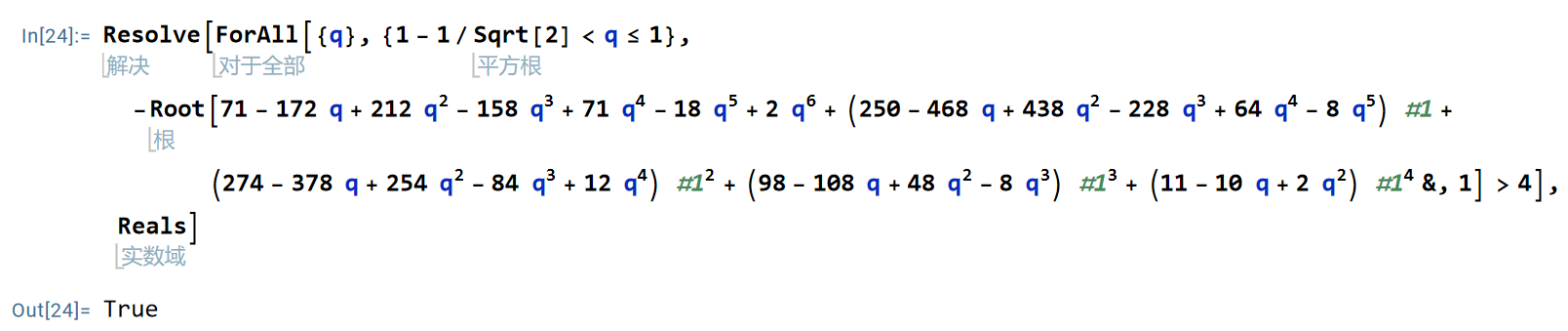}
		%	\caption{}
		%	\label{fig:ineq2}
	\end{figure}
	
	Thus we complete the proof of Case 2.
	
	\vspace{1mm}

	Case 3. $q\in(1,\frac{5}{3})$. In this case, %by Mathematica,  the constant $\mathcal{H}(n,q)$ given in Definition \ref{set} 	is 
	\begin{align*}
		&(p, q) \in \mathbb{L}(n) \Longleftrightarrow p + q - 1 < \mathcal{H}(n, q) \\
		&\qquad\qquad\quad\,\,\Longleftrightarrow p < \mathcal{H}(n, q) + 1 - q.
	\end{align*}
	So we only need to check that when $ q \in \left(1, \frac{5}{3}\right), n \geq 3$,
	\[
	\mathcal{H}(n, q) + 1 - q > \frac{3\sqrt{n + 6}}{2(n - 2)}.
	\]
	That is, when $ q \in \left(1, \frac{5}{3}\right), n \geq 3$, there exists a $y\in\mathbb{E}(n,q)$ (see Definition \ref{set}) such that
	\begin{equation}\label{B1}
		\frac{2-q}{n-2}+\frac{y(n-2)+2}{(n-2)(2-y)}+ 1 - q > \frac{3\sqrt{n + 6}}{2(n - 2)}.
	\end{equation}
	By Mathematica, $\mathbb{E}(n,q)$ is given by 
		\begin{figure}[htbp]
		\centering
		\includegraphics[scale=0.36]{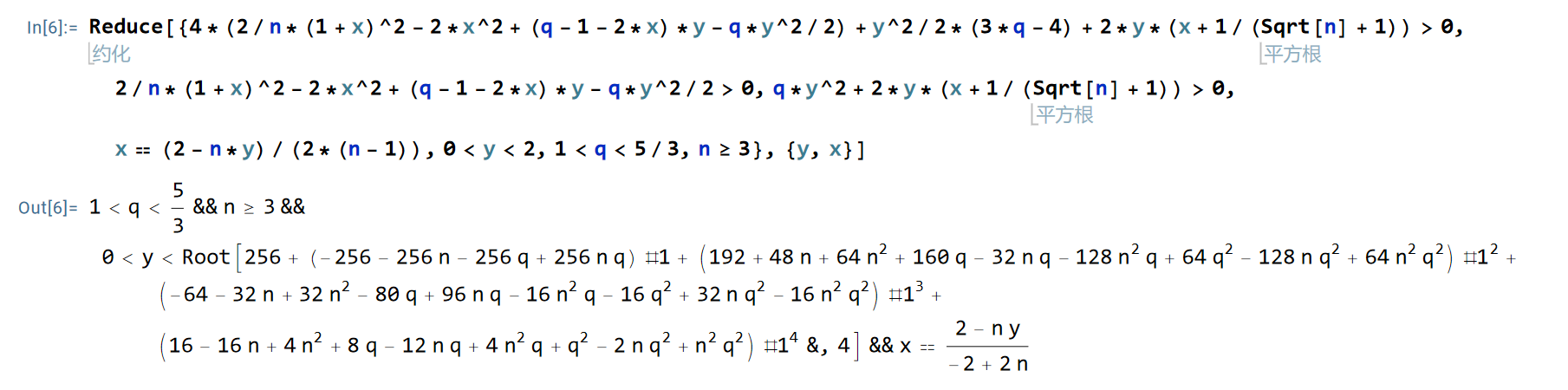}
		%	\caption{}
		\label{fig:ineq2}
	\end{figure}

	and inequality \eqref{B1}
is given by
		\begin{figure}[H]
		\centering
		\includegraphics[scale=0.36]{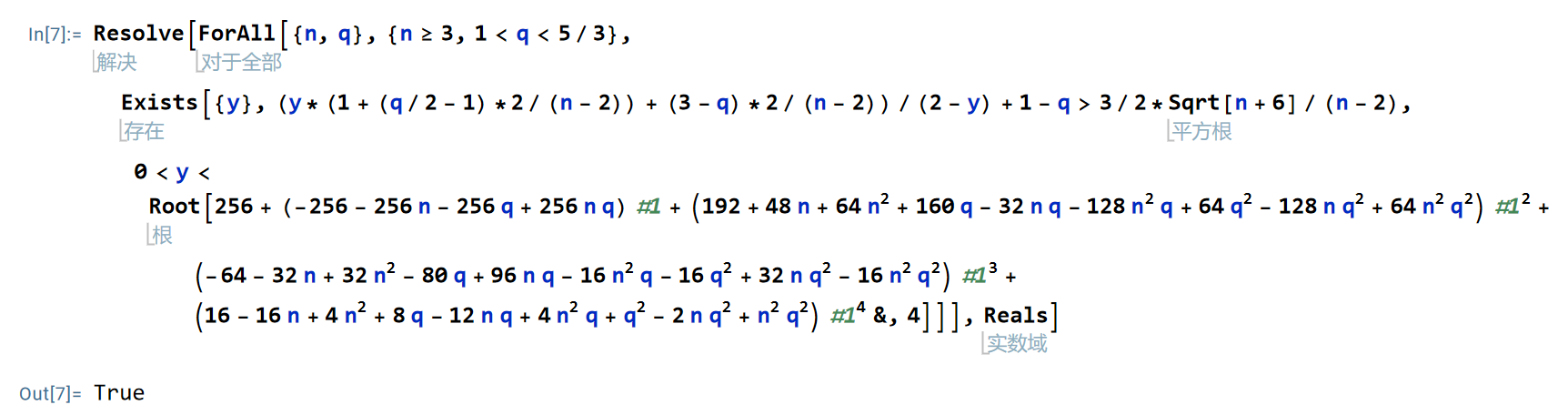}
		%	\caption{}
		\label{fig:ineq3}
	\end{figure}

	Notice that when $q>1$, $p+q\le\frac{n+2}{n-2}$ yields $p<\frac{4}{n-2}< \frac{3\sqrt{n + 6}}{2(n - 2)}$ and so when 
	$q\in(1,\frac{5}{5})$,
	\[
	p+q\le\frac{n+2}{n-2}\Longrightarrow (p,q)\in \mathbb{L}(n).
	\]
	We finish the proof of the lemma.

\end{proof}

The following lemma compares our results with Ma-Wu's result \cite[Theorem 1,3,Theorem 1.4]{MW}, which  improved the  Bidaut-Véron-García-Huidobro-Véron's Liouville theorem.  For $q\in(\frac{1}{n-1},2)$, they prove that if $p\ge 0$, $p+q-1>0$ and $\mathbb{H}(p,q)<0$, then $(p,q)\in \mathscr{D}_L(n)$, where
\begin{equation}\label{B4}
	\mathbb{H}(p, q):=p^2+\left[\frac{n-1}{n-2} q-\frac{n^2-3}{(n-2)^2}\right] p+\frac{1-(n-1) q}{(n-2)^2} .
\end{equation}
Actually, from Figure \ref{F02}, when $n=6$, we observe intuitively that the Liouville domain $\mathbb{L}(n)$ is larger than theirs. 

\begin{lem}\label{b2}
	When $n\ge 4$ and $q\in\left(1-\frac{1}{\sqrt{n-1}},\frac{5}{3}\right)$, if $(p,q)$ satisfies $\mathbb{H}(p, q)<0$, where $\mathbb{H}(p, q)$ is given by \eqref{B4}, then $(p,q)\in\mathbb{L}(n)$.
\end{lem}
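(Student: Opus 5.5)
The plan is to reduce the statement to a pointwise comparison of two explicit upper bounds for $l=p+q-1$. By Definition \ref{set}, for $q\in(1-\frac{1}{\sqrt{n-1}},\frac32)\cap(1-\frac{1}{\sqrt{n-1}},1]$ membership in $\mathbb{L}(n)$ reduces to $l<\mathcal{L}(n,q)$, and for $q\in(1,\frac53)$ to $l<\mathcal{H}(n,q)$. (For $q\ge \frac32$ but $q\le 1$ there is no overlap, since $1<\frac32$.) So it suffices to show that $\mathbb{H}(p,q)<0$ forces $p<\mathcal{L}(n,q)+1-q$, resp. $p<\mathcal{H}(n,q)+1-q$.

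\textbf{Step 1: reduce to the larger root.} For fixed $q$, $\mathbb{H}(\cdot,q)$ is a monic quadratic in $p$. Its zero set $\{\mathbb{H}<0\}$ is the open interval between its roots $p_-(n,q)<p_+(n,q)$. Hence it is enough to prove
\[
p_+(n,q)\le \mathcal{L}(n,q)+1-q \quad (q\le 1),\qquad p_+(n,q)\le \mathcal{H}(n,q)+1-q \quad (1<q<\tfrac53).
\]
Here
\[
p_+=\tfrac12\Big(\tfrac{n^2-3}{(n-2)^2}-\tfrac{n-1}{n-2}q+\sqrt{\Delta}\Big),\qquad
\Delta=\Big(\tfrac{n-1}{n-2}q-\tfrac{n^2-3}{(n-2)^2}\Big)^2-\tfrac{4(1-(n-1)q)}{(n-2)^2}.
\]

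\textbf{Step 2: use a single witness point.} Since $\mathcal{L}$ and $\mathcal{H}$ are suprema over $y\in\mathbb{D}(n,q)$, resp. $y\in\mathbb{E}(n,q)$, of
\[
g(y)=\tfrac{2-q}{n-2}+\tfrac{y(n-2)+2}{(n-2)(2-y)},
\]
and $g$ is increasing in $y$, it suffices to exhibit one admissible $y^*(n,q)$ with $g(y^*)+1-q\ge p_+$. The natural choice is $y^*$ slightly below the right endpoint of the admissible set. I would determine that endpoint from the binding constraint in \eqref{12}/\eqref{13}, with $x=\frac{2-ny}{2(n-1)}$ substituted; in the paper's proof this is the third inequality, the quadratic combination. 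Each constraint becomes a polynomial inequality in $y$ whose coefficients are polynomial in $n$ and $q$. For the first two constraints I would check directly that they hold on an interval containing $y^*$.

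\textbf{Step 3: the comparison inequality (main obstacle).} One must show $p_+\le g(y^*)+1-q$ uniformly in $n\ge4$ and $q$ in the stated range. Since $g(y^*)$ is itself an algebraic function involving a root of the binding quadratic, this amounts to comparing two expressions with nested square roots. I would rationalize as follows: isolate $\sqrt{\Delta}$, square (after checking sign conditions), and reduce to the nonnegativity of a polynomial in $(n,q)$. Then substitute $n=4+s$ with $s\ge0$ and $q$ affinely rescaled to $[0,1]$, and verify that all coefficients are nonnegative, or use a Sturm/CAD check. Because the paper's own Lemma \ref{b1} relies on Mathematica for comparable inequalities, I expect the same here: a cylindrical algebraic decomposition (Reduce) certificate over $n\ge4$, $q\in(1-\frac{1}{\sqrt{n-1}},\frac53)$.

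\textbf{Caveats.} Near the endpoint $q\to1-\frac1{\sqrt{n-1}}$ the comparison should be checked against the exact boundary $\frac{(2-q)^2}{(1-q)(n-2)}$, since there $\mathcal{L}$ meets the optimal curve. Also, $n=3$ is excluded precisely because the inequality fails there, so the certificate must genuinely use $n\ge4$.
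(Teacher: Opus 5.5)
Your reduction (bound $\{\mathbb{H}(\cdot,q)<0\}$ by its larger root $p_+$, then certify by computer algebra that $p_+\le\mathcal{L}(n,q)+1-q$) is exactly the paper's argument for $q\le 1$; for $1<q<\frac53$ the paper instead checks $\mathbb{H}(p,q)<0\Rightarrow p<\frac{3\sqrt{n+6}}{2(n-2)}$ and invokes Lemma \ref{b1}, which splits your direct comparison with $\mathcal{H}(n,q)+1-q$ into two simpler one-sided checks. Your caveat that $\mathcal{L}$ meets the optimal curve as $q\downarrow 1-\frac{1}{\sqrt{n-1}}$ is false, though nothing in your argument uses it: for $q<1$ the second condition in \eqref{12} forces $y<\frac{2}{n-(n-1)q}$, and at $q=1-\frac{1}{\sqrt{n-1}}$, $y=\frac{2}{1+\sqrt{n-1}}$ the left side of the third condition equals $\frac{4}{1+\sqrt{n-1}}\bigl(\frac{1}{1+\sqrt{n}}-\frac{1}{1+\sqrt{n-1}}\bigr)<0$, so $\mathcal{L}(n,q)$ stays strictly below $\frac{(2-q)^2}{(1-q)(n-2)}$ there.
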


\begin{proof}
	
	We assume $p>0$ and $z:=p+q>1$  in the proof or by Lemma \ref{b1}, we already have $(p,q)\in \mathbb{L}(n)$.
	
	Case 1. $q\in(1 - \frac{1}{\sqrt{n-1}},1]$. In this case, $\mathbb{H}(p, q)<0$ is equivalent to the following inequality of $z$:

	\begin{figure}[H]
		\centering
		\includegraphics[scale=0.36]{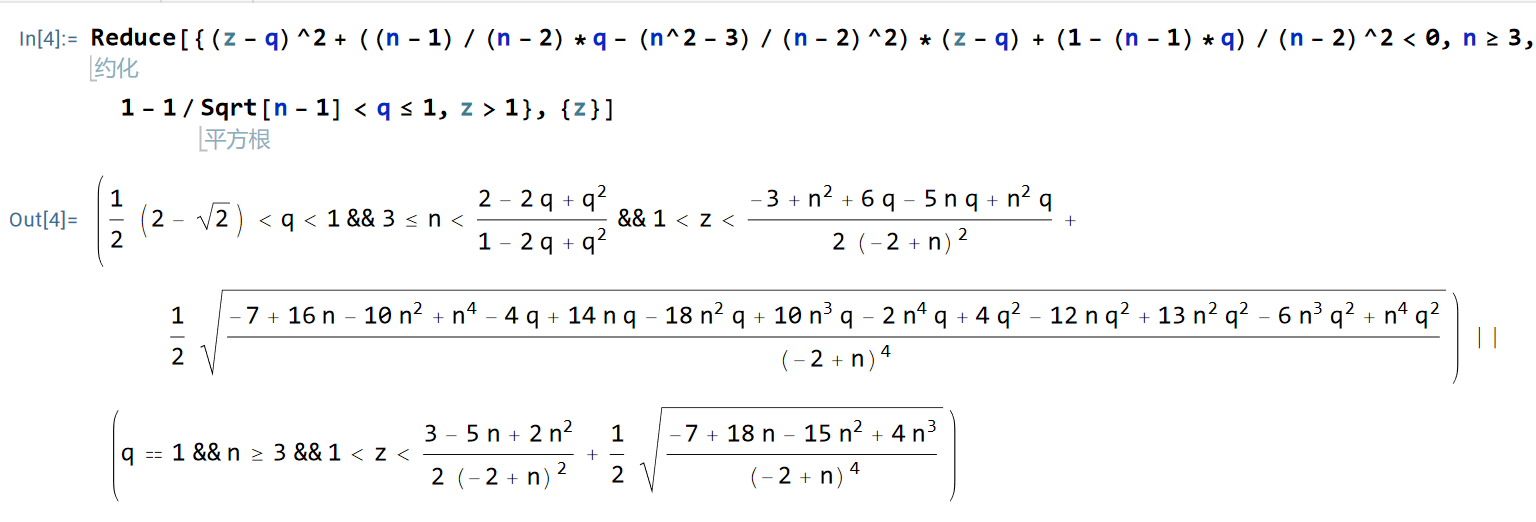}
			\caption{Upper bound of $z$ by $\mathbb{H}(p, q)<0$}
		\label{fig:upz}
	\end{figure}
	
		By the definition of 
		$\mathbb{L}(n)$,
		\[
		(p, q) \in \mathbb{L}(n) \Longleftrightarrow z=p + q  < \mathcal{L}(n, q)+1 .
		\]
		So we need to check the upper bound in Figure \ref{fig:upz} is smaller than $\mathcal{L}(n, q)+1$. When $n\ge 4$, it is given by ( the $\mathbb{D}(n,q)$ is given in Case 2 of the proof of Lemma \ref{b1})
		
			\begin{figure}[H]
			\centering
			\includegraphics[scale=0.36]{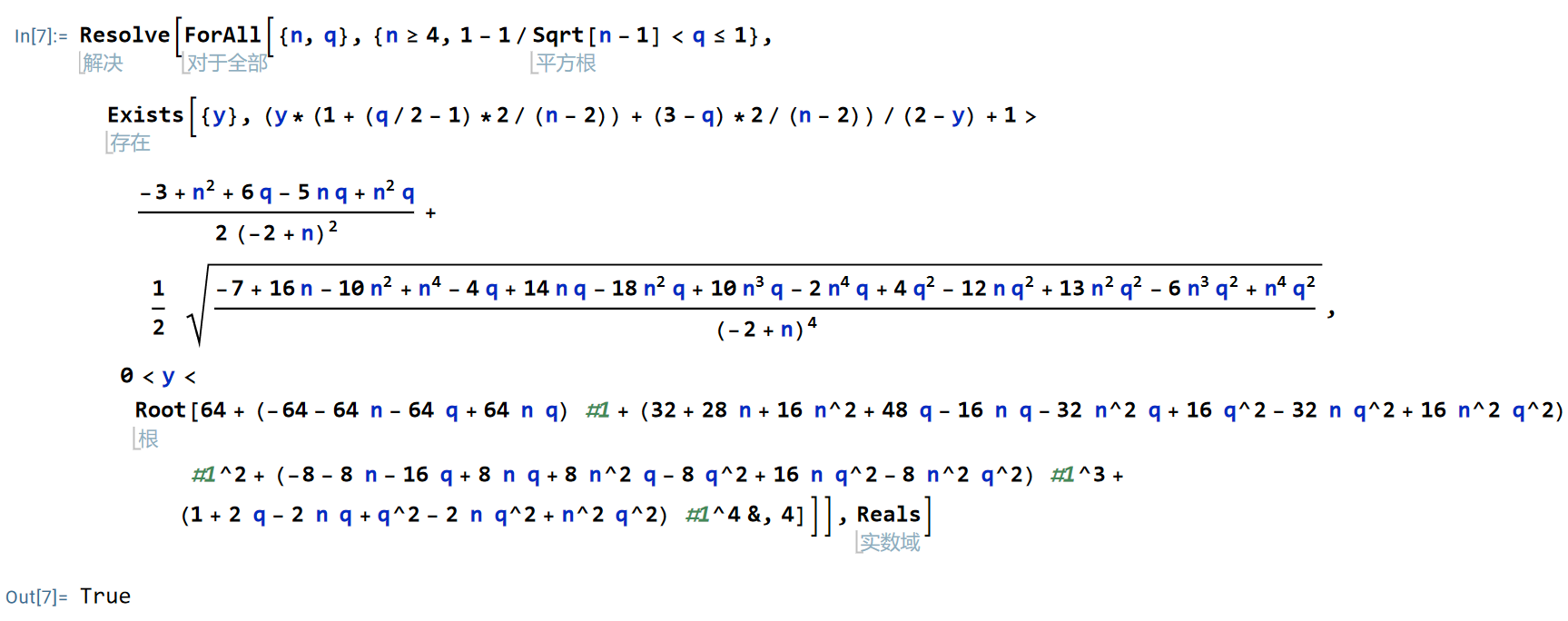}
		%	\caption{Upper bound of $z$ by $\mathbb{H}(p, q)<0$}
			%\label{fig:upz}
		\end{figure}

	Case 2. $q\in(1,\frac{5}{3})$.
	In this case, we actually have 
	\[
	\mathbb{H}(p, q)<0\Longrightarrow p<\frac{3\sqrt{n + 6}}{2(n - 2)},
	\]
	and so by Lemma \ref{b1}, $(p,q)\in \mathbb{L}(n)$. Above inequality is given by

		\begin{figure}[H]
		\centering
		\includegraphics[scale=0.36]{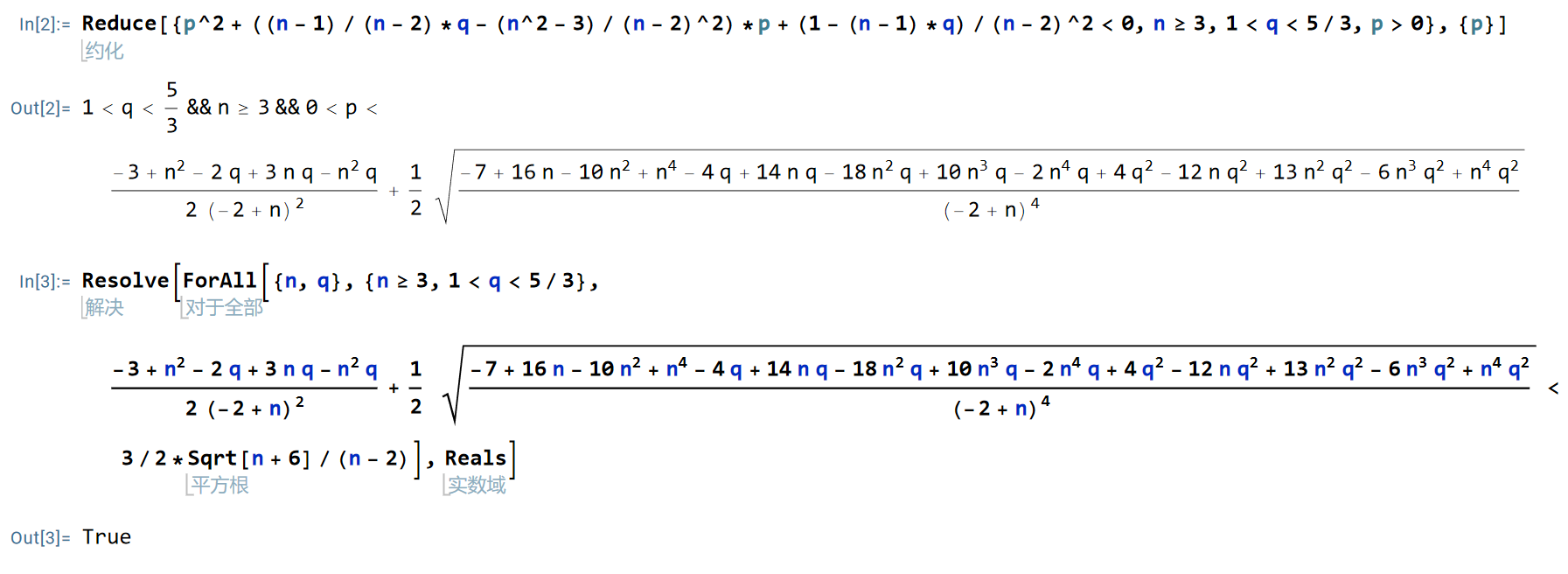}
		%	\caption{Upper bound of $z$ by $\mathbb{H}(p, q)<0$}
		%\label{fig:upz}
	\end{figure}
	
\end{proof}

\begin{rmk}
	From the Case 2 of the proof of Lemma \ref{b2}, we do not use $n\ge 4$ and so our Liouville domain is larger than known results when $q\ge 1$. However, when $n=3$,  Ma-Wu's sharp  Liouville domain is valid for $q\in[0,\frac{1}{2}]$, which is better than our $q\in[0,1-\frac{1}{\sqrt{2}}]$. By Mathematica, in this case, one can verify that our Liouville domain is also larger than theirs when $q\ge\frac{3}{4}$.
\end{rmk}

%	\noindent\textbf{Data availability}\\
%	\noindent Data sharing not applicable to this article as no datasets were generated or analysed during
%	the current study.\\
	\vspace{3mm}
	
	\noindent\textbf{Declarations}\\
	%\noindent	\textbf{conflict of interest} 
	The authors declare that they have no competing interests. 
	\vspace{5mm}
	
	\noindent\textbf{Data availability} \\
	Data sharing not applicable to this article as no datasets were generated or analysed during
	the current study.
	
	\vspace{5mm}
	\noindent\textbf{Acknowledgments}\\
We warmly thank Dr. Hua Zhu for his kind and helpful communications regarding their very recent interesting work \cite{DSWZ}. We also warmly thank Professor Laurent Véron for his kind and valuable communications regarding the present work.
	The author is supported by Scientific Research Startup Project of Jiangsu Normal University (Project No: 24XFRS051), the General Program of Basic Scientific Research in Institutions of Higher Education of Jiangsu Province (Grant No: 25KJB110002) and National Natural Science Foundation of China (Grant No: 12526552).

\end{document}